\documentclass{compositio}
\usepackage{amssymb,bm,amsmath, mathrsfs }

\usepackage[all]{xy}
\usepackage{color}
\usepackage{hyperref}

\usepackage[OT2,T1]{fontenc}
\DeclareSymbolFont{cyrletters}{OT2}{wncyr}{m}{n}
\DeclareMathSymbol{\Sha}{\mathalpha}{cyrletters}{"58}
\synctex=1

\newcommand{\map}[1]{\xrightarrow{#1}}
\newcommand{\iso}{\xrightarrow{\sim}}
\newcommand{\define}{:=}
\newcommand{\red}{\mathrm{red}}

\newcommand{\Hom}{\mathrm{Hom}}
\newcommand{\Aut}{\mathrm{Aut}}
\newcommand{\End}{\mathrm{End}}
\newcommand{\Spec}{\mathrm{Spec}}
\newcommand{\Q}{\mathbb Q}
\newcommand{\Z}{\mathbb Z}
\newcommand{\R}{\mathbb R}
\newcommand{\C}{\mathbb C}
\newcommand{\F}{\mathbb F}
\newcommand{\A}{\mathbb A}
\newcommand{\co}{\mathcal O}
\newcommand{\ord}{\mathrm{ord}}

\newcommand{\RZfsm}{\mathrm{RZ}^{\mathrm{fsm}}}
\newcommand{\RZnilp}{\mathrm{RZ}^{\mathrm{nilp}}}
\newcommand{\RZc}{\mathrm{RZ}}

\newcommand{\GSpin}{\mathrm{GSpin}}
\newcommand{\SO}{\mathrm{SO}}
\newcommand{\GSp}{\mathrm{GSp}}
\newcommand{\OGr}{\mathrm{OGr}}
\newcommand{\Gm}{{\mathbb{G}_m}}

\newcommand{\GL}{\mathrm{GL}}
\newcommand{\Fil}{\mathrm{Fil}}
\newcommand{\action}{\bullet}
\newcommand{\FF}{\mathscr{F}}

\newcommand{\Db}{{\mathbb D}}
\newcommand{\Sh}{{\mathrm {Sh}}}
\newcommand{\sS}{{\mathscr S}}

\newcommand{\dia}{\diamond}

\newcommand{\ti}{\widetilde}
\newcommand{\Spf}{\mathrm{Spf}}
\newcommand{\bZp}{W}

\newcommand{\fX}{{\mathfrak X}}

%%%%%%% RZ spaces

\newcommand{\adl}{{\phi}}
\newcommand{\xo}{X_0}

\newcommand{\lps}{[\![}
\newcommand{\rps}{]\!]}

\newcommand{\quash}[1]{}  %%Anything in \quash is ignored

\newcommand{\et}{{\rm {et}}}

\setcounter{tocdepth}{1}

\makeatletter
\renewcommand\subsubsection{\leftskip 0pt\@startsection {subsubsection}{3}{\z@}%
                                   {-1ex}%
                                   {-0ex}%
                                   {\normalfont}}
\makeatother

\begin{document}
\author[B. Howard]{Benjamin Howard}
\thanks{B.H. is partially supported by NSF grants  DMS-1201480 and DMS-1501583}
\address{Dept.~of Mathematics\\ Boston College\\
Chestnut Hill\\ MA 02467-3806}
\author[G. Pappas]{Georgios Pappas}
 \thanks{G.P. is partially supported by NSF grants  DMS-1102208 and DMS13-60733}
\classification{11G18, 14G35}
\keywords{Shimura variety, spinor group, Rapoport-Zink space}
\address{Dept.~of Mathematics\\ Michigan State University\\ E. Lansing\\ MI 48824-1027}

\title[Rapoport-Zink spaces for spinor groups]{Rapoport-Zink spaces for spinor groups}
\date{\today}

\begin{abstract}
After the work of Kisin, there is a good theory of canonical integral models of Shimura varieties of Hodge type at primes of good reduction.  
The first part of this paper develops a theory of Hodge type Rapoport-Zink formal schemes, which  uniformize certain formal completions of such
integral models.  In the second part, the general theory is applied to the special case of Shimura varieties associated with 
groups of spinor similitudes, and the reduced scheme underlying the Rapoport-Zink space is determined explicitly.
\end{abstract}

\maketitle

\tableofcontents

\theoremstyle{plain}
\newtheorem{BigThm}{Theorem}
\swapnumbers
\newtheorem{theorem}[subsubsection]{Theorem}
\newtheorem{proposition}[subsubsection]{Proposition}
\newtheorem{lemma}[subsubsection]{Lemma}
\newtheorem{corollary}[subsubsection]{Corollary}
\newtheorem*{claim*}{Claim}
\newtheorem{conjecture}[subsubsection]{Conjecture}

\theoremstyle{definition}
\newtheorem{definition}[subsubsection]{Definition}

\newtheorem{remark}[subsubsection]{Remark}

\numberwithin{equation}{subsubsection}
\renewcommand{\labelenumi}{(\roman{enumi})}
\renewcommand{\theBigThm}{\Alph{BigThm}}

\bigskip

\section{Introduction}
\label{intro:hodge}

This paper contributes to the theory of integral models of Shimura varieties,  and to the related  theory of Rapoport-Zink formal schemes.
We concentrate our attention on Shimura varieties of Hodge type with hyperspecial level subgroup. 
In this case, canonical smooth integral models of the Shimura varieties were constructed by Kisin (see also work of Vasiu). 
Using these models, we give a construction of certain Hodge type Rapoport-Zink formal schemes, 
and describe their field-valued points in terms of certain \emph{refined} affine 
Deligne-Lusztig sets.

A large portion of the paper concerns what is arguably the most interesting 
family of Shimura varieties that are of Hodge but not of PEL type:  those associated to the  spinor similitude groups of quadratic spaces over $\Q$ of signature $(d, 2)$.
For this family of Shimura varieties, we use our results on Rapoport-Zink spaces to explicitly describe  the  basic (supersingular)
locus in the reduction modulo $p$ of the canonical integral model.

In what follows, we describe our results in more detail.  First, we will discuss the construction 
of Rapoport-Zink formal schemes for general Hodge type Shimura varieties, and then we will explain the description of the supersingular locus in the reduction modulo $p$ of the  Shimura varieties for spinor similitude groups.

\subsection{Rapoport-Zink spaces for Hodge type Shimura varieties}
\label{intro:RZ}

Let $( G , \mathcal{H}  )$ be  a Hodge type Shimura datum with reflex field $E \subset \C$.
Fix a prime $p>2$ and a sufficiently small compact open subgroup 
\[
U = U_p U ^p \subset G(\A_f)
\]
with $U_p \subset G(\Q_p)$ hyperspecial.    This implies that $G$ extends to a reductive group scheme over $\Z_{(p)}$, denoted the 
same way, with  $U_p=G(\Z_p)$.
Denote by $\Sh_U(G , \mathcal{H})$ the corresponding Shimura variety; it is a smooth quasi-projective variety over $E$ with complex points
 \[
 \Sh_U(G , \mathcal{H})(\C)  = G(\Q) \backslash \mathcal{H} \times G(\A_f) / U.
 \]

\subsubsection{}

For  $(G,\mathcal{H})$ to be of Hodge type means  that there is an embedding of Shimura data
\begin{equation}\label{intro hodge}
(G,\mathcal{H}) \to ( \GSp_{2g} , \mathcal{H}_{2g} ),
\end{equation}
where  $\mathcal{H}_{2g}$ is the union of the upper and lower Siegel half-spaces of genus $g$.   This embedding
may be chosen in a particular way:   we can find a  self-dual symplectic space 
$( C ,\psi )$ over $\Z_{(p)}$   and a closed immersion 
\begin{equation}\label{intro symplectic}
G \hookrightarrow \GSp(C,\psi)
\end{equation}
of reductive groups over $\Z_{(p)}$ whose generic fiber induces (\ref{intro hodge}).

Moreover,   $G$ can be realized as the pointwise stabilizer of a finite set of tensors 
$
(s_\alpha) \subset C^\otimes.
$
Here $C^\otimes$ is the \emph{total tensor algebra}; it is defined as the direct sum of all free $\Z_{(p)}$-modules 
that can be formed from $C$  using the operations of taking duals, tensor products,
symmetric powers,  and exterior powers.   In particular, if we set
\begin{equation}\label{contra}
D= \Hom(C,\Z_{(p)})
\end{equation}
with its  contragredient action $(g d)(c) = d(g^{-1} c)$ of $G$,  
then $C^\otimes = D^\otimes$ as representations of $G$.

For a prime $v\mid p$ of $E$,  Kisin \cite{KisinJAMS} has proved that the   Shimura variety  $\Sh_U(G , \mathcal{H})$  over $E$
admits a canonical smooth integral model 
\[
\mathscr{S} = \mathscr{S}_U(G , \mathcal{H})
\] 
over the localization $\co_{E, (v) }$.  The integral model is constructed, using (\ref{intro hodge}),  
as the normalization of the Zariski closure of $\Sh_U(G , \mathcal{H})$ 
in the integral model of a Siegel moduli variety.   In particular,  $\mathscr{S}$  carries over it a  ``universal'' family of abelian varieties with 
additional structure, obtained as the pullback of the universal family over the Siegel variety.  The universal family on $\mathscr{S}$
depends on the choice of Hodge embedding (\ref{intro hodge}), but  the integral model $\mathscr{S}$ does not.

  The special fiber   of the canonical integral model  comes with its \emph{Newton stratification}, 
  whose strata are   defined by fixing the isogeny class of the universal $p$-divisible group with additional structure.    Among the Newton strata there  is a distinguished closed stratum, called the \emph{basic locus} (see \cite{RapoportRich, Wortmann}).
For many  Hodge type Shimura varieties,  the basic locus   is the  \emph{supersingular locus}: the locus of points at  which the 
universal  abelian variety is isogenous to a product of supersingular elliptic curves.  This will be the case for the 
spinor similitude Shimura varieties discussed below in \S\ref{intro:gspin}.

When $\Sh_U(G , \mathcal{H})$ is a  PEL-type Shimura variety, the completion of the integral model along the basic locus is described  
 via the  \emph{$p$-adic uniformization theorem}   of Rapoport-Zink  \cite{RapZinkBook}  as a quotient of  what is now called  a
 Rapoport-Zink formal scheme.

\subsubsection{} The first main result of this paper is the construction of Rapoport-Zink formal schemes for 
general Hodge type Shimura varieties as above.  Such a construction also appears in the recent preprints of 
Kim \cite{KimRZ, KimUnif}.
We have followed Kim in the  characterization of our formal schemes as moduli spaces of quasi-isogenies between $p$-divisible groups endowed with so-called \emph{crystalline Tate tensors},  however, our construction of these spaces is  more direct than Kim's, and uses the existence of the integral model $\mathscr{S}$.

 Let $k$ be an algebraic closure of the residue field of the place $v\mid p$  fixed above, and let $W=W(k)$ be the ring of Witt vectors of $k$.
 For us, a  Hodge type Rapoport-Zink formal scheme over $W$ is characterized in terms of  the  \emph{local Shimura-Hodge datum} 
 $( G_{\Z_p} ,b_{x_0},\mu_{x_0}, C_{\Z_p})$ attached to a point 
 $
 x_0\in \mathscr{S}(k).
 $
 The reductive group scheme $G_{\Z_p}$ over $\Z_p$ and the representation   
 \[ G_{\Z_p} \hookrightarrow \GL( C_{\Z_p})\] were described above,   
 and we must now explain the meaning of $b_{x_0}$ and $\mu_{x_0}$.

Denote by $X_0$  the $p$-divisible group of the 
fiber  of the universal abelian scheme at  $x_0$, and let    $\Db(X_0)$ be its 
 contravariant Grothendieck-Messing crystal.   The evaluation $\Db(X_0)(W)$  of the crystal on $W$
 is the Dieudonn\'e module of $X_0$.  Kisin shows that this comes equipped with a collection of  \emph{crystalline  tensors} 
\[
t_{\alpha, 0}\in \Db(X_0)(W)^\otimes,
\] 
which are Frobenius invariant in $\Db(X_0)(W)^\otimes[1/p]$.  Moreover, there is a $W$-module isomorphism
\begin{equation}\label{dieudonne trivialization}
D \otimes_{\Z_p}W\xrightarrow{\sim}\Db(X_0)(W)
\end{equation}
identifying   $s_\alpha \otimes 1$ with  $t_{\alpha, 0}$.   Under any such identification  the Frobenius 
operator  on $\Db(X_0)(W)$ induces an operator on $D\otimes_{\Z_p} W$ of  the form  
\[
F=b_{x_0}\circ  \sigma
\] 
for some $b_{x_0}\in G(K)$.    Here $\sigma\in \Aut(W)$ lifts the absolute  Frobenius on $k$, and $K=W[1/p]$ is the fraction field of $W$.

Kisin shows that the Hodge filtration on $\Db(X_0)(k)$   is split  by a  
$G_k$-valued cocharacter,  which is the reduction of a minuscule cocharacter 
 \[
 \mu_{x_0}: \Gm_W\to G_W
 \]
satisfying
$
b_{x_0}\in G(W) \mu_{x_0}^\sigma(p) G(W).
$

   The $G(W)$-conjugacy class
 of $\mu_{x_0}$ is independent of (\ref{dieudonne trivialization}) and it agrees with 
 the conjugacy class of the  \emph{inverse} of the Deligne cocharacter $\mu_h: \Gm_\C \to G_\C$ associated to the symmetric domain $\mathcal{H}$. 
More precisely, $\mu_{x_0}$ and $\mu_h^{-1}$ become conjugate after we fix an isomorphism $\C \iso \bar{K}$ whose restriction to $E \hookrightarrow \bar{K}$ induces the place $v$ chosen above.

Having fixed $x_0$ and  (\ref{dieudonne trivialization}), we abbreviate  $b=b_{x_0}$ and $\mu=\mu_{x_0}$.  
Define   an  algebraic group $J_b$ over $\Q_p$  with functor of points 
\[
J_b(R)=\{g\in G(R\otimes_{\Q_p}K) :  gb\sigma(g)^{-1}=b\}
\]
for any $\Q_p$-algebra $R$. The element $b$ is \emph{basic} if and only if   $J_b$ is an inner form of $G$.

\begin{BigThm}\label{bigthm:RZ space}
There exists a formal scheme $\RZc_G$ over ${\rm Spf}(W)$ that is formally smooth and locally formally of finite type,  admits a left action of $J_b(\Q_p)$,
and has the following properties:
\begin{enumerate}
\item  It  is a formal closed subscheme of the usual Rapoport-Zink formal scheme $\RZc(X_0)$ 
over ${\rm Spf}(W)$  representing pairs $(X, \rho)$ of a $p$-divisible group $X$
and a quasi-isogeny $\rho: X_0\dasharrow X$, as in \cite{RapZinkBook}.

\item There is a bijection 
\[
\RZc_G(k)\xrightarrow{\sim}
{X}_{G, b,  \mu^\sigma }(k) 
\]
where $X_{G,b,\mu^\sigma}(k)$ is the affine Deligne-Lusztig set 
\[
\big\{g\in G(K) : g^{-1}b\sigma(g)\in G(W) \mu^\sigma (p) G(W) \big\}/G(W ).
\]

\item  Assume in addition that  $b$ is   basic, or,  equivalently, that the point  $x_0 $ lies in the basic locus. 
Then there is an  isomorphism of formal schemes
\[
\Theta^b: I (\Q)\backslash  \RZc_G\times  G({\mathbb A}^p_f)/U^p  \iso  (\widehat \sS_W)_{/\sS_b}.
\]
Here $(\widehat \sS_W)_{/\sS_b}$ is the completion of the base change $\sS_W$ along the basic locus $\sS_b$ of the special fiber, and $I$ is a reductive group over $\Q$,  which is an inner form of $G$ admitting identifications
\[
I(\Q_\ell) = 
\begin{cases}
J_b(\Q_p) & \mbox{if }\ell=p \\
G(\Q_\ell) & \mbox{if }\ell\neq p,
\end{cases}
\]
and with $I(\R)$ compact modulo center.

 \end{enumerate} 
\end{BigThm}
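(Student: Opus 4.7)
The plan is to construct $\RZc_G$ as a closed formal subscheme of the Siegel Rapoport--Zink space $\RZc(X_0)$ by imposing that the crystalline Tate tensors extend, and to verify all three claims by transporting information from the universal $p$-divisible group on the canonical integral model $\sS$ via Kisin's deformation theory.

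For part (i), the universal pair $(X^\univ,\rho^\univ)$ over $\RZc(X_0)$ carries, via $\rho^\univ$, Frobenius-invariant rational tensors on $\Db(X^\univ)^\otimes[1/p]$ transported from $t_{\alpha,0}$. I would define $\RZc_G$ as the locus where these rational tensors are integral in $\Db(X^\univ)^\otimes$ and compatible with the Hodge filtration of the universal $p$-divisible group. To check representability and the closed-subscheme property, I would argue one point at a time: at a closed point $y\in\RZc_G(k)$ with data $(X_y,\rho_y,(t_{\alpha,y}))$, the pro-representing object of infinitesimal deformations of $(X_y,(t_{\alpha,y}))$ is canonically identified, via Kisin, with the formal completion of $\sS_W$ at a point $x\in\sS(k)$ whose $p$-divisible group with tensors realizes $(X_y,(t_{\alpha,y}))$. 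Since that completion is formally smooth of dimension $\dim\sS$, this simultaneously establishes formal smoothness, local formal finite type, and the closed embedding into $\RZc(X_0)$. The left $J_b(\Q_p)$-action is induced from the natural action of self-quasi-isogenies of $X_0$ respecting the tensors, which are precisely parametrized by $J_b(\Q_p)$.

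For part (ii), fix the trivialization \eqref{dieudonne trivialization}. A $k$-point of $\RZc(X_0)$ is a $W$-lattice $L\subset D\otimes_{\Z_p}K$ (the Dieudonn\'e module of $X$) with Frobenius $b\sigma$. Integrality of all the transported tensors on $L$ is exactly the condition that $L=g\cdot(D\otimes_{\Z_p}W)$ for some $g\in G(K)$, well defined modulo $G(W)$. The requirement that $L$ arises from an actual $p$-divisible group with tensors, rather than merely an isocrystal, translates via the minuscule Hodge-filtration condition to $g^{-1}b\sigma(g)\in G(W)\mu^\sigma(p)G(W)$, yielding the bijection with $X_{G,b,\mu^\sigma}(k)$. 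For part (iii), I would follow the Rapoport--Zink uniformization strategy. The classical theorem of \cite{RapZinkBook} applied to the Siegel embedding \eqref{intro hodge} uniformizes the basic locus of the ambient Siegel Shimura variety; pulling this back along the closed immersion $\sS\hookrightarrow \Sh_{U'}(\GSp_{2g},\mathcal{H}_{2g})$ and using part (i) to cut down produces the candidate map $\Theta^b$. Basicness of $b$ ensures that every $k$-point of $\sS_b$ is quasi-isogenous to $X_0$ compatibly with tensors, giving surjectivity; the fibers are torsors under the global automorphism group of the isogeny class, whose $\Q$-points give the inner form $I$. The local descriptions of $I$ at finite places come from computing local automorphisms, the compactness of $I(\R)$ modulo center follows from basicness via Kottwitz's theory of isocrystals, and a standard Hasse-principle argument assembles $I$ globally.

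The principal obstacle is the claim in part (i) that the tensor condition carves out a \emph{closed formal subscheme} of $\RZc(X_0)$ (rather than merely a closed subset) and that this subscheme is formally smooth. Kisin's identification of the formal neighborhood of a $k$-point of $\sS$ with the universal deformation space of a $p$-divisible group equipped with specified crystalline tensors is the essential input, providing the only direct access to how the tensors deform over general Artinian bases; without it one cannot promote the pointwise integrality of the tensors to a scheme-theoretic statement.
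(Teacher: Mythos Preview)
Your approach to part (i) diverges from the paper's and contains a genuine gap. You propose to define $\RZc_G$ directly as the locus in $\RZc(X_0)$ where the rational tensors transported by the quasi-isogeny become integral, and then to verify the formal-scheme structure ``one point at a time'' via Kisin's deformation theory. But knowing the set of $k$-points and the formal completion at each of them does not construct a closed formal subscheme: there is no mechanism in your argument that glues these local pieces into a global object, nor any reason a priori that the integrality-of-tensors condition cuts out a closed formal subscheme (as opposed to merely a closed subset of the underlying topological space). This is precisely the difficulty in Kim's direct approach, and you have not addressed it.

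The paper sidesteps this entirely by a different construction. It first forms the honest fiber product
\[
\RZc^\diamond_G \;=\; \RZc_{\GSp(C)} \times_{\widehat{\mathscr{A}}_{g,W}} \widehat{\sS}_W,
\]
which is automatically a formal scheme, formally smooth and locally formally of finite type (Proposition~\ref{finite}). Over $\RZc^\diamond_G$ one has \emph{two} families of crystalline tensors: those pulled back from the universal tensors $t^{\rm univ}_\alpha$ on $\sS$ (which are integral by construction), and those transported via the quasi-isogeny from $t_{\alpha,0}$. The locus $\RZc_G$ where these agree is shown, via a rigidity argument for Frobenius isocrystals (Lemma~\ref{convIsocr}), to be a union of connected components of $\RZc^\diamond_G$, hence open and closed. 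This is what buys representability without ever having to prove that an integrality condition on tensors is Zariski-closed. The morphism $\Theta:\RZc_G\to\widehat\sS_W$ needed for part (iii) then comes for free as a projection from the fiber product, rather than being extracted from Siegel uniformization as you suggest. Note also that $\sS\to\mathscr{A}_g$ is only finite (it is a normalization of a Zariski closure), not a closed immersion, so your pullback strategy for (iii) would require more care; the paper instead invokes results of \cite{KisinLR} on isogeny classes and Kottwitz triples to identify $I$ and establish surjectivity.
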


In fact, $\RZc_G(k)$ can be identified with the set of isomorphism classes of  triples $(X, \rho, (t_\alpha))$ in which $X$ is a $p$-divisible
group over $k$, \[( t_\alpha) \subset  \Db(X)(W)^\otimes\] is a collection of Frobenius invariant tensors, and 
$\rho: X_0\dasharrow X$  is a quasi-isogeny identifying  $t_\alpha$ with  $t_{\alpha, 0}$.
Some additional technical properties are required; see Definition  \ref{defRZG}. We can give a similar moduli
description of the $R$-valued points of $\RZc_G$ for $R$   any formally smooth 
formally finitely generated $W$-algebra, but not for general $R$. This description
uniquely determines $\RZc_G$.

\begin{remark}\label{rem:nonemptiness}
As noted earlier, Theorem \ref{bigthm:RZ space} already appears in the recent preprints of Kim \cite{KimRZ, KimUnif}.  
It is only our \emph{construction} of the space $\RZc_G$ that is new.  It is essential for our construction (but not for Kim's)
that the local Shimura-Hodge datum  $(G_{\Z_p} ,b,\mu,C_{\Z_p})$ arises from a point $x_0 \in \mathscr{S}(k)$ on a global  Hodge type Shimura variety as above. Given results on the non-emptiness of Newton strata for Shimura varieties of Hodge type which have been recently announced by Kisin, Madapusi Pera, and Shin, one should be able to show that this 
happens most of the time; we would like to return to this question on another occasion.
\end{remark}

\subsubsection{}
We can also give a  concrete description of $\RZc_G(k')$ when $k'/k$ is any finitely generated field extension and the $p$-divisible group $X_0$ is formal. This involves the new notion of a {\sl refined} affine Deligne-Lusztig set,  which we  now explain. 
Let $W'$ be the Cohen ring of $k'$, let $K'=W'[1/p]$ be its fraction field, and 
suppose $\sigma: W'\to W'$ is an appropriate lift of Frobenius (see Proposition \ref{fgfield}). 
The following is then obtained by using Zink's theory of displays and windows.

\begin{BigThm}\label{bigthm:RADL}
There is a bijection 
\begin{equation*}
\RZc_G(k')\xrightarrow{\sim}
{X}_{G, b,  \mu^\sigma, \sigma}(k')  
\end{equation*}
where   the refined affine Deligne-Lusztig set ${X}_{G, b,  \mu^\sigma, \sigma}(k')$  is, by definition, the image of the natural map
\[
\big\{g\in G(K') : g^{-1}b\sigma(g)\in G(W') \mu^\sigma (p)  \big\}
\to 
G(K') /G(W') .
\]
\end{BigThm}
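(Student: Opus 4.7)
My approach is to reduce the statement to Zink's theory of windows over the Cohen ring $W'$ equipped with the Frobenius lift $\sigma$ furnished by Proposition~\ref{fgfield}, and then to convert the window data attached to a point of $\RZc_G(k')$ into the refined affine Deligne--Lusztig datum by means of suitable trivializations. The assumption that $X_0$ is formal is crucial, since it ensures that every $p$-divisible group $X$ appearing in the moduli problem is also formal, and hence amenable to Zink's theory over imperfect bases.

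First, by the moduli description of $\RZc_G$ recalled after Theorem~\ref{bigthm:RZ space} (and made precise in Definition~\ref{defRZG}), a point of $\RZc_G(k')$ corresponds to a triple $(X,\rho,(t_\alpha))$ consisting of a $p$-divisible group $X$ over $k'$, a quasi-isogeny $\rho\colon X_0\otimes k'\dasharrow X$, and a family of Frobenius-invariant crystalline tensors $(t_\alpha)\subset \Db(X)(W')^\otimes$ matching $(t_{\alpha,0})$ via $\rho$. Since $X$ is formal, Zink's theory attaches to it a window $(M,\Fil^1 M,F,F_1)$ over $W'$ with underlying module $M=\Db(X)(W')$, in which the $t_\alpha$ sit inside $M^\otimes$ and are Frobenius invariant after inverting $p$.

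The second step is to trivialize the $G$-torsor $\mathcal{T}$ over $W'$ whose sections are isomorphisms $D\otimes_{\Z_p} W'\iso M$ carrying $(s_\alpha\otimes 1)$ to $(t_\alpha)$. A section of $\mathcal{T}$ over the residue field $k'$ is obtained by reducing the trivialization (\ref{dieudonne trivialization}) of $\Db(X_0)(W)$ and transporting it along $\rho$; since $G$ is smooth and $W'$ is $p$-adically complete, successive lifts mod $p^n$ promote this section to a trivialization of $\mathcal{T}$ over $W'$. We then adjust the trivialization to be adapted to a normal decomposition of the window, so that $\Fil^1 M$ corresponds to the weight-$\geq 1$ subspace for $\mu^\sigma$. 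With such a choice, the window Frobenius takes the form $F = u\cdot\mu^\sigma(p)\cdot\sigma$ for some $u\in G(W')$, the factor $\mu^\sigma(p)$ encoding the $p$-divisibility of $F$ along $\Fil^1$ built into a window.

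Thirdly, the quasi-isogeny $\rho$ yields an isomorphism of isocrystals $\Db(X_0)(W)\otimes_W K'\iso M\otimes_{W'} K'$ respecting tensors; composing it with the two trivializations produces an element $g\in G(K')$, well-defined modulo $G(W')$. Comparing the two formulas for the Frobenius gives $g^{-1} b \sigma(g) = u\cdot \mu^\sigma(p) \in G(W')\mu^\sigma(p)$, which is precisely the refined condition. Conversely, given such a $g$ one forms the lattice $M = g\cdot (D\otimes_{\Z_p} W')$ inside $D\otimes_{\Z_p} K'$, equips it with the Frobenius inherited from $b\sigma$ and with the filtration determined by $\mu^\sigma$, and applies Zink's equivalence to recover a triple $(X,\rho,(t_\alpha))$. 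Verifying that the two constructions are mutually inverse, and that all the residual freedom collapses to right multiplication by $G(W')$, completes the bijection.

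The main obstacle is the second step: producing a trivialization of the tensor $G$-torsor over all of $W'$ (not merely rationally) that is simultaneously compatible with the Hodge filtration on $M$. This rests on the reductivity of $G_{\Z_p}$, the smoothness of $G$-torsors, the $p$-adic completeness of $W'$, and on the availability of Zink's window formalism over the imperfect base $k'$ together with the chosen Frobenius lift, which is exactly what Proposition~\ref{fgfield} sets up. A secondary subtlety is to ensure that the tensors produced on $\Db(X)(W')$ from the construction of the inverse map agree with the prescribed $t_\alpha$ on the nose, not merely up to a Frobenius-invariant ambiguity; this requires a careful use of the crystalline comparison at the level of the window.
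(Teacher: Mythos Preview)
Your overall architecture matches the paper's, but there is a genuine gap in the second step, precisely where you flag it as the ``main obstacle.'' Your proposed resolution does not work. You claim to obtain a section of the torsor $\mathcal{T}$ over $k'$ by reducing the trivialization of $\Db(X_0)(W)$ and transporting it along $\rho$. But $\rho$ is only a quasi-isogeny: the induced map $\Db(\rho)$ is an isomorphism only after inverting $p$, so it yields a section of $\mathcal{T}$ over $K'$, not over $k'$. At the level of $k'$, the map $\Db(X)(k')\to \Db(X_0\otimes k')(k')$ need not be an isomorphism, and there is nothing to transport. Consequently your smoothness-plus-completeness lifting argument has no starting point. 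The paper's Lemma~\ref{Ctorsor} fills exactly this gap: one first shows $\mathcal{T}$ is a genuine $G_{W'}$-torsor (via the local flatness criterion applied to the PD thickenings $W'/p^m W'\to k'$), observes that its generic fiber over $K'$ is trivial (this is what $\rho$ actually gives), and then invokes Nisnevich's theorem---a case of Grothendieck's conjecture valid because $G$ is quasi-split---to conclude triviality over $W'$. This step is not elementary and cannot be replaced by Hensel-type lifting.

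There is a secondary gap in your converse. Having built the lattice $M=g\cdot D_{W'}$ and the corresponding $p$-divisible group $X$ via Zink's equivalence, you must produce the tensors $t_\alpha$ as \emph{morphisms of crystals} $\mathbf{1}\to \Db(X)^\otimes$, not merely as elements of $\Db(X)(W')^\otimes$. Over an imperfect $k'$ the crystal carries a nontrivial connection, and horizontality of the candidate tensors is not automatic. The paper handles this by spreading out: writing $k'=\varinjlim R$ over smooth finite-type $k$-algebras $R$ with compatible $p$-adic lifts $\widetilde R\subset W'$ (this is the content of Proposition~\ref{fgfield}, which you invoke but do not actually use), realizing the point over some $R$, and constructing the tensors as horizontal sections over $\widetilde R$ by pulling back via the quasi-isogeny and checking integrality using $\widetilde R[1/p]\cap W'=\widetilde R$. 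Your sketch skips this entirely. Finally, a minor point: the Hodge filtration is of type $\mu$, not $\mu^\sigma$; the $\sigma$-twist appears only after applying Frobenius to $M_1=g\cdot p\mu(p)^{-1}D_{W'}$, which is how the paper arrives at the refined condition $g^{-1}b\sigma(g)\mu^\sigma(p)^{-1}\in G(W')$.
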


Our refined affine Deligne-Lusztig set is a subset of the \emph{naive} affine Deligne-Lusztig set
\[
\big\{g\in G(K') : g^{-1}b\sigma(g)\in G(W') \mu^\sigma (p) G(W') \big\}/G(W' ),
\]
and equality holds if $k'$ is perfect.
The above description of $\RZc_G(k')$ is entirely group-theoretical (\emph{i.e.} does not involve $p$-divisible groups), 
and is thus quite useful.

\begin{remark}
There is a simpler parallel theory when we consider only the points 
of Rapoport-Zink formal schemes with values in perfect ${\mathbb F}_p$-algebras.
Indeed, then one can even use the Witt vector affine Grassmannian as in \cite{ZhuWitt} and \cite{BhattScholzeWitt} to  obtain 
a more straightforward and general construction of (at least) the reduced locus of Rapoport-Zink schemes
but {\sl only up to perfection}. These constructions allow one to also consider non-minuscule coweights. However, 
this comes at the cost of passing to the non-finite type perfection which loses a lot of information.

In contrast, in this paper  we can consider points with values in non-perfect rings,
but we must restrict to minuscule coweights connected to Shimura varieties. 
Allowing non-perfect rings as in  Theorem  \ref{bigthm:RADL} is  essential 
for the application to spinor Shimura varieties described below. A different approach towards 
describing  Rapoport-Zink formal schemes as functors on more general (not necessarily perfect) rings 
directly from the group data 
is pursued in work in preparation of one of us (G.P.) with O.~B\"ultel.
\end{remark}

\begin{remark}
 A direct construction of an  adic analytic space corresponding to the limit 
 of  Rapoport-Zink spaces over all $p$-level subgroups 
has been given by  Scholze and Weinstein \cite{ScholzeWeinsteinModuli} using Scholze's perfectoid spaces. 
Recently, there has been further progress in defining related spaces by  Scholze using his theory of  diamonds. 
The constructions  of Scholze and Scholze-Weinstein concern the generic fiber,  and do not provide a uniformization of the  integral model.
\end{remark}

\subsection{Spinor similitude Shimura varieties}
\label{intro:gspin}
In large part, our motivation for studying Rapoport-Zink spaces for Hodge type Shimura varieties 
is  to  apply the general theory to  the  Shimura varieties associated with spinor similitude groups.

 By combining our general results, specialized to the case of GSpin, with 
the linear algebra of lattices in quadratic spaces as in \cite{HP},  we obtain a very explicit description
of the basic locus of the special fiber of the integral model, and of the underlying reduced scheme of the corresponding 
 Rapoport-Zink formal scheme.

\subsubsection{}

 Start with an odd prime $p$ and a self-dual quadratic space $(V,Q)$ over $\Z_{(p)}$ of signature $(d,2)$ with $d\ge 1$.
 The corresponding bilinear form is denoted
 \begin{equation}\label{bilinear}
 [x,y] = Q(x+y) -Q(x) -Q(y).
 \end{equation}
  This determines a reductive group scheme 
  $
  G=\GSpin(V)
  $ 
  over $\Z_{(p)}$.   By slight abuse of notation,  we sometimes use the same letter to denote the generic fiber of $G$.

Define   a hyperspecial subgroup \[U_p = G(\Z_p) \subset G(\Q_p).\]  By  setting $U=U^pU_p$  for any sufficiently small compact open subgroup $U^p \subset G(\A_f^p)$, we obtain a $d$-dimensional
 Shimura variety   $\Sh_U(G , \mathcal{H})$ over $\Q$.   Here  $G(\R)$ acts on the hermitian domain
 \begin{equation}\label{hermitian domain}
 \mathcal{H} = \{ z\in V_\C : [z,z]=0,\,  [z,\bar{z}]<0 \} /\C^\times
 \end{equation}
 via the natural surjection  $G\to \SO(V)$.

 \subsubsection{}
The group $G$ is, by definition, a subgroup of the unit group of the Clifford algebra  $C=C(V)$, and hence
$G$ acts on $C$ by left multiplication.  For an appropriate choice of perfect symplectic form $\psi$ on $C$,  this defines a closed immersion (\ref{intro symplectic}) of reductive groups over $\Z_{(p)}$,

Thus we find ourselves in exactly the situation described in \S \ref{intro:RZ}.  Let $\mathscr{S}=\mathscr{S}_U(G , \mathcal{H})$
be the canonical smooth integral model over $\Z_{(p)}$,  equipped with the universal abelian scheme determined by the 
symplectic embedding (\ref{intro symplectic}).  This universal abelian scheme is also known as the \emph{Kuga-Satake abelian scheme},
and the locus of points
\begin{equation}\label{intro:ss locus}
\mathscr{S}_{ss} \subset \mathscr{S}  \otimes_{\Z_{(p)}} k
\end{equation}
at which it is supersingular is precisely the basic locus.
As before we set   $k=\bar{\F}_p$ and $W=W(k)$.

 Fix a supersingular point   $x_0 \in \mathscr{S}(k)$, and 
let $(G_{\Z_p} ,b,\mu,C_{\Z_p} )$ be the corresponding unramified local 
 Shimura-Hodge datum  as in \S  \ref{intro:RZ}.   
Let  $\RZc=\RZc_G$ be the associated formal scheme over $W$, as in  Theorem \ref{bigthm:RZ space}.
Our  main result is an explicit description of the underlying reduced locally finite type $k$-scheme $\RZc^\mathrm{red}$.   First, we give a formula for its dimension.

\begin{BigThm}
Let $n=d+2$ be the dimension of $V_{\Q_p}$.
All irreducible components  of $\RZc^\mathrm{red}$ are isomorphic, and are  smooth of dimension 
\[
\dim( \RZc^\mathrm{red} ) = \frac{1}{2} \begin{cases}
n-4 & \mbox{if $n$ is even and $\det(V_{\Q_p}) = (-1)^{\frac{n}{2}} $} \\
n-3 & \mbox{if $n$ is odd} \\
n-2 & \mbox{if $n$ is even and $\det(V_{\Q_p}) \neq (-1)^{\frac{n}{2}} $,}
\end{cases}
\]
where the equalities involving $\det(V_{\Q_p})$ are understood to be in $\Q_p^\times$ modulo squares.
Equivalently,
\[
\dim( \RZc^\mathrm{red} )  =  \begin{cases}
(d/2)  -1 & \mbox{if $V_{\Q_p}$ is a sum of hyperbolic planes } \\ 
\left\lfloor d/2\right\rfloor   & \mbox{otherwise.}
\end{cases}
\]
\end{BigThm}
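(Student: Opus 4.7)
The plan is to reduce the dimension computation to the combinatorics of \emph{vertex lattices} in a quadratic $\Q_p$-space attached to the basic isocrystal, via the affine Deligne-Lusztig description of $\RZc^\red(k)$ from Theorem \ref{bigthm:RZ space}(ii). Concretely, that theorem identifies $\RZc^\red(k)$ with $X_{G,b,\mu^\sigma}(k)$ for $G=\GSpin(V)$ and $b$ basic; the central exact sequence $1\to\Gm\to G\to \SO(V)\to 1$ allows one to reduce (up to a harmless $\Gm$-twist) to the corresponding problem for $\SO(V)$, whose $k$-points are concretely described in terms of $W$-lattices $M$ in the rational isocrystal $N$. Because $b$ is basic, $N$ is naturally identified with $V'\otimes_{\Q_p} K$ for a distinguished quadratic $\Q_p$-space $V'$ of dimension $n=d+2$, sharing the determinant of $V_{\Q_p}$ but with flipped Hasse invariant, so that $J_b\cong \GSpin(V')$.

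Second, introduce \emph{vertex lattices}: $\Z_p$-lattices $\Lambda\subset V'$ satisfying $p\Lambda\subset\Lambda^\vee\subset\Lambda$, with type $t(\Lambda)=\dim_{\F_p}(\Lambda/\Lambda^\vee)\in\{2,4,\dots,t_{\max}\}$, where $t_{\max}$ is determined by the isomorphism class of $V'_{\Q_p}$. Using the $p$-adic lattice techniques of the authors' earlier work \cite{HP}, one produces a stratification of $\RZc^\red$ by locally closed subvarieties $\RZc_\Lambda$ indexed by vertex lattices, where $\RZc_\Lambda$ parametrizes those Dieudonn\'e lattices $M$ for which $\Lambda$ is the minimal vertex lattice containing the $\Z_p$-span of the Frobenius orbit of $M$.

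The heart of the argument is to identify the closure $\overline{\RZc_\Lambda}$ with a classical (generalized) Deligne-Lusztig variety for the orthogonal group of the $\F_p$-quadratic space $\Lambda/\Lambda^\vee$ of rank $t(\Lambda)$; this forces $\overline{\RZc_\Lambda}$ to be smooth projective of dimension $(t(\Lambda)-2)/2$, with isomorphism type depending only on $t(\Lambda)$. The irreducible components of $\RZc^\red$ are therefore exactly the $\overline{\RZc_\Lambda}$ with $t(\Lambda)=t_{\max}$, all mutually isomorphic, and of common dimension $(t_{\max}-2)/2$. A direct computation of $t_{\max}$ from the classification of quadratic forms over $\Q_p$ yields $t_{\max}=n-2$, $n-1$, or $n$ in the three cases distinguished by the parity of $n$ and the class of $\det V_{\Q_p}$ in $\Q_p^\times/(\Q_p^\times)^{\times 2}$, matching the asserted formula.

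The main obstacle is the Deligne-Lusztig identification of the closed strata: passing from the isocrystal-with-tensors description of the $k$-points given by Theorem \ref{bigthm:RZ space} to an algebro-geometric description of $\overline{\RZc_\Lambda}$ as a scheme requires careful use of Grothendieck-Messing deformation theory together with the crystalline tensors $t_{\alpha,0}$ furnished by the Hodge-type construction, and a check that the resulting moduli problem is represented by the Deligne-Lusztig variety in question. The rest — computing $t_{\max}$, recognizing smoothness, and matching the three cases of the formula — is then a routine case analysis in the theory of $p$-adic quadratic forms and their vertex lattices.
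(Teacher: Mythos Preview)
Your outline is essentially the paper's own strategy: vertex lattices $\Lambda\subset V_K^\Phi$ index closed subschemes $\RZc_\Lambda^\red\subset\RZc^\red$, each isomorphic to (a component of) a variety $S_\Lambda$ of Lagrangians with a Frobenius condition, smooth projective of dimension $t_\Lambda/2-1$; the irreducible components are the $\RZc_\Lambda^\red$ with $t_\Lambda=t_{\max}$, and the case analysis for $t_{\max}$ is exactly the one you describe.

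One point deserves more precision. You identify the main obstacle as upgrading the point-set bijection to a scheme isomorphism $\RZc_\Lambda^\red\iso S_\Lambda$ and propose to handle it by Grothendieck--Messing theory plus the crystalline tensors. The paper's actual mechanism is: construct a morphism $\RZc_\Lambda^\red\to S_\Lambda$ directly from the crystal and Hodge filtration over an arbitrary reduced finite-type $k$-algebra $R$, then invoke Zariski's main theorem. For that to work one needs the morphism to be a bijection on $k'$-points for \emph{every} finitely generated extension $k'/k$, not just $k$ itself; this is where the \emph{refined} affine Deligne--Lusztig description (Theorem~\ref{bigthm:RADL}) is used, not merely the $k$-point bijection of Theorem~\ref{bigthm:RZ space}(ii) that you cite. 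Since $k'$ is typically non-perfect, the naive affine Deligne--Lusztig set is the wrong object, and this refinement is genuinely needed. Everything else in your sketch matches the paper closely.
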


\subsubsection{}
In fact, we give essentially a complete description of $\RZc^\mathrm{red}$, in the same spirit as the work of Vollaard \cite{VollaardSS}, Vollaard-Wedhorn \cite{VollaardWedhorn}, Rapoport-Terstiege-Wilson \cite{RTW}, and the authors \cite{HP} for some unitary Shimura varieties.    
To explain its structure requires some more notation.

Consider the quadratic space $V_K$ over $K=W[1/p]$, with its natural action $G_K \to \SO(V_K)$.
The operator $\Phi = b \circ \sigma$ makes $V_K$ into a slope $0$ isocrystal, and its  subspace
of  $\Phi$-invariant vectors $V_K^\Phi$ is a $\Q_p$-quadratic space of the same dimension and  determinant as $V_{\Q_p}$, 
 but with different Hasse invariant.  In fact, the self duality of $V$ implies that $V_{\Q_p}$ has Hasse invariant $1$, and so  $V_K^\Phi$ has Hasse invariant $-1$.

 A \emph{vertex lattice}  is a $\Z_p$-lattice $\Lambda \subset V_K^\Phi$  satisfying 
 $p\Lambda \subset \Lambda^\vee \subset \Lambda$. The quadratic form $pQ$ on $V_K^\Phi$ induces 
 a quadratic form on the $\F_p$-vector space
 \[
 \Omega_0=\Lambda /\Lambda^\vee.
 \] 
 The \emph{type}    $t_\Lambda = \dim( \Omega_0)$ of $\Lambda$ is   is even, and satisfies
$
2 \le t_\Lambda \le t_\mathrm{max},
$  
where
\begin{equation}\label{tmax}
t_\mathrm{max} =   \begin{cases}
n-2 & \mbox{if $n$ is even and $\det(V_{\Q_p}) = (-1)^{\frac{n}{2}} $} \\
n-1 & \mbox{if $n$ is odd} \\
n & \mbox{if $n$ is even and $\det(V_{\Q_p}) \neq (-1)^{\frac{n}{2}} $}.
\end{cases}
\end{equation}
One may characterize  $\Omega_0$ as the unique quadratic space over $\F_p$ of dimension 
$t_\Lambda$ that admits no Lagrangian (= totally isotropic of dimension $t_\Lambda/2$) subspace.

Of course the base change of $\Omega_0$ to $k$ does admit Lagrangian subspaces, and we 
define a smooth projective $k$-variety $S_\Lambda$ with $k$-points
\[
S_\Lambda(k) = \left\{ \mathrm{Lagrangians\ }\mathscr{L} \subset \Omega_0\otimes_{\F_p} k : \dim( \mathscr{L} + \Phi(\mathscr{L}) ) = \frac{t_\Lambda}{2} +1 \right\}.
\]
Here $\Phi = \mathrm{id} \otimes \sigma$ is the  operator on $\Omega_0\otimes k$ induced by the absolute Frobenius
$\sigma(x)=x^p$ on $k$.  The variety $S_\Lambda = S_\Lambda^+ \sqcup S_\Lambda^-$ has two connected components,
which are (non-canonically) isomorphic, and smooth of dimension $(t_\Lambda/2)-1$.  
As we will explain in \S\ref{DLsection}, these can be identified with closures of Deligne-Lusztig varieties for ${\rm SO}(\Omega_0)$.

\begin{BigThm}
The Rapoport-Zink formal scheme $\RZc=\RZc_G$ admits a decomposition 
\[
\RZc = \bigsqcup_{\ell \in \Z} \RZc^{(\ell)}
\]
with the following properties:
\begin{enumerate}
\item
Each open and closed formal subscheme  $\RZc^{(\ell)}$ is connected, and 
\[
\RZc^{(\ell)} \iso \RZc^{(\ell +1)}.
\]
\item
Each connected component  $\RZc^{(\ell)}$  has a collection of   closed  formal subschemes 
$
\RZc_\Lambda^{(\ell) } \subset \RZc^{(\ell) }
$
indexed by the vertex lattices $\Lambda \subset V_K^\Phi$, and the underlying reduced schemes  satisfy
\[\RZc_\Lambda^{(\ell) ,\mathrm{red} } \iso S_\Lambda^\pm.\]  Moreover, for any vertex lattices $\Lambda_1$ and $\Lambda_2$,
\[
\RZc_{\Lambda_1} (k) \cap \RZc_{\Lambda_2}(k)
= \begin{cases}
\RZc_{\Lambda_1 \cap \Lambda_2 }(k) & \mbox{if $\Lambda_1\cap\Lambda_2$ is a vertex lattice} \\
\emptyset & \mbox{otherwise.}
\end{cases}
\]
\item
The irreducible components of $\RZc^{(\ell) ,\mathrm{red}}$ are precisely the  closed subschemes $\RZc_\Lambda^{(\ell) ,\mathrm{red} }$
indexed by the vertex lattices of type $t_\Lambda =t_\mathrm{max}$.
\end{enumerate}
\end{BigThm}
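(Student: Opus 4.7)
The plan is to transfer everything to the quadratic space side via the projection $G = \GSpin(V) \to \SO(V)$, and then to perform the lattice-theoretic analysis that was carried out for unitary groups in \cite{HP}. By Theorem A, $\RZc(k) \iso X_{G,b,\mu^\sigma}(k) = \{g \in G(K) : g^{-1}b\sigma(g) \in G(W)\mu^\sigma(p) G(W)\}/G(W)$. Since $\mu$ is minuscule and its image in $\SO(V)$ corresponds to the choice of an isotropic line, a $G(W)$-coset $g G(W)$ gives rise to a self-dual $W$-lattice $L = g \cdot (V \otimes W) \subset V_K$. The Frobenius is $\Phi = b\circ\sigma$, and the ADL condition translates to the relative position requirement that $L$ and $\Phi(L)$ are both self-dual $W$-lattices in $V_K$ with $L + \Phi(L)$ of length $1$ over each. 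This identifies $\RZc(k)$ with a set of such lattices $L$, up to the ambiguity of the kernel of $G \to \SO(V)$, which is $\Gm$ and accounts for the integer invariant $\ell$ below.

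Next I would produce the decomposition $\RZc = \bigsqcup_\ell \RZc^{(\ell)}$. The spinor similitude character $\GSpin(V) \to \Gm$ gives a locally constant function on $\RZc$ with values in $\Z$, namely the $p$-adic valuation of the similitude factor of $g$. This yields the decomposition into open and closed pieces, and multiplication by a suitable element of $G(K)$ of similitude valuation $1$ produces the isomorphism $\RZc^{(\ell)} \iso \RZc^{(\ell+1)}$. Connectedness of each $\RZc^{(\ell)}$ will follow after the rest of the structure is understood, by exhibiting each component as reached from a fixed base point by chains through the subschemes $\RZc_\Lambda^{(\ell)}$.

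For the vertex-lattice subschemes, I would define $\RZc_\Lambda(k) \subset \RZc(k)$ as the set of lattices $L$ satisfying $\Lambda^\vee \otimes_{\Z_p} W \subset L \subset \Lambda \otimes_{\Z_p} W$. Self-duality of $L$ forces $L/(\Lambda^\vee \otimes W)$ to be a Lagrangian $\mathscr{L} \subset \Omega_0 \otimes_{\F_p} k$, and the relative position condition on $L, \Phi(L)$ becomes exactly $\dim(\mathscr{L} + \Phi(\mathscr{L})) = t_\Lambda/2 + 1$. So the bijection $\RZc_\Lambda(k) \iso S_\Lambda(k)$ is immediate; restricting to a single component $\RZc^{(\ell)}$ the spinor norm picks out one of $S_\Lambda^\pm$. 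To promote this to a morphism of $k$-schemes $\RZc_\Lambda^{(\ell),\red} \iso S_\Lambda^\pm$, I would use Theorem B to describe $R$-points for finitely generated field extensions (sufficient, as $S_\Lambda$ is reduced and locally of finite type) and produce the morphism via Zink windows, then check it is an isomorphism on points and on tangent spaces using smoothness of $\RZc_G$ and of $S_\Lambda$. The intersection identity for $k$-points is a direct lattice computation: $L \subset \Lambda_i \otimes W$ for $i=1,2$ forces $L \subset (\Lambda_1 \cap \Lambda_2)\otimes W$ and, by duality, $(\Lambda_1\cap\Lambda_2)^\vee\otimes W = (\Lambda_1^\vee + \Lambda_2^\vee)\otimes W \subset L$, and one checks these conditions conversely characterize $\RZc_{\Lambda_1\cap\Lambda_2}$ provided the intersection is still a vertex lattice. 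The irreducible components claim then follows because $\Lambda \subset \Lambda'$ induces a closed immersion $S_\Lambda \hookrightarrow S_{\Lambda'}$, so every $\RZc_\Lambda^{(\ell),\red}$ is contained in one indexed by a maximal vertex lattice, and the dimension formula $\dim S_\Lambda = (t_\Lambda/2) - 1$ matches the dimension of $\RZc^{\red}$ exactly when $t_\Lambda = t_\mathrm{max}$.

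The main obstacle will be the covering statement, namely that every point of $\RZc(k)$ lies in some $\RZc_\Lambda(k)$. Given a lattice $L$ coming from $\RZc(k)$, one needs to produce a vertex lattice $\Lambda \subset V_K^\Phi$ with $\Lambda^\vee \otimes W \subset L \subset \Lambda \otimes W$. Following the strategy of \cite{HP} in the unitary case, the candidate is
\[
\Lambda = \bigl(\, \textstyle\sum_{n\ge 0} \Phi^n(L) \,\bigr) \cap V_K^\Phi,
\]
and one must verify that this sum stabilizes after finitely many steps (so $\Lambda$ is a lattice), that $p\Lambda \subset \Lambda^\vee \subset \Lambda$, and that the containments $\Lambda^\vee \otimes W \subset L \subset \Lambda \otimes W$ hold. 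This is a delicate piece of linear algebra on $\sigma$-stable lattices in the isocrystal $(V_K, \Phi)$, combined with the fact that $V_K^\Phi$ has Hasse invariant $-1$ (the non-split local form), which is what forces the type to be bounded by $t_\mathrm{max}$ in \eqref{tmax}. Upgrading the whole package from $k$-points to a genuine isomorphism of reduced $k$-schemes, and deducing connectedness of each $\RZc^{(\ell)}$ from the combinatorics of vertex lattices (using that the graph of vertex lattices under inclusion is connected), will complete the proof.
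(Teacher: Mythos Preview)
Your overall strategy is essentially the one the paper follows: translate to lattices in $V_K$ via the affine Deligne--Lusztig description, introduce special lattices and vertex lattices, prove the covering statement by showing $\sum_{n\ge 0}\Phi^n(L)$ stabilizes and descends to a vertex lattice in $V_K^\Phi$, and deduce connectedness from the connectedness of the adjacency graph of vertex lattices. The intersection formula and the identification of irreducible components also go exactly as you outline.

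There is, however, a genuine gap in your plan for the isomorphism $\RZc_\Lambda^{(\ell),\red}\iso S_\Lambda^\pm$. Knowing the $k'$-points for all finitely generated extensions $k'/k$ does \emph{not} produce a morphism of schemes; Theorem~B only describes field-valued points, and ``produce the morphism via Zink windows'' is not a step one can carry out without further input, since windows over general reduced $R$ are not available in the form you need. The paper handles this by first giving $\RZc_\Lambda$ an honest formal-scheme structure as the closed locus where the quasi-endomorphisms $\rho\circ\Lambda^\vee\circ\rho^{-1}$ of the universal $p$-divisible group are integral (this is how one gets closedness and, with some work, projectivity of $\RZc_\Lambda^{(\ell),\red}$). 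Then for any reduced finite-type $k$-algebra $R$ and any $R$-point $y$, one works directly with the crystal $\Db(X_y)(R)$ and the Hodge filtration to construct, functorially in $R$, a Lagrangian $\mathscr L_y\subset\Omega\otimes_k R$ landing in $S_\Lambda$; this is where the auxiliary lattices $L_y^\sharp$ and $L_y^{\sharp\sharp}$ (stabilizers of $M_y$ and of the pair $M_{1,y}\subset M_y$) enter. With the morphism in hand, the paper concludes by properness plus bijectivity on geometric points plus smoothness of $S_\Lambda$ and Zariski's main theorem, rather than a tangent-space comparison.

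A second point you pass over too quickly is the assertion that ``the spinor norm picks out one of $S_\Lambda^\pm$''. A priori one only knows $p^\Z\backslash\RZc_\Lambda^\red\iso S_\Lambda^+\sqcup S_\Lambda^-$ and $p^\Z\backslash\RZc_\Lambda^\red\iso\RZc_\Lambda^{(0),\red}\sqcup\RZc_\Lambda^{(1),\red}$; to rule out that one of the latter pieces is empty the paper does an explicit spinor-norm computation for a type~$2$ vertex lattice (finding a $g\in J_b(\Q_p)$ with $\ord_p(\eta_b(g))$ odd that preserves $\RZc_\Lambda$) and then propagates non-emptiness to all $\Lambda$ via inclusions of vertex lattices. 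You should expect to need a similar step.
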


 Loosely speaking, the theorem asserts that the irreducible components of $\RZc^{\mathrm{red} }$, their intersections, the intersections of their intersections, \emph{etc.}~are all isomorphic to varieties of the form $S_\Lambda^\pm$ for various choices of $\Lambda$.
 The following result  is an immediate corollary of this and the uniformization result of Theorem \ref{bigthm:RZ space}.

 \begin{BigThm}
For $U^p \subset G(\A_f^p)$ sufficiently small,  every irreducible component of the supersingular locus (\ref{intro:ss locus})
is isomorphic to  a connected component of the smooth projective $k$-variety
\[
\left\{ \mathrm{Lagrangians\ }\mathscr{L} \subset \Omega_0\otimes k : \dim( \mathscr{L} + \Phi(\mathscr{L}) ) =  \frac{t_\mathrm{max}}{2} +1 \right\},
\] 
where $\Omega_0$ is the unique quadratic space over $\F_p$ having dimension $t_\mathrm{max}$, and admitting no Lagrangian subspace.
In particular, all irreducible components of $\mathscr{S}_{ss}$ are smooth and projective of dimension 
\[
\dim( \mathscr{S}_{ss} )  =  \frac{t_\mathrm{max}}{2}  -1   =  \begin{cases}
(d/2)  -1 & \mbox{if $V_{\Q_p}$ is a sum of hyperbolic planes } \\ 
\left\lfloor d/2\right\rfloor   & \mbox{otherwise.}
\end{cases}
\]

 \end{BigThm}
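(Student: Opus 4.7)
The plan is to deduce this theorem directly from the uniformization result in Theorem \ref{bigthm:RZ space}(iii) combined with the previous theorem giving the explicit description of $\RZc^{\mathrm{red}}$ in terms of the varieties $S_\Lambda^{\pm}$. The uniformization gives an isomorphism of formal schemes
\[
\Theta^b: I(\Q)\backslash \RZc_G \times G(\A_f^p)/U^p \iso (\widehat{\sS}_W)_{/\sS_b},
\]
so after passing to reduced underlying schemes on special fibers one obtains
\[
I(\Q)\backslash \bigl(\RZc^{\mathrm{red}} \times G(\A_f^p)/U^p\bigr) \iso \sS_{ss} \otimes_k k.
\]
The first step is to observe that, since $I(\R)$ is compact modulo center, standard arguments (analogous to those in \cite{RapZinkBook}) show that for $U^p$ sufficiently small the action of $I(\Q)$ on the source is free and properly discontinuous, with only finitely many orbits on the set of irreducible components. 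Hence every irreducible component of $\sS_{ss}$ is obtained as the image of a component of the form $\RZc_\Lambda^{(\ell),\mathrm{red}}\times\{gU^p\}$ for some maximal-type vertex lattice $\Lambda$, some $\ell\in\Z$, and some $g\in G(\A_f^p)$, and this image is isomorphic to $\RZc_\Lambda^{(\ell),\mathrm{red}}$ itself.

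By the previous theorem, the components $\RZc_\Lambda^{(\ell),\mathrm{red}}$ corresponding to vertex lattices of maximal type $t_\Lambda=t_{\mathrm{max}}$ are exactly the irreducible components of $\RZc^{\mathrm{red}}$, and each is isomorphic to $S_\Lambda^{\pm}$, namely one connected component of
\[
S_\Lambda(k) = \left\{ \text{Lagrangians } \mathscr{L} \subset \Omega_0 \otimes_{\F_p} k : \dim(\mathscr{L} + \Phi(\mathscr{L})) = \tfrac{t_\Lambda}{2}+1 \right\}.
\]
For a maximal-type vertex lattice, the associated $\F_p$-quadratic space $\Omega_0 = \Lambda/\Lambda^\vee$ has dimension $t_{\mathrm{max}}$ and, by the characterization stated after (\ref{tmax}), is the unique quadratic space over $\F_p$ of that dimension admitting no Lagrangian subspace. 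This identification of $\Omega_0$ uses the computation (already needed for the previous theorem) of the Hasse invariant and determinant of $V_K^{\Phi}$, where one exploits the self-duality of $V$ over $\Z_{(p)}$ to conclude that $V_{\Q_p}$ has Hasse invariant $+1$ and hence $V_K^{\Phi}$ has Hasse invariant $-1$; this then pins down $\Omega_0$.

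Combining these pieces identifies each irreducible component of $\sS_{ss}$ with a connected component of the Lagrangian variety in the statement, and yields smoothness and projectivity from the corresponding properties of $S_\Lambda^{\pm}$. The dimension formula is then immediate: $\dim(\sS_{ss}) = \dim(\RZc^{\mathrm{red}}) = (t_{\mathrm{max}}/2)-1$, and substituting the value of $t_{\mathrm{max}}$ from (\ref{tmax}) gives the case distinction, which one checks agrees with the dichotomy ``$V_{\Q_p}$ is a sum of hyperbolic planes vs.\ not'' by comparing discriminants. The main obstacle, beyond citing the two earlier theorems, will be the care needed in choosing $U^p$ small enough that the $I(\Q)$-action on components is free, so that quotienting does not collapse or fold components and the local isomorphism of Theorem \ref{bigthm:RZ space}(iii) descends to a global isomorphism on each irreducible component; this is a standard but slightly delicate argument using that $I(\Q)\cap U_p U^p$ can be made trivial for $U^p$ sufficiently small.
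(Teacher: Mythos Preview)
Your proposal is correct and follows essentially the same route as the paper: combine the uniformization isomorphism with the structure theorem for $\RZc^{\mathrm{red}}$, then argue that for $U^p$ small the group action does not identify distinct irreducible components. The paper makes two minor refinements you might note: it first rewrites the uniformization as $(\widehat{\sS}_W)_{/\sS_{ss}} \iso \bigsqcup_g \Gamma_g \backslash \RZc^{(0)}$ using the subgroup $I(\Q)_0 = \ker(I(\Q)\to\Z)$ and the single connected piece $\RZc^{(0)}$, and for the ``slightly delicate'' freeness argument it simply cites the proof of \cite[Theorem~6.1]{VollaardSS} rather than spelling it out.
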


 \subsection{Applications and directions of further inquiry}

\subsubsection{}
One motivation for wanting such an explicit description of the supersingular locus   for $\GSpin$ Shimura varieties is 
because of its relevance to conjectures of Kudla \cite{KudlaSurvey} relating intersections of special cycles on orthogonal  Shimura varieties to derivatives of Eisenstein series.  Indeed, Kudla and Rapoport \cite{KR1,KR2} were able to verify many cases of these conjectures for Shimura varieties attached to the low rank groups $\GSpin(2,2)$ and $\GSpin(3,2)$, and their arguments depend in an essential way on having concrete descriptions of  the supersinglar loci.  

With the results of \S \ref{intro:gspin} now in hand, it should be possible to extend the results of [\emph{loc.~cit.}] to all Shimura varieties of type $\GSpin(d,2)$.  Some results in this direction will appear in the forthcoming Boston College Ph.D.~thesis of Cihan Soylu.

 \subsubsection{}
 G\"ortz and He \cite{GortzHe} have studied all
basic minuscule affine Deligne-Lusztig varieties  for equicharacteristic discrete valued fields.  
They give a list of cases where these affine Deligne-Lusztig varieties  can be expressed as a union 
of classical Deligne-Lusztig varieties, and that list contains   equicharacteristic analogues of the  $\GSpin$
Rapoport-Zink spaces considered here. In fact, these spaces are the only (absolutely simple) types in their list 
with hyperspecial level subgroups which are not of EL or PEL type.  The  results  of G\"ortz and He  in the equicharacteristic case
are analogous to our  mixed characteristic results.

 There are other Hodge type cases 
for which a similar description should be possible,  but for more general 
parahoric level subgroups. Extending our construction of Rapoport-Zink formal schemes to the general parahoric case, by using, for example, 
 the integral models of Shimura varieties given in \cite{K-P}, is an interesting problem.
 If this is done, 
then our results should extend to cover all the cases listed in \cite{GortzHe}. 
This will probably require generalizing, via Bruhat-Tits theory, the algebra of 
 lattices in quadratic spaces we use in this paper. In
another direction, it would also be interesting to understand our results from the point of view of the stratifications 
introduced by  Chen and Viehmann in \cite{ViehmannChen}. 

 \subsubsection{}

In the cases considered in \cite{GortzHe}, the affine Deligne-Lusztig varieties are unions of Ekedahl-Oort (EO) strata. Such strata can be defined in the hyperspecial mixed characteristic case  following \cite{ZhangCaoEO} or \cite{ViehmannAnnals}. In the 
$\GSpin$ case considered here, the EO strata should be indexed by the possible
types $t_\Lambda$ of vertex lattices $\Lambda$. In fact, we expect that each EO stratum is the union of all \emph{Bruhat-Tits strata} 
\[
\mathrm{BT}_\Lambda = \RZc_\Lambda^\red \smallsetminus \bigcup_{ \Lambda'\subsetneq \Lambda } \RZc_{\Lambda'}^\red
\] 
in the sense of  \S \ref{ss:BT}, with $\Lambda$ ranging over all vertex lattices of 
the corresponding type.

\smallskip

  \subsection{Organization and contents}
In \S \ref{s:General RZ space} we first fix notations and  recall some general facts about windows and crystals  
for $p$-divisible groups, and  about local Shimura data.
When the local Shimura datum $(G,[b],\{\mu\})$ is of Hodge type, 
and after fixing  a suitable Hodge embedding, we define in \S \ref{ss:hodge RZ space} a functor $\RZnilp_G$ on $p$-nilpotent algebras. We also consider a functor $\RZfsm_G$ 
 defined (only) on formally smooth formally of finite type $p$-adic algebras,  which is essentially given by a limit of 
 values of $\RZnilp_G$. In \S\ref{par:ADL}, we describe the field valued points of these functors via refined affine Deligne-Lusztig sets. 
 
 In  \S \ref{ss:integral models}  we switch to the global set-up of Shimura varieties and recall some properties of the canonical integral models constructed by Kisin. Then, in \S \ref{newproofpar} we prove the first main result of the paper (Theorem \ref{mainKim}): Roughly speaking, we show that when the local Shimura datum is obtained from a global one,
the functor $\RZfsm_G$ is representable by a formal scheme $\RZc_G$. 
In  \S\ref{uniformpar} we prove a uniformization theorem for the formal completion of the integral model of the Shimura variety along its basic locus.
 
 The rest of the paper is devoted to Rapoport-Zink formal schemes and Shimura varieties for  spinor similitude groups.
 
In \S \ref{s:GSpin RZ space} we describe the corresponding local Shimura data and  define the GSpin Rapoport-Zink formal schemes. 
We devote \S \ref{s:lattices} to the algebra of certain type of lattices (``vertex lattices'' and ``special lattices'') in quadratic spaces. 
 This, together with our previous general results,  is used in \S\ref{s:RZ structure} to describe the reduced scheme underlying the basic GSpin Rapoport-Zink formal schemes (see especially \S \ref{ss:mainRZGSPin}).    Finally, in \S\ref{s:global GSpin}, we apply our local results to the global problem of describing the supersingular loci of Shimura varieties of type $\GSpin$.

 \smallskip
 
\subsection{Acknowledgements} We would like to thank W. Kim, M.~Kisin,  and M.~Rapoport for helpful discussions and comments, and the referees for some useful suggestions.

\smallskip
\subsection{Notation and conventions}

Throughout the paper,  $k=\bar{\F}_p$, where $p>2$.  The absolute Frobenius on $k$ is  denoted $\sigma(x)=x^p$.
We also denote  by $\sigma$ the induced automorphism of the ring of Witt vectors $W=W(k)$ and its fraction field $K=W[1/p]$.

%\bigskip
%\vfill\eject
%%%%%%%%%%%%%%%%%%%%%%%%%%%%%%%%%%%%

\section{Rapoport-Zink spaces of Hodge type}
\label{s:General RZ space}

%%%%%%%%%%%%%%%%%%%%%%%%%%%%%%%%%%%%

%%%%%%%%%%%%

\subsection{Preliminaries}
\label{ss:2prelim}
 
In this section we introduce  notation for various categories of $W$-algebras.  We also recall some facts about divided power thickenings and  crystals of $p$-divisible groups, and Zink's theory of windows.

\subsubsection{}  
As in \cite{RapZinkBook}, we will denote by ${\rm Nilp}_W$ the category of $W$-schemes $S$ such that $p$ is Zariski locally nilpotent in $\co_S$. 
Denote by 
\[
{\rm ANilp}_W \subset {\rm Nilp}_W^{ op}
\] 
the full  subcategory of Noetherian $W$-algebras  in which  $p$ is  nilpotent.
We denote by ${\rm ANilp}^{\rm f}_W$  the  category    of Noetherian adic $W$-algebras in which  $p$ is nilpotent, and  embed
 \[
 {\rm ANilp}_W \subset{\rm ANilp}^{\rm f}_W
 \] 
 as a full subcategory  by endowing any  $W$-algebra  in ${\rm ANilp}_W$ with its  $p$-adic topology.

 We say that an adic $W$-algebra $A$ is \emph{formally finitely generated}
if $A$ is Noetherian,  and if $A/I$ is a  finitely generated $W$-algebra for some ideal of definition $I\subset A$. 
Thus ${\rm Spf}(A)$ is a formal scheme which is formally of finite type over ${\rm Spf}(W)$. 
If, in addition, $p$ is nilpotent in $A$, then $A$ is a quotient of $W/(p^n)\lps x_1,\ldots, x_r\rps [y_1,\ldots, y_s]$
for some $n$, $r$, and $s$.

We will denote by 
\[
{\rm ANilp}^{\rm fsm}_W \subset {\rm ANilp}^{\rm f}_W
\] 
the   full subcategory  whose  objects are $W$-algebras that  are formally finitely generated and formally smooth over $W/(p^n)$,
for some $n\geq 1$.

\subsubsection{} 

As in   \cite[Chapter 2.1] {RapZinkBook}, every  formal scheme $\fX$ over $\Spf(W)$ defines a functor on ${\rm Nilp}_W$.
We restrict this functor to ${\rm ANilp}_W$, and then extend to ${\rm ANilp}^{\rm f}_W$ as follows:
 For $A$ in ${\rm ANilp}^{\rm f}_W$ with ideal of definition $I$,  define $\fX(A)$ to be the set 
\[
\fX(A)={\rm Hom}_{\Spf(W)}(\Spf(A), \fX)=\varprojlim\nolimits_n  \fX(A/I^n) .
\]

\subsubsection{}\label{PDlift} 

If $R$ is an object of  ${\rm ANilp}_W^{\rm fsm}$, the quotient  $\bar R=R/pR$ satisfies the condition \cite[(1.3.1.1)]{deJongBTIHES}. 
Thus,  by \cite[Lemma 1.3.3]{deJongBTIHES},   $\bar R$ admits a PD thickening
\[
\widetilde R\to \widetilde R/p\widetilde R =  \bar R
\] 
 by a formally smooth, $p$-adically complete $W$-algebra $\widetilde R$,  unique up to non-canonical isomorphism.
 The absolute Frobenius on $\bar R$  lifts to $\widetilde R$. 
The formal smoothness of $R$ over  $W/p^nW$ implies that  
$R\iso \widetilde R/p^n\widetilde R$,   and hence $\widetilde R$ also provides a    PD thickening $\widetilde R\to R$. 
 One can show  that $\widetilde R$ is a quotient   of a $W$-algebra of the form 
 \[
 W\lps x_1,\ldots, x_r\rps\{y_1,\ldots, y_s\}=\varprojlim_{n} W/(p^n)\lps x_1,\ldots, x_r\rps [y_1,\ldots, y_s],
 \]
 and hence   is Noetherian.

\subsubsection{}\label{sss:cohen}  

Suppose that $R$ is any $k$-algebra admitting  a $p$-basis in the sense of    \cite[\S1.1]{BerthelotMessingIII}. 
An explicit construction of a PD thickening $\widetilde R \to R$ is then explained in [\emph{loc.~cit.}].
This applies in particular when  $R=k'$ is   any field extension of $k$, in which case $\widetilde R$ is isomorphic to the Cohen ring $W'$ of $k'$.

Recall that the Cohen ring $W'$ is the unique, up to non-canonical isomorphism, discrete valuation
ring with $k'$ as a residue field and $p$ as uniformizer. It is flat over the Witt ring  $W$ of $k$.
If $(x_i)_i$ is a $p$-basis of $k'$, then a choice of elements $y_i\in W'$ with 
$
x_i^p \equiv y_i \pmod{ pW' }
$
determines a lift $\sigma: W'\to W'$ of the absolute Frobenius $k'\to k'$. 
Set $K'=W'[1/p]$.

\subsubsection{}\label{Zinkpar}
Continue with the above notation,  and fix a lift $\sigma: W'\to W'$ of the Frobenius of the field $k'$. 
The triple $(W',pW',k')$ gives a    frame for $k'$ in the sense  of \cite{ZinkWindows}. 
As in   [\emph{loc.~cit.}, Definition~2], a  \emph{Dieudonn\'e $W'$-window}  over $k'$ consists of a triple $(M, M_1, F),$ in which
\begin{itemize} 
\item $M$ is a free finitely generated $W'$-module,
\item $M_1\subset M$ is a  $W'$-submodule such that 
$ 
pM\subset M_1\subset M,
$ 
\item $F: M\to M$ is a $\sigma$-semi-linear map such that $F(M_1) \subset pM$, 
and $p^{-1}F(M_1)$ generates $M$ as an $W'$-module.
\end{itemize}
These conditions  imply  $F(M)\subset p^{-1}F(M_1)$, and so are equivalent to the conditions appearing in [\emph{loc.~cit.}, Definition~2].
If $(M, M_1, F)$ is a Dieudonn\'e $W'$-window  then 
\[
M_1=F^{-1}(pM)=\{x\in M [1/p] :  F(x)\in pM\}.
\]

If  $M$ is a free finitely generated $W'$-module,  and $F: M[1/p] \to M[1/p]$  is a $\sigma$-semi-linear map such that
\begin{itemize}
\item $pM\subset F^{-1}(pM)\subset M$, and
\item $F(F^{-1}(M))$ generates $M$ as a $W'$-module,
\end{itemize}
then  $F(M)\subset M$ and $(M, F^{-1}(pM), F)$ is a Dieudonn\'e $W'$-window.

A Dieudonn\'e $W'$-window is called simply a  \emph{$W'$-window} when 
the additional nilpotence condition of  [\emph{loc.~cit.}, Definition 3] is satisfied.

\subsubsection{}
\label{crystalstuff} Let $S$ be a scheme such that  $p$ is Zariski locally nilpotent in $\co_S$.  Set 
\[
\bar S = S\otimes_{\Z_p}\F_p,
\]
and denote by $\sigma: \bar S\to\bar S$ the absolute Frobenius morphism. 

For a $p$-divisible group $X$ over $S$, we will denote by $\Db(X)$ its contravariant Dieudonn\'e crystal. It is a crystal of
locally free $\co_S/\Z_p$-modules of rank equal to the height $h(X)$ of $X$. We refer the reader to \cite{MessingBT}, \cite{BerthelotBreenMessingII}, and \cite{deJongBTIHES}  for
background on the construction and properties of the Dieudonn\'e crystal. 

The crystal
$\Db(X)$ is  equipped with the Hodge filtration 
\[
{\rm Fil}^1(X) = {\rm Lie}(X)^*\subset \Db(X)_{S},
\]
where $\Db(X)_S$ is the pull-back of $\Db(X)$ to the Zariski site of $S$;  it  is a locally free $\co_S$-module of rank $h(X)$, and  
the $\co_S$-submodule  ${\rm Fil}^1(X)$ is   locally a direct summand. 
We also have  the Frobenius morphism 
\begin{equation}\label{crystalline frob}
F : \sigma^*\Db(X) \to \Db(X),
\end{equation}
  where the pull-back $\sigma^*\Db(X) $ is defined as in [\emph{loc.~cit.}].

Define crystals
\[
{\bf 1}=\Db(\Q_p/\Z_p), \quad {\bf 1}(-1)=\Db(\mu_{p^\infty}),
\] 
and note that ${\bf 1}$ is the structure sheaf  $\co_S/\Z_p$ with the usual Frobenius structure and ${\rm Fil}^1=(0)$. 
We often confuse a global section $t$ of a crystal $\mathbb{D}$ with the corresponding morphism of crystals
$t: \bf{1} \to \mathbb{D}$.

We define $\Db(X)^*$ to be the  $\co_S/\Z_p$-linear
dual with the dual filtration.  Note that $\Db(X^\vee)^*=\Db(X)(-1)$, where $X^\vee$ is the dual $p$-divisible group.  
There is a  Frobenius structure on $\Db(X)^*$ as in (\ref{crystalline frob}) but it is defined defined only
``up to isogeny", \emph{i.e.~}only after we view $\Db(X)^*$ as an isocrystal  as below.

We define the category of isocrystals over $S$ as follows:
\begin{itemize}
\item  Objects are crystals $\Db$ of locally free $\co_S/\Z_p$-modules.  We write  $\Db[1/p]$ if we view $\Db$ as an isocrystal.

\item Morphisms $\Db[1/p]\to \Db'[1/p]$ are given by global sections of the Zariski sheaf 
$\underline {\rm Hom}(\Db,\Db')[1/p]$ over $S$, where ${\rm Hom}(\Db, \Db')$ is  taken in the category of crystals of locally free 
$\co_S/\Z_p$-modules.

\end{itemize}

Every quasi-isogeny $\rho: X\dashrightarrow X'$ of $p$-divisible groups over $S$, in the sense of \cite[Def. 2.8]{RapZinkBook}, induces an
isomorphism of isocrystals 
\begin{equation}\label{crystal isogeny}
\Db(\rho): \Db(X')[1/p]\iso  \Db(X)[1/p].
\end{equation}

 The \emph{total  tensor algebra} $\Db(X)^\otimes $ is defined as the direct sum of all the  crystals of locally free $\co_S/\Z_p$-modules 
which can be formed from $\Db(X)$  using the operations of taking duals, tensor products,
symmetric powers and exterior powers.  It is a crystal   of locally free $\co_S/\Z_p$-modules over $S$.
The Hodge filtration on $\Db(X)_S$  induces a natural filtration ${\rm Fil}^\bullet(\Db(X)^\otimes_S)$
on $\Db(X)^\otimes _S$, and the Frobenius morphism (\ref{crystalline frob}) 
induces an isomorphism of isocrystals 
\[
F : \sigma^*\Db(X)^\otimes[1/p] \iso  \Db(X)^\otimes[1/p ].
\]
 For any quasi-isogeny $\rho: X\dashrightarrow X'$ of $p$-divisible groups over $S$, the isomorphism (\ref{crystal isogeny}) extends to
\[
\Db(\rho) : \Db(X')^\otimes[1/p] \iso \Db(X)^\otimes[1/p].
\]

A similar discussion applies  to formal schemes $S$ over $\Spf(\Z_p)$, as in   \cite[Ch. 2]{deJongBTIHES}.

\subsubsection{} 
Suppose $X$ is a formal $p$-divisible  group over a field $k'$ of characteristic $p$.  Again letting $W'$ be the Cohen ring of $k'$,  
the  evaluation  
\[
 \Db(X)(W')=\varprojlim\nolimits_n \Db(X)(W'/p^nW')
 \]
  of the crystal $\Db(X)$ on   $W'$  has a natural structure of a $W'$-window over $k'$. 
  (Combine the proof of \cite[Theorem 1.6]{ZinkWindows}   with \cite[Theorem 6]{Zinkdisplay}.)

By  \cite[Thm.~4]{ZinkWindows}, the functor $X \mapsto \Db(X)(W')$ gives an anti-equivalence of categories between formal $p$-divisible groups over $k'$  and  $W'$-windows over $k'$.
 More precisely, the equivalence of [{\it loc.~cit.}] uses the covariant Dieudonn\'e crystal, and we compose the functor defined there with Cartier duality.

If $k'$ is perfect,  classical Dieudonn\'e theory   (or \cite[Thm.~3.2]{ZinkWindows})  gives in  the same way an anti-equivalence 
between (all) $p$-divisible groups over $k'$ and Dieudonn\'e modules over $W'=W(k')$.

\subsection{Local Shimura data} 
\label{ss:local shimura data}

For the remainder of \S \ref{s:General RZ space}, $G$ is   a connected 
reductive group scheme  over $ \Z_p $. The  generic fiber of $G$ is therefore  a connected reductive group  over $\Q_p$,  and is unramified in the sense that it is quasi-split  and split over an unramified extension of $\Q_p$. 
Conversely, every unramified  connected reductive group   over $\Q_p$ is isomorphic to the generic fiber of such a $G$.

\subsubsection{}
\label{general}

Let $([b],  \{\mu\} )$ be a pair consisting of:

\begin{itemize}
\item a $G(\bar K)$-conjugacy class $\{\mu\}$ of  cocharacters $\mu: \Gm_{\bar K}\to G_{\bar K}$,

\item  a $\sigma$-conjugacy class $[b]$ of elements  $b\in G(K)$.
\end{itemize}
Here  $b$ and $b'$ are $\sigma$-conjugate if there is $g\in G(K)$ with $b'=gb\sigma(g)^{-1}$.

  We let $E \subset \bar{K}$ be the field of definition of the conjugacy class $\{\mu\}$. This is the 
\emph{local reflex field.}
Denote by $\co_E$ its valuation ring and by $k_E$ its (finite) residue field. 
In fact, under our assumption on $G$, the field $E\subset \bar K$ is 
contained in $K$ and there is a cocharacter 
$
\mu: \Gm_E\to G_E
$ 
in the conjugacy class $\{\mu\}$ that is  defined over $E$; see \cite[Lemma (1.1.3)]{KottTwisted}.
In fact, we can find a representative  $\mu$ that extends to an integral cocharacter 
\begin{equation}\label{integral mu rep}
\mu: \Gm_{\co_E}\to G_{\co_E},
\end{equation}
and the $G(\co_E)$-conjugacy class of such an $\mu$ is well-defined. 
In what follows, we usually assume that $\mu$ is such a representative.
	
To the conjugacy class  $\{\mu\}$ we associate the homogeneous space 
\[
M_{G, \mu }=G_{\co_E}/P_{\mu }
\] 
over $\co_E$, in which  $P_{\mu}\subset G_{\co_E}$ is the parabolic subgroup defined by $\mu $.
More precisely,  $P_\mu$ is  the parabolic subgroup  such that $P_\mu\times_{\co_E}W$
contains exactly the root groups $U_a$ of the split group $G_W$, 
for all roots $a$ with $a\cdot \mu\geq 0$.  The group $P_\mu\times_{\co_E}W$
stabilizes the filtration  defined by $\mu$ in any representation of $G_W$.
\quash{Denote by $U^\mu=U^\mu_G $ the unipotent radical of the opposite to $P_\mu$ parabolic subgroup of $G_{\co_E}$ and by $U^{\mu,\wedge}$ the formal completion of $U^\mu $
at its identity section over $\co_E$.  Then $U^{\mu,\wedge}$ can be identified with
the formal completion of $M_{G, \mu}$ at the section given by $1\cdot P_\mu/P_\mu$.}

We  write $\mu^\sigma=\sigma(\mu)$ for the Frobenius conjugate of (\ref{integral mu rep}).

\begin{definition} (cf.~\cite[Def. 5.1]{RapoportVi})
A  {\it local  unramified  Shimura datum}  is a triple 
$(G, [b], \{\mu\})$, in which  $G$ is a connected reductive group
over $\Z_p$, the pair $([b], \{\mu\})$ is as above, and we assume
\begin{enumerate}
\item  $\{\mu\}$ is minuscule,  

\item  for some (equivalently, any)   integral representative  (\ref{integral mu rep})  of $\{\mu\}$,
the $\sigma$-conjugacy class $[b]$ has a representative  
\begin{equation}\label{b coset}
 b\in G(W)\mu^\sigma(p)G(W).
 \end{equation}
\end{enumerate}  
 
\end{definition}

By \cite[Theorem 4.2]{RapoportRich}, assumptions (i) and (ii) imply that $[b]$ lies in the set $B(G_{\Q_p}, \{\mu\})$ 
of neutral acceptable elements for $\{\mu\}$; see \cite[Definition 2.3]{RapoportVi}. 
In particular, $(G_{\Q_p}, [b], \{\mu\})$ is a local Shimura datum in the sense of  \cite[Def. 5.1]{RapoportVi}.

\begin{definition}\label{Hodgetypedef}
%\label{def:shimura-hodge} 
The local unramified Shimura datum  $(G, [b], \{\mu\})$  is of {\sl Hodge type} if there exists a closed group scheme embedding 
$
\iota : G\hookrightarrow \GL( C),
$
for  a free $\Z_p$-module $ C$ of finite rank, with the following properties: The central torus $\Gm\subset \GL( C)$ is contained in $G$, and,
after a choice of basis $ C_{\co_E}\iso  \co_E^n$, the composite cocharacter 
\[
\iota \circ \mu: \Gm_{\co_E}\to {\rm GL}_{n, \co_E}
\] 
is the inverse of the minuscule cocharacter\footnote{The notation $a^{(r)}$ means that there are $r$ copies of $a$.}
 \[ a\mapsto {\rm diag}(a^{(r)}, 1^{(n-r)})\]  for some $1\leq r< n$.
 \end{definition}
 
 \begin{definition}\label{def:shimura-hodge} 
Let $(G, [b], \{\mu\})$ be a  local unramified Shimura datum  of Hodge type. 
A \emph{local  Hodge embedding datum} for $(G, [b], \{\mu\})$ consists of 
\begin{itemize}
\item
 a group scheme embedding $\iota : G\hookrightarrow \GL( C)$ as above, \ref{integral mu rep}),
\item
the $G(W)$-$\sigma$-conjugacy class $\{gb\sigma(g)^{-1} : g\in G(W)\}$ of a representative 
\[
b \in G(W)\mu^\sigma(p)G(W) 
\]  
of $[b]$, where  $\mu : \mathbb{G}_{mW} \to G_W$ is chosen to be an integral representative of the $G(\bar{K})$-conjugacy class $\{\mu\}$.
Note that such a  representative $\mu$ is unique  up to $G(W)$-conjugacy.   
\end{itemize}
  The quadruple $(G, b, \mu,  C)$, where $\mu$ is given up to $G(W)$-conjugation, and $b$ up to $G(W)$-$\sigma$-conjugation, is a  {\sl local unramified Shimura-Hodge datum}.  
\end{definition}

By definition,  there is a  surjection 
$
(G, b, \mu,  C) \mapsto (G, [b], \{\mu\})
$
 from the set of local unramified Shimura-Hodge data to the set of local unramified Shimura data of Hodge type.

Fix a  local unramified  Shimura-Hodge datum  $(G, b, \mu,  C)$,  and set 
$
 D = \Hom_{\Z_p}( C , \Z_p)
$
with the contragredient action of $G$. 

\begin{lemma}\label{lemma121} 
 Up to isomorphism, there is a unique   $p$-divisible group 
\[
X_0=X_0(G, b, \mu,  C)
\] 
over $k$ whose  contravariant Dieudonn\'e module   is  $\Db(X_0)(W) =  D_W$ 
with Frobenius $F=b \circ \sigma$.  Moreover,  the Hodge filtration 
\[
V D_k\subset  D_k=\Db(X_0)(k)
\] 
is   induced by   a conjugate of the reduction $\mu_k: \Gm_k\to G_k $.
\end{lemma}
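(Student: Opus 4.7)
The plan is to construct $X_0$ by writing down its contravariant Dieudonn\'e module explicitly as $(D_W, F = b \circ \sigma, V = pF^{-1})$, apply the classical Dieudonn\'e anti-equivalence over the perfect field $k$, and then read off the Hodge filtration by direct computation.

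For the construction I would take $M := D_W$ and $F := b \circ \sigma$. By Definition \ref{Hodgetypedef}, the contragredient action of $\iota \circ \mu$ on $D$ has weights $0$ and $+1$, of multiplicities $n-r$ and $r$ respectively, so $\mu^\sigma(p) \cdot D_W = D_W^{\mu^\sigma,0} \oplus p D_W^{\mu^\sigma,1}$ sits between $pD_W$ and $D_W$ with quotient free of rank $r$ over $k$. Writing $b = g_1\, \mu^\sigma(p)\, g_2$ with $g_1, g_2 \in G(W)$, and using that the $g_i$ preserve $D_W$ and that $\sigma$ is a set-theoretic bijection on $D_W$, the same sandwich $pD_W \subset F(D_W) \subset D_W$ holds. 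Hence $V := pF^{-1}$ is a well-defined $\sigma^{-1}$-semilinear endomorphism of $D_W$ with $FV = VF = p \cdot \mathrm{id}$, making $(M, F, V)$ a genuine Dieudonn\'e module over $W$.

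The classical anti-equivalence between $p$-divisible groups over $k$ and Dieudonn\'e modules over $W$, recalled at the end of \S\ref{sss:cohen}, then produces uniquely up to isomorphism a $p$-divisible group $X_0$ over $k$ with $\Db(X_0)(W) \iso (M, F, V)$, establishing the existence and uniqueness parts of the lemma.

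For the Hodge filtration $\mathrm{Fil}^1 = VD_W / pD_W = \ker(\bar F) \subset D_k$, I reduce the factorization of $b$ modulo $p$ and note that $\overline{\mu^\sigma(p)}$ annihilates precisely the weight-$1$ subspace $D_k^{\mu^\sigma,1}$, yielding the direct computation
\[
\ker(\bar F) \;=\; \bar\sigma^{-1}\bigl(\bar g_2^{-1} D_k^{\mu^\sigma, 1}\bigr).
\]
Using that the absolute Frobenius on $D_k$ intertwines the weight decompositions for $\mu_k$ and $\mu^\sigma_k$, so that $\bar\sigma^{-1}(D_k^{\mu^\sigma,1}) = D_k^{\mu,1}$, one rewrites the right-hand side as $h \cdot D_k^{\mu,1}$ for a suitable $h \in G(k)$, thereby identifying the Hodge filtration with the one induced by the $G(k)$-conjugate $h\mu_k h^{-1}$ of $\mu_k$, as claimed. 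The main obstacle is bookkeeping around the $\sigma$-twist: once one understands the relation between the weight decompositions of $\mu_k$ and $\mu^\sigma_k$ (which ultimately rests on the reflex field $E$ being unramified and $G$ being defined over $\Z_p$), the rest of the argument is mechanical.
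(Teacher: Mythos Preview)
Your proposal is correct and follows essentially the same route as the paper. The paper writes $b = h'\mu^\sigma(p)h$ and computes $VD_W = h_1\, p\mu(p)^{-1} h_1^{-1} D_W$ with $h_1 = \sigma^{-1}(h^{-1})$, then reads off the Hodge filtration as $\bar h_1 \cdot D_k^{\mu,1}$; your computation of $\ker(\bar F) = \bar\sigma^{-1}(\bar g_2^{-1} D_k^{\mu^\sigma,1})$ and identification of the conjugating element as $\overline{\sigma^{-1}(g_2^{-1})}$ is exactly the same calculation phrased on the special fiber rather than integrally.
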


\begin{proof} By our assumption  on $\mu$ in  Definition \ref{Hodgetypedef},
we have $\mu(p) D_W\subset  D_W$.
Therefore, by (\ref{b coset}), the lattice 
$
 D_W  \subset  D_W[1/p]
$
is  $F$--stable.  To determine $VD_W$,   write  $b=h' \mu^\sigma(p)h $  with $h, h'\in G(W)$, so that
\begin{align*}
V D_W=pF^{-1} D_W 
& = p\sigma^{-1} (b^{-1} D_W)\\
& =\sigma^{-1} (h^{-1}p\mu^\sigma(p)^{-1}h'^{-1} D_W) \\
&=h_1 p\mu(p)^{-1}h_1^{-1} D_W
\end{align*}
for  $h_1=\sigma^{-1}(h^{-1})\in G(W)$.
Observe that $p\mu(p)^{-1} D_W\subset  D_W$, and in fact
the filtration 
\[
(p\mu(p)^{-1} D_W)/p D_W\subset  D_W/p D_W= D_k
\]
 is induced by $\mu_k: \Gm_k\to G_k $.  
 
The above calculation shows that $V D_W\subset  D_W$, and that the Hodge filtration
$V D_k\subset  D_k$ is induced by the conjugate $\bar h_1\mu_k\bar h_1^{-1}$.
\end{proof}

\subsubsection{}\label{sss:tate tensors}
By \cite[Prop. (1.3.2)]{KisinJAMS}, there is finite list  $(s_\alpha)$ of tensors $s_\alpha$ in the total tensor algebra $ C^\otimes$ 
that ``cut out'' the group $G$, in the sense that 
\[
G(R)=\{g\in \GL( C \otimes_{\Z_p} R ) :  g\cdot (s_\alpha\otimes 1)=(s_\alpha\otimes 1),\ \forall \alpha   \}
\]
for all $\Z_{p}$-algebras $R$. 
Using the canonical isomorphism  $ C^{\otimes}= D^{\otimes }$,   the tensors 
$s_\alpha \in  C^\otimes$ determine  tensors   $s_\alpha \otimes 1\in D^\otimes \otimes _{\Z_p} W$.
Thus, if  $X_0$ is the $p$-divisible group  of Lemma \ref{lemma121}, we obtain tensors 
\[
t_{\alpha, 0}=s_\alpha\otimes 1\in  D^\otimes \otimes_{\Z_p} W = \Db(X_0)(W) ^\otimes,
\]
which are  Frobenius invariant when viewed in  $\Db(X_0)(W) ^\otimes[1/p]$.  The tensors 
$t_{\alpha, 0}$  uniquely determine morphisms of 
crystals $t_{\alpha, 0}: {\bf 1}\to \Db(X_0)^\otimes$ over $\Spec(k)$,
such that each
\[
t_{\alpha, 0}: {\bf 1} [ 1/p] \to \Db(X_0)[1/p]^\otimes
\]
 is Frobenius equivariant.\footnote{
Since $\Db(X)^\otimes$ also involves the dual, the Frobenius is not defined on $\Db(X)^\otimes$, but only on $\Db(X)^\otimes[1/p]$; see \S\ref{crystalstuff}.}  Here, as before,   we denote by ${\bf 1}=\Db(\Q_p/\Z_p)$ the crystal determined by the 
 Dieudonn\'e module  $W$ with $F=\sigma$. Using Lemma \ref{lemma121},  we easily see that
$
t_{\alpha, 0}(k)\in {\rm Fil}^0(\Db(X_0)(k)^\otimes).
$

\subsubsection{}\label{sss:J}

By \cite{KottIso}, every $\sigma$-conjugacy class in $G(K)$ is   decent in the sense of \cite[Def.~1.8]{RapZinkBook}.  
By \cite[Prop.~1.12]{RapZinkBook},  any   $b\in G(K)$ determines  a smooth affine group scheme $J_b$ over $\Q_p$
with functor of points 
\[
J_b(R)=\{g\in G(R\otimes_{\Q_p}K) :  gb\sigma(g)^{-1}=b\}
\]
for any $\Q_p$-algebra $R$.  Up to isomorphism, $J_b$ depends only on the $\sigma$-conjugacy class $[b]$.

\subsubsection{}\label{sss:slope}
Let $\mathbb{T}$ be the pro-torus over $\Z_p$ with character group $\Q$.  For any $\Z_p$-algebra $R$, an $R$-point
$z\in \mathbb{T}(R)$ consists of a tuple 
\[
z=(z_m\in R^\times)_{m\in \Z_{>0} }
\]
such that  $z_m = z_{md}^d$ for all positive $m$ and $d$.  The character indexed by the rational number $s/t$ sends
$z\mapsto z_t^s$.  

Kottwitz \cite{KottIso} attaches to every $b\in G(K)$ a \emph{slope cocharacter} 
\[
\nu_b : \mathbb{T}_K \to G_K
\]
such that for any representation $\psi :G_{\Q_p} \to \GL(M)$ on a $\Q_p$-vector space $M$, the decomposition
\[
M_K = \bigoplus_{s/t \in \Q} M^{s/t}_K
\] 
of $M_K$ determined by the cocharacter $\psi\circ \nu_b:\mathbb{T}_K \to \GL(M_K)$ agrees with the slope decomposition 
of the isocrystal $(M_K, \psi(b) \circ \sigma)$.   The slope cocharacter depends only on the $\sigma$-conjugacy class $[b]$.

An element  $b\in G(K)$ is \emph{basic}  if its slope cocharacter  $\nu_b$ factors through  
the center of $G_{ K}$.   By \cite{KottIso}, $b$ is basic if and only if the group $J_b$ is an inner form of $G$.

\subsection{Rapoport-Zink formal schemes and functors} 
\label{ss:hodge RZ space}

In this paragraph we define Rapoport-Zink formal schemes and functors
associated to a local unramified Shimura-Hodge datum $(G, b, \mu,  C)$
as defined in \S \ref{ss:local shimura data}. We start by recalling the definition of some 
``classical'' Rapoport-Zink functors.

\subsubsection{}\label{sss:RZ} Suppose that $X_0$ is any $p$-divisible group over $k$. 
The \emph{Rapoport-Zink space}  $\RZc(X_0)$ of deformations of $X_0$ up to quasi-isogeny
is,  as in \cite{RapZinkBook},   the formal scheme over $\Spf(W)$ that represents
the functor  assigning to each  scheme $S$ in ${\rm Nilp}_W$  the set of isomorphism classes of pairs $(X,  \rho)$ in which
\begin{itemize}
\item $X$ is a $p$-divisible group   over $S$,

\item $\rho: X_0\times_k \bar S\dashrightarrow X\times_S\bar S$ is a quasi-isogeny, where $\bar S=S\otimes_Wk$.
\end{itemize}

Suppose now that $X_0$ comes with a principal polarization 
$
\lambda_0: X_0\xrightarrow{\sim} X_0^\vee.
$
The \emph{symplectic Rapoport-Zink space} $\RZc(X_0,\lambda_0)$   is the formal scheme over $\Spf(W)$ 
that represents  the functor   that assigns to each  $S$ in ${\rm Nilp}_W$  the set of isomorphism classes of 
triples $(X, \lambda, \rho)$ in which
\begin{itemize}
\item $X$ is a $p$-divisible group  over $S$,

\item $\lambda: X\iso X^\vee$ is a principal polarization,

\item $\rho: X_0\times_k \bar S\dashrightarrow X\times_S\bar S$ is a quasi-isogeny that respects polarizations up to a scalar, 
in the sense that, Zariski locally on $\bar S$, we have 
\[
\rho^\vee\circ\lambda\circ \rho=c^{-1}(\rho)\cdot \lambda_0,
\]
for some $c(\rho)\in \Q_p^\times$. 
\end{itemize}
 By [\emph{loc.~cit.}] the formal schemes  $\RZc(X_0)$ and $\RZc(X_0,\lambda_0)$ are formally smooth and locally formally of finite type over $W$,
and forgetting the polarization defines a closed immersion 
$
\RZc(X_0,\lambda_0) \to \RZc(X_0) .
$

\subsubsection{} Suppose that $(G, b, \mu,  C)$ is a local unramified Shimura-Hodge datum. 
Choose tensors $(s_\alpha)$ that cut out $G$ as in \S\ref{sss:tate tensors}. Denote by 
\[
X_0=X_0(G, b, \mu,  C)
\] 
the corresponding $p$-divisible group over $k$ of Lemma \ref{lemma121}, with  its  Frobenius invariant crystalline tensors $(t_{\alpha, 0})$.

\begin{definition}\label{defRZG}
Consider the functor \[\RZnilp_G=\RZnilp_{G, b, \mu, C,(s_\alpha)} : {\rm ANilp}_W \to {\rm Sets}\]
that assigns to each  $R\in {\rm ANilp}_W$ the set of isomorphism classes of triples
$(X,   \rho, (t_\alpha))$ in which 
  \begin{itemize}
\item $(X,   \rho)$ consists of a $p$-divisible group over $\Spec(R)$ and a quasi-isogeny 
\[
\rho: X_0\otimes_k \bar R\dashrightarrow X\otimes_R\bar R,
\] 
with $\bar R=R/pR$, as in the definition of the   Rapoport-Zink formal scheme $\RZc(X_0)$,

\item the collection $(t_\alpha)$  consists of morphisms of crystals $t_\alpha: {\mathbf 1}\to \Db(X)^\otimes$  over $\Spec(R)$
with $t_\alpha: {\mathbf 1}[1/p]\to \Db(X)^\otimes[1/p]$ Frobenius equivariant,
\end{itemize}
satisfying the following properties:
\begin{enumerate}
\item
For some nilpotent ideal $J\subset R$ with $p\in J$, the pull-back of $t_\alpha$ over $\Spec(R/J)$ 
is identified  with $t_{\alpha, 0}$ under the isomorphism of isocrystals
\[\Db(\rho):  \Db(X_{R/J})^{\otimes}[1/p]\iso   \Db(X_0\times_k R/J)^{\otimes}[1/p]\]
induced by the quasi-isogeny  $\rho$.

\item
 The sheaf  of $G_W$-sets  over ${\rm {CRIS}}(\Spec(R)/W)$ given by isomorphisms 
 \[
 \underline{\rm Isom}_{t_\alpha, s_\alpha\otimes 1 }( \Db(X),    D \otimes_{\Z_p} R)
 \] 
 that respect  the tensors as indicated, is a crystal of  
 $G_W$-torsors, \emph{i.e.} a crystal fppf locally
 isomorphic to the crystal defined by $G_W$.
  
\item
 There exists an \'etale cover $\{U_i\}$ of $\Spec(R)$,  and for each $i$ 
an isomorphism \[\Db(X_{U_i})_{U_i}\iso   D\otimes_{\Z_p}\co_{U_i}\] of vector bundles 
respecting the tensors $t_\alpha$ and $s_\alpha\otimes 1$ as in (ii), such that the 
Hodge filtration  
\[
{\rm Fil}^1 ( X_{U_i} ) \subset\Db(X_{U_i})_{U_i} \iso  D\otimes_{\Z_p}\co_{U_i}
\]   
is induced by a  cocharacter that  is $G(U_i)$-conjugate to $\mu$.
\end{enumerate}

Two triples $(X,  \rho, (t_\alpha))$ and $(X',   \rho', (t'_\alpha))$
are identified if there is an isomorphism $X\iso X'$ of $p$-divisible
groups that respects the rest of the data in the obvious manner.

Above, ${\rm {CRIS}}(\Spec(R)/W)$ denotes the big fppf crystalline site of $\Spec(R)$ over $(W, (p), \gamma)$
with $\gamma$ the natural PD-structure,  as in \cite[1.1]{BerthelotBreenMessingII}. Condition (ii)  implies
that,  for any nilpotent PD thickening $R'\to R$ of $R$, the $\Spec(R')$-scheme of isomorphisms of finite locally free $R'$-modules 
\[
T_{R'} =\underline{\rm Isom}_{R', t_\alpha(R'), s_\alpha\otimes 1 }( \Db(X)(R'),    D\otimes_{\Z_p} R'    )
\]
 is a $G_{R'}$-torsor.
\end{definition}

\begin{remark}\label{rem:base point}
There is a distinguished point
\[
x_0 = ( X_0 , \rho_0 , (t_{\alpha,0}) ) \in \RZnilp(k),
\]
defined by taking $\rho_0$ to be the identity quasi-isogeny $X_0 \dashrightarrow X_0$.
\end{remark}

\begin{remark}\label{remark1}
a)  Suppose $(p)\subset J'\subset J$ with $J'$ also 
 nilpotent. Then a power of the Frobenius of $R/J'$ factors through $R/J$. 
 Since $t_{\alpha}$ are Frobenius equivariant we obtain that condition (i)
 is independent of the ideal $J$. In particular, we can simply take $J=(p)$.

b) Conditions (ii) and (iii) together imply  the following:
The $\Spec(R)$-scheme of $R$-linear isomorphisms 
\[
\Db(X)(R)\iso   D\otimes_{\Z_p} R
\] 
identifying  $t_\alpha(R)$ with  $s_\alpha\otimes 1$, and 
identifying the Hodge filtration ${\rm Fil}^1(X) \subset \Db(X)(R)$ with  standard filtration 
$F_\mu\otimes_W  R\subset  D \otimes_{\Z_p}R$ defined by $\mu$, is a $P_{\mu}\times_WR$-torsor.

c) For $R$ in ${\rm ANilp}^\mathrm{f}_W$, the categories of $p$-divisible groups
over $\Spf(R)$ and over $\Spec(R)$ are naturally equivalent, by \cite[Lemma 2.4.4]{deJongBTIHES}.
For $R$   in  ${\rm ANilp}^{\rm fsm}_W$,  the argument in the proof of  \cite[Prop. 2.4.8]{deJongBTIHES} shows that each
morphism of crystals $\hat t_\alpha: {\mathbf 1}\to \Db(X)^\otimes$ over $\Spf(R)$
is induced by a unique   morphism of crystals  $t_\alpha: {\mathbf 1}\to \Db(X)^\otimes$ over $\Spec(R)$.

d) For $\bar R$ of finite type over $k$, we will see that
it is enough to verify (i) over one closed point of 
each connected component of $\Spec(R)$; see   Lemma \ref{convIsocr}
and its proof. 
 \end{remark}

\subsubsection{}\label{RZGfsm} Define a functor $\RZfsm_G$ on
  ${\rm ANilp}_W^{\rm fsm}$ by setting 
\[
\RZfsm_G(A)=\varprojlim\nolimits_n \RZnilp_G(A/I^n),
\]
where $I$ is an ideal of definition of $A$.

Assume  that $I$ is chosen with $p\in I$.  By Remark (c) above and the rigidity of quasi-isogenies \cite{DrinfeldCoverings}, 
we see that elements of $\RZfsm_G(A)$ correspond to isomorphism classes of triples
 $(X,  \rho, (t_\alpha))$, in which  $X$ is a $p$-divisible group over $\Spec(A)$, 
 \[
 \rho : X_0\times_k A /I   \dashrightarrow  X\times_A  A/I
 \]
is  a quasi-isogeny, and $t_\alpha$ a morphism of crystals over $\Spec(A)$, such that (i), (ii), and (iii) above are satisfied.  
The definition is independent of the choice of $I$.

Since any object $A$ of  ${\rm ANilp}_W^{\rm fsm}$ is also  an object of   ${\rm ANilp}_W$,
it  makes sense to consider $\RZnilp_G(A)$. We will rarely do this unless $A$ is discrete,
in which case  
\[
\RZnilp_G(A)=\RZfsm_G(A).
\]
The  difference between $\RZfsm_G(A)$ and $\RZnilp_G(A)$
is that, in the former, we ask that the quasi-isogeny $\rho$ only exists over $A/I$,
with $I$ an ideal of definition of the adic algebra $A$. For  $A$ in ${\rm ANilp}_W^{\rm fsm}$  it will often be the case that  $\RZnilp_G(A)=\emptyset$, while $\RZfsm_G(A)\neq\emptyset$.

\subsubsection{}
The closed immersion  $\iota : G\hookrightarrow \GL( C)$ induces
an injective homomorphism from the group $J_b(\Q_p)$ into the group 
${\rm Aut}_{\Q_p}(X_0)$ of quasi-automorphisms of  the $p$-divisible group $X_0$, \emph{i.e.}~of automorphisms of   $X_0$ 
up to isogeny.  
In addition, we can see that   the induced action of $J_b(\Q_p)$ on $\Db(X_0)(W)^\otimes[1/p]$
preserves the tensors $t_{\alpha, 0}$.
Therefore, the group $J_b(\Q_p)$ acts on the functors $\RZnilp_G$ and  $\RZfsm_G$  on the left by 
\begin{equation}\label{Jaction}
g\cdot (X,  \rho, (t_\alpha))=(X,  \rho\circ g^{-1}, (t_\alpha)).
\end{equation}

\subsection{Field valued points and   affine Deligne-Lusztig sets} \label{par:ADL}

We  now introduce some  refined  affine  Deligne-Lusztig sets, and  show that these 
can be used to parametrize the set $\RZnilp_G(k')$  for any finitely generated field extension $k'/k$.

\subsubsection{}

For $G$, $b$ and $\mu: \Gm_W\to G_W$  as in the beginning of \S\ref{general},
the ``classical'' affine Deligne-Lusztig set  is
\[
X_{G, b, \mu}(k) =\big\{g\in G(K) :   g^{-1}b\sigma(g)\in G(W){\mu}(p) G(W) \big\}/G(W).
\]
We will define an analogous set for any extension field  $k'/k$.

Let $W'$ be the Cohen ring of $k'$, let $K'=W'[1/p]$ be its fraction field,
 and  let  $\sigma: W'\to W'$  be  a lift of the absolute Frobenius as in \S \ref{sss:cohen}.
 Consider the set
\begin{equation}\label{PreRADL}
  \big\{g \in G(K')  :  g^{-1}b\sigma(g) \mu (p)^{-1} \in G(W')\big\},
\end{equation}
and define
\[
Q_\mu(W' ) =G(W' )\cap  \mu^{\sigma^{-1}}(p)^{-1}  G(W')\mu^{\sigma^{-1}}(p),
\]
the intersection  taking place in $G(K')$.

The right translation action of $Q_\mu(W')$  on $G(K')$ preserves (\ref{PreRADL}).
Indeed, if $q\in Q_\mu(W')$ and $g$ belongs to (\ref{PreRADL}), then
$
U_g=g^{-1}b\sigma(g)\mu(p)^{-1}
$ 
belongs to $G(W')$, and  hence so does
\[
(gq)^{-1} b\sigma(gq)\mu(p)^{-1}=q^{-1}g^{-1}b\sigma(g)\sigma(q)\mu(p)^{-1}
=q^{-1}U_g\mu(p)\sigma(q)\mu(p)^{-1}.
\]
 Thus $gq$ belongs to (\ref{PreRADL}).

\begin{definition}
The \emph{refined affine Deligne-Lusztig set}   is the quotient
\[
{X}_{G, b,  \mu, \sigma}(k') =\big\{g\in G(K') :  g^{-1}b\sigma(g) \mu (p)^{-1} \in G(W') \big\}/Q_{\mu}(W' ).
\]
Similarly, we have the \emph{naive affine Deligne-Lusztig set} 
\[
{X}^{\rm naive}_{G,b, \mu, \sigma}(k') = \big\{g\in G(K') : g^{-1}b\sigma(g)\in G(W') \mu (p) G(W') \big\}/G(W' ).
\]

\end{definition}

For simplicity, we will often omit $\sigma$ from the list of subscripts. However, we do not know if the set
$
{X}_{ G, b, \mu, \sigma}(k')$ is  independent of the choice of the lift of Frobenius $\sigma$.

\begin{proposition}
%\label{RvsADL} 
The refined affine Deligne-Lusztig sets have the following properties.
\begin{enumerate}
\item
 Sending $gQ_{\mu}(W' )$ to $gG(W')$ defines an injection
\begin{equation*}
\adl(k'): {X}_{G,b, \mu}(k')\hookrightarrow {X}^{\rm naive}_{G, b,\mu}(k')\subset G(K')/G(W').
\end{equation*}
  If $k'$ is perfect,   then $\adl(k')$ is a bijection. 

\item
 If $b'$ is $\sigma$-conjugate to $b$, say  $b'=h^{-1}b\sigma(h) $ with  $h\in G(K)$,
then $g\mapsto hg$ defines a bijection 
\[
X_{G,b', \mu}(k')\iso   X_{G,b,\mu}(k').
\]
  
\item
 If $k'$ is perfect, then  $g\mapsto \sigma^{-1}(b^{-1} g)$  defines  a bijection
  \begin{equation*}
  {X}_{ G,b, \mu^\sigma}(k')\iso  {X}_{G,b, \mu}(k').
  \end{equation*}
  \end{enumerate}
\end{proposition}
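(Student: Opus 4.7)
The overall strategy is a collection of direct manipulations. The only non-formal input needed is the following observation, which I will call the \emph{$\sigma$-reflection property}: because the Frobenius lift $\sigma \colon W' \to W'$ preserves the $p$-adic valuation (it fixes the uniformizer $p$ and sends units to units, since $\sigma(u) \equiv u^p \bmod p$), one has $\sigma(x) \in W'$ if and only if $x \in W'$ for every $x \in K'$. Via a closed immersion $G \hookrightarrow \GL(C)$ (which exists in the situations of interest) and the equality $G(W') = G(K') \cap \GL(C)(W')$, this property passes to group-valued points: for $g \in G(K')$, $\sigma(g) \in G(W')$ if and only if $g \in G(W')$.

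For part (i), well-definedness of $\adl(k')$ is immediate since $Q_\mu(W') \subset G(W')$. For injectivity, suppose $g_1, g_2$ lie in the refined pre-set and $g_1 = g_2 q$ with $q \in G(W')$. Writing $U_i = g_i^{-1} b \sigma(g_i) \mu(p)^{-1} \in G(W')$, one computes
\[
\mu(p)\, \sigma(q)\, \mu(p)^{-1} = U_2^{-1} q U_1 \in G(W').
\]
Since $\mu^{\sigma^{-1}}(p) \in G(W) \subset G(W')$ (which is meaningful because $k$ is perfect), the displayed element equals $\sigma\bigl(\mu^{\sigma^{-1}}(p)\, q\, \mu^{\sigma^{-1}}(p)^{-1}\bigr)$, and $\sigma$-reflection yields $\mu^{\sigma^{-1}}(p)\, q\, \mu^{\sigma^{-1}}(p)^{-1} \in G(W')$, i.e., $q \in Q_\mu(W')$. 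For surjectivity when $k'$ is perfect, given $g$ in the naive pre-set, factor $g^{-1} b \sigma(g) = u\, \mu(p)\, v$ with $u, v \in G(W')$, set $w = \sigma^{-1}(v^{-1}) \in G(W')$ (using that $\sigma$ is bijective in the perfect case), and observe that $g' := g w$ satisfies $(g')^{-1} b \sigma(g') \mu(p)^{-1} = w^{-1} u \in G(W')$; hence $g'$ lies in the refined pre-set and its image is $g\, G(W')$.

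For part (ii), the identity $(hg)^{-1} b\, \sigma(hg) = g^{-1}\bigl(h^{-1} b\, \sigma(h)\bigr)\sigma(g) = g^{-1} b' \sigma(g)$ shows that left multiplication by $h$ gives a bijection between the two pre-sets, and it evidently commutes with the right $Q_\mu(W')$-action. For part (iii), set $g' = \sigma^{-1}(b^{-1} g)$, so $g = b\, \sigma(g')$; substituting yields
\[
g^{-1} b\, \sigma(g)\, \mu^\sigma(p)^{-1} = \sigma\bigl((g')^{-1} b\, \sigma(g')\, \mu(p)^{-1}\bigr),
\]
and $\sigma$-reflection translates one membership condition into the other. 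Since $Q_{\mu^\sigma}(W') = G(W') \cap \mu(p)^{-1} G(W') \mu(p)$ while $Q_\mu(W') = G(W') \cap \mu^{\sigma^{-1}}(p)^{-1} G(W') \mu^{\sigma^{-1}}(p)$, the same $\sigma$-reflection argument shows that $\sigma^{-1}$ carries $Q_{\mu^\sigma}(W')$ onto $Q_\mu(W')$, so the map descends to the quotients; its inverse is $g' \mapsto b\, \sigma(g')$. The one subtle point in the whole argument is the injectivity in (i), where the natural computation produces a condition formulated in terms of $\mu(p)$ but the definition of $Q_\mu(W')$ is formulated in terms of $\mu^{\sigma^{-1}}(p)$; the $\sigma$-reflection property is precisely the bridge, and it is what forces the refined pre-set to descend modulo $Q_\mu(W')$ rather than modulo $G(W')$ when $k'$ is non-perfect.
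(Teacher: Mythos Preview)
Your proof is correct and follows essentially the same route as the paper's: both hinge on the observation that for $y \in G(K')$ one has $\sigma(y) \in G(W')$ if and only if $y \in G(W')$ (you call this ``$\sigma$-reflection''; the paper phrases it via the injectivity of $\sigma$ on each $W'/p^iW'$ together with $G$ being affine and flat), and the three parts are then direct manipulations. One small slip: you write ``$\mu^{\sigma^{-1}}(p) \in G(W) \subset G(W')$,'' but of course $\mu^{\sigma^{-1}}(p)$ lies only in $G(K) \subset G(K')$, not in $G(W)$; the parenthetical about $k$ being perfect is really just to ensure $\sigma^{-1}$ exists on $K$ so that $\mu^{\sigma^{-1}}$ makes sense, and the identity $\mu(p)\,\sigma(q)\,\mu(p)^{-1} = \sigma\bigl(\mu^{\sigma^{-1}}(p)\,q\,\mu^{\sigma^{-1}}(p)^{-1}\bigr)$ holds in $G(K')$ regardless---this does not affect the argument.
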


\begin{proof} We first show (i).
The condition for $g$ in the refined notion is stronger then the condition in the naive notion;
since $Q_\mu(W')\subset G(W')$ the map is well-defined. 
It remains to show that it is injective. Let $g$, $g'\in G(K')$, and assume   
\begin{align*}
U_g \define g^{-1}b\sigma(g) \mu (p)^{-1} &\in G(W') \\
U_{g'} \define {g'}^{-1}b\sigma(g') \mu (p)^{-1} & \in G(W').
\end{align*}
 Suppose  there is an $h\in G(W')$ such that $g'=gh$.  Then we obtain
\[
U_{g'}=h^{-1}g^{-1}b\sigma(g)\sigma(h)\mu(p)^{-1}=h^{-1}U_g  \mu(p)\sigma(h)\mu(p)^{-1}.
\]
As $h^{-1}U_g$ and  $U_{g'}$ are in $G(W')$, we see that   the element
\[
\mu(p)\sigma(h)\mu(p)^{-1}=\sigma(\mu^{\sigma^{-1}}(p)h\mu^{\sigma^{-1}}(p)^{-1}) \in G(K)
\] 
actually lies in  $G(W')$. Since $\sigma: W'/p^iW'\to W'/p^iW'$ is injective for all $i$, and $G$ is affine and flat over $\Z_p$,
this implies that
\[
\mu^{\sigma^{-1}}(p)h\mu^{\sigma^{-1}}(p)^{-1}\in G(W').
\]
It follows that $h\in Q_\mu(W')$.  This shows the injectivity of the map $\adl(k')$. 

Now suppose that   $k'$ is perfect  so that $\sigma^{-1}$ makes sense on $W'=W(k')$.
If $g\in G(K')$ is such that $g^{-1}b\sigma(g)=h_1\mu^\sigma(p)h_2$ with $h_i\in G(W')$,
then $g'=g\sigma^{-1}(h_2)h_1$ satisfies the refined condition. Hence  $\adl(k')$
is surjective. 

Part (ii) is routine. To show part (iii), observe that for $h=\sigma^{-1}(b^{-1}g)$,
we have 
\[
h^{-1}b\sigma(h)=\sigma^{-1}(g^{-1}b)bb^{-1}g=\sigma^{-1}(g^{-1})\sigma^{-1}(b)g=\sigma^{-1}(g^{-1}b\sigma(g))
\]
and the result follows.
\end{proof}

\subsubsection{} Suppose  
$\iota : G\hookrightarrow \GL_n$ is a closed  immersion of group schemes   
over $\Z_p$,  and set $\nu= \iota \circ \mu: \Gm_W\to \GL_{n,W}$. For $\tau=\sigma^{-1}\in \Aut(W)$, we have
\[
G(W')=G(K')\cap \GL_n(W'),
\]
and
\[
  \mu^{\tau}(p)^{-1} G(W')\mu^{\tau}(p)=
G(K')\cap \nu^{\tau}(p)^{-1}\GL_n(W')\nu^{\tau}(p),
\]
the intersections taking place in $\GL_n(K')$.
The imbedding $\iota$ then induces injections 
\[
G(K')/G(W')\hookrightarrow \GL_n(K')/\GL_n(W'),
\]
and
\[
 G(K')/Q_\mu(W')\hookrightarrow \GL_n(K')/Q^{\GL_n}_\nu(W').
\]
Moreover, $g\in G(K')$ satisfies $g^{-1}b\sigma(g)\mu(p)^{-1}\in G(W')$ if and only if 
$\iota(g)\in \GL_n(K')$ satisfies the corresponding condition with $(G, b,\mu)$ replaced by $( \GL_n , \iota(b),\nu)$.
It follows that $\iota$ defines an injection
\[
{ X}_{G, b, \mu}(k')\hookrightarrow {X}_{\GL_n, \iota (b), \nu}(k').
\]

\subsubsection{} We now return to the set up of \S\ref{ss:hodge RZ space}.
Assume that $(G, [b], \{\mu\})$ is an unramified local Shimura datum of Hodge 
type.   Fix a corresponding local Shimura-Hodge datum $(G, b, \mu,  C)$ and a set of tensors $(s_\alpha)$
that cuts out $G\subset \GL( C)$.  We then have the functor $\RZnilp_G$  as before.

Fix a point
\[
 (X,  \rho, (t_\alpha)) \in \RZnilp_G(k').
 \] 
 Consider the value $M=\Db(X)(W')$ of the crystal $\Db(X)$ on the Cohen ring $W'$ of $k'$,
viewed as a  ${\rm PD}$-thickening of $k'$. We have the tensors $t_\alpha(W')\in M^{\otimes}$,  which are Frobenius
invariant in $M^\otimes[1/p]$,  and the quasi-isogeny $\rho$,  which induces an isomorphism
\[
\Db(\rho): M[1/p]\iso  M_0[1/p]
\]
 such that  $\Db(\rho)(t_{\alpha}(W'))= t_{\alpha, 0}(W)\otimes 1$,  by (i) of Definition \ref{defRZG}.

\begin{lemma}\label{Ctorsor}
Under these assumptions, the scheme
\[
T=\underline{\rm Isom}_{W', t_\alpha(W'),  s_\alpha\otimes 1 }( M,    D\otimes_{\Z_p} W'    )
\]
is a trivial $G_W'$-torsor over $\Spec(W')$. 
\end{lemma}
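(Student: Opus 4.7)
My plan is to verify the lemma in two stages: first, to show that $T$ is a $G_{W'}$-torsor, and second, to produce a $W'$-valued point of $T$ (which is equivalent to triviality of the torsor).

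For the first stage, I would invoke condition (ii) of Definition \ref{defRZG}. The surjection $W' \twoheadrightarrow k'$ is a PD thickening and hence an object of $\mathrm{CRIS}(\Spec(k')/W)$; evaluating the crystal of $G_W$-sets from (ii) on this thickening yields the scheme $T$. The hypothesis that the crystal is fppf locally isomorphic to the constant crystal defined by $G_W$ translates directly into the statement that $T$ is fppf, hence \'etale by smoothness of $G$, locally isomorphic to $G_{W'}$---that is, $T$ is a $G_{W'}$-torsor.

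For the second stage, I would produce a $K'$-valued point of $T$ from the quasi-isogeny $\rho$: condition (i) of Definition \ref{defRZG} ensures that the induced isomorphism of isocrystals
\[
\Db(\rho) : M \otimes_{W'} K' \iso D \otimes_{\Z_p} K'
\]
identifies $t_\alpha(W') \otimes 1$ with $s_\alpha \otimes 1$, exhibiting an element of $T(K')$. To upgrade this $K'$-section to a $W'$-section, I would appeal to the principle that, for a reductive group scheme $G$ over $\Z_p$ and the complete (hence henselian) discrete valuation ring $W'$, any $G_{W'}$-torsor whose pullback to $\Spec K'$ admits a section itself admits a section. This amounts to the injectivity of the map $H^1_{\mathrm{et}}(\Spec W', G) \to H^1_{\mathrm{et}}(\Spec K', G)$, a purity-type statement for smooth reductive groups over a discrete valuation ring.

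The most delicate step is precisely this last triviality argument. Should the purity statement not be directly available in our level of generality, my fallback would be to argue by hand: identify $M$ and $D \otimes_{\Z_p} W'$ as two $W'$-lattices inside the common $K'$-vector space $D \otimes_{\Z_p} K'$ via $\Db(\rho)$, and use the Frobenius structures together with the Cartan decomposition for $G(K')$ to exhibit an element of $G(K')$ transporting the one lattice onto the other. The resulting $W'$-linear isomorphism $M \iso D \otimes_{\Z_p} W'$ would respect the tensors by construction, giving the desired element of $T(W')$.
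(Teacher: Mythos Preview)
Your overall approach matches the paper's: establish that $T$ is a $G_{W'}$-torsor using condition (ii), then trivialize it via the generic-fiber section coming from $\Db(\rho)$ together with a purity result for reductive torsors over a DVR.

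There is one imprecision in your first stage. The surjection $W'\twoheadrightarrow k'$ is \emph{not} an object of $\mathrm{CRIS}(\Spec(k')/W)$ as defined in the paper, since the thickenings in that site are required to have $p$ locally nilpotent. The paper instead uses the nilpotent PD thickenings $W'/p^mW'\to k'$ for all $m\ge 1$: condition (ii) gives that each $T\times_{W'}W'/p^mW'$ is a $G\times_{\Z_p}W'/p^mW'$-torsor, and then the local criterion of flatness shows $T$ itself is faithfully flat over $W'$, hence a torsor. This is a routine fix, but your write-up should include it.

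For the second stage, the purity statement you invoke is exactly what the paper uses: the injectivity of $H^1_{\mathrm{\acute{e}t}}(\Spec W',G)\to H^1_{\mathrm{\acute{e}t}}(\Spec K',G)$ for $G$ reductive over the DVR $W'$. The paper cites \cite[Theorem~5.2]{NisnevichThesis} (a special case of the Grothendieck--Serre conjecture), noting that it applies because $G$ is quasi-split. You should name this reference explicitly rather than leave it as a ``principle''.

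Your fallback via the Cartan decomposition is unreliable here: since $k'$ need not be perfect, $W'$ is only a Cohen ring rather than a ring of Witt vectors, and the usual Cartan decomposition $G(K')=G(W')\cdot\{\mu(p)\}\cdot G(W')$ is not available in that generality. Stick with the Nisnevich/Grothendieck--Serre argument.
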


\begin{proof}
Note that $T$ is an affine finite type $W'$-scheme carrying an action of 
the group scheme $G_W'$. We will first show that $T$ is a $G_W'$-torsor over $\Spec(W')$.

For any $m\geq 1$, we have the nilpotent PD thickening  $W'/p^mW'\to k'$.
Therefore, by condition (ii) in the definition of $\RZnilp_G$, 
the base change $T\times_{W'} {W'/p^mW'}$ is a $G\times_{\Z_p}W'/p^mW'$-torsor. 
It follows from the local criterion of flatness that $T$ is $W'$-flat and hence 
 also faithfully flat (since the special fiber is non-empty).
Since $G$ acts transitively on the points of $T$ it now follows 
that $T$ is a (fppf locally trivial) $G_W'$-torsor over 
$\Spec(W')$. Since $G$ is smooth, the torsor $T$ splits locally 
for the \'etale topology of $\Spec(W')$. 

We can easily see that the generic fiber  
$T\times_{W'}K'$ is a trivial $G_{K'}$-torsor with  a section constructed using 
a composition of $\Db(\rho)(W')$ with the identification $ \Db(X_0)(W')\cong  D\otimes_{\Z_p}W'$. By \cite[Theorem 5.2]{NisnevichThesis} (a very special case
of a conjecture of Grothendieck),
which applies since $G$ is  quasi-split, $T$ is   a trivial torsor. \end{proof}

\subsubsection{} 
We  now describe $\RZnilp_G(k')$ in terms of a {refined} affine Deligne-Lusztig 
(ADL) set.   The following  may be standard, but we could not find a reference.

\begin{proposition}\label{fgfield} Suppose that $k'/k$ is  a finitely generated field extension
and denote by $W'$ the Cohen ring of $k'$. There exists a lift of Frobenius $\sigma: W'\to W'$ with the following property. We can write $k'=\varinjlim R$, where $R$ are finitely generated smooth $k$-algebras,
each having a finite $p$-basis, such that for each $R$ that appears in the limit there
is a $W$-flat formally smooth $p$-adically complete and separated lift $\widetilde R$ of $R$ with 
$\widetilde R\subset W'$ lifting $R\to k'$ which is such that  $\sigma(\widetilde R)\subset \widetilde R$. 
\end{proposition}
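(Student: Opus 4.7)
The plan is to reduce the statement to the explicit construction of the Cohen ring recalled in \S\ref{sss:cohen} by means of a separating transcendence basis and étale lifting. Since $k$ is algebraically closed (so perfect) and $k'/k$ is finitely generated, the extension is separably generated. First I would fix a separating transcendence basis $x_1,\ldots,x_r\in k'$, so that $k'$ is a finite separable extension of $K_0:=k(x_1,\ldots,x_r)$. Writing a primitive element $\alpha$ for $k'/K_0$ and clearing denominators, $\alpha$ satisfies a monic polynomial $\widetilde P(T)\in R_0[1/f_0][T]$ with $\widetilde P'(\alpha)$ a unit in $k'$, where $R_0:=k[x_1,\ldots,x_r]$. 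Consequently, every finite subset of $k'$ lies in a finitely generated $k$-subalgebra $R\subset k'$ that is étale over a localization $R_0[1/f]$, and such $R$ are cofinal among the finitely generated smooth $k$-subalgebras of $k'$. Each such $R$ is smooth of relative dimension $r$ and has $x_1,\ldots,x_r$ as a $p$-basis, and $k'=\varinjlim R$ over this cofinal system.

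Next I would choose lifts $\widetilde x_1,\ldots,\widetilde x_r\in W'$ of the $x_i$. By \S\ref{sss:cohen}, this choice determines a unique Frobenius lift $\sigma:W'\to W'$ with $\sigma(\widetilde x_i)=\widetilde x_i^{\,p}$. Setting $\widetilde R_0:=W[x_1,\ldots,x_r]$ and embedding it in $W'$ via $x_i\mapsto\widetilde x_i$ already gives a $p$-adically complete, formally smooth, $W$-flat lift of $R_0\hookrightarrow k'$, and visibly $\sigma(\widetilde R_0)\subset\widetilde R_0$. For each $R$ in the cofinal system, étale over $R_0[1/f]$, I would pick a lift $\widetilde f\in\widetilde R_0$ and form the $p$-adic completion $A$ of $\widetilde R_0[1/\widetilde f]$; by étale lifting there is a unique étale $A$-algebra $\widetilde R$ reducing to $R$ modulo $p$, which is formally smooth, $p$-adically complete and separated, and $W$-flat. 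The same étale-lifting property yields a unique $W$-algebra map $\widetilde R\to W'$ extending $\widetilde R_0[1/\widetilde f]\hookrightarrow W'$ and lifting $R\hookrightarrow k'$; its kernel $J$ satisfies $J\subset p\widetilde R$ (since $R\hookrightarrow k'$ is injective) and $J=pJ$ (since $W'$ is $p$-torsion free), hence $J=0$ by $p$-adic separatedness. This gives the embedding $\widetilde R\subset W'$.

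The main obstacle, and the point that genuinely needs checking, is the Frobenius compatibility $\sigma(\widetilde R)\subset\widetilde R$. To verify this I would compare two $W$-algebra maps $\widetilde R\to W'$: (a) the restriction of $\sigma$ to $\widetilde R\subset W'$, and (b) the composite $\widetilde R\xrightarrow{\sigma_R}\widetilde R\hookrightarrow W'$, where $\sigma_R$ is the unique Frobenius lift on $\widetilde R$ determined by the $p$-basis $(x_i)$ via $\widetilde x_i\mapsto\widetilde x_i^{\,p}$. Both maps reduce modulo $p$ to the absolute Frobenius of $R$ composed with $R\hookrightarrow k'$, and both agree on $\widetilde R_0$ (sending $\widetilde x_i\mapsto\widetilde x_i^{\,p}\in\widetilde R_0$), hence on $\widetilde R_0[1/\widetilde f]$. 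Uniqueness in étale deformation theory then forces (a)$=$(b), so $\sigma|_{\widetilde R}$ factors through $\widetilde R$, as required. Everything else (separability of the transcendence degree, étale lifting, $p$-adic completion) is standard; the delicate point is precisely the matching of the extrinsic restriction $\sigma|_{\widetilde R}$ with the intrinsic $p$-basis Frobenius $\sigma_R$ on $\widetilde R$.
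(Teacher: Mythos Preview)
Your argument is correct in outline and reaches the same conclusion, but it takes a genuinely different route from the paper. The paper does not fix a separating transcendence basis or use \'etale lifting at all: instead, starting from any smooth finitely generated $k$-subalgebra $R$ with ${\rm Frac}(R)=k'$, it localizes so that $\Omega_{R/k}$ is free, checks that a basis $(dx_i)$ gives a $p$-basis $(x_i)$ of $R$, and then invokes the explicit Berthelot--Messing construction $W'=\varprojlim_n A_n(k')$, $\widetilde R=\varprojlim_n A_n(R)$ from \cite[\S1.1--1.2]{BerthelotMessingIII}. Because this construction is functorial in rings equipped with a $p$-basis, the inclusion $\widetilde R\hookrightarrow W'$ and the Frobenius compatibility $\sigma(\widetilde R)\subset\widetilde R$ come for free, without any uniqueness argument. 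Your approach trades this single citation for a hands-on construction via \'etale deformation over a polynomial base, which is more elementary in that it avoids the $A_n(-)$ formalism, but requires the careful two-map comparison you carry out at the end. One small correction: your $\widetilde R_0=W[x_1,\ldots,x_r]$ is not $p$-adically complete as written; you should take its $p$-adic completion (and then the embedding into $W'$ and the stability under $\sigma$ still hold by continuity).
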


\begin{proof} 
Suppose that $R$ is a finitely generated smooth $k$-algebra  which is a  domain and is such that
$k'$ is the fraction field of $R$. By replacing $R$ by a localization we can assume that   the differentials $\Omega_{R/k}$ are a free $R$-module of rank equal to the Krull dimension of $R$;  let $dx_i$, $i=1,\ldots, d$,   be an  $R$-basis of $\Omega_{R/k}$.
 In this situation, the absolute Frobenius $\phi_R: R\to R$ is injective and makes $R$ into a finitely generated $R$-module.
Therefore,    by \cite[Ch. $0$, Prop. (21.1.7)]{EGAIV},  the tuple $(x_i)$ with  $i=1,\ldots, d$  is a system of $p$-generators of $R$ over $k$, \emph{i.e.}~ $R=k[R^p, (x_i)]$. In fact, we can easily see that,
since $dx_i$ are $R$-linearly independent, the $x_i$ are $p$-independent
(\emph{cf.}~\cite[p.~276]{Matsumura}). Therefore, the  $x_i$'s  form a $p$-basis of $R$ over $k$.

If we start with a $p$-basis  $(x_i)$ of $k'$, then   \cite[Theorem 86]{Matsumura} implies that  $(dx_i)$ are a basis of the $k'$-vector space 
$\Omega_{k'/k}$.  If $R\subset k'$ is any smooth finite type $k'$-algebra with $k'={\rm Frac}(R)$ such that $x_i\in R$ and $(dx_i)$
generate $\Omega_{R/k}$, then,   by the above,  $(x_i)$ also provide   a $p$-basis of $R$. 

Since $k$ is perfect, we can write $k'=\varinjlim R$,
where $R$ is as above. 
Now,  as in \cite[\S1.1]{BerthelotMessingIII}, using the $p$-basis $(x_i)$ we  obtain a 
concrete construction of the Cohen ring 
\[
W'=\varprojlim_n A_n(k'),
\]  
and of a $W$-flat lift \[ \widetilde R=\varprojlim_n A_n(R)\]  of $R$. 
Here  $A_n(R)$ and  $A_n(k')$, are certain subrings 
of the truncated Witt vector rings $W_n(R)$ and  $W_n(k')$. By  [\emph{loc.~cit.}], sending $x_i\in R$ to $x_i\in k'$   
gives   ring homomorphisms $i_n: A_n(R)\to A_n(k')$. Since $R\to k'$ is injective,
 $A_n(R)\to A_n(k')$ is injective,  and so also
$\widetilde i: \widetilde R\to W'$ is injective. Recall that, by \cite[Prop. 1.2.6]{BerthelotMessingIII}, a lift 
of Frobenius on $A_n(R)$, resp. $W_n(k')$, is uniquely determined by giving (arbitrary) lifts 
$y_{i, n}\in A_n(R)$, resp. $A_n(k')$, of all the elements $x_i^p$. Therefore, we can choose this way lifts
of  Frobenius on $\widetilde R$ and $W' $ that are compatible under $\widetilde i: \ti R\hookrightarrow W' $. 
\end{proof}

\subsubsection{} 
Let $(G, b, \mu,  C)$  be a local unramified Shimura-Hodge datum as in Definition \ref{def:shimura-hodge}, and let   
$(s_\alpha)$ be tensors in $ C^\otimes$ that cut out $G \subset \GL( C)$, as in \S\ref{sss:tate tensors}.

Let $k'/k$ be a finitely generated field extension, and suppose
 that the lift $\sigma: W'\to W'$ of the Frobenius  is chosen as Proposition \ref{fgfield}. 
 Suppose also that the Dieudonn\'e module structure
 on $ D_W$ determined by $\iota(b) \in \GL( C_W)$ has no zero slopes 
   (equivalently,  the base point $p$-divisible group $X_0$ over $k$ defined in Lemma \ref{lemma121} is formal).

\begin{theorem}\label{bijectionRADL}
Under the above assumptions, there are natural bijections
\[
\pi: \varinjlim\nolimits_{R}\RZnilp_G(R)\iso    \RZnilp_G(k')\iso 
{X}_{ G, b, \mu^\sigma,\sigma}(k')
\]
where the limit  over $R$ is as in Proposition \ref{fgfield} above.
\end{theorem}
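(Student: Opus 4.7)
The plan is to produce the two bijections in turn. The first, $\varinjlim_R \RZnilp_G(R) \iso \RZnilp_G(k')$, will be a routine spreading-out argument: the $p$-divisible group $X$, the quasi-isogeny $\rho$, the morphisms of crystals $t_\alpha$, and the isomorphism class of the triple $(X,\rho,(t_\alpha))$ all descend, by finite presentation, to some smooth finite type $R \subset k'$ appearing in the direct limit of Proposition \ref{fgfield}. Conditions (i)--(iii) of Definition \ref{defRZG} are stable under passage to a smaller or larger $R$ (condition (i) descends because $p = 0$ in such $R$; conditions (ii) and (iii) descend by \'etale-local triviality of torsors and the closedness of the $\mu$-type condition), so this gives both surjectivity and injectivity of the limit map.

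For the second and more substantive bijection $\RZnilp_G(k') \iso X_{G,b,\mu^\sigma,\sigma}(k')$, I will pass to linear-algebraic data via Zink's theory of $W'$-windows. Starting from $(X,\rho,(t_\alpha))$, first observe that $X$ is formal: by assumption $X_0$ is formal, and the quasi-isogeny $\rho$ transports the Newton polygon, so $X$ also has no zero slopes. Evaluating the crystal on the Cohen ring $W'$ then produces a $W'$-window $M = \Db(X)(W')$ equipped with the tensors $t_\alpha(W')$, which are Frobenius-invariant after inverting $p$. Lemma \ref{Ctorsor} furnishes a $W'$-linear isomorphism $M \iso D \otimes_{\Z_p} W'$ matching the tensors; after adjusting within the residual $P_\mu(W')$-torsor from Remark \ref{remark1}(b), I obtain a trivialization $\beta$ additionally sending the Hodge filtration to $F_\mu \otimes W'$. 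Under $\beta$ the Frobenius on $M$ becomes $b_X \circ \sigma$ with $b_X \in G(K')$, and the quasi-isogeny $\Db(\rho)$ is presented as an element $h \in G(K')$ satisfying $b_X = h^{-1} b \sigma(h)$.

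The key identity is that this choice of $\beta$ forces $M_1 = p\mu(p)^{-1}(D \otimes W')$; combining this with the window relation $F(M_1) = pM$ and the rule $\sigma(p\mu(p)^{-1}) = p\mu^\sigma(p)^{-1}$ rearranges to $b_X \mu^\sigma(p)^{-1} \in G(W')$, which is precisely the defining condition for $h$ in the refined affine Deligne-Lusztig set. Tracking how $h$ depends on the residual choice of $\beta$ shows that the class $[h] \in G(K')/Q_{\mu^\sigma}(W')$ is well-defined. The inverse map takes a representative $h$, forms the triple $(D \otimes W',\, p\mu(p)^{-1}(D \otimes W'),\, (h^{-1} b \sigma(h)) \circ \sigma)$ as a candidate $W'$-window, checks the axioms, and applies Zink's anti-equivalence (with nilpotence justified by the no-zero-slopes hypothesis, since $b_X\sigma$ is $\sigma$-conjugate to $b\sigma$) to obtain a formal $p$-divisible group $X$ over $k'$. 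One then takes for $t_\alpha$ the morphisms of crystals corresponding to the constant tensors $s_\alpha \otimes 1$, which are Frobenius-invariant because $b_X \in G(K')$ preserves them, and uses the isocrystal isomorphism encoded by $h$ to produce the quasi-isogeny $\rho$. The conditions of Definition \ref{defRZG} are then readily verified, with (ii) reducing over a field to the triviality assertion of Lemma \ref{Ctorsor}; the two maps are mutually inverse by construction.

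The main obstacle will be ensuring that the computation of the submodule $M_1$ under the trivialization $\beta$, and the resulting bookkeeping of the residual ambiguity, produce precisely the refined (not merely naive) ADL condition and precisely the quotient group $Q_{\mu^\sigma}(W')$ claimed in the theorem. This depends delicately on using the freedom granted by Remark \ref{remark1}(b) to match $\Fil^1$ exactly to $F_\mu$, and it is this step that explains why, in the non-perfect case, one sees a genuinely refined ADL set rather than the naive one of the original Rapoport-Zink theory. A smaller technical point will be the careful use of the no-zero-slopes hypothesis, both to guarantee in the forward direction that the original $X$ is formal so that the $W'$-window exists, and to verify Zink's nilpotence condition when inverting the construction.
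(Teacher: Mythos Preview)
Your treatment of the second bijection $\RZnilp_G(k')\iso X_{G,b,\mu^\sigma,\sigma}(k')$ is essentially the paper's argument and is correct, modulo a small imprecision: over non-perfect $k'$ the window condition reads $\langle p^{-1}F(M_1)\rangle=M$, not $F(M_1)=pM$, since $F(M_1)$ is only a $\sigma(W')$-submodule and one must pass to the $W'$-span.

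The gap is in your first bijection. Morphisms of crystals do \emph{not} spread out by finite presentation. After descending $(X,\rho)$ to $(X_R,\rho_R)$, the datum $t_\alpha$ is encoded by an element $t_\alpha(W')\in\Db(X)(W')^\otimes=\Db(X_R)(\widetilde R)^\otimes\otimes_{\widetilde R}W'$, and you need this element to lie in the sublattice $\Db(X_R)(\widetilde R)^\otimes$. The Cohen ring $W'$ is a $p$-adic completion, not a filtered colimit of the lifts $\widetilde R$, so no limit argument applies. Nor does your clause ``condition (i) descends because $p=0$'' help: condition (i) pins down $t_\alpha$ as $\Db(\rho)^{-1}(t_{\alpha,0}\otimes 1)$, but this lives a priori only in $\Db(X_R)(\widetilde R)^\otimes[1/p]$, and its integrality is precisely the point at issue.

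The paper sidesteps this by proving the two bijections simultaneously rather than in series. Injectivity of $\varinjlim_R\RZnilp_G(R)\to\RZnilp_G(k')$ and of $\pi$ are easy. For the two surjectivities one starts from $g\in X_{G,b,\mu^\sigma,\sigma}(k')$, builds the window and hence $(X,\rho)$ over $k'$ via Zink, and then---using only that the undecorated $\RZc(X_0)^{\rm red}$ is locally of finite type---spreads $(X,\rho)$ to some $R$. The tensors are then \emph{defined} over $R$ by $t_\alpha(\widetilde R):=\Db(\rho_R)^{-1}(t_{\alpha,0}\otimes 1)\in\Db(X_R)(\widetilde R)^\otimes[1/p]$ and shown to be integral in two steps: their image in $M^\otimes[1/p]$ equals $s_\alpha\otimes 1\in M^\otimes$ because $g\in G(K')$, and $\Db(X_R)(\widetilde R)^\otimes[1/p]\cap M^\otimes=\Db(X_R)(\widetilde R)^\otimes$ since $\widetilde R[1/p]\cap W'=\widetilde R$. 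Horizontality and Frobenius invariance over $\widetilde R$ then follow from the inclusion $\widetilde R\hookrightarrow W'$. This integrality argument is the genuine content behind the first bijection, and it is exactly what the compatible lifts $\widetilde R\subset W'$ and Frobenius of Proposition~\ref{fgfield} were arranged to enable.
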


\begin{proof} 
Let $X_0$ be the $p$-divisible group of Lemma \ref{lemma121}, and recall that 
 $\RZc(X_0)$ is  the (undecorated) Rapoport-Zink formal scheme from \S\ref{sss:RZ}.
Notice that $(X, \rho, (t_\alpha))\mapsto  (X,  \rho)$ defines an injection 
$
\RZnilp_G(k') \hookrightarrow \RZc(X_0)(k'),
$ 
as  $t_\alpha$ is determined by $t_\alpha(W')$.

 Similarly, $\RZnilp_G(R)$ injects to $\RZc(X_0)(R)$. Indeed, $t_\alpha$ is determined by $t_\alpha(\widetilde R)$, and, as
   $\widetilde R$ is torsion-free,  $t_\alpha(\widetilde R)$ is determined by 
   \[
   t_\alpha(\widetilde R)[1/p]=\Db(\rho)^{-1}(t_{\alpha, 0}[1/p]).
   \]
   Since $\RZc(X_0)$ is formally locally of finite type over $W$, we have
\[
\RZc(X_0)(k')=\varinjlim_{R}\RZc(X_0)(R),
\]
and so
$
\varinjlim\nolimits_{R}\RZnilp_G(R)\hookrightarrow \RZnilp_G(k').
$

Pick a point  $x= (X,\rho, (t_\alpha))\in \RZnilp_G(k')$, and  consider the value 
$
M:=\Db(X)(W')
$
 of the crystal $\Db(X)$ on  $W'$, endowed with the tensors  $t_\alpha(W')\in M^{\otimes}$. 
By Lemma \ref{Ctorsor},  the scheme
\[
T_x =\underline{\rm Isom}_{W', t_\alpha(W'),  s_\alpha\otimes 1 }( M,    D\otimes_{\Z_p} W'    )
\]
is a trivial $G$-torsor over $\Spec(W')$.

If $k'=k$ and $x=x_0$  is the base point of Remark \ref{rem:base point},  then there is an isomorphism
\[
\beta_0: M_0:=\Db(X_0)(W)\iso   D\otimes_{\Z_p}W
\]
with
$\beta^{\otimes}_0(t_{\alpha, 0}(W))=s_\alpha\otimes 1$ and we can use this to identify
$M_0= D\otimes_{\Z_p}W.$ In general, the generic fiber of $T_x$ has a section constructed using $\Db(\rho)$ and $\beta_0$. 
 Since the $G$-torsor $T_x$ is trivial  there is $\beta: M \iso  D_W' $  such that 
$\beta^{\otimes}(t_{\alpha}(W'))=s_\alpha\otimes 1$.
Using $\Db(\rho)$ we can identify 
\[
M_\Q=M_0\otimes_{W} K'= D\otimes_{\Z_p} K'
\]
and therefore think of $M\subset M_\Q$ as a $W'$-lattice in $M_0\otimes_{W}K'= D\otimes_{\Z_p}K'$. Under this identification, the choice of $\beta$ is equivalent to picking $g\in G(K')$ such that 
$
M=g\cdot ( D\otimes_{\Z_p} W');
$
 then $\beta$ is given by left multiplication by $g^{-1}$.
Notice that the coset  $gG(W')$ is independent of the choice of $\beta_0$ and $\beta$
and so we have a well-defined map
\[
\RZnilp_G(k')\to G(K')/G(W') 
\]
given by $(X,\rho, (t_\alpha))\mapsto gG(W')$.

By Zink's theory,  as in  \S\ref{Zinkpar} (see especially \cite[Theorem 4]{ZinkWindows}), and using the inclusion
\[
\RZnilp_G(k')\subset \RZc(X_0)(k'),
\] 
we see that the $W'$-lattice $M\subset M_0\otimes_{W}W'[1/p]$
uniquely determines the point $  (X, \rho, (t_\alpha)) \in \RZnilp_G(k')$. 
On the other hand, 
\[
M=g\cdot (M_0\otimes_{W}W')\subset M_0\otimes_{W}W'[1/p]
\]
is uniquely determined by $gG(W')$,  and so this construction gives an injection 
\[
\pi: \RZnilp_G(k')\hookrightarrow G(K')/G(W').
\] 
We have to show that the image of this injection is exactly the refined ADL set ${X}_{G, b, \mu^\sigma, \sigma}(k')$.

This amounts to showing that $M$ is the $W'$-window corresponding to a point in $\RZnilp_G(k')$
if and only if we can pick $g\in G(K')$ such that 
$M=g\cdot (M_0\otimes_{W}W')$ and $g^{-1}b\sigma(g) \mu^\sigma (p)^{-1}\in G(W')$.

Suppose that $M$ indeed corresponds to the point $x\in \RZnilp_G(k')$ as above. 
Then 
\[
\Db(X)(k')=M\otimes_{W'} k'=M/pM
\]
with Hodge filtration given by  ${\rm Fil}^1(X)=M_1/pM\subset M/pM$. Since the Hodge filtration is a  $G$-filtration of type $\mu$ by Definition \ref{defRZG} (iii), we can pick a trivialization $\bar\beta: M/pM\iso    D_k'$ that preserves the tensors
 (\emph{i.e.}~a $k'$-section of the trivial torsor $T_x$ above), and such that  
\[
M_1/pM\subset M/pM\cong  D_k'
\] 
is actually the filtration given by  $\mu$. (Indeed, since $G_W$ is split, the quotient morphism $G_W\to G_W/P_\mu$ splits
locally for the Zariski topology over $W$. Therefore, $G(k')$ acts transitively 
on the set of $G$-filtrations of 
$ D_k'$ of type $\mu$.) By Hensel's lemma, we can lift $\bar\beta$ to 
$\beta: M \iso   D_W'$. Then 
$ 
\beta(M_1)=  p\mu(p)^{-1} D_W' .
$ 
As before, $\beta$ corresponds to $g\in G(K')$
such that 
\[
M=g\cdot (M_0\otimes_W W')=g\cdot  D_W'
\]
and then $\beta$ is multiplication by $g^{-1}$. Hence,  
\[
M= g\cdot  D_W',\quad M_1=g\cdot p\mu(p)^{-1} D_W'.
\]

The quadruple  $(M, M_1, F=b\circ \sigma)$ defines a $W'$-window if and only if
the $W'$-submodule $\langle p^{-1}F(M_1)\rangle$ of $M[1/p] $  generated
by $p^{-1}F(M_1)$ is equal to $M$. Indeed, if  $(M, M_1, F=b\circ \sigma)$ is a $W'$-window
this condition follows from the definition in \S \ref{Zinkpar}. Conversely, assume that
$\langle p^{-1}F(M_1)\rangle=M$. Then 
\[
pM\subset M_1\subset F^{-1}(pM)\subset M
\] 
and   $\langle  F(M_1)\rangle=M=\langle  F(F^{-1}(pM))\rangle$. Hence $M_1=F^{-1}(pM)$,
and then $(M, M_1, F=b\circ \sigma)$ gives a $W'$-module by the observation in \S\ref{Zinkpar}  (the nilpotence condition also follows by a slope argument).  

The condition that  $\langle p^{-1}F(M_1)\rangle=M$ 
reads
\[
\big\langle p^{-1} b \sigma\big(gp\mu(p)^{-1} \cdot  D\otimes_{\Z_p}W' \big) \big\rangle= g\cdot 
( D\otimes_{\Z_p}W'),
\]
which translates to
\begin{equation}\label{eqLattices}
\big\langle  g^{-1}b\sigma(g) \mu^\sigma(p)^{-1}\cdot( D\otimes_{\Z_p}\sigma(W')) \big\rangle
= D\otimes_{\Z_p}W'. 
\end{equation}
Set 
$
u = gb\sigma(g)\mu^\sigma(p)^{-1}\in G(K').
$
Since 
\[
\langle h\cdot ( D\otimes_{\Z_p}\sigma(W'))\rangle=h\cdot\ ( D\otimes_{\Z_p}W') ,
\]
 for any $h\in \GL_n(K')$, the equation (\ref{eqLattices}) above amounts to $u\in \GL_n(W')$.
We obtain that $u$ is in $G(W')=G(K')\cap \GL_n(W')$, and thus $g\in X_{G, b, \mu^\sigma,  \sigma}(k')$.
Therefore, the image of $\pi$ is contained in $ X_{G, b, \mu^\sigma,  \sigma}(k')$.

Let us now discuss the converse.   Start with $g\in X_{G, b, \mu^\sigma,  \sigma}(k')$, and set
\[
M=g\cdot  D_W', \quad M_1=gp\mu(p)^{-1}\cdot  D_W'.
\]
By the argument above, $(M, M_1, b\circ \sigma)$  is a $W'$-window.
By Zink's theory, there is a corresponding  $p$-divisible group $X$ over $k'$ with a quasi-isogeny 
$
\rho:X_0\times_kk'\dasharrow X,
$
and $(X, \rho )$ gives a $k'$-point of the   Rapoport-Zink space $\RZc(X_0)$. 

Since  the underlying reduced scheme  $\RZc(X_0)^{\rm red}$ is locally  of finite type,   we can find a smooth domain $R$ over $k$ 
as above with $k'={\rm Frac}(R)$ and a $p$-divisible group with quasi-isogeny  $(X_R, \rho_R)$ over $R$ that extends $(X, \rho )$.
By replacing $R$ by a localization we can further assume that $R$ has a lift $\widetilde R$ such that $\sigma(\widetilde R)\subset \widetilde R$ and that $
\Db(X_R)(\widetilde R)$ is $\widetilde R$-free.
We have
\[
\Db(X_R)(\widetilde R)\otimes_{\widetilde R}W'\cong \Db(X)(W')\cong M=g\cdot (M_0\otimes_W W').
\]
We will now produce a corresponding $R$-valued point $(X_R, \rho_R,  (t_\alpha))$
of $\RZnilp_G$. For this, we will construct a morphism of crystals  
$t_{\alpha}: {\bf 1}\to \Db(X_R)^\otimes$ such that $t_\alpha: 
{\bf 1}\to \Db(X_R)^\otimes[1/p]$ are Frobenius invariant and then check that $t_\alpha$  satisfy (i), (ii) and (iii)
of Definition \ref{defRZG}. 

By \cite[Prop.~1.3.3]{BerthelotMessingIII}  or \cite[Cor.~2.2.3]{deJongBTIHES}, to give $t_{\alpha}$ as above, 
it suffices to give $t_{\alpha}(\widetilde R)(1)\in \Db(X_R)(\widetilde R)^\otimes$ which are
horizontal for the connection and are Frobenius invariant in $\Db(X_R)(\widetilde R)^\otimes[1/p]$. Consider the images $t_\alpha$ of the
``constant'' tensors $s_\alpha\otimes 1$ under the isomorphism of isocrystals
\[
\Db(\rho)^{-1}  :   \Db ( X_{0,R}   )^\otimes [1/p] \iso \Db    (X_R)^\otimes [1/p]
\] 
induced by the quasi-isogeny $\rho$. Since $\rho$ is defined over $R$, we obtain  $t_\alpha(\widetilde R)\in \Db(X_R)(\widetilde R)^{\otimes}[1/p]$. However, since $g$ is in $G(K')$, 
we actually have $t_\alpha(\widetilde R)=s_\alpha\otimes 1$ in 
\[
M^\otimes[1/p]=\Db(X_R)(\widetilde R)^\otimes\otimes_{\widetilde R}W'[1/p]
\]
 and these lie in 
\[
M^\otimes =\Db(X_R)(\widetilde R)^\otimes\otimes_{\widetilde R}W' .
\]

 Since $\Db(X_R)(\widetilde R) $  is $\widetilde R$-free
and $\widetilde R[1/p]\cap W'=\widetilde R$, we can see that 
our tensors $t_\alpha(\widetilde R)=s_\alpha\otimes 1$ lie in $\Db(X_R)(\widetilde R)^\otimes$.  They are horizontal and Frobenius
invariant  since this is true over $W'$,  and   $\widetilde R\subset W'$. Moreover, conditions (i), (ii) and (ii)
can now be seen to be satisfied after possibly further localizing $R$.  We have now produced an $R$-valued point in $\RZnilp_G$.

These two  constructions are inverses of each other. This shows 
that  $\varinjlim_{R}\RZnilp_G(R)= \RZnilp_G(k')$ and  the image of $\pi$ is $X_{G, b, \mu^\sigma, \sigma}(k')$.
\end{proof}

\begin{remark}\label{remark:ftpoints}
{\rm 
Observe that   $\RZfsm_G(R)=\RZnilp_G (R)$,  and so Theorem \ref{bijectionRADL} also gives 
\[
\varinjlim\nolimits_R\RZfsm_G(R)\iso  {X}_{ G, b, \mu^\sigma,\sigma}(k').
\]
This will be important for the application to Rapoport-Zink formal schemes. 
As we will see, these are defined via the functor $\RZfsm_G$ which can be evaluated 
at $R$ as above but not at $k'$. 

We can directly obtain a bijection $\RZnilp_G(k')\iso 
{X}_{ G, b, \mu^\sigma,\sigma}(k')$, without assuming that 
$k'/k$ is finitely generated, by a simpler version of the above argument.  }
\end{remark}

%%%%%%%%%%%%%%%%%%%%%%%%%%%%%%%

\section{Shimura varieties and representability}
\label{ShimuraSect}

\subsection{Integral models of Shimura varieties of Hodge type}
\label{ss:integral models}

Here, we recall  results of \cite{KisinJAMS} about integral models of Shimura varieties of Hodge type. 
We actually follow the set-up of \cite[(1.3)]{KisinLR},  to which  the reader is referred 
for more details.

\subsubsection{}\label{sss:Shimura set up} 

Let $(G, \mathcal{H})$ be a  Hodge type Shimura datum in the sense of \cite{DeligneCorvallis},
so that $G$ is a connected reductive group over $\Q$ and  $\mathcal{H}=\{h\}$ is the $G(\R)$-conjugacy class of a 
Deligne cocharacter $h: {\rm Res}_{\C/\R}\Gm\to G_{\R}$.   Define  $\mu_h: \Gm_\C\to G_\C$,
as usual,   by $\mu_h(z)=h_{\C}(z, 1)$.  
The reflex field $E\subset \bar\Q\subset \C$ is the field of definition of the conjugacy class $\{\mu_h\}$.

The condition of Hodge type  means that there is an   algebraic group embedding 
\begin{equation}\label{generic global hodge}
\iota: G\hookrightarrow \GSp_{2g}
\end{equation}
over $\Q$  inducing a morphism of Shimura data $(G, \mathcal{H}) \to ({\rm GSp}_{2g}, {\mathcal H}_{2g} )$.  
Here $\mathcal{H}_{2g}$ is the union of the 
usual Siegel upper and lower half-spaces of genus $g$.
The composition $ \iota \circ \mu_h $ is conjugate to the standard minuscule
cocharacter $\mu_{\rm std}: \Gm\to \GSp_{2g}$ given by $\mu_{\rm std}(a)={\rm diag}(a^{(g)}, 1^{(g)})$. 
As $G(\R)$ contains the image of the weight  homomorphism $w_h$,
we see that $G$ has to contain the torus of scalars (diagonal matrices) of $\GSp_{2g}$.

We assume that $G$ extends to a connected reductive group over $\Z_{(p)}$, which we again denote by $G$.
As in \cite[Lemma (2.3.1)]{KisinJAMS}, this implies that there is a rank $2g$ symplectic space $(C,\psi)$ over $\Z_{(p)}$
and a closed immersion 
$
\iota : G \hookrightarrow \GL(C)
$
of reductive groups over $\Z_{(p)}$ whose generic fiber factors through the subgroup $\GSp(C_\Q,\psi) \subset \GL(C_\Q)$ and induces   (\ref{generic global hodge}) after fixing an identification $\GSp(C_\Q,\psi) = \GSp_{2g}$.
By Zarhin's trick, after replacing $C$ by ${\rm Hom}_{\Z_{(p)}} (C , C )^{\oplus 4}$ and enlarging $g$,
we may  assume that $C$ is self-dual with respect to $\psi$.  We  then have 
a closed immersion  of reductive group schemes
\begin{equation}\label{global hodge}
\iota : G  \hookrightarrow \GSp(C  , \psi)
\end{equation}
over $\Z_{(p)}$ with generic fiber (\ref{generic global hodge}).

As in (\ref{contra}) let $D$ be the $G$-representation contragredient to $C$.
By [\emph{loc.~cit.}]  there is finite list  $(s_\alpha)$ of tensors $s_\alpha\in C^\otimes=D^\otimes$ that cut out $G \subset \GL(C)$, in the sense that 
\[
G  (R)=\big\{g\in \GL(C_R) : g\cdot (s_\alpha\otimes 1)=(s_\alpha\otimes 1),\ \forall \alpha   \big\}.
\]
 for all $\Z_{(p)}$-algebras $R$.   In particular, $G(W)=G(K)\cap \GL( C_W)$. 
 In what follows, we take the set of tensors $(s_\alpha)$ to always  include the tensor 
corresponding  to the perfect symplectic  form $\psi$; see the proof of Theorem \ref{mainKim} below.

A choice of field embedding $\bar\Q\hookrightarrow \bar K$ determines a place $v \mid p$ 
of $E$.  The completion $E_v$  is the field of definition of  $\{\mu_h\}$, 
  now regarded as a $G(\bar{K})$-conjugacy class of  cocharacters    $\Gm_{\bar K} \to G_{\bar K}$.

Let  
\[
{\mathrm {Sh}}_{U_p}(G, \mathcal{H}) =\varprojlim\nolimits_{U^p} {\mathrm {Sh}}_{U^pU_p}(G, \mathcal{H})
\]
be the canonical model over $E\subset K$ of the corresponding Shimura variety 
for the hyperspecial subgroup 
\[
U_p=G(\Z_p) \subset G(\Q_p).
\]  
Here the limit is over compact open subgroups $U^p$
of  $G({\mathbb A}^p_f)$.

\subsubsection{} 
For a $\Z_{(p)}$-scheme $S$ and an abelian scheme $A \to S$ we set
\[
 \mathrm{Ta}^p(A) =\varprojlim\nolimits_{p \nmid n} A[n],
 \]
viewed as an \'etale local system on $S$,  and  write  
\[
 \mathrm{Ta}^p(A)_\Q=  \mathrm{Ta}^p(A)  \otimes_{\Z}\Q .
\] 

 Consider the category obtained from the category of abelian schemes over $S$  by tensoring the Hom groups by $ \Z_{(p)}$. 
 An object in this category will be called an  \emph{abelian scheme over $S$ up to prime to $p$-isogeny}. 
 An isomorphism in this category will be called a  \emph{$p'$-quasi-isogeny}. Note that $\mathrm{Ta}^p(A)_\Q$ is functorial for $p'$-quasi-isogenies.

If  $A$ is an abelian scheme up to prime to $p$-isogeny, we write $A^\vee$ for the dual  abelian scheme up to prime to $p$-isogeny. 
A {\sl  weak polarization} of $A$ is an equivalence class of $p'$-quasi-isogenies $\lambda: A\iso  A^\vee$ 
 such that some $\Z_{(p)}^\times$-multiple of $\lambda$ is a polarization.   Here two such 
 $\lambda$ are equivalent if they differ by multiplication by an element of $\Z_{(p)}^\times$.

Let  $U'^p \subset \GSp_{2g}(\A_f^p)$ be any compact open subgroup, let 
\[
U_p' = \GSp(C, \psi)(\Z_p) \subset \GSp_{2g} (\Q_p)
\]
be the hyperspecial subgroup determined by the self-dual symplectic space $(C,\psi)$ over $\Z_{(p)}$, and set
\[
U' = U'^p U'_p \subset \GSp_{2g}(\A_f).
\]
 
 Assume $(A, \lambda)$ is   an abelian scheme up to  prime to $p$-isogeny with a weak polarization.
  A \emph{$U'^p$-level structure} on $(A, \lambda)$  is a global section  
 \[
 \epsilon^p_{U'}  \in \Gamma\big(S, \underline {\rm Isom} \big(C \otimes \A^p_f , \mathrm{Ta}^p(A)_\Q \big)/U'^p \big).
 \]
Here  $\underline {\rm Isom}\big(C \otimes \A^p_f , \mathrm{Ta}^p(A)_\Q\big)/U'^p$
 is the \'etale sheaf on $S$ of  $U'^p$-orbits of isomorphisms 
 \[
 C\otimes \A^p_f \iso  \mathrm{Ta}^p(A)_\Q
 \]
identifying  the symplectic pairings induced by   $\psi$ and $\lambda$,  up to a $(\A_f^p)^\times$-scalar.

For $U'^p$ sufficiently small, the functor that assigns to $S$ the set of isomorphism  
classes of triples  $(A,\lambda, \epsilon^p_{U'})$ as above, is representable by a smooth $\Z_{(p)}$-scheme
${\mathscr A}_{g, U'}$,  whose generic fiber
\begin{equation}\label{siegel generic}
{\mathscr A}_{g, U'}  \otimes_{\Z_{(p)} } \Q \iso {\rm Sh}_{U' }({\rm GSp}_{2g}, {\mathcal H}_{2g} ) 
\end{equation}
 is  identified   with the Siegel Shimura variety; see \cite{KottJAMS}.  We will always assume that  $U'^p$ is sufficiently small in what follows.

\subsubsection{}

Let us denote by $\sS_{U_p}(G, \mathcal{H})$ the canonical smooth model over $\co_{E,(v)}$
of the Shimura variety 
\[
{\mathrm {Sh}}_{U_p}(G, \mathcal{H}) =\varprojlim_{U^p} {\mathrm {Sh}}_{U^pU_p}( G , \mathcal{H})
\]
  constructed in \cite{KisinJAMS} for the hyperspecial subgroup $U_p=G(\Z_p)$.   Thus
\[
{\sS}_{U_p}(G, \mathcal{H})= \varprojlim_{U^p} {\sS}_{U^pU_p}(G, \mathcal{H}),
\]
where for  sufficiently small subgroups $U^p_1\subset U^p_2\subset G({\mathbb A}_f^p)$ the transition morphism
\[
{\sS}_{U^p_1U_p} (  G , \mathcal{H})\to {\sS}_{U^p_2U_p}( G ,  \mathcal{H})
\]
is finite \'etale. In fact, for $U=U_pU^p$ with $U^p$ sufficiently small, the integral model  $\sS_U(G, \mathcal{H})$ is smooth 
over $\co_{E,(v)}$,    and is constructed as the normalization of the Zariski closure 
$\sS_U(G, \mathcal{H})^-$ of  the image of the morphism
\[
\Sh_U(G, \mathcal{H})   \to  \Sh_{U'}({\rm GSp}_{2g}, {\mathcal H}_{2g} ) \otimes_\Q E \to    {\mathscr A}_{g, U'}\otimes_{\Z_{(p)} }\co_{E,(v)}
\]
induced by (\ref{global hodge}) and (\ref{siegel generic}) for a suitable choice of level structure $U'^p$.
  In particular, there are finite morphisms
\begin{equation*}
%\label{iotaMap}
\iota: \sS_U(G, \mathcal{H}) \map{\mathrm{normalization} }  \sS_U (G, \mathcal{H})^- \to {\mathscr A}_{g}\otimes_{\Z_{(p)}}\co_{E,(v)},
\end{equation*}
where now we suppress the level structure $U'$  on the Siegel space from the notation.  
This should not lead to confusion, as the particular choice of level $U'$ will play little role in our arguments.

\subsubsection{} \label{sss:global to local datum}

Now let us pick a point $x_0\in \sS_U(G, \mathcal{H})(k)$. 
Consider the contravariant Dieudonn\'e module $\Db(X_0)(W)$ of the $p$-divisible
group $X_0=A_{x_0}[p^\infty]$ of the abelian scheme $A_{x_0}$ over $k$
determined by the point  $\iota(x_0)\in {\mathscr A}_g(k)$. By \cite[Cor.~(1.4.3)]{KisinJAMS}, there are 
 crystalline tensors    
 \[
 t_{\alpha, 0} =t^{\rm cr}_{\alpha, 0}\in \Db(X_0)(W)^\otimes
 \]
that are fixed by the action of Frobenius on $\Db(X_0)(W)[1/p]$,  and satisfy 
\[
t_{\alpha, 0}(k)\in {\rm Fil}^0(\Db(X_0)^\otimes(k)).
\]

By [\emph{loc.~cit.}], there is an isomorphism of $W$-modules
 \begin{equation*}
 %\label{keylemma}
 \beta_0:  D\otimes_{\Z_p}W\xrightarrow{\sim }\Db(X_0)(W)
 \end{equation*}
 identifying  $s_\alpha\otimes 1$ with  $t_{\alpha, 0}$.    After  choosing such an isomorphism,  
 the Frobenius on $\Db(X_0)(W)$ has the form  $F=b_{x_0}\circ \sigma$ for some $b_{x_0}\in G(K)$.
 
Consider the Hodge filtration \[{\rm Fil}^1(X_0) \subset \Db(X_0)(k)\cong {\rm H}^1_{\rm dR}(A_{x_0}/k).\] By \cite[Cor. (1.4.3) (4)]{KisinJAMS} this filtration is given by a $G_k$-valued cocharacter, and we pick any lift  to a cocharacter 
\begin{equation*}
\mu_{x_0}: \Gm_W\to G_W.
\end{equation*}
By the argument in the proof of \cite[Lemma (1.1.9)]{KisinLR} the $G(W)$-conjugacy class of
$\mu_{x_0}$ is independent of the choice of $\beta_0$.
In fact, any such cocharacter satisfies  
\[
b_{x_0}\in G(W) \mu_{x_0}^\sigma(p) G(W),
\]
and  lies in the   $G(\bar{K})$-conjugacy class defined by $ \mu^{-1}_h $  (see \cite[(1.1.9)]{KisinLR}), whose
local reflex field is   $E_v \subset K$.

In this way, each point  $x_0 \in    \sS_{U_p}(G, \mathcal{H})(k)$ produces a local Shimura-Hodge datum, which, for ease of notation, we abbreviate
to 
\begin{equation}\label{localtoglobaleq}
 (G,b,\mu,C) = (G_{\Z_p}, b_{x_0}, \mu_{x_0},  C_{\Z_p} ).
 \end{equation}
 Note that the $p$-divisible group of Lemma \ref{lemma121} is  $X_0=A_{x_0}[p^\infty]$.

For the rest of \S \ref{ShimuraSect}, we fix the  local Shimura-Hodge datum  (\ref{localtoglobaleq}) given
by a point $x_0  \in    \sS_{U_p}(G, \mathcal{H})(k)$ by the above procedure. 
We also fix the tensors  
$
s_\alpha\in C^\otimes =D^\otimes
$
cutting out the subgroup $G\subset \GL(C)$.  It is essential in what follows that both the local Shimura-Hodge datum $(G,b,\mu,C)$ and the tensors
$(s_\alpha)$ arise from a global point $x_0  \in    \sS_{U_p}(G, \mathcal{H})(k)$, and from tensors defined on $C$, not just on $C_{\Z_p}$.

\subsubsection{}
  \label{universaltensors} 
As the Shimura datum $(G,\mathcal{H}$) will remain fixed, in what follows we will often abbreviate $\sS_U$ or just $\sS$ instead of $\sS_{U}(G, \mathcal{H})$.
We will also often write $\sS_{U_p}$ instead of  $\sS_{U_p}(G, \mathcal{H})$.

Let $f: A\to\sS$ be the universal abelian scheme over $\sS$, and  denote by
\[
\widehat f: \widehat A\to \widehat \sS
\] 
the corresponding morphism of smooth formal schemes over $\Z_p$ obtained by $p$-adic completion.

We have the  crystals $\Db(X^{\rm univ})$ and $\Db(X^{\rm univ})^\otimes$, and Frobenius isocrystals $\Db(X^{\rm univ})[1/p]$ and $\Db(X^{\rm univ})^\otimes[1/p]$,  over $\widehat\sS$, where $X^{\rm univ}=\widehat A[p^{\infty}]$
is the $p$-divisible group of the universal abelian scheme. 
By \cite{BerthelotBreenMessingII}, we have
\[
\Db(X^{\rm univ})= {\rm R}^1\widehat{f}_{{\rm cris}, *}\co_{\widehat{A}/\Z_p},
\]
and there is a natural isomorphism 
\[
{\rm H}^1_{\rm dR}(\widehat A/\widehat \sS)\iso  ({\rm R}^1\widehat{f}_{{\rm cris}, *}\co_{\widehat{A}/\Z_p})_{\widehat\sS}
\]
of coherent sheaves $\co_{\widehat{\sS}}$-modules,    where the left hand side is the first relative de Rham cohomology of $\widehat{A}$, 
and  the right hand side is  the pullback of the crystal to the Zariski site. 

By \cite[Cor. (2.3.9)]{KisinJAMS}, there are de Rham tensors 
\[
t^{\rm univ}_{\alpha, \rm{dR}}: {\bf 1}\to {\rm H}^1_{\rm dR}(\widehat A/\widehat {\sS})^\otimes,
\]
which are horizontal for the Gauss-Manin connection.
Since $\sS$ is smooth over $\Z_p$, 
we obtain $\co_{\widehat\sS/\Z_p}$-morphisms 
\[
t^{\rm univ}_{\alpha}: {\bf 1}\to  ({\rm R}^1\widehat{f}_{{\rm cris}, *}\co_{\widehat{A}/\Z_p})^{\otimes}=\Db(X^{\rm univ})^\otimes
\]
of crystals over $\widehat\sS$.
By the construction in \cite{KisinJAMS}, the tensors $t^{\rm univ}_{\alpha}$ restrict to $t_{\alpha, 0}$ by pulling back via
$x_0:\Spec(k)\to \widehat\sS$, and
\[
t^{\rm univ}_{\alpha}[1/p]: {\bf 1}[1/p]\to  \Db(X^{\rm univ})^\otimes[1/p]
\] 
are Frobenius equivariant.

\subsubsection{}\label{Faltings} 
Recall from \S \ref{general} the homogeneous space 
\[
M_{G, \mu }\otimes_{\co_{E,v}}W\iso  G_{W}/P_{\mu } 
\]
 over $W$,
where $P_{\mu}\subset G_{W}$ is the parabolic subgroup defined by $\mu:\Gm_W\to G_W $.
Denote by $U^\mu=U^\mu_G $ the unipotent radical of the opposite to $P_\mu$ parabolic subgroup of $G_{W}$ and by $U^{\mu,\wedge}_G$ the formal completion of $U^\mu_G  $
at its identity section over $W$.  Then $U^{\mu,\wedge}_G$ can be identified with
the formal completion of $M_{G, \mu}\otimes_{\co_E}W$ at the section given by $1\cdot P_\mu /P_\mu$.

Let 
$R=\widehat\co_{\sS_W, x_0} $ be the completion
of   the local ring of $\sS_W$ at $x_0$. By  \cite[Prop. (2.3.5)]{KisinJAMS} and its proof,  we may identify
${\rm Spf}(R )$  with $U^{\mu_{x_0}, \wedge}_{G}$ and identify  $R= W\lps x_1,\ldots , x_d\rps$ in such a way that the $W$-point  
given by $x_1=\cdots =x_d=0$ has Hodge filtration given by $\mu_{x_0}$.  
  In fact, by a result of Faltings \cite{FaltingsJAMS}, we see, as in \cite[(1.5)]{KisinJAMS} (see  also \cite{Moonen} \S 4), that this identification can be chosen
in such a way that the  Dieudonn\'e crystal  of the universal $p$-divisible group
$X^{\rm univ}$ with its tensors $t_{\alpha}$ over $R $ is 
\[
\Db(X^{\rm univ})(R)=\Db(X_0)(W)\otimes_WR = D\otimes_{\Z_p}R , 
\]
(as $R$-modules) with filtration
\[
 {\rm Fil}^1( X^{\rm univ} ) ={\rm Fil}^1( X_0)  \otimes_{W}R , 
\]
and $t_{\alpha}=t_{\alpha, 0}\otimes 1=s_{\alpha}\otimes 1$, while the semi-linear Frobenius 
\[
F: \Db(X^{\rm univ})(R)\to \Db(X^{\rm univ})(R)
\]
 is given by $F= u\cdot (b_{x_0}\otimes \phi_R)$.  Here $u$ is the  universal  $R $-point of the
completion $U^{\mu_{x_0}, \wedge}_{G}$ of the unipotent subgroup $U^{\mu_{x_0}}_G$ at the identity section and $\phi_{R}$ is the lift of Frobenius 
such that $\phi_R(x_i)=x^p_i$.

\subsection{A global construction of Rapoport-Zink formal schemes}
 \label{newproofpar}

A more general version of the following representability result
appears in work of W. Kim \cite{KimRZ}. Kim does not assume that the local Shimura 
datum 
  $(G, [b], \{\mu\})$ of Hodge type are obtained from a point on a Shimura variety.   Although we were very much inspired by Kim's work, our arguments are quite different 
and independent of \cite{KimRZ}.

\begin{theorem}\label{mainKim} 
Let the unramified local Shimura-Hodge datum $(G, b, \mu, C)$  and  
the finite set of tensors $(s_\alpha)\in C^\otimes$   that cut out $G\subset \GL( C)$ be  as in \ref{localtoglobaleq} above.
Suppose $\xo=\xo(G,b,\mu,  C)$ is the $p$-divisible group
over $k$ defined in Lemma \ref{lemma121}. 
\begin{enumerate}
\item
 There exists a formal scheme   $\RZc_G=\RZc_{ G, b, \mu,  C, (s_\alpha) }$ over $\Spf(W)$,  formally smooth and formally
locally of finite type, that  represents the functor $\RZfsm_G$ on ${\rm ANilp}^{\rm fsm}_W$
defined in \ref{RZGfsm}. 

\item
 The action of $J_b(\Q_p)$ on $\RZfsm_G$ given by (\ref{Jaction}) is  induced 
by a left $J_b(\Q_p)$-action on the formal scheme $\RZc_{G}$.

\item
 The formal scheme $\RZc_G$ is a closed formal subscheme of  $\RZc(\xo)$.
\end{enumerate}
\end{theorem}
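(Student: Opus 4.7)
I would build $\RZc_G$ as a formal closed subscheme of the classical Rapoport-Zink space $\RZc(X_0)$, so that (iii) holds by construction. Part (ii) then follows automatically: $J_b(\Q_p)$ embeds into the group of quasi-automorphisms of $X_0$, and by definition of $J_b$ its action on $\Db(X_0)(W)^\otimes[1/p]$ fixes each $t_{\alpha,0}$, so it preserves the tensor condition in Definition~\ref{defRZG}(i) and thereby restricts to an action on $\RZfsm_G$. The substantive content is (i), representability on $\mathrm{ANilp}^{\rm fsm}_W$.

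The construction would be pointwise-and-glue. By Theorem~\ref{bigthm:RADL} together with Remark~\ref{remark:ftpoints}, the $k$-points of $\RZfsm_G$ inject into $\RZc(X_0)(k)$ with image parametrized by the refined affine Deligne-Lusztig set $X_{G,b,\mu^\sigma,\sigma}(k)$. At the base point $z_0=(X_0,\mathrm{id},(t_{\alpha,0}))$, Faltings's theorem identifies $\widehat{\co}_{\RZc(X_0),z_0}$ with the universal deformation space $U^{\mu,\wedge}_{\GL(D)}$ of $X_0$. The decisive input from Kisin's theory (\S\ref{Faltings}) is that $\widehat{\co}_{\sS_W,x_0}$ embeds into this deformation space as the closed formal subscheme $U^{\mu,\wedge}_G$, and parametrizes precisely those deformations over which $t_{\alpha,0}$ extend to horizontal, Frobenius-invariant tensors. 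This closed formal subscheme, formally smooth and formally of finite type over $W$, will be the formal neighborhood of $z_0$ in $\RZc_G$.

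To install analogous neighborhoods at every $z \in \RZc_G(k)$, I would exhibit each such $z=(X_z,\rho_z,(t_{\alpha,z}))$ as arising from some point $x_z \in \sS_{U^pU_p}(G,\mathcal{H})(k)$ for a sufficiently small prime-to-$p$ level $U^p$: the quasi-isogeny $\rho_z$ together with a $U^p$-level structure produces an abelian variety up to prime-to-$p$ isogeny whose $p$-divisible group realizes $z$. The completion $\widehat{\co}_{\sS_W,x_z}$ then embeds into $\widehat{\co}_{\RZc(X_0),z}$ as a formally smooth closed subscheme, via the universal $p$-divisible group on $\sS$, and serves as the formal neighborhood of $z$. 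These neighborhoods glue unambiguously because each is characterized by the same universal property: deformations of the underlying $p$-divisible group along which all crystalline tensors extend.

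Representability of $\RZfsm_G$ by the resulting formal scheme $\RZc_G$ is then a formal check: for an $R$-point $(X,\rho)$ of $\RZc_G$ with $R \in \mathrm{ANilp}^{\rm fsm}_W$, one passes to a PD-thickening $\widetilde R \to R$ (supplied by \S\ref{PDlift}) and uses the $G_{\widetilde R}$-torsor structure of the comparison $\Db(X)(\widetilde R) \iso D \otimes_{\Z_p} \widetilde R$ to transport $s_\alpha \otimes 1$ to a horizontal, Frobenius-equivariant tensor $t_\alpha(\widetilde R)$; this descends to a morphism of crystals $t_\alpha$ as in Remark~\ref{remark1}(c), and conditions (i)--(iii) of Definition~\ref{defRZG} can be read off from the local Faltings--Kisin description. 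The main obstacle I anticipate is the globalization step: showing that the patched formal neighborhoods assemble into a single formal scheme closed inside $\RZc(X_0)$, rather than merely covering $\RZc_G(k)$ set-theoretically, will require the rigidity of quasi-isogenies, the uniqueness of horizontal extensions of the tensors, and crucially that $(s_\alpha)$ arises globally from $C^\otimes$ so that $\sS$ and its prime-to-$p$ Hecke translates account for every $k$-point of $\RZc_G$.
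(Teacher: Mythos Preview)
Your plan shares the paper's key insight---that the formal neighborhoods of $\RZc_G$ at $k$-points should be supplied by completions of the integral model $\sS_W$ via the Faltings--Kisin description in \S\ref{Faltings}---but the paper's execution is genuinely different and avoids the gluing obstacle you identify. Rather than patching neighborhoods, the paper builds $\RZc_G$ in one stroke as a global fiber product: one forms
\[
\RZc_G^\diamond \;=\; \RZc_{\GSp(C)} \times_{\widehat{\mathscr A}_{g,W}} \widehat{\sS}_W
\]
using the Rapoport--Zink morphism $\Theta:\RZc_{\GSp(C)}\to \widehat{\mathscr A}_{g,W}$ and the finite map $\iota:\widehat{\sS}_W\to \widehat{\mathscr A}_{g,W}$. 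This $\RZc_G^\diamond$ is automatically a formal scheme, finite over $\RZc_{\GSp(C)}$ and formally smooth (since $\iota$ is a closed immersion on completions). Over $\RZc_G^\diamond$ one has two families of crystalline tensors---those pulled back from $\sS$ and those transported from $t_{\alpha,0}$ via the universal quasi-isogeny---and $\RZc_G$ is defined as the locus where they agree, which is shown (via a rigidity argument with convergent isocrystals) to be a union of connected components of $\RZc_G^\diamond$. No gluing is needed; closedness in $\RZc(X_0)$ is deduced at the end by a separate Nakayama-type argument.

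There is also a gap in your pointwise step. You assert that for each $z\in\RZc_G(k)$ the quasi-isogeny $\rho_z$ produces a point $x_z\in\sS(k)$, but a priori $\Theta$ only lands in $\mathscr A_{g,W}(k)$. Showing that this Siegel point actually lies on the Hodge-type model $\sS$ is exactly the content of \cite[Proposition~(1.4.4)]{KisinLR}, which relies on the main result of \cite{CKV}; the paper flags this explicitly as ``already quite deep'' when verifying the $R=k$ case of representability. Your sketch treats this as routine, but it is one of the two substantive inputs (the other being the Faltings--Kisin deformation theory) that make the construction work.
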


\begin{remark}
The formal scheme  $\RZc_G$ is characterized as the unique formally smooth and locally formally 
of finite type over $W$ that represents the functor $\RZfsm_G$, and this functor is determined by $(X_0 , (t_{\alpha,0}))$.
(The definition of the functor also involves the conjugacy class $\{\mu\}$, but this is determined by $X_0$ and its tensors.)
In particular, it follows that $\RZc_G$ agrees, up to isomorphism, with  the Rapoport-Zink formal scheme of Kim  \cite{KimRZ}. 
\end{remark}

\begin{corollary}\label{cor:points}
Under the above assumptions, if $k'/k$ is finitely generated field extension, the 
construction in the proof of Theorem \ref{bijectionRADL} provides a  bijection
\begin{equation*}
\pi: \RZc_G(k')\iso  X_{G, b, \mu^\sigma, \sigma}(k').
\end{equation*}
\end{corollary}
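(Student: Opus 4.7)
The plan is to deduce this bijection by combining the representability of $\RZfsm_G$ by $\RZc_G$ (Theorem \ref{mainKim}) with the identification of field-valued points of $\RZnilp_G$ with the refined affine Deligne-Lusztig set (Theorem \ref{bijectionRADL}). First I would identify $\RZc_G(k')$ with a direct limit of $R$-valued points over the smooth finitely generated $k$-subalgebras $R\subset k'$ with fraction field $k'$ supplied by Proposition \ref{fgfield}. Since $\RZc_G$ is locally formally of finite type over $\Spf(W)$, its reduced subscheme $\RZc_G^\mathrm{red}$ is locally of finite type over $k$, and any $k'$-valued point of $\RZc_G$ factors through $\RZc_G^\mathrm{red}$ because $k'$ is reduced. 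The standard colimit description of $k'$-points of a $k$-scheme locally of finite type then yields
\[
\RZc_G(k') \;=\; \varinjlim_R \RZc_G(R).
\]

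Next I would verify that each such $R$, regarded as a discrete $W$-algebra, belongs to the category $\mathrm{ANilp}_W^\mathrm{fsm}$: it is Noetherian, finitely generated over $W$ (since $k$ is a quotient of $W$), and formally smooth over $W/p=k$. The representability assertion of Theorem \ref{mainKim} then gives $\RZc_G(R)=\RZfsm_G(R)$, and the definition in \S\ref{RZGfsm} further collapses to $\RZfsm_G(R)=\RZnilp_G(R)$ because $R$ carries the discrete topology (so one may take the ideal of definition to be zero, giving $R/I^n=R$ for all $n$). Assembling the two steps produces
\[
\RZc_G(k') \;=\; \varinjlim_R \RZnilp_G(R),
\]
and Theorem \ref{bijectionRADL} supplies a bijection from the right-hand side onto $X_{G,b,\mu^\sigma,\sigma}(k')$ through the explicit construction of $\pi$ recalled in its proof.

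The one point requiring genuine verification is the first step, that evaluation of the formal scheme $\RZc_G$ at $k'$ coincides with the colimit over smooth finitely generated subalgebras. This is a standard property for formal schemes locally formally of finite type over $W$---a $k'$-valued point factors through the reduced locally finite type $k$-scheme $\RZc_G^\mathrm{red}$, on which the colimit description is classical---so no genuinely new obstacle arises beyond Theorems \ref{mainKim} and \ref{bijectionRADL}. Once the bookkeeping above is in place, the map $\pi$ from the proof of Theorem \ref{bijectionRADL} is manifestly functorial in $R$ and descends to the claimed bijection, which by construction is precisely the one obtained by trivializing $\Db(X)(W')$ via a tensor-preserving isomorphism $\beta$ and reading off the resulting class $gG(W')\in G(K')/G(W')$.
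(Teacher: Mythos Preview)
Your proposal is correct and follows essentially the same route as the paper's proof: pass to the reduced scheme $\RZc_G^{\mathrm{red}}$ (locally of finite type over $k$ by Theorem~\ref{mainKim}), write $\RZc_G(k')=\varinjlim_R \RZc_G(R)$ over the algebras $R$ from Proposition~\ref{fgfield}, identify $\RZc_G(R)=\RZfsm_G(R)=\RZnilp_G(R)$ (the paper records this as Remark~\ref{remark:ftpoints}), and then invoke Theorem~\ref{bijectionRADL}. Your write-up is slightly more explicit about why $R\in\mathrm{ANilp}_W^{\mathrm{fsm}}$ and why $\RZfsm_G(R)=\RZnilp_G(R)$, but the argument is the same.
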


\begin{proof}
 By Theorem \ref{mainKim} (i),  the underlying reduced scheme  $\RZc_G^{\rm red}$ is locally of finite type over $k$,  and so
\[
\RZc_G(k')=\varinjlim\nolimits_R \RZc_G(R)=\varinjlim\nolimits_R\RZnilp_G(R),
\]
where  the limit is as in Proposition \ref{fgfield}. The corollary 
then follows from Theorem \ref{bijectionRADL} after taking into account
Remark \ref{remark:ftpoints}.
\end{proof}

\subsubsection{}
 We will now turn to the proof of  Theorem \ref{mainKim} but, before we do that, we sketch the argument. We first construct the formal scheme
 $\RZc_G$
 over $W$ using the integral model of a Shimura variety and then we show, using also certain results from \cite{KisinLR}, that it represents the functor $\RZfsm_G$.

 Roughly, the construction of $\RZc_G$ is done in two steps: First, we form a fiber product $\RZc^\diamond_G$ of the classical Rapoport-Zink space associated to the $p$-divisible group $X_0$ with the $p$-adic completion of the integral model of the Shimura variety; the fiber product is over a Siegel moduli space. Over this fiber product, we have  crystalline tensors obtained by pulling back the universal crystalline tensors $t_\alpha^{\rm univ}$ over the integral model constructed by Kisin (see \S\ref{universaltensors}); we also have corresponding crystalline tensors $t_{\alpha, 0}$ obtained by pulling back the crystalline tensors on the base point $X_0$ via the universal quasi-isogeny. The second step is to show that the locus where these two  tensors agree, \emph{i.e.} with $t_\alpha^{\rm univ}=t_{\alpha, 0}$, is given by a closed and open formal subscheme of the fiber product. This formal subscheme is the desired formal scheme $\RZc_G$.

\begin{proof}
It follows directly from the definition that the usual Rapoport-Zink formal scheme $\RZc(\xo)$  of  \S\ref{sss:RZ} represents 
\[
\RZfsm_{\GL( C)} \define \RZfsm_{ \GL( C), b, \mu,  C ,\emptyset }.
\] 
Here  we take  the set of tensors  to be empty. 
 Also, we can show that the symplectic Rapoport-Zink formal scheme
 $\RZc(\xo, \lambda_0)$ represents 
 \[
 \RZfsm_{\GSp( C)} \define  \RZfsm_{ \GSp( C, \psi), b, \mu,  C , (s_{\rm sympl})}.
 \]
 Here, $\lambda_0: X_0\xrightarrow{\sim} X_0^\vee$ is the principal polarization deduced from the symplectic form $\psi$ on 
 $C$, and the single tensor $s_{\rm sympl}$ is defined as follows: Denote by $\eta: \GSp( C,\psi) \to \Gm$ the similitude character. The 
 symplectic pairing 
$
\psi:  C\otimes C\xrightarrow{\ } \Z_p(\eta)
$ 
defining $\GSp( C,\psi)$ induces an isomorphism $C\iso   D(\eta)$, which  allows us to view the dual 
\[
\psi^*: \Z_p(\eta^{-1})\to ( C\otimes C)^*\cong  D\otimes D
\]
as a map $\psi^*: \Z_p(\eta)\to  C\otimes C$. Now define  
\[
s_{\rm sympl}\in {\rm End}( C \otimes C )\cong  C^{\otimes 2}\otimes D^{\otimes 2}\subset C^\otimes
\]
 as  the composition 
\[
 C\otimes C\xrightarrow{\psi} \Z_p(\eta)\xrightarrow{\psi^*}  C\otimes C.
\]
The group $\GSp( C, \psi)\subset \GL( C)$
is the  stabilizer of $s_{\rm sympl}$. One can easily shows that 
$\RZc(\xo, \lambda_0)$ represents $\RZfsm_{\GSp( C)}$
by using duality and the full-faithfulness of the 
Dieudonn\'e crystal functor (see \cite[4.1 and 4.3]{BerthelotMessingIII} and \cite{deJongBTIHES}) 
for $p$-divisible groups over bases in ${\rm ANilp}^{\rm fsm}_W$.

In accordance to the above general notation scheme,  we can now denote
\[
\RZc_{\GL( C)}  = \RZc(X_0), \quad  \RZc_{\GSp( C)} = \RZc(X_0,\lambda_0).
\]
If $ C$ is clear from the context we will simply write $\RZc_{\GL}$, $\RZc_{\GSp}$, instead.

As in \cite[Theorem 6.21]{RapZinkBook}, there is a canonical  morphism 
\[
\Theta: \RZc_{\GSp( C) }\xrightarrow{ \ } \widehat{\mathscr{A}}_{g, W}\define\widehat{\mathscr A}_g\widehat{\otimes}_{\Z_p}W.
\]
(Here again, $\widehat{\mathscr A}_g$ denotes the completion of ${\mathscr A}_g\to \Spec(\Z_{(p)})$ along its special fiber.)
This morphism sends a triple $(X, \lambda, \rho)$ to the point corresponding to the unique principally polarized abelian scheme $A$ whose $p$-divisible group is
$X$, and  for which there exists  a quasi-isogeny $A_{x_0} \dasharrow A$  respecting polarizations up to $\Q^\times$-scaling, inducing an isomorphism on $\ell$-divisible groups for all $\ell\neq p$, and inducing the quasi-isogeny 
\[
A_{x_0}[p^\infty] =X_0 \overset{\rho}\dasharrow X=A[p^\infty] .
\]

We now let $\RZc^\dia_G$ be the formal scheme over $\Spf(\bZp)$ defined  by the   fiber product
\begin{equation*}
%\label{fiber}
\xymatrix{
{  \RZc^\dia_G  }  \ar[r] \ar[d]_{\Theta^\dia_G   }  & {  \RZc_{\GSp( C) }  }   \ar[d]^{\Theta}   \\
  {  \widehat\sS_\bZp }    \ar[r]^{\iota}   & \widehat{\mathscr A}_{g,W.}
}
\end{equation*}
Here, $\sS=\sS_{U^pU_p}$, in which we fix a choice of a sufficiently small prime-to-$p$ level $U^p$. We will see eventually that all such choices produce the same Rapoport-Zink formal scheme. 

The formal scheme $\RZc^\dia_G$ represents the functor that assigns to each $W$-scheme $S$ in ${\rm Nilp}_W$
 the set of quadruples 
$
(X, \lambda, \rho, f: S\rightarrow  \sS_W )
$
where $(X, \lambda, \rho)$ is  an $S$-valued point of $\RZc_{\GSp }$  such that the  composition
\[
S\xrightarrow{f} \sS_{\bZp}\xrightarrow{\rm norm}  \sS_{\bZp}^-\hookrightarrow  {\mathscr A}_{g, W}.
\]
gives the corresponding point $\Theta((X, \lambda, \rho))\in  {\mathscr A}_{g, W}(S)$.
 
In this situation, the Dieudonn\'e crystal ${\mathbb D}(X)$ of the $p$-divisible group over $S$ supports 
  tensors $t_{\alpha}:{\bf 1}\to  {\mathbb D}(X)^{\otimes}$ which are obtained as 
$
t_{\alpha}=f^*(t_{\alpha}^{\rm univ}),
$ 
 \emph{i.e.}~by pulling back the universal crystalline tensors over $\widehat\sS$.

\begin{proposition}\label{finite}
The morphism $\RZc^\dia_G\to \RZc_{\GSp( C)}$ is finite and the formal scheme $\RZc^\dia_G$ is formally smooth
and locally formally of finite type over ${\rm Spf}(W)$.
\end{proposition}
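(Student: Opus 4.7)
The plan is to split the argument into two parts: finiteness, handled by a direct base change argument, and formal smoothness together with local finite-typeness, deduced by analyzing complete local rings at closed points through the Serre-Tate theorem.

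For finiteness, I would observe that $\sS_U(G,\mathcal{H}) \to \mathscr{A}_{g,U'} \otimes_{\Z_{(p)}} \co_{E,(v)}$ is finite, factoring as the normalization $\sS \to \sS^-$ followed by a closed immersion; the normalization is finite because $\sS^-$ is of finite type over the excellent ring $\co_{E,(v)}$. Finiteness is preserved under $p$-adic completion and under base change, so $\iota \colon \widehat{\sS}_W \to \widehat{\mathscr{A}}_{g,W}$ is finite, and therefore so is $\RZc^\dia_G \to \RZc_{\GSp(C)}$, being the pullback of $\iota$ along $\Theta$. Since $\RZc_{\GSp(C)}$ is locally formally of finite type over $\Spf(W)$, the same holds for $\RZc^\dia_G$.

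The main content is formal smoothness. It will suffice to show that the complete local ring of $\RZc^\dia_G$ at every $k$-valued point $y = (X,\lambda,\rho,f)$ is a power series ring over $W$. Write $x = (X,\lambda,\rho) \in \RZc_{\GSp(C)}(k)$ and $z = \Theta(x) \in \widehat{\mathscr{A}}_{g,W}(k)$; by the definition of the fiber product, $z$ agrees with the image of $f$ under $\iota$. The Serre-Tate theorem implies that the complete local ring of $\widehat{\mathscr{A}}_{g,W}$ at $z$ pro-represents deformations of the polarized $p$-divisible group $(X,\lambda)$, and the complete local ring of $\RZc_{\GSp(C)}$ at $x$ pro-represents the same deformation functor, since the quasi-isogeny $\rho$ lifts uniquely to any nilpotent thickening by rigidity and so carries no infinitesimal moduli. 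Hence $\Theta$ is formally \'etale at $x$, inducing an isomorphism between these complete local rings; this is essentially the content of \cite[Theorem 6.21]{RapZinkBook}. Pulling back along $\iota$, the complete local ring of $\RZc^\dia_G$ at $y$ is isomorphic to the complete local ring of $\widehat{\sS}_W$ at $f$, which is a power series ring over $W$ because $\sS_W$ is smooth over $W$.

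The hardest step is the identification of the complete local rings through $\Theta$, but this is already recorded in \cite{RapZinkBook}, so the remaining work is essentially bookkeeping: matching the data of $(A,\lambda_A)$ on the Shimura side with the rigidified data $(X,\lambda,\rho)$ on the Rapoport-Zink side, which is guaranteed by the construction of $\Theta$. Once formal smoothness is checked at every closed point and local finite-typeness is known, one concludes that $\RZc^\dia_G$ is formally smooth and locally formally of finite type over $\Spf(W)$.
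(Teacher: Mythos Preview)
Your proposal is correct and follows essentially the same route as the paper: finiteness by base change of $\iota$, local finite-typeness inherited from $\RZc_{\GSp(C)}$, and formal smoothness by using Serre--Tate (plus rigidity of quasi-isogenies) to identify the formal completion of $\RZc^\dia_G$ at a closed point with that of $\widehat{\sS}_W$ at the image under $\Theta^\dia_G$. The only difference is that the paper also records, via \cite[Prop.~(2.3.5) and Thm.~(2.3.8)]{KisinJAMS}, that $\iota$ induces a closed immersion on formal completions; this is not needed for the statement at hand (your formally-\'etale base-change argument suffices) but is invoked later in the proof of Proposition~\ref{closedimm}.
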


\begin{proof}
Since $\iota$ is finite, the same is true for $\RZc^\dia_G\to \RZc_{\GSp( C)}$,  and so $\RZc^\dia_G$ is locally formally of finite type over $W$.
 By \cite[Prop. (2.3.5)]{KisinJAMS}  and the proof of \cite[Theorem (2.3.8)]{KisinJAMS}  (see also \S\ref{Faltings}), the 
morphism $\iota$ induces a closed immersion between the formal completions of
  $\sS_W$ and $\mathscr{A}_{g, \bZp}$ at  each closed point
$s \in \sS_W$. Moreover, the scheme $\sS_W$ is smooth. By the Serre-Tate theorem, 
the formal completion of $\RZc_{\GSp( C)}$ at any closed point can be identified
with the formal completion of ${\mathscr A}_{g,\bZp}$ at 
the corresponding point and formal smoothness follows. In fact, for each closed point $s$ 
of ${\RZc}^\dia_{G}$ the morphism $\Theta^\dia_G$
 gives an isomorphism 
 \[
 {\widehat\RZc}^\dia_{G, s}\iso  \widehat{\sS}_{\bZp, s}
 \]
 between  formal completions.
\end{proof}

  Suppose that $S$ is a scheme in ${\rm Nilp}_W$ and $a\in \RZc^\dia_G(S)$. 
  We have crystals $\Db(X)$ and $\Db(X)^\otimes$, and isocrystals $\Db(X)[1/p]$ and $\Db(X)^\otimes[1/p]$,  over $S$. 
  By pulling back  the universal 
crystalline tensors $t^{\rm univ}_\alpha$   via $\Theta^\dia\circ a: S\to  \sS$, we obtain $t_{\alpha }: {\bf 1}\to \Db(X)^\otimes$ over $S$.

\begin{definition}
 For each $S$ in ${\rm Nilp}_W$,  denote by  
 $
 \RZc_G(S) \subset \RZc_G^\dia(S)
 $ 
 the subset consisting  of all points $a \in \RZc_G^\dia(S)$ with  the following property:
For every field  extension  $k'/k$ and every point $y\in S(k')$, 
the  isomorphism 
\[
\Db(\rho):  \Db(X_y)^{\otimes}(W')[1/p] \iso  \Db(X_0)^{\otimes}[1/p]\otimes_{W[1/p]} W' [1/p] 
\]
 identifies the tensors  $t_{\alpha}(W')$  with $t_{\alpha,  0}(W)\otimes 1$.
In other words
\begin{equation}\label{ident}
\Db(\rho)(t_{\alpha }(W'))=t_{\alpha, 0}(W)\otimes 1.
\end{equation}
(Recall that $W'$ is the Cohen ring of $k'$.)  
\end{definition}

\begin{proposition}\
%\label{subfunctor}
\begin{enumerate}
\item
The subfunctor $\RZc_G$ is represented by a closed and open formal subscheme
of $\RZc^\dia_G$. 

\item
 The formal scheme $\RZc_G$ is  formally smooth and 
locally formally of finite type over ${\rm Spf}(\bZp)$. 
\end{enumerate}
\end{proposition}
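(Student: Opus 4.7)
The plan is to first establish (i), after which (ii) is automatic: an open and closed formal subscheme of a formally smooth, locally formally of finite type formal scheme inherits both properties, and these were verified for $\RZc^\dia_G$ in Proposition \ref{finite}. So the whole task reduces to exhibiting $\RZc_G$ as an open and closed formal subscheme of $\RZc^\dia_G$.

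For closedness, I would argue that the condition (\ref{ident}) actually cuts out a closed formal subscheme. Along any quasi-compact piece of $\RZc^\dia_G$ the quasi-isogeny $\rho$ has bounded $p$-adic denominators, so after multiplying by a suitable power of $p$ the difference $p^N\bigl(\Db(\rho)^{-1}(t_{\alpha,0}) - t_\alpha\bigr)$ defines an honest section of the coherent crystal $\Db(X)^\otimes$, whose vanishing carves out a closed formal subscheme $Z \subseteq \RZc^\dia_G$. The Frobenius-equivariance of both $t_\alpha$ and $t_{\alpha,0}$ should then be used to check that $Z$ coincides with $\RZc_G$ as functors of points on field-valued points, in particular reducing the verification to a single representative point in each connected component (compare Lemma \ref{convIsocr}).

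Openness is the substantive step. I would use the isomorphism of formal completions $\widehat{\RZc}^\dia_{G,s} \iso \widehat{\sS}_{W,\,\Theta^\dia_G(s)}$ established in the proof of Proposition \ref{finite}, together with Kisin's description of the universal $p$-divisible group on $\widehat{\sS}_{W,s}$ recalled in \S\ref{Faltings}. On the target side, Kisin's trivialization identifies $\Db(X^{\rm univ})(R)$ with $D\otimes_{\Z_p}R$ in such a way that $t_\alpha^{\rm univ} = s_\alpha\otimes 1$. On the source side, at a closed point $s\in Z$ the rigidity of quasi-isogenies lifts $\rho$ uniquely to the formal neighborhood, and via this lift the tensors $t_{\alpha,0}$ are also identified with $s_\alpha\otimes 1$, precisely because (\ref{ident}) holds at $s$. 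Combining the two trivializations on the common formal neighborhood forces $\Db(\rho)(t_\alpha) = t_{\alpha,0}$ on all of $\widehat{\RZc}^\dia_{G,s}$, so the entire formal neighborhood of $s$ lies in $Z$. Since $\RZc^\dia_G$ is locally formally of finite type, this is enough to conclude that $Z$ is open.

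The main obstacle is making the openness argument watertight: one must check that the two trivializations of $\Db(X)$ over $\widehat{\RZc}^\dia_{G,s}$ --- the one coming from Kisin's $U^{\mu_{x_0},\wedge}_G$-parametrization of $\widehat{\sS}_{W,\,\Theta^\dia_G(s)}$ and the one coming from the quasi-isogeny $\rho$ --- are compatible enough for the tensor identity to propagate from $s$ to the full formal neighborhood. This rests on Faltings' theorem and the fact that Kisin's construction is tailor-made so that the universal crystalline tensors $t_\alpha^{\rm univ}$ are literally constant with value $s_\alpha\otimes 1$ in Kisin's coordinates on $\widehat{\sS}_{W,s}$, so the matching at one point propagates automatically to the whole formal neighborhood.
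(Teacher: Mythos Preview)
Your plan to deduce (ii) from (i) is fine, but the argument for (i) takes a different route from the paper's and has a gap in the openness step.

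The paper does not separate closedness from openness. Instead it proves Lemma~\ref{convIsocr}: on any connected $S$ in ${\rm Nilp}_W$, if (\ref{ident}) holds at a single field-valued point, then $\Db(\rho)(t_\alpha) = t_{\alpha,0}\otimes 1$ as an identity of isocrystals over all of $\bar S$. The proof reduces to $S$ reduced of finite type over $k$, uses \cite[Lemma~5.10]{MadapusiSpin} to propagate (\ref{ident}) to every closed point, and then upgrades this to an equality of convergent $F$-isocrystals via Ogus's rigidity theorem \cite[Theorem~4.1]{OgusIso} together with Berthelot's faithfulness \cite[Theorem~(2.4.2)]{Berthelot}. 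Once that lemma is in hand, $\RZc_G$ is visibly the union of those connected components of $\RZc^\dia_G$ possessing a point where (\ref{ident}) holds---open and closed in one stroke. You cite this lemma yourself in your closedness paragraph; once you have invoked it, the separate openness argument is redundant.

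Your openness argument asserts that $\Db(\rho)(t_\alpha)=t_{\alpha,0}$ ``propagates automatically'' from a closed point $s$ to $\widehat{\RZc}^\dia_{G,s}$ because Kisin's trivialization makes $t_\alpha^{\rm univ}$ constant. But $\Db(\rho)$ is \emph{not} constant in those coordinates: over $R=\widehat\co_{\sS_W,\Theta^\dia_G(s)}$ the Frobenius on $\Db(X^{\rm univ})(R)$ is $u\cdot(b_{x_0}\otimes\phi_R)$ with $u$ ranging over $U^{\mu_{x_0},\wedge}_G$, whereas on the pullback of $\Db(X_0)$ it is the constant $b_{x_0}\otimes\phi_R$, so $\Db(\rho)$ must vary to intertwine the two, and a varying isomorphism need not carry constant tensors to constant tensors. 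What actually makes the propagation work is that both $t_\alpha$ and $\Db(\rho)^{-1}(t_{\alpha,0})$ are \emph{horizontal} sections of the isocrystal (the first by Kisin's construction, the second because $\Db(\rho)$ is a morphism of crystals and $t_{\alpha,0}$ is pulled back from a point), and over a formal power-series ring a horizontal section is determined by its value at the closed point. That is precisely the rigidity input that Lemma~\ref{convIsocr} packages globally via Ogus's theorem; without naming this mechanism, your ``propagates automatically'' is the gap.
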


\begin{proof}  The proof relies on the following lemma.
 
\begin{lemma}\label{convIsocr}
Assume that $S$ in ${\rm Nilp}_W$ is connected, $a\in  \RZc^\dia_G(S)$,  and that there is a field $k'$ and a point $y\in S(k')$ such that the condition (\ref{ident}) is satisfied at $y$.  Then 
$
\Db(\rho)(t_{\alpha})=t_{\alpha, 0}\otimes 1,
$
where 
\[
\Db(\rho): \Db(X_{\bar S})^{\otimes}[1/p] \iso   \Db(X_0\times_k \bar S)^{\otimes}[1/p] 
\]
is the morphism of Frobenius isocrystals over $\bar S$ induced by the quasi-isogeny $\rho$. In particular,
 condition  (\ref{ident}) is satisfied at  \emph{all} field valued points of $S$,
 and so $a \in \RZc_G(S)$.
\end{lemma}

\begin{proof}
Note that, as  $\RZc^\dia_G$ is  locally formally of finite type, for any $a\in \RZc^\dia_G(S)$,
there exists a locally finite type scheme $S'$ in ${\rm Nilp}_W$, a morphism $\omega: S\to S'$, 
and $b\in \RZc^\dia_G(S')$ such that $a=b\circ \omega$. Since $S$ is connected,
we can assume that $S'$ is connected and so we reduce to showing the
statement above for $S$ connected and locally of finite type. 

We will first show  that (\ref{ident}) holds for all field valued points of $S$. 
All such points  factor through the underlying reduced scheme $S_{\rm red}$; hence,
we can further assume that $S$ is reduced and is actually affine of finite type over $k$.  
Now the argument of \cite[Lemma 5.10]{MadapusiSpin}  implies that 
condition (\ref{ident}) is satisfied for all field-valued points of $S$,
and in particular for all closed points $s\in S$, taking  $k'=k(s)$ to be the residue field. This already gives that $a$ is in $\RZc_G(S)$.

To show the rest, observe that,
by Berthelot's construction \cite[Theorem (2.4.2)]{Berthelot}, the Frobenius crystal $\Db(X)$ over $S$ determines a convergent 
Frobenius isocrystal $M=\Db(X)[1/p]^{\rm an}$ over $S/W$.
Similarly, we have a convergent 
Frobenius isocrystal $M_0$ given by base-changing $\Db(X_0)[1/p]^{\rm an}$ to a
convergent 
Frobenius isocrystal over
 $S/W$.
The quasi-isogeny $\rho$ induces a morphism of convergent 
Frobenius isocrystals 
\[
\Db(\rho)^{\rm an}:  M^{\otimes}\iso  M^{\otimes}_0.
\]
By \cite[Theorem 4.1]{OgusIso}  (see \cite[Remark 2.3.4]{Berthelot}), we have $\Db(\rho)^{\rm an}(t_{\alpha})=t_{\alpha, 0}\otimes 1$ in $M^{\otimes}$
since, by the above, this is true at a closed point.  By  \cite[Theorem (2.4.2)]{Berthelot}  the functor from Frobenius  crystals up to isogeny
over $S$ to convergent Frobenius isocrystals over $S/W$ is faithful, and 
the result follows.
\end{proof}

Consider the union of connected components of $\RZc^\dia_G$ which have 
a field valued point such that (\ref{ident}) is satisfied. We can now see that this 
union represents the functor $\RZc_G$. The second statement now follows from 
the first and the previous proposition.  
\end{proof}

The following gives a main part of Theorem \ref{mainKim}.

\begin{proposition}\label{represent}
The  formal scheme $\RZc_G$ constructed above represents the functor 
\[\RZfsm_G: {\rm  ANilp}_W^{\rm fsm}\to {\rm Sets}
\]
defined in \S \ref{RZGfsm}.
\end{proposition}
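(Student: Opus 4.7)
The plan is to construct mutually inverse bijections $\eta_A: \Hom_{\Spf(W)}(\Spf(A),\RZc_G) \iso \RZfsm_G(A)$ for each $A \in {\rm ANilp}^{\rm fsm}_W$. For the forward map, a morphism $f: \Spf(A) \to \RZc_G \hookrightarrow \RZc^\dia_G$ yields, via the two projections of the fiber product defining $\RZc^\dia_G$, a point $(X,\lambda,\rho) \in \RZc_{\GSp(C)}(A)$ on one side and a morphism $\tilde f: \Spf(A) \to \widehat{\sS}_W$ on the other; pulling back the universal crystalline tensors $t^{\rm univ}_\alpha$ of \S\ref{universaltensors} along $\tilde f$ produces the tensors $(t_\alpha)$. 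The conditions of Definition \ref{defRZG} are then verified as follows: (i) holds because, by construction, $\RZc_G$ is the subfunctor of $\RZc^\dia_G$ on which (\ref{ident}) is satisfied at every field-valued point, and Lemma \ref{convIsocr} propagates this to agreement of the isocrystal-valued tensors; (ii) and (iii) follow from the Faltings-type local description of \S\ref{Faltings}, which supplies at each closed point of $\sS_W$ a trivialization $\Db(X^{\rm univ})(R) \cong D\otimes R$ that carries $t_\alpha$ to $s_\alpha\otimes 1$ and the Hodge filtration to the one of type $\mu$.

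For the inverse, start with $(X,\rho,(t_\alpha)) \in \RZfsm_G(A)$. Since the symplectic tensor $s_{\rm sympl}$ is one of the $s_\alpha$, condition (i) endows $X$ with a polarization $\lambda$ for which $\rho$ respects $\lambda, \lambda_0$ up to scalar, producing $g: \Spf(A) \to \RZc_{\GSp(C)}$; composing with $\Theta$ gives $\phi: \Spf(A) \to \widehat{\mathscr{A}}_{g,W}$. The crucial step is to lift $\phi$ along $\iota$ to $\tilde\phi: \Spf(A) \to \widehat{\sS}_W$. Examine this first at each closed point $a \in \Spec(A/I)$ with residue field $k'$: the fiber $(X_a,\rho_a,(t_\alpha)_a) \in \RZnilp_G(k')$ provides precisely the crystalline tensor datum that Kisin uses in \cite{KisinJAMS, KisinLR} to characterize the image of $\sS(k')$ in $\mathscr{A}_g(k')$, so $\phi(a)$ already lies in $\iota(\sS_W(k'))$. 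By Proposition \ref{finite} and the discussion of \S\ref{Faltings}, $\iota$ induces a closed immersion of formal completions at each closed point of $\sS_W$; combined with the set-theoretic factoring just established and the Noetherian adic hypothesis on $A$, this yields the unique formal lift $\tilde\phi$. The pair $(g,\tilde\phi)$ then defines a morphism $\Spf(A) \to \RZc^\dia_G$, which lands in the open and closed formal subscheme $\RZc_G$ because, by Lemma \ref{convIsocr}, the tensor-matching condition (\ref{ident}) need only be checked at one field-valued point of each connected component, and this is guaranteed by condition (i) on the input datum.

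The main obstacle is this lifting step $\phi \mapsto \tilde\phi$: one must extract a point of the Hodge-type integral model $\sS$ from abstract tensor-endowed $p$-divisible group data over $A$. At the level of residue fields this is a direct invocation of Kisin's description of $\sS(k')$, but promoting the statement to an $A$-valued morphism relies essentially on the local structure of $\sS_W$ at closed points recalled in \S\ref{Faltings} together with the uniqueness of formal lifts, which forces the pointwise lifts to assemble into the global $\tilde\phi$. Once $\tilde\phi$ is in hand, the verification that the two constructions are mutually inverse is routine: starting from a morphism $f$, the triple extracted and then re-fed into the inverse construction reproduces the pair $(g,\tilde f)$ tautologically, and uniqueness of the morphism into the fiber product $\RZc^\dia_G$ returns $f$; conversely, starting from $(X,\rho,(t_\alpha))$, the pull-back of $t^{\rm univ}_\alpha$ along $\tilde\phi$ recovers $(t_\alpha)$ because the two sets of crystalline-horizontal tensors agree after pull-back to every closed point of $\Spec(A/I)$ and are therefore equal as morphisms of crystals.
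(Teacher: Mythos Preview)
Your forward direction (Part A) matches the paper's argument closely and is fine. The gap is in the inverse direction, specifically in the lifting step $\phi \mapsto \tilde\phi$.

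You argue: once $\phi(a) \in \iota(\sS_W(k))$ for every closed point $a$, and since $\iota$ is a closed immersion on formal completions (Proposition~\ref{finite}, \S\ref{Faltings}), the morphism $\phi$ lifts uniquely to $\tilde\phi$. This inference is not valid. A morphism $\Spf(A) \to \widehat{\mathscr A}_{g,W}$ whose closed points land in the closed formal subscheme $\widehat\sS_W$ need not factor through $\widehat\sS_W$ scheme-theoretically; the simplest obstruction is a tangent vector pointing out of $\sS_W$. Set-theoretic factoring plus ``closed immersion on completions'' gives uniqueness of a lift \emph{if it exists}, but not existence. The existence is exactly where the tensor data $(t_\alpha)$ must be used, and your sketch never invokes conditions (ii) and (iii) of Definition~\ref{defRZG} in this step.

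The paper fills this gap in two stages. First, for $R \iso W\lps x_1,\ldots,x_n\rps$, one trivializes $\Db(X)(R)$ using condition (ii), identifies the formal completion of $\mathscr A_{g,W}$ with $U^{\mu_{\rm std},\wedge}_{\GSp(C)}$ via Faltings' deformation theory, and then applies Moonen's result \cite[4.8]{Moonen} on Tate tensors: the horizontal tensors $t_\alpha = s_\alpha\otimes 1$ force the $R$-point of $U^{\mu_{\rm std},\wedge}_{\GSp(C)}$ to lie in $U^{\mu_x,\wedge}_G \cong \widehat\sS_{W,x}$. Second, for general $R \in {\rm ANilp}^{\rm fsm}_W$, one passes to the $p$-adic lift $\widetilde R$ of \S\ref{PDlift}, shows that the scheme $T_{\widetilde R}$ of tensor-preserving trivializations is a $G_{\widetilde R}$-torsor, lifts the Hodge filtration to an $\widetilde R$-point of the twisted flag variety $M^T_{G,\mu}$, and uses Grothendieck--Messing theory to produce $\widetilde y:\Spec(\widetilde R)\to \mathscr A_{g,W}$; only then does the power-series case at each closed point, together with normality of $\sS_W$, yield the factoring through $\sS_W$. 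Both the Tate-tensor argument and the torsor/filtration lift are essential and are absent from your proposal.
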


\begin{proof} Suppose that $R$ is in ${\rm ANilp}_W^{\rm fsm}$. Denote by $I$ an ideal
of definition of $R$ with $pR\subset I$. We will establish a functorial bijection 
\[
\RZc_G(R)\iso \RZfsm_G(R).
\]

A) Suppose first that we are given   
\[
(X, \lambda, \rho, f: \Spf(R)\xrightarrow{} \widehat\sS_W ) \in \RZc_G(R)
\]
By Remark \ref{remark1} (c),  we  have morphisms of crystals
$t_{\alpha}: {\bf 1}\to \Db(X)^\otimes$ over $\Spec(R)$
obtained by pulling back the universal crystalline tensors on $\widehat\sS$.
Recall, the tensors $(t_\alpha)$ include the crystalline tensor 
that corresponds to the polarization $\lambda$.
By the definition of $\RZc_G$, for any $k'$-valued point of $R$,  where $k'$ is any field, 
we have the identity (\ref{ident}). Property (i) of Definition \ref{defRZG} follows from Lemma \ref{convIsocr}.
To show that 
properties  (ii) and (iii) are satisfied, we  reduce by fppf descent to
the case that $R$ is replaced by its  completion $\widehat R_x$  at an arbitrary closed $k$-valued point $x$.
Since $R$ is formally smooth, we can assume   $R=W/p^m\lps x_1,\ldots , x_n\rps$ 
for some $m$ and $n$.  
Then, the morphism 
\[
f: \Spf(W/p^m\lps x_1,\ldots , x_n\rps)\to  \widehat\sS_W
\] 
factors through the  completion $\Spf(\hat\co_{\sS, f(x)})$; 
this  completion is described in \S\ref{Faltings}, from which properties  (ii) and (iii) follow  for $t_\alpha$
and the Hodge filtration over $\widehat\co_{\sS, f(x)}$,  and therefore also over $R$.

B) Conversely, suppose 
\[
(X, \rho, (t_\alpha))\in \RZfsm_G(R).
\]
Since $(t_\alpha)$
include the  polarization tensor $t_{\rm sympl}$, it follows, by using duality and the full-faithfulness of the 
Dieudonn\'e crystal functor over $R$ (see \cite[4.1 and 4.3]{BerthelotMessingIII}  and also \cite{deJongBTIHES}) that these data also produce a principal polarization $\lambda$ on the $p$-divisible group $X$.
We thus obtain $\Theta( (X, \lambda, \rho ))$, a $\Spf(R)$-valued point of  ${\mathscr A}_{g,W}$
which, by the standard algebraization theorems, corresponds to 
 \begin{equation}\label{y point}
 y: \Spec(R)\to {\mathscr A}_{g,W}.
 \end{equation}
  It is enough to show that $y$ factors    through $\sS_{W}$. 

This true when $R=k$, but this  is already quite deep. It follows from Theorem \ref{bijectionRADL} (with $k'=k$),  together 
with \cite[Proposition (1.4.4)]{KisinLR} and its proof  (this uses the main result of \cite{CKV}).

Let us now deal with more general $R$.
Assume first  $R\iso  W\lps x_1,\ldots, x_n\rps$, for some $n\geq 0$,
with the obvious
extension of a notion of an $R$-valued point of $\RZfsm_G$
(take $I=(p, x_1,\ldots, x_n)$ as an ideal of definition in \S \ref{RZGfsm}). 
As just explained, we know that $\Spec(k)\to {\mathscr A}_W$ given 
$x_1=\cdots =x_n=p=0$
gives a point $x\in \sS_W(k)$.
Use property (ii) of Definition \ref{defRZG} to choose a trivialization 
\[
\Db(X)(R)\cong  D\otimes_{\Z_p}R
\]
that matches the tensors  $t_\alpha(R)$ with the standard tensors $s_\alpha\otimes 1$. 
By Faltings'  construction of the universal deformation of the  polarized
 $p$-divisible group $(X_x, \lambda_x)$ as in \S \ref{Faltings}, and the Serre-Tate theorem, we can identify the completed local ring of ${\mathscr A}_{g, W}$
with the completion $U^{\mu_{\rm std},\wedge}_{\rm GSp( C)}$ of the 
opposite unipotent   at the identity section.
Our conditions now imply that the tensors $t_\alpha=s_\alpha\otimes 1$ are ``Tate tensors'' over $R$,
therefore, by \cite[4.8]{Moonen}, the corresponding $R$-valued point of 
$U^{\mu_{\rm std},\wedge}_{\rm GSp( C)}$ 
factors through 
\[
U^{\mu_x,\wedge}_G\subset U^{\mu_{\rm std},\wedge}_{\rm GSp( C)}.
\]
As in (\ref{Faltings}), $U^{\mu_x,\wedge}_G$ is   identified with the completion of $\sS_W$ at $x$ and the result   for 
  $R\iso  W\lps x_1,\ldots, x_n\rps$  follows.

We now consider the case of a general $R$ in ${\rm ANilp}_W^{\rm fsm}$. 
Evaluate $t_\alpha$ on the PD lift $\widetilde R$ of  \S\ref{PDlift}  to  obtain $t_{\alpha}(\widetilde R)$,  and a 
corresponding $\widetilde R$-scheme $T_{\widetilde R}$ of trivializations of $\big(  \Db(X)(\widetilde R), (t_{\alpha}(\widetilde R)) \big)$.

\begin{lemma}  $T_{\widetilde R}$ is a $G_{\widetilde R}$-torsor.
\end{lemma}

\begin{proof}  Notice that $\widetilde R/p^n\widetilde R\to R$ is a nilpotent PD thickening, for all $n\geq 1$. 
 The claim   follows from the local criterion of flatness and 
the definition of $\RZfsm_G$   (see in particular \S\ref{RZGfsm} and condition (ii) of Definition \ref{defRZG}), by an argument as in the proof of Lemma \ref{Ctorsor}.
 \end{proof}

As in \cite[Prop. (1.1.5)]{KisinJAMS},   the scheme $M_{G,\mu}$ of $G$-split filtrations of type $\mu$ is smooth over $\co_E$.
 Hence, so is its twist 
 \begin{equation}\label{torsor twist}
 M_{G,\mu}^T=T_{\widetilde R}\times^{G_{\widetilde R}}M_{G, \mu}
 \end{equation}
  by the $G_{\widetilde R}$-torsor $T_{\widetilde R}$; 
this  classifies  filtrations in $\Db(X)(\widetilde R)$ which are, locally for the \'etale topology, induced by a cocharacter
which is $G$-conjugate to $\mu$.
Since $\widetilde R$ is $p$-adically complete, we can lift the $R$-valued point 
of (\ref{torsor twist})  corresponding  to the Hodge filtration  ${\rm Fil}^1( X ) \subset \Db(X)(R)$
to an $\widetilde R$-valued point  corresponding to a filtration in $\Db(X)(\widetilde R)$ as above. 
By  Grothendieck-Messing theory, this lift of the Hodge filtration gives   a morphism 
\begin{equation}\label{tilde point}
\widetilde y: \Spec(\widetilde R)\to {\mathscr A}_{g, W}
\end{equation}
extending the point $y$ of (\ref{y point}).   Now suppose that $x$ is a $k$-valued point of $\bar R=\widetilde R/p\widetilde R$
and consider 
\[
y^\wedge_x: \Spec({\widetilde R}^\wedge_x)\to \Spec(\widetilde R)\to {\mathscr A}_{g, W}.
\]
Since ${\widetilde R}^\wedge_x\iso  W\lps x_1,\ldots, x_n\rps$, we obtain, by the result above,
that $y^\wedge_x$ factors through through $\sS_W$.
It follows that (\ref{tilde point}) factors through the Zariski closure $\sS^-_W$ of the generic fiber of $\sS_W$ in 
${\mathscr A}_{g,W}$; since $\widetilde R$ is integrally closed in $\widetilde R[1/p]$, we see that
$\ti y$ factors through the normalization  $\sS_W$. Therefore, the morphism  
  $y: \Spec(R)\to {\mathscr A}_{g,W}$ also factors through $\sS_W$. This completes the proof of Proposition \ref{represent}.
\end{proof}

This completes the proof of (i) and (ii) of Theorem \ref{mainKim}. Indeed, the statement about the action
of $J_b(\Q_p)$ can be easily deduced from the rest.  
Part (iii)   follows from the fact that 
\[
 \RZc_{\GSp }=\RZc(X_0,\lambda_0)\hookrightarrow \RZc_{\GL }=\RZc(X_0)
 \]
  is a closed immersion,  together with:

\begin{proposition}\label{closedimm}
 The morphism $\RZc_G\to \RZc_{\GSp }$
obtained by composing $\RZc_G\hookrightarrow\RZc^\dia_G$ and $\RZc^\dia_G\to \RZc_{\GSp }$
is a closed immersion.
\end{proposition}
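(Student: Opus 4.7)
The plan is to deduce the result from three properties of the morphism $\phi : \RZc_G \to \RZc_{\GSp}$: it is finite, it is injective on geometric points, and it is surjective on completed local rings at every closed point. A finite morphism of locally Noetherian formal schemes is a closed immersion precisely when its underlying map of points is injective and the induced map on structure sheaves is surjective; by faithful flatness of completion, the latter can be tested on completed local rings at closed points. Finiteness of $\phi$ is immediate: it factors as $\RZc_G \hookrightarrow \RZc^\dia_G \to \RZc_{\GSp}$, where the first map is an open-and-closed immersion (hence a closed immersion, hence finite) and the second is finite by Proposition~\ref{finite}.

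For injectivity on $k$-points (which suffices since $k$ is algebraically closed and $\phi$ is finite), I will invoke Corollary~\ref{cor:points} to identify $\RZc_G(k)$ with the refined affine Deligne--Lusztig set $X_{G, b, \mu^\sigma, \sigma}(k)$, which by definition injects into $G(K)/G(W)$. Because $G \hookrightarrow \GL(D)$ is a closed immersion of group schemes over $\Z_p$, one has $G(W) = G(K) \cap \GL(D)(W)$, so the coset map $G(K)/G(W) \hookrightarrow \GL(D)(K)/\GL(D)(W)$ is injective, and the composite embeds $\RZc_G(k)$ into $\GL(D)(K)/\GL(D)(W)$. This last set is the classical parameter set for $\RZc_{\GL}(k) = \RZc(X_0)(k)$, and since $\phi$ factors as $\RZc_G \to \RZc_{\GSp} \hookrightarrow \RZc_{\GL}$, the injectivity of the composite forces $\phi$ itself to be injective on $k$-points.

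For surjectivity on completed local rings, fix a closed point $s \in \RZc_G$ with image $s' = \phi(s) \in \RZc_{\GSp}$. Because $\RZc_G$ is open and closed in $\RZc^\dia_G$, we have $\widehat\co_{\RZc_G, s} = \widehat\co_{\RZc^\dia_G, s}$. The proof of Proposition~\ref{finite} provides an isomorphism $\widehat\co_{\sS_W, f(s)} \iso \widehat\co_{\RZc^\dia_G, s}$ induced by $\Theta^\dia_G$, while the Serre--Tate theorem gives an isomorphism $\widehat\co_{\mathscr{A}_{g, W}, \iota(f(s))} \iso \widehat\co_{\RZc_{\GSp}, s'}$ induced by $\Theta$. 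Under these identifications, the map on completed local rings induced by $\phi$ becomes the map induced by $\iota : \sS_W \to \mathscr{A}_{g, W}$, which is known (by Kisin's work, as recalled in the proof of Proposition~\ref{finite}) to be a closed immersion on formal completions at closed points, and in particular is surjective on completed local rings.

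The main obstacle is injectivity on geometric points: a priori, $\phi$ discards all non-symplectic tensor data, so distinct points of $\RZc_G$ coming from different Shimura-variety lifts $f \in \sS_W(k)$ of a single point in $\mathscr{A}_{g, W}(k)$ might plausibly be identified by $\phi$. The refined affine Deligne--Lusztig description of Corollary~\ref{cor:points} is exactly what is needed to rule this out: the crystalline tensors rigidify a point of $\RZc_G(k)$ up to $G(W)$ rather than $\GL(D)(W)$, and the saturation identity $G(W) = G(K) \cap \GL(D)(W)$ ensures that passing to the symplectic (or general linear) coset space loses no information.
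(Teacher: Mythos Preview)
Your proof is correct and follows the same overall architecture as the paper's: finiteness from Proposition~\ref{finite}, injectivity on $k$-points, and a closed immersion on formal completions at closed points (via the identifications with completions of $\sS_W$ and $\mathscr{A}_{g,W}$ established in the proof of Proposition~\ref{finite}), then Nakayama to conclude.

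The only substantive difference is in the injectivity step. The paper argues directly: a $k$-point of $\RZc_G$ is a triple $(X,\rho,(t_\alpha))$, and since the crystalline tensors $t_\alpha$ are determined by their values $t_\alpha(W)$, the identity $\Db(\rho)(t_\alpha(W))=t_{\alpha,0}\otimes 1$ forces the tensors from the pair $(X,\rho)$ alone; hence $\RZc_G(k)\to\RZc_{\GSp}(k)$ is injective. Your route instead passes through Corollary~\ref{cor:points} and the coset injections $X_{G,b,\mu^\sigma}(k)\hookrightarrow G(K)/G(W)\hookrightarrow \GL(D)(K)/\GL(D)(W)$. Both are valid; the paper's argument is more self-contained (it does not invoke the affine Deligne--Lusztig description), while yours makes transparent the group-theoretic reason injectivity holds, namely the saturation $G(W)=G(K)\cap \GL(D)(W)$.
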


\begin{proof} Recall that by Proposition \ref{finite},  $\RZc_G\to \RZc_{\GSp }$ is finite.
By the previous proposition, we can identify $\RZfsm_G(k)=\RZc_G(k)$. Let $(X,\lambda, \rho)\in  \RZc_{\GSp }(k)$,
and  let   
\[
x=(X, \rho, (t_\alpha))\in \RZfsm_G(k)
\] 
be a preimage. 
The crystalline tensors $t_\alpha: {\bf 1}\to \Db(X)^\otimes$ are uniquely determined by
$t_\alpha(W)\in \Db(X)(W)^{\otimes}$. The condition  (\ref{ident}) shows that these are then
also uniquely determined by the rest of the data, so 
\[
\RZfsm_G(k)=\RZc_G(k)\to \RZc_{\GSp }(k)
\] 
is injective.  The proof of Proposition \ref{finite} now implies that $\RZc_G\to \RZc_{\GSp }$ induces 
a closed immersion $\widehat{\RZc}_{G,x}\hookrightarrow \widehat{\RZc}_{\GSp ,x}$ on formal completions.
Since $\RZc_G\to \RZc_{\GSp }$ is finite the result easily follows, for example by using Nakayama's
lemma.
\end{proof}

This completes the proof of Theorem \ref{mainKim}. \end{proof}

\begin{proposition}\label{prop:tensor independence} 
The formal scheme $\RZc_G$ depends only on the local Shimura-Hodge datum $(G, b, \mu,  C)$ and not on the  choice of the tensors $(s_\alpha)\subset C^{\otimes}$, as in \ref{sss:Shimura set up}, that cut out $G$. 
\end{proposition}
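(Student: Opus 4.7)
The plan is to show that the functor $\RZfsm_{G, b, \mu, C, (s_\alpha)}$ of Definition \ref{defRZG} is, as a functor on ${\rm ANilp}^{\rm fsm}_W$, canonically independent of the choice of tensors cutting out $G$; by the representability statement of Proposition \ref{represent}, this yields the desired canonical isomorphism of formal schemes. Let $(s'_\beta)\subset C^\otimes$ be another finite collection of tensors whose common pointwise stabilizer is $G$. Each of the tensors $s_\alpha$ and $s'_\beta$ is automatically $G$-invariant, being fixed by its own stabilizer. I will construct, for every $R$ in ${\rm ANilp}^{\rm fsm}_W$, a natural bijection
\[
\Psi_R:\RZfsm_{G, b, \mu, C, (s_\alpha)}(R)\iso \RZfsm_{G, b, \mu, C, (s'_\beta)}(R),
\]
the inverse being the symmetric construction.

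Given $(X, \rho, (t_\alpha))\in \RZfsm_{G, b, \mu, C, (s_\alpha)}(R)$, I would define crystalline tensors $(t'_\beta)$ on $\Db(X)$ as follows. Condition (ii) of Definition \ref{defRZG} supplies a crystal of $G_W$-torsors
\[
T=\underline{\rm Isom}_{t_\alpha,s_\alpha\otimes 1}\bigl(\Db(X),\,D\otimes_{\Z_p}\co_{\cdot}\bigr)
\]
on $\mathrm{CRIS}(\Spec R/W)$. Over any PD thickening $R'\to R$, fppf-locally on $\Spec R'$ pick a trivialization $\beta\in T(R')$ and set $t'_\beta(R')$ to be the preimage of $s'_\beta\otimes 1\in D^\otimes\otimes_{\Z_p}R'$ under $\beta^\otimes$. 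Since $s'_\beta$ is $G$-invariant, this preimage is independent of $\beta$ and descends through the fppf cover. The same reasoning after inverting $p$, combined with Frobenius equivariance of the $t_\alpha$, yields the Frobenius equivariance of $t'_\beta[1/p]$. Condition (ii) for $(X, \rho, (t'_\beta))$ relative to $(s'_\beta)$ is then immediate, since the sheaf of isomorphisms respecting $(t'_\beta, s'_\beta\otimes 1)$ contains $T$ by construction and equals $T$ because $(s_\alpha)$ and $(s'_\beta)$ cut out the same group $G$. Condition (i) follows by running the same construction on $X_0$ to obtain $t'_{\beta,0}$ and using that $\Db(\rho)$ carries the local trivializations for $(t_\alpha,s_\alpha\otimes 1)$ to those for $(t_{\alpha,0},s_\alpha\otimes 1)$ modulo $J$; $G$-invariance of $s'_\beta$ then gives $\Db(\rho)(t'_\beta)=t'_{\beta,0}$ modulo $J$. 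Condition (iii) is unchanged, since the existence of an \'etale-local trivialization respecting the tensors and splitting the Hodge filtration by a cocharacter conjugate to $\mu$ is a condition on $T$ and on the cocharacter type.

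The main obstacle is verifying that the fibrewise construction of $t'_\beta$ globalizes to a genuine morphism of crystals $\mathbf 1\to \Db(X)^\otimes$, compatible with transition maps between different PD thickenings. This follows from the fact that $T$ is a crystal of $G_W$-torsors: both its local sections and its transition isomorphisms are $G$-equivariant, and so preserve the image of any $G$-invariant tensor in $C^\otimes$, in particular of each $s'_\beta$. Hence the assignment $R'\mapsto t'_\beta(R')$ commutes with the crystal's transition maps and with the fppf descent used to define it, producing the required morphism of crystals. With the natural bijection $\Psi_R$ thus established, the universal property of the representing object in Proposition \ref{represent} gives a canonical isomorphism between the formal schemes associated to $(s_\alpha)$ and $(s'_\beta)$, completing the proof.
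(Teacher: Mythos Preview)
Your proof is correct and takes a genuinely different route from the paper's. You establish a natural isomorphism between the two functors $\RZfsm_{G,b,\mu,C,(s_\alpha)}$ and $\RZfsm_{G,b,\mu,C,(s'_\beta)}$ directly, by transporting one set of crystalline tensors to the other along the crystal of $G_W$-torsors $T$; the key observation is that, since both families cut out the same group $G$, every $s'_\beta$ is $G$-invariant, so its image under any local trivialization in $T$ is well-defined and the resulting torsors coincide. This yields a canonical identification of the representing formal schemes via Proposition~\ref{represent}.

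The paper argues instead via the closed embedding $\RZc_G\hookrightarrow \RZc_{\GL}=\RZc(X_0)$ from Proposition~\ref{closedimm}: it shows that the image of $\RZc_G(k)$ in $\RZc_{\GL}(k)$ is the affine Deligne--Lusztig set $X_{G,b,\mu^\sigma}(k)$ (which visibly does not involve the tensors), and that the formal completion at each closed point is the formal neighborhood $U^{\mu_x,\wedge}_G\subset U^{i(\mu_x),\wedge}_{\GL}$ (again tensor-free). Hence the closed formal subscheme of $\RZc_{\GL}$ is pinned down by $(G,b,\mu,C)$ alone. Your approach is more categorical and self-contained, avoiding the ambient $\RZc_{\GL}$ and any appeal to the point-by-point or infinitesimal descriptions; the paper's approach, on the other hand, makes transparent exactly \emph{which} subscheme of $\RZc_{\GL}$ one obtains, which is useful elsewhere in the paper.
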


\begin{proof}
By Proposition \ref{closedimm}, $\RZc_G$ is a closed formal subscheme of 
the undecorated Rapoport-Zink formal scheme
$\RZc_{\GL }=\RZc(X_0)$. By Theorem \ref{bijectionRADL},
 the choice of the base point $x_0=(X_0,  (t_{\alpha, 0}))$, 
together with an isomorphism of its Dieudonne module with  $( D_W, b \circ \sigma , (s_\alpha\otimes 1))$, determines bijections
$$
\RZc_G(k)=\RZfsm_G(k)\iso  X_{G, b_0, \mu^\sigma_0}(k),\quad 
\RZc_{\GL }(k)\iso  X_{\GL , i(b_0),   i(\mu^\sigma_0)}(k).
$$
In fact, these bijections are compatible with the maps $\RZc_G\hookrightarrow \RZc_{\GL }$
and \[ X_{G, b_0, \mu^\sigma_0}(k)\hookrightarrow  X_{\GL , i(b_0),   i(\mu^\sigma_0)}(k)\]
determined  by $i: G\hookrightarrow \GL( C) $. Moreover, for each $x\in  \RZc_G(k)$,
the formal completions $\widehat\RZc_{G,x}\subset \widehat\RZc_{\GL , x}$ at $x$ 
can be identified with $U^{\mu_x,\wedge}_{G}
\subset U^{i(\mu_x), \wedge}_{\GL }$ where $\mu_x:\Gm_W\to G_W$ 
 gives a filtration that
 lifts the Hodge filtration
for $x$. Therefore,  both 
the set of $k$-valued points and the formal completions
at each point of the closed formal subscheme $\RZc_G\subset \RZc_{\GL } $   do not depend
on the choice of tensors $(s_\alpha)\subset C^{\otimes}$.   Hence, we deduce that 
 the closed formal subscheme $\RZc_G\subset \RZc_{\GL } $ also does not depend on 
 the choice of tensors $(s_\alpha)$. Combining with the above, this now implies that $\RZc_G$ depends only on the local Shimura-Hodge datum $(G, b, \mu,  C)$. 
 \end{proof}
 
  \begin{remark}
 According to \cite{KimRZ}, the Rapoport-Zink formal  scheme   only depends, up to isomorphism,
on the datum $(G, [b], \{\mu\})$. Then by the above, $\RZc_G$ also only 
depends, up to isomorphism, on $(G, [b], \{\mu\})$ and not on the local Hodge embedding. However,  this independence does not follow directly from our construction without appealing to [\emph{loc.~cit.}]. 
\end{remark}

\begin{remark}\label{comparemark_level}
Define the $W$-morphism
\[
\Theta: \RZc_G\to \widehat {\sS}_W=\widehat {\sS}_{U, W}
\]
to be
the composition of $\RZc_G\hookrightarrow\RZc^\dia_G$ with $\Theta^\dia_G: 
\RZc^\dia_G\to \widehat {\sS}_W$. The morphisms \[\Theta: \RZc_G\to  \widehat {\sS}_{  W}=\widehat {\sS}_{U, W}\]
commute with the projections $\sS_{U^p_1U_p, W}\to \sS_{U^p_2U_p, W}$ for $U^p_1\subset U^p_2$. 
Hence, they  combine to also provide a morphism
\[
\Theta: \RZc_G\to  \widehat\sS_{U_p, W}\define \varprojlim_{U^p}\widehat\sS_{U^pU_p, W}.
\]
\end{remark}

\begin{remark} \label{rem:point switch}
Recall from  \S\ref{sss:global to local datum}  that the local Shimura-Hodge datum $(G,b,\mu,C)$ was constructed  from a point $x_0\in \sS(k)$.
Fix $g\in G(K)$ and  $h\in G(W)$, set 
\[
b'=g^{-1}b\sigma(g),\quad \mu'=h\mu h^{-1},
\] 
and assume that 
$
b'\in G(W)\mu'^\sigma(p)G(W).
$
Then there is a point $x_0'\in \sS(k)$ such that the unramified local Shimura-Hodge datum $(G, b', \mu', C)$
is constructed, in the sense of  \S\ref{sss:global to local datum}, from $x_0'$.

To see this, notice that the above condition on $b'$ implies that  $g \in X_{G, b, \mu^\sigma}(k)$,
and we may take $x_0'$ to be the image of $g$ under the composition
\[
X_{G, b, \mu^\sigma}(k)\xrightarrow{\pi^{-1}} \RZc_G(k)\xrightarrow{\Theta} \sS(k).
\]
Note that we then obtain an isomorphism 
\[
\RZc_{G,b,\mu,C, (s_\alpha)}\xrightarrow{\sim} \RZc_{G,b',\mu', C, (s_\alpha)}
\]
 by composing the quasi-isogeny $\rho$ in the definition of the Rapoport-Zink functor \ref{defRZG} with
the quasi-isogeny 
\[
X_0(G, b',\mu', C)\dasharrow X_0(G, b,\mu, C)
\] 
determined  by $g$. 
\end{remark}

\subsection{Formal uniformization of the basic locus}
 \label{uniformpar}

By our construction, $\RZc_G$ 
comes   with a $W$-morphism
\[
\Theta: \RZc_G\to \widehat {\sS}_W
\]
where $\sS=\sS_U$ is the integral model of the Shimura variety
given by our choice of a global Shimura datum.  Such a morphism
is one of the main ingredients of the  uniformization theorems of 
 \cite[Theorem 6.2]{RapZinkBook} and  \cite{KimUnif}. In our approach, $\Theta$ is 
essentially part of the definition of $\RZc_G$. We can directly show  a version of   
 uniformization (Theorem \ref{uniformThm})  
by combining the above with results of Kisin \cite{KisinLR}.

\subsubsection{}
For simplicity, we will only discuss the uniformization when $b\in G(K)$ is {\sl basic.} 
We assume this is the case for the rest of this section.

We fix a sufficiently small
compact open subgroup $U^p$ of $ G(\A^p_f)$,  again set $U=U^pU_p$, and again abbreviate
$\sS=\sS_{U}(G, \mathcal{H})$ for the smooth integral
model over $\co_{E,(v)}$ of the Shimura variety ${\rm Sh}_{U}(G, \mathcal{H})$.

We continue as in \S\ref{sss:global to local datum}. In particular, we assume that $X_0=X_0(G, b, \mu,  C)$
arises as the $p$-divisible group $A_{x_0}[p^\infty]$ with tensors attached  to a point $x_0\in \sS_{U_p}(k)$.
We will denote also by $x_0$ the image of $x_0$ in $\sS(k)$.
Denote by 
\begin{equation}\label{newt stratum}
\sS_{b} \subset \sS\otimes_{\co_{E,(v)}}k
\end{equation}
the Newton stratum determined by  $b$   in  the geometric special fiber of $\sS$.   
By definition,   $\sS_{b}$ consists of all points $x$ such that  there is a quasi-isogeny 
\[
X_0\otimes_k k(x)=A_{x_0}[p^\infty]\otimes_k k(x)\dashrightarrow A_x[p^\infty]
\]
of $p$-divisible groups whose  corresponding morphism of contravariant Dieudonn\'e isocrystals identifies
$
t_{\alpha,x}\define i_x^*(t_\alpha^{\rm univ})
$
with  $t_{\alpha, 0}\otimes 1$.   Obviously,  $x_0\in \sS_b$.

By \cite{RapoportRich} and our assumption that $b$ is basic, the stratum (\ref{newt stratum}) is closed.
  Denote by   $(\widehat \sS_W)_{/\sS_b}$ the completion  of $\sS_W$ along $\sS_b$.

\begin{theorem}\label{uniformThm}(Kim \cite{KimUnif})
The morphism $\Theta$ extends to a $G({\mathbb A}^p_f)$-equivariant morphism
\begin{equation*}
\Theta: \RZc_G\times G({\mathbb A}^p_f) \xrightarrow{ \ \ } \widehat\sS_{U_p, W}\define \varprojlim\nolimits_{U^p}\widehat\sS_{U^pU_p, W}.
\end{equation*}
which   induces an isomorphism of formal schemes
\[
\Theta^b: I (\Q)\backslash  \RZc_G\times  G({\mathbb A}^p_f)/U^p  \iso  (\widehat \sS_{W})_{/\sS_b}.
\]

Here $I$ is a reductive group over $\Q$, which is an inner form of $G$, and is such that $I_\R$ is anisotropic modulo center.   
Moreover there are natural identifications 
\[
I_{\Q_\ell} = 
\begin{cases}
J_b & \mbox{if }\ell=p \\
G_{\Q_\ell} & \mbox{otherwise.}
\end{cases}
\]
The quotient  is for the action of $I(\Q)$ obtained by combining the (discrete) embedding  
$
I(\Q)\subset J_b(\Q_p)\times G({\mathbb A}^p_f)
$ 
given by the above  identifications, with  the actions of $J_b(\Q_p)$ on $\RZc_G$,  
and of $G({\mathbb A}^p_f)$ on $G({\mathbb A}^p_f)/K_p$ by left multiplication.
\end{theorem}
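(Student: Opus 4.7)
The plan is to follow the pattern of the Rapoport--Zink uniformization theorem \cite[Ch.~6]{RapZinkBook}, adapted to the Hodge type setting, using the representability result of Theorem \ref{mainKim} together with Kisin's work on basic isogeny classes. First I would extend the morphism $\Theta$ of Remark \ref{comparemark_level} to incorporate prime-to-$p$ level data. Given $(X,\rho,(t_\alpha)) \in \RZc_G(S)$ and $g \in G(\A_f^p)$, the composition of the closed immersion $\RZc_G \hookrightarrow \RZc_{\GSp}$ of Proposition \ref{closedimm} with the map $\RZc_{\GSp} \to \widehat{\mathscr A}_{g,W}$ of \cite[Theorem 6.21]{RapZinkBook} produces an abelian scheme $A$ over $S$ up to prime-to-$p$ isogeny, together with a quasi-isogeny $A_{x_0} \dashrightarrow A$ lifting $\rho$ at $p$. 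This quasi-isogeny transports the fixed $U^p$-level structure at $x_0$ to one on $A$, and then modification by $g$ yields a well-defined point of $\sS_{U^p U_p, W}$. Compatibility with the transition maps $\sS_{U_1^p U_p} \to \sS_{U_2^p U_p}$ is immediate, producing the advertised $G(\A_f^p)$-equivariant morphism.

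Next I would define the algebraic group $I$ over $\Q$ whose $R$-points are the units of $\End(A_{x_0}) \otimes_\Z R$ that preserve all tensors $s_\alpha$ in the Betti, de Rham, crystalline, and prime-to-$p$ \'etale realizations. Standard Kottwitz-type arguments as in \cite[\S 2]{KottJAMS}, adapted to Hodge type as in \cite[\S 1.2]{KisinLR}, identify $I$ as an inner form of $G$ with the stated localizations $I(\Q_\ell) = G(\Q_\ell)$ for $\ell \neq p$ (via the $\ell$-adic realization) and $I(\Q_p) = J_b(\Q_p)$ (via the isocrystal realization); the assumption that $b$ is basic is precisely what forces $I(\R)$ to be compact modulo center. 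The morphism $\Theta$ factors through the diagonal action of $I(\Q)$ essentially by construction: a class $\gamma \in I(\Q)$ supplies simultaneously a modification of the quasi-isogeny at $p$ and a compatible prime-to-$p$ automorphism preserving all tensors, so the resulting point of $\sS$ is unchanged.

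The substantive content is the bijectivity of $\Theta^b$ on $k$-valued points. For \emph{surjectivity} I would invoke Kisin's result that every $y \in \sS_b(k)$ admits a quasi-isogeny $A_{x_0} \dashrightarrow A_y$ matching $(t_{\alpha,0})$ with $(t_{\alpha,y})$ in every realization, which is precisely \cite[Prop.~(1.4.4)]{KisinLR} (relying on \cite{CKV}); translating through Corollary \ref{cor:points} and the description of $\RZc_G(k)$ via refined affine Deligne--Lusztig sets, this produces a preimage in $\RZc_G \times G(\A_f^p)$. For \emph{injectivity}, any two preimages of $y$ differ by a global quasi-isogeny of $A_{x_0}$ respecting all tensor structures, which is by definition an element of $I(\Q)$.

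Finally, to upgrade this bijection to an isomorphism of formal schemes, I would use Proposition \ref{finite}: at each $k$-point $x \in \RZc_G$, the morphism $\Theta^\diamond_G$ induces an isomorphism $\widehat{\RZc}_{G,x} \iso \widehat{\sS}_{W, \Theta(x)}$ of formal completions. Since $\sS_b$ is closed under the basicness hypothesis by \cite{RapoportRich}, and since $I(\Q)$ acts discretely on $\RZc_G \times G(\A_f^p)/U^p$ with $I(\R)$ compact modulo center, we may pass to the quotient on the left and conclude that $\Theta^b$ is an isomorphism onto $(\widehat{\sS}_W)_{/\sS_b}$. The hard part by a wide margin will be invoking Kisin's theorem to guarantee surjectivity on points; the remainder of the argument is essentially formal given Theorem \ref{mainKim}, the explicit description of the formal neighborhoods from \S\ref{Faltings}, and the standard transport of level structures through the Siegel moduli space.
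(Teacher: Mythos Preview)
Your overall strategy matches the paper's: extend $\Theta$ via the prime-to-$p$ Hecke action, define $I$ via Kisin's work, factor through $I(\Q)$, establish bijectivity on $k$-points, and then upgrade to an isomorphism of formal schemes using the formal-completion comparison of Proposition~\ref{finite}. The injectivity step and the formal-neighborhood step go through essentially as you describe.

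The gap is in your treatment of surjectivity. You assert that every $y \in \sS_b(k)$ lies in the isogeny class of $x_0$ and attribute this to \cite[Prop.~(1.4.4)]{KisinLR}. That is not what (1.4.4) provides; in this paper it is used (in the proof of Proposition~\ref{represent}) in the \emph{opposite} direction, to show that a $k$-point of $\RZnilp_G$ actually lands in $\sS$. The statement you need---that the basic Newton stratum consists of a single isogeny class---is the paper's Proposition~\ref{basicIsogeny}, and its proof requires a genuinely different input. One associates to each $x \in \sS_b(k)$ a Kottwitz triple $(\gamma_0, (\gamma_\ell)_{\ell\neq p}, \delta)$ as in \cite[\S 4.3]{KisinLR}, observes that basicness of $b$ forces a power of $\gamma_0$ to be central (so the centralizer $I_0$ equals $G$), notes that there is then a unique equivalence class of such triples, and finally invokes \cite[Prop.~4.4.13]{KisinLR}: the set of isogeny classes producing a given Kottwitz triple is in bijection with $\Sha_G(\Q, I)$, which is trivial precisely because $I_0 = G$.

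This same point is also where basicness enters the proof that $I$ is an inner form of $G$ with $I_{\Q_\ell} \cong G_{\Q_\ell}$ for $\ell \neq p$. Your sketch attributes these identifications to ``standard Kottwitz-type arguments,'' but the relevant results from \cite[\S 2.3]{KisinLR} only give $I$ as an inner form of the centralizer $I_0$; one needs $b$ basic to conclude $I_0 = G$. Your remark that basicness is ``precisely what forces $I(\R)$ to be compact modulo center'' misplaces its role: compactness of $I_\R$ modulo center holds in general, whereas basicness is what makes $I$ an inner form of $G$ itself and what makes the $\Sha$-obstruction to uniqueness of the isogeny class vanish.
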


There is a more general result for non-basic $b$, which is more complicated to state. 
Compare, for example, with \cite[Theorem 6.23]{RapZinkBook} or \cite{KimUnif}.
Also, the uniformization isomorphism descends to an isomorphism 
over a finite unramified extension of $\Q_p$; again, we omit this discussion.

 \begin{proof} Given the existence of the morphism $\Theta$, this closely follows \cite{KimUnif}
 and  \cite{RapZinkBook}; for the convenience of the reader we sketch 
the proof here.

The morphism  
\[
\Theta: \RZc_G\to \widehat \sS_{U_p, W}
\]  
is given using Remark \ref{comparemark_level}  and, by its construction,
sends the base point $(X_0, {\rm id}, (t_{\alpha, 0}))$
 to the point $x_0$. Note that $G({\mathbb A}^p_f)$ acts on the projective system
 $\sS_{U_p}\define\varprojlim_{U^p}\sS_{U_pU^p}$ on the right (this is the prime-to-$p$ Hecke action) and so this gives a morphism
 \begin{equation*}
\Theta: \RZc_G\times G({\mathbb A}^p_f) \xrightarrow{ }  \widehat{\sS}_{U_p, W}.
\end{equation*}
After taking the quotient by $U^p$ we obtain  
  \begin{equation*}
\Theta: \RZc_G\times G({\mathbb A}^p_f)/U^p\xrightarrow{ }   \widehat{\sS}_W.
\end{equation*}
The prime-to-$p$ Hecke action on $ \sS_{U_p }$ preserves the $p$-divisible groups,
and we can easily see that this morphism  factors through $(\widehat \sS_W)_{/\sS_b}$.

By their construction (see also \S \ref{sss:Shimura set up}), the integral model $\sS $ supports a 
 lisse ${\mathbb A}^p_f$-sheaf $\mathrm{Ta}^p(A)_{\Q}$ given by the  
 Tate ${\mathbb A}^p_f$-module of the universal abelian scheme $A$,  and sections 
 \[
 t^{ p}_{\alpha, \et}: {\mathbb A}^p_f\to  \mathrm{Ta}^p(A)_{\Q}^{\otimes}.
 \] 
  
 We say that two  points $x$ and $x'$   
 of $\sS(k)$ are in the same isogeny class if there is a quasi-isogeny $f: A_x\dashrightarrow A_{x'}$ of the 
 corresponding abelian schemes,  respecting weak polarizations, such that the induced maps 
 \[
 \Db(A_{x'})[1/p]\iso  \Db(A_{x})[1/p]
 \]
 and 
 \[
 \mathrm{Ta}^p(A_{x})_{\Q}\iso   \mathrm{Ta}^p(A_{x'})_{\Q}
 \]
   send   $t_{\alpha, x'} \mapsto t_{\alpha, x}$ and $t^p_{\alpha, \et, x} \mapsto t^p_{\alpha, \et, x'}$,
  respectively; compare with   \cite[Prop. (1.4.15)]{KisinLR}. 
 
Kisin \cite[(2.1)]{KisinLR} associates to the isogeny class 
$
\phi \subset \mathscr{S}(k)
$ 
of $x_0$ an algebraic group $I=I_\phi$ 
with rational points $I(\Q)$ given by the self-quasi-isogenies 
$A_{x_0 }\dashrightarrow A_{x_0 }$ that preserve $t^p_{\alpha, \et, x_0}$ and $t_{\alpha, x_0}$. 
By its very definition, $I(\Q)$ is a subgroup of $J_b(\Q_p)\times G({\mathbb A}^p_f)$, and an 
 argument as in \cite[p. 289]{RapZinkBook} shows that this subgroup
 is discrete.

 Moreover, as in \cite{RapZinkBook},  $\Theta$ factors as
 \begin{equation}\label{factor unif}
 \Theta : I (\Q)\backslash  \RZc_G\times  G({\mathbb A}^p_f)/U^p \xrightarrow{ \ } (\widehat \sS_W)_{/\sS_b}.
 \end{equation}
 We continue to assume that $U^p$ is sufficiently small.  Using 
Corollary \ref{cor:points} and  \cite[Prop. (2.3.1)]{KisinLR}, we can see  that (\ref{factor unif}) 
 gives an injection on $k$-valued points.
By the proof of Proposition \ref{finite}, we then see that $\Theta$ induces an isomorphism between the formal completions at such points.
It then also follows that, for $U^p$ sufficiently small, the quotient 
\[
 I(\Q)\backslash \RZc_G\times  G({\mathbb A}^p_f)/U^p 
 \]
 is representable by a formal scheme over $W$. 
 
 It remains to show that  the group $I$ has the properties in the statement of  Theorem  \ref{uniformThm},
 and that (\ref{factor unif}) is an isomorphism. 
 
Note that the point  $x_0\in \sS (k)$  is actually defined over a finite field of cardinality $p^r$.  By \cite[(2.3) and Cor.~(2.3.5)]{KisinLR}, one sees that there is an element $\gamma_0\in G(\Q)$ such that $I$ is an inner form of the centralizer $I_0$ of a sufficiently divisible power of $\gamma_0$. In fact, $\gamma_0$ is a part of a so-called \emph{Kottwitz triple} 
 \[
 {\mathfrak k}=(\gamma_0, (\gamma_\ell)_{\ell\neq p}, \delta)
 \] 
 as in    [\emph{loc.~cit.}, (4.3) and  (4.4.6)]. Here, $\gamma_\ell$ belongs to $G(\Q_\ell)$, for all $\ell\neq p$.  
 Also $\delta$ belongs to $G(\Q_{p^r})$, with $\Q_{p^r}\subset K$ a finite unramified 
 field extension of $\Q_p$, and is $\sigma$-conjugate to $b$.  There are  reductive groups 
 $I_p$ over $\Q_p$, and $I_\ell$  over $\Q_\ell$ for  $\ell\neq p$,  associated to $\mathfrak k$.  
 By \cite[Cor.~(2.3.2)]{KisinLR}  we have  isomorphisms 
 \[
 I\otimes_\Q\Q_\ell\iso  I_\ell,\quad I\otimes_\Q\Q_p\iso  I_p.
 \]
As we assume that $b$ is basic,   a power of the element $\gamma_p\define \delta\sigma(\delta)\cdots \sigma^{r-1}(\delta)$ is central.
Therefore $I_p=J_\delta=J_b$ and they are both inner forms of $G$. It follows from the definition of Kottwitz triple that a power of $\gamma_0$ 
 is also central,   and hence $I_0=G$, and $I_\ell=G_{\Q_l}$  for $\ell\neq p$. The statements about the group $I$ follow from this
 and the results of Kisin \cite[(2.3)]{KisinLR} mentioned above.
  
 In fact, the isogeny class $\phi$ is  independent of our choice of (basic) point $x_0$.   More precisely, we have 
 
 \begin{proposition}\label{basicIsogeny}
 Suppose that  $x\in \sS_b (k)$; in other words, assume that $\Db(A_x[p^\infty])(W)$ has 
 Frobenius $F=b_x\circ \sigma$, where $b_x\in G(K)$   is $\sigma$-conjugate to  $b=b_{x_0}$ in $G(K)$.
 Then $x$ and $x_0$ are in the same isogeny class. 
 \end{proposition}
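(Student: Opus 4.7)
The strategy will be to realize $x$ as the image under the morphism $\Theta : \RZc_G \to \widehat{\sS}_W$ of a point of $\RZc_G(k)$ constructed from $g$, and to deduce from this that $x$ lies in the isogeny class of $x_0$.

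First, I will translate the hypothesis into group-theoretic data suitable for applying Corollary \ref{cor:points}. The Dieudonn\'e module of $A_x[p^\infty]$ together with its crystalline tensors can be trivialized so as to take the form $(D \otimes_{\Z_p} W, b_x \circ \sigma, (s_\alpha \otimes 1))$, and the Hodge filtration is then induced by a minuscule cocharacter $\mu_x$ in the geometric conjugacy class of $\mu$. Since $G_W$ is split reductive over $W = W(\bar{\F}_p)$, minuscule cocharacters in a single geometric conjugacy class are $G(W)$-conjugate; after further adjusting the trivialization, I may therefore assume $\mu_x = \mu$, so that $b_x \in G(W)\mu^\sigma(p) G(W)$. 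The assumption $b_x = g^{-1} b \sigma(g)$ then places the class of $g$ in the naive affine Deligne-Lusztig set, which coincides with $X_{G, b, \mu^\sigma, \sigma}(k)$ since $k$ is perfect.

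Next, I will apply Corollary \ref{cor:points} to produce from $g$ a point $y = (X, \rho, (t_\alpha)) \in \RZc_G(k)$, and set $x' := \Theta(y) \in \sS(k)$. By construction of $\RZc_G$ as a closed formal subscheme of the fiber product $\RZc^\dia_G = \RZc_{\GSp} \times_{\widehat{\mathscr{A}}_{g,W}} \widehat{\sS}_W$, the abelian scheme $A_{x'}$ is canonically equipped with a $p'$-quasi-isogeny $A_{x_0} \dashrightarrow A_{x'}$ respecting weak polarizations and inducing $\rho$ on $p$-divisible groups. Combining this $p'$-quasi-isogeny with $\rho$ yields a quasi-isogeny $A_{x_0} \dashrightarrow A_{x'}$ matching both the \'etale and the crystalline Tate tensors, so $x'$ lies in the isogeny class of $x_0$ in Kisin's sense.

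It then remains to show that $x$ and $x'$ lie in the same isogeny class. The window data of $A_{x'}[p^\infty]$ with its crystalline tensors, as determined by $y$, coincides by construction with that of $A_x[p^\infty]$, namely $(D \otimes W, b_x \circ \sigma, (s_\alpha \otimes 1))$ with Hodge filtration $\mu$; hence there is an isomorphism $A_{x'}[p^\infty] \iso A_x[p^\infty]$ respecting crystalline tensors. The prime-to-$p$ component of the required quasi-isogeny is to be extracted from the level structures on the two $\sS(k)$-points: each is a $U^p$-orbit of trivializations of $\mathrm{Ta}^p$ compatible with $(s_\alpha)$, and any two such orbits determine a prime-to-$p$ quasi-isogeny matching the \'etale tensors. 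The main obstacle is exactly this last step, where one must verify that the two prime-to-$p$ level structures are related by a genuine quasi-isogeny respecting all of Kisin's tensors; this compatibility is not formal but is precisely the content of \cite[Prop.~(2.3.1)]{KisinLR}, which is the same global input already invoked in the proof of Theorem \ref{uniformThm}.
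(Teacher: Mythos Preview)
Your argument has a genuine gap in the final step. You correctly produce a point $x' = \Theta(y)$ in the isogeny class of $x_0$ whose $p$-divisible group with crystalline tensors is isomorphic to that of $A_x[p^\infty]$. The problem is the claim that the prime-to-$p$ level structures ``determine a prime-to-$p$ quasi-isogeny matching the \'etale tensors.'' A $U^p$-level structure is merely a class of isomorphisms $C\otimes\A_f^p \iso \mathrm{Ta}^p(A)_\Q$; composing the level structure on $x$ with the inverse of that on $x'$ gives an $\A_f^p$-linear isomorphism $\mathrm{Ta}^p(A_x)_\Q \iso \mathrm{Ta}^p(A_{x'})_\Q$, but such an isomorphism need not arise from any quasi-isogeny of abelian varieties---there is no reason it should be Galois-equivariant. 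The reference \cite[Prop.~(2.3.1)]{KisinLR} does not supply this: in the present paper it is invoked only for the \emph{injectivity} of the uniformization map on $k$-points, whereas the statement you are trying to prove is exactly the surjectivity onto $\sS_b(k)$. In effect your argument assumes what is to be shown.

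The paper's proof is global and of a different nature. It uses that, since $b$ is basic, a power of $\gamma_p$ is central, so there is a \emph{unique} equivalence class of Kottwitz triples $(\gamma_0,(\gamma_\ell)_{\ell\ne p},\delta)$ with $\delta$ $\sigma$-conjugate to $b$. Then \cite[Prop.~4.4.13]{KisinLR} says the set of isogeny classes yielding a fixed Kottwitz triple is in bijection with the abelian group $\Sha_G(\Q,I)$; basicness forces $I_0=G$, and one checks $\Sha_G(\Q,I)$ is trivial. The vanishing of this global Galois-cohomological obstruction is precisely the missing input that no amount of local matching of $p$-divisible groups and Tate modules can replace.
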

 
 \begin{proof} This again follows from \cite{KisinLR}. As above, a power of $\gamma_{p, x}$ obtained  from $\delta_x$ as above is central. Using this we can easily see that there is a unique equivalence class of Kottwitz triples ${\mathfrak k}=(\gamma_0, (\gamma_\ell)_{\ell\neq p}, \delta_x)$ with $\delta_x$ $\sigma$-conjugate to $b$. (See  
 \cite[4.3.1]{KisinLR}  for the definition of   the equivalence relation.)
 Now by \cite[Proposition 4.4.13]{KisinLR},  the set of isogeny classes which produce the same Kottwitz triple 
 ${\mathfrak k}$ is in bijection with the abelian group $\Sha_{G}(\Q, I )$. However, since $b$ is basic, $I_0=G$ as above, and we can see   from its definition [\emph{loc.~cit.}, \S 4.4.9 and \S 4.4.7]  that  $\Sha_{G}(\Q, I )$ is  trivial. This concludes the proof.
 \end{proof}
 
   Since the image of (\ref{factor unif}) on $k$-points is the isogeny class of $x_0$, Proposition \ref{basicIsogeny}
implies that (\ref{factor unif}) surjects onto $\sS_b (k)$. Given the above, the remaining claim that (\ref{factor unif}) 
is an isomorphism can be proven quickly by following  the arguments in \cite[Chapter 6]{RapZinkBook}.
\end{proof}

%%%%%%%%%%%%%%%%%%%%%%%%%%%%%%%%%%%%

%%%%%%%%%%%%%%%%%%%%%%%%%%%%%%%%%%%%

\section{Rapoport-Zink spaces for spinor similitude groups}
\label{s:GSpin RZ space}

%%%%%%%%%%%%%%%%%%%%%%%%%%%%%%%%%%%%

Now we turn to the description of a special class of Hodge type Rapoport-Zink formal schemes: those associated 
with the Shimura varieties for spinor similitude groups.  
Throughout \S\ref{s:GSpin RZ space}, \S\ref{s:lattices}, and \S\ref{s:RZ structure} we work purely locally.  
The  Shimura varieties themselves will not appear until \S\ref{s:global GSpin}.

Fix a nondegenerate  quadratic space $(V,Q)$ of rank $n\ge 3$ over $\Z_p$  and define a bilinear form 
on $V$ by  (\ref{bilinear}).
We assume that $V$ is self-dual, in the sense that the bilinear form  induces an isomorphism $V\iso \Hom(V,\Z_p).$
The space $V$ will remain fixed throughout \S\ref{s:GSpin RZ space}, \S\ref{s:lattices}, and \S\ref{s:RZ structure}.

%%%%%%%%%%%%%%%%%%%%%%%%%%%%

\subsection{Quadratic spaces, Clifford algebras, and spinor similitudes}
%\label{ss:GSpin}

%%%%%%%%%%%%%%%%%%%%%%%%%%%%%

For details on quadratic spaces, Clifford algebras,  and spinor similitude groups  we refer the reader to 
\cite{BassClifford}, \cite{MadapusiSpin}, and \cite{ShimuraQuadratic}.

\subsubsection{}\label{sss:quadratic invariants}

The \emph{Hasse invariant} of $V_{\Q_p}$ is the product of Hilbert symbols
\[
\epsilon( V_{\Q_p}) = \prod\nolimits_{  i<j } (a_i,a_j)_p,
\] 
where $e_1,\ldots, e_n \in V_{\Q_p}$ is an orthogonal basis and  $a_i=Q(e_i)$.
The \emph{determinant}  
\[
\det(V_{\Q_p}) = 2^n a_1\cdots a_n
\] 
 is  the determinant of the matrix of inner products $[e_i,e_j]$.  
It is  well-defined up to multiplication by a square in $\Q_p^\times$.  
The self-duality  hypothesis on $V$ implies  that  $\epsilon( V_{\Q_p})=1$ and 
\[
\ord_p(\det(V_{\Q_p})) \equiv 0 \pmod{2}.
\]

 \subsubsection{}\label{sss:azumaya}

The Clifford algebra  of $V$ is a $\Z/2\Z$-graded $\Z_p$-algebra denoted
\[
C(V)=C^+(V) \oplus C^- (V). 
\]
It is free of rank $2^n$ over $\Z_p$, generated as an algebra by the image of a canonical injection
$V\hookrightarrow C^-(V)$ satisfying $v\cdot v=Q(v)$.  The \emph{canonical involution} on $C(V)$ is the    
$\Z_p$-linear endomorphism $c\mapsto c^*$ characterized by 
 $
 (v_1 \cdots v_d)^*=v_d\cdots v_1
 $
 for all $v_1,\ldots, v_d\in V$.

For some faithfully flat $\Z_p$-algebra $R$ there is an isomorphism
\[
C(V_R) \iso \begin{cases}
M_{2^k }(R) & \mbox{if }n=2k  \\
M_{2^k}(R) \times M_{2^k}(R) & \mbox{if } n=2k+1.
\end{cases}
\]
The \emph{reduced trace} $\mathrm{Trd}:C(V)\to \Z_p$ is the unique $\Z_p$-linear map which induces,
under any such isomorphism, the usual trace on $M_{2^k }(R)$ when $n=2k$, and the sum of the usual traces
when $n=2k+1$.

 The center  $Z(V) \subset C(V)$ is easy to determine: if $n$ is even then $Z(V)=\Z_p$, while 
if  $n$  is odd then $Z(V)$ is either $\Z_{p^2}$ or $\Z_p\times \Z_p$, depending on the determinant of $V_{\Q_p}$.
In all cases  the natural map
\[
C(V) \otimes_{Z(V)}  C(V)^{op} \to \End_{Z(V)} (C(V) )
\]
is an isomorphism.

 \subsubsection{}\label{sss:G}
For a $\Z_p$-algebra $R$,  the tensor product   $V_R=V\otimes_{\Z_p} R$ is a nondegenerate quadratic space over $R$ 
with  Clifford algebra  $C(V_R)=C(V)\otimes_{\Z_p}R$.
  The \emph{spinor similitude group}   $G=\GSpin(V)$   is the reductive group over $\Z_p$ with $R$-points
\[
G(R) = \{ g\in C^+(V_R)^\times : g V_R g^{-1} =V_R,\, g^*g \in R^\times \},
\]
 and the \emph{spinor similitude}  
 $
 \eta_G:G \to \Gm
 $ 
 is the character  $\eta_G(g) = g^* g$.

The conjugation action of $G$ on $C(V)$ leaves invariant the $\Z_p$-submodule $V$, and this action of $G$ on $V$ is denoted
$g\action v = gvg^{-1}$.  There is a short exact sequence of group schemes
\[
1 \to \Gm \to G \map{g\mapsto g \action} \SO(V) \to 1
\]
over $\Z_p$, and the restriction of $\eta_G$  to the central $\Gm$ is  $z\mapsto z^2$.

\subsubsection{}\label{sss:symplectic rep}

If we fix any $\delta\in C(V)^\times$ with $\delta^*=-\delta$,  then 
\[
\psi_\delta(c_1,c_2) \define \mathrm{Trd}(c_1\delta c_2^*)
\]
is a perfect symplectic form on $C(V)$.   The group $G$, being a subgroup of $C(V)^\times$,  
acts on $C(V)$ by  left multiplication, yielding a closed immersion
$
 G \hookrightarrow \GSp(C(V) ,\psi_\delta) .
$
Under this embedding the symplectic similitude character  restricts to the spinor similitude on $G$.

\subsubsection{}\label{sss:D-module}

As in (\ref{contra}), we denote by  
$
D = \Hom_{\Z_p} (C(V) , \Z_p)
$
the contragredient representation. It follows from \S \ref{sss:azumaya} that there is an isomorphism
\[
C(V)^{op} \otimes_{Z(V)}  C(V) \iso \End_{Z(V)} (D)
\]
defined by $( (c_1 \otimes c_2) d ) ( c ) =  d( c_1 c c_2)$.  Note that the contragredient action of $G$ on $D$ commutes 
with the action of $C(V)$, but not with that of $C(V)^{op}$.

However, the inclusion $V\subset C(V)^{op}$ allows us to view 
\begin{equation}\label{simple special}
V \subset \End_{\Z_p} (D).
\end{equation}
These are the \emph{special endomorphisms} of $D$.  Again, they do not commute with the $G$ action; rather, they satisfy the relation
$
g\circ v \circ g^{-1} = g\action v
$
as endomorphisms of $D$, for any  $g\in G(\Z_p)$ and $v\in V$.

%%%%%%%%%%%%%%%%%%%%%%%%%%%%%%%%%

\subsection{The GSpin local Shimura datum}
\label{ss:gspin local datum}

%%%%%%%%%%%%%%%%%%%%%%%%%%%%%%%%%

From the quadratic space $V$ we will construct  an \emph{unramified local Shimura-Hodge} datum $(G, b  , \mu ,C(V) )$  in the sense of 
Definition \ref{def:shimura-hodge}.

\subsubsection{}\label{sss:mu basis}

Fix a $\Z_p$-basis  $x_1,\ldots,x_n \in V$ for which  the matrix of inner products has the form
\[
( [ x_i,x_j ] ) = \left(\begin{matrix}
0 & 1 \\
1 & 0 \\
& & * \\
& & & *  \\
& & &  & \ddots \\
& & & & & *
\end{matrix}\right)
\]
(the matrix is diagonal except for the upper left $2\times 2$ block.)
This choice of basis determines a  cocharacter $\mu:\Gm \to G$  by 
\[
\mu(t) = t^{-1}  x_1 x_2 +  x_2 x_1 ,
\]
where the arithmetic on the right hand side takes place in $C(V)$.

Under the  representation $G \to \SO(V)$, we have 
\[
\mu(t) \action x_i = \begin{cases}
t^{-1} x_i & \mbox{if }i=1 \\
t x_i & \mbox{if }i=2 \\
x_i & \mbox{if } 3 \le i \le n.
\end{cases}
\]
   The relation $x_1 x_2 + x_2x_1 =[x_1,x_2]=1$  implies that 
 $
 C(V) =x_1C(V) \oplus x_2 C(V),
 $ 
 and under the representation $G\to \GSp(C(V),\psi_\delta)$ we have 
\[
\mu(t) \cdot z = \begin{cases}
t ^{-1} z & \mbox{if } z\in x_1 C(V) \\
z & \mbox{if } z\in x_2 C(V).
\end{cases}
\]

The following lemma will be needed in the proof of Proposition \ref{prop:special lattice bijection}.

\begin{lemma}\label{lem:tech2}
For  self-dual $W$-lattices $A,A^\sharp\subset V_K$, the following are equivalent: 
 \begin{enumerate}
 \item
 $(A+A^\sharp)/A \iso W / pW$,
  \item
 there is a  $g\in G(K)$ such that  $ A^\sharp  =  g  \action V_W$ and  $A = g\mu(p^{-1} )  \action V_W.$
 \end{enumerate}
 \end{lemma}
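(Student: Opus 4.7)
Plan: The implication (ii) $\Rightarrow$ (i) follows from a direct computation on the standard pair, since $g \in G(K)$ acts as an isometry of $V_K$ and so carries self-dual lattices to self-dual lattices. In the basis of \S\ref{sss:mu basis} one reads off $V_W + \mu(p^{-1})\action V_W = Wx_1 + Wp^{-1}x_2 + \sum_{i \ge 3} Wx_i$, so the quotient by $\mu(p^{-1})\action V_W$ is $W/pW$, generated by the class of $x_1$. Applying $g$ transports this to $(A+A^\sharp)/A \cong W/pW$.

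For (i) $\Rightarrow$ (ii), the strategy is to produce an orthogonal decomposition $V_K = H \oplus H^\perp$ with $H$ a hyperbolic plane, in which $(A^\sharp, A)$ takes exactly the same shape as $(V_W, \mu(p^{-1})\action V_W)$, and then to transport this shape to the standard basis by an element of $G(K)$. Set $A_1 = A + A^\sharp$ and $A_2 = A \cap A^\sharp$. Self-duality of $A$ and $A^\sharp$ forces $A_1^\vee = A_2$, so $pA_1 \subset A_2$ and $A_1/A_2$ is a two-dimensional $k$-vector space equipped with the non-degenerate form $p[\cdot,\cdot] \bmod p$. The images of $A$ and $A^\sharp$ are Lagrangian lines, and since $k = \bar{\F}_p$ the ambient form is hyperbolic. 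Lift generators $a \in A$, $a^\sharp \in A^\sharp$ of these Lagrangians, rescale to arrange $[a, a^\sharp] = p^{-1}$, use a Hensel-lemma adjustment of $a^\sharp$ by a $W$-multiple of $pa \in A^\sharp$ to achieve $Q(a^\sharp) = 0$, then replace $a$ by $a - Q(a)\,pa^\sharp \in A$ to achieve $Q(a) = 0$ while preserving $[a, a^\sharp] = p^{-1}$. Setting $y_1 = a^\sharp$ and $y_2 = pa$ produces a hyperbolic pair in $V_K$ with $[y_1, y_2] = 1$ spanning the desired $H$.

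Next, using the formula $\pi_H(v) = [v, y_2]\,y_1 - [v, y_1]\,y_2$ together with the key observation that $y_2 = pa$ forces $[v, y_2] \in pW$ whenever $v \in A$ (because $[v, a] \in [A, A] \subset W$), verify
\[
A \cap H = Wpy_1 + Wp^{-1}y_2, \qquad A^\sharp \cap H = Wy_1 + Wy_2,
\]
and orthogonal direct-sum decompositions $A = (A \cap H) \oplus N$, $A^\sharp = (A^\sharp \cap H) \oplus N^\sharp$ with lattices $N, N^\sharp \subset H^\perp$. Since the class of $a = p^{-1}y_2 \in H$ already generates the Lagrangian $A/A_2 \subset A_1/A_2$, the $H^\perp$-component of $A/A_2$ vanishes, giving $N \subset N^\sharp$; by symmetry $N = N^\sharp$, and this common lattice is self-dual in $H^\perp$.

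It remains to match the pair with the standard one. By Witt cancellation applied to $V_K = H \oplus H^\perp = (Kx_1 + Kx_2) \oplus (Kx_1 + Kx_2)^\perp$, the two orthogonal complements are isometric as quadratic $K$-spaces. Over $W = W(\bar{\F}_p)$ any two self-dual $W$-lattices in a common quadratic $K$-space are isometric, since nondegenerate quadratic forms over $\bar{\F}_p$ of a given rank are unique up to isomorphism and then Hensel-lift to $W$. Thus there is an isometry $\phi$ of $V_K$ sending $x_1, x_2 \mapsto y_1, y_2$ and $N_0 = \sum_{i \ge 3} Wx_i$ to $N$; composing if necessary with a reflection along a unit-length vector of $N$ ensures $\phi \in \SO(V_K)$. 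Hilbert~90 ($H^1(K, \Gm) = 0$) applied to $1 \to \Gm \to G \to \SO(V) \to 1$ then lifts $\phi$ to the required $g \in G(K)$. The main obstacle will be the orthogonal decomposition step: identifying $A \cap H$ precisely and establishing $N = N^\sharp$ requires careful bookkeeping of how the bilinear form pairs $A$ against $A^\sharp$, using the relations $pA, pA^\sharp \subset A_2$ to control denominators.
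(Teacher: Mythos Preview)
Your proof is correct, with only cosmetic issues (the projection formula should read $\pi_H(v)=[v,y_2]\,y_1+[v,y_1]\,y_2$, and after the Hensel adjustment of $a^\sharp$ you need one more unit rescaling before $[a,a^\sharp]=p^{-1}$ holds exactly; neither affects the argument).

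Your route differs genuinely from the paper's. The paper invokes the elementary divisor theorem for maximal lattices (Theorem~\ref{thm:elementary divisors}) to produce, in one stroke, simultaneous decompositions $A=We\oplus Wf\oplus B_1$ and $A^\sharp=Wpe\oplus Wp^{-1}f\oplus B_1$ with a common orthogonal complement $B_1$; it then does the same for the standard pair $(\mu(p^{-1})\action V_W,\,V_W)$, and matches the complements using Witt cancellation together with Eichler's theorem (Theorem~\ref{thm:eichler}) that all maximal lattices are isometric. You instead build the hyperbolic plane $H$ by hand from the $2$-dimensional $k$-quadratic space $A_1/A_2$, establish the splittings $A=(A\cap H)\oplus N$ and $A^\sharp=(A^\sharp\cap H)\oplus N^\sharp$ directly via the projection, and prove $N=N^\sharp$ by the Lagrangian-line argument. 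For the complement you replace Eichler's theorem by the observation, specific to $W=W(\bar\F_p)$, that self-dual $W$-lattices of a given rank are all isometric (uniqueness over $\bar\F_p$ plus Hensel). The paper's approach is shorter and works over any complete DVR; yours is more self-contained, avoiding the appendix results entirely at the cost of some explicit bookkeeping, and makes transparent exactly where the hypothesis $(A+A^\sharp)/A\cong W/pW$ enters.
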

 
 \begin{proof}
First assume  (i) holds.  As self-dual lattices are necessarily  maximal,
Theorem \ref{thm:elementary divisors} implies that $A$ and $A^\sharp$ have the form  
\[
A = W e \oplus W f  \oplus B_1 ,\quad 
A^\sharp =  W p e \oplus W p^{-1} f  \oplus B_1
\]
for some isotropic $e,f\in V_K$ with $[e,f]=1$, and some $W$-submodule $B_1\subset V_K$ orthogonal to both $e$ and $f$.

Now consider the self-dual $W$-lattice $V_W \subset V_K$.  The calculations of \S \ref{sss:mu basis} imply that
\begin{equation}\label{nearby base}
 \big( \mu(p^{-1})  \action V_W  + V_W \big) / \big( \mu(p^{-1})  \action V_W \big)  \iso W / pW ,
\end{equation}
 and  so there is a similar decomposition 
 \begin{align*}
\mu(p^{-1}) \action V_W  &= W \widetilde{e} \oplus W  \widetilde{f}  \oplus B_2 \\
V_W  &=  W p \widetilde{e} \oplus W p^{-1} \widetilde{f}  \oplus B_2
\end{align*}
 for some $W$-submodule $B_2 \subset V_K$.  
 
 Witt's cancellation theorem implies that 
 $B_{1 K} \iso B_{2K}$ as $K$-quadratic spaces.  As $B_1$ and $B_2$ are self-dual (hence maximal),  Theorem \ref{thm:eichler}
 implies that  $B_1$ and $B_2$ are isomorphic  as  $W$-quadratic spaces.   It follows that there is a 
 $g\in \SO( V_K )$  such that  $g \widetilde{e} = e$, $g \widetilde{f} = f$, and  $g B_2 =  B_1$.
Choosing any lift $g \in G(K)$ yields the element required in (ii).

  The reverse  implication is clear from (\ref{nearby base}).
\end{proof}

\subsubsection{}
%\label{sss:isocrystals}

As $G$ acts on both $V$ and $D$, any  $b \in  G(K)$ determines  isocrystals 
\[
( V_K, \Phi=b \circ  \sigma ) \quad \mbox{and}\quad (  D_K, F=b\circ  \sigma).
\]  
These will play a  central role in everything that follows.

Recall from \S \ref{sss:slope} that  $b\in G(K)$ is  \emph{basic}    if its slope cocharacter  $\nu_b:\mathbb{T}_K \to G_K$ factors through  
the center of $G_{ K}$.  As $\mathbb{T}_K$ is an inverse limit of connected group schemes, this is equivalent to the slope cocharacter factoring  
through the connected component of the center, which is
\[
\Gm =\ker\big(  G \to \SO(V) \big).
\]
Thus any basic  $b\in G(K)$ determines a rational number
\[
\nu_b \in \Hom(\mathbb{T}_K ,\Gm ) = \Q,
\]
which depends only on the $\sigma$-conjugacy class of $b$.

\begin{lemma}\label{lem:basic criterion}
For each $b\in G(K)$  the following are equivalent: 
\begin{enumerate}
\item
$b$ is basic,
\item
 the isocrystal $( V_K, \Phi=b\circ \sigma)$ is isoclinic of slope $0$,
\item
the isocrystal $( D_K , F= b\circ \sigma)$ is isoclinic.
\end{enumerate}
When these equivalent conditions hold, the slope of the isocrystal $D_K$ is $-\nu_b$.
\end{lemma}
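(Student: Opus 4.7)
The plan is to establish all equivalences by systematically using the defining property of Kottwitz's slope cocharacter $\nu_b$ recalled in \S\ref{sss:slope}: for any $\Q_p$-representation $\psi: G_{\Q_p} \to \GL(M)$, the slope decomposition of the isocrystal $(M_K, \psi(b)\circ\sigma)$ coincides with the weight decomposition determined by the composite cocharacter $\psi\circ \nu_b: \mathbb{T}_K \to \GL(M_K)$. I will apply this twice, once to the standard representation $G \to \SO(V) \to \GL(V)$ and once to the contragredient representation $G \to \GL(D)$ on $D = \Hom(C(V),\Z_p)$.

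First I would prove the two implications out of (i). Suppose $b$ is basic, so that $\nu_b$ factors through the connected center $\Gm \subset G$, which by \S\ref{sss:G} is exactly the kernel of $G \to \SO(V)$. Then $\nu_b$ composed with $G \to \GL(V)$ is trivial, giving (ii). For (iii), note that $\Gm \subset G$ acts on $C(V)$ by left multiplication with weight $1$, hence on the contragredient $D$ with weight $-1$; composing $\nu_b \in \Hom(\mathbb{T},\Gm) = \Q$ with this weight $-1$ representation shows that $(D_K, b\circ\sigma)$ is isoclinic of slope $-\nu_b$. This simultaneously proves (iii) and the final slope assertion of the lemma.

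Next I would handle the converses. For (ii)$\Rightarrow$(i), if the isocrystal $(V_K,\Phi)$ is isoclinic of slope $0$, then the composite $\mathbb{T}_K \to G_K \to \SO(V)_K \subset \GL(V)_K$ is trivial, so $\nu_b$ factors through the kernel $\Gm$ of $G \to \SO(V)$, giving basicness as remarked before the statement of the lemma. For (iii)$\Rightarrow$(i), if $(D_K,F)$ is isoclinic then the composite $\mathbb{T}_K \to G_K \to \GL(D)_K$ has image in the central $\Gm \subset \GL(D)$, so for every $t\in \mathbb{T}(K')$ (any extension) the element $\nu_b(t)\in G(K')$ acts as a scalar on $D$. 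Unwinding the contragredient action, $(\nu_b(t)d)(c) = d(\nu_b(t)^{-1} c)$ being a scalar multiple of $d$ for every $d$ and $c$ forces $\nu_b(t)^{-1}$ to act as a scalar under left multiplication on $C(V)$, hence $\nu_b(t)$ itself is a scalar in $C(V)^\times$, i.e.\ lies in $\Gm \subset G$. Therefore $\nu_b$ factors through $\Gm$ and $b$ is basic.

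No step is particularly delicate; the only point requiring a little care is the last one, where one must unpack the contragredient action to see that scalar action on $D$ is equivalent to $G$-membership in the central $\Gm$. Everything else is a direct application of the functoriality of the slope decomposition with respect to the cocharacter $\nu_b$.
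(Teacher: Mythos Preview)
Your proof is correct and follows essentially the same approach as the paper's: both arguments hinge on the two observations that the central $\Gm\subset G$ is precisely the kernel of $G\to\SO(V)$ (giving (i)$\Leftrightarrow$(ii)) and that $G\to\GL(D)$ identifies this central $\Gm$ with the torus of scalars in $\GL(D)$, acting via $t\mapsto t^{-1}$ (giving (i)$\Leftrightarrow$(iii) and the slope formula). Your version simply spells out in more detail why scalar action on $D$ forces $\nu_b(t)$ to lie in the central $\Gm$, which the paper asserts without unpacking.
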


\begin{proof}
The equivalence of the first two statements follows from the fact that the central $\Gm \subset G_K$ 
is  the kernel of the representation $G_K \to \SO(V_K)$.   The equivalence of the first and third follows from 
the observation that the representation $G_K \to \GL(D_K)$  identifies the  central  $\Gm \subset G_K$ with the 
torus of scalars in $\GL(D_K)$.   Moreover, $\Gm \subset G_K$ acts on $D_K$ through 
the  character $t\mapsto t^{-1}$, proving the final claim.
\end{proof}

\begin{proposition}\label{prop:twisted space}
  Every  basic $b\in G(K)$ satisfies
\begin{equation}\label{slope formula}
 \nu_b=   \frac{ \ord_p ( \eta_G(b) ) }{2}  ,
\end{equation}
and  $b \mapsto \nu_b$ establishes a bijection 
\begin{equation}\label{basic classes}
\{\mbox{basic } b\in G(K)  \} /  \mbox{$\sigma$-conjugacy} \iso   \frac{ 1 }{2} \Z  .
\end{equation}
Moreover, for any basic $b\in G(K)$  the $\Q_p$-quadratic space 
\[
V_K^\Phi = \{ x\in V_K : \Phi x=x\}
\]
  has the same dimension and  determinant as $V_{\Q_p}$, and has Hasse invariant 
$
\epsilon( V_K^\Phi) =   (-1)^{2  \nu_b} .
$
\end{proposition}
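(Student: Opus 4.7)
The plan is to establish the three assertions in order: the slope formula, the bijection, and then the structure of the fixed space $V_K^\Phi$.

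First, for the formula $\nu_b = \ord_p(\eta_G(b))/2$. Since slope cocharacters are functorial in group homomorphisms, applying the character $\eta_G : G \to \Gm$ to $\nu_b : \mathbb{T}_K \to G_K$ produces the slope cocharacter of $\eta_G(b) \in \Gm(K) = K^\times$. But for $\Gm$, the slope of $u \in K^\times$ is the character $\mathbb{T} \to \Gm$ corresponding to $\ord_p(u) \in \Z = \Hom(\mathbb{T},\Gm)$. On the other hand, because $b$ is basic, $\nu_b$ factors through the connected central torus $\Gm \subset G$, on which (as recalled in \S\ref{sss:G}) $\eta_G$ is the squaring map $z \mapsto z^2$. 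Hence $\eta_G \circ \nu_b = 2\nu_b$ as elements of $\Q = \Hom(\mathbb{T},\Gm)$, and comparing the two expressions yields (\ref{slope formula}).

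Second, for the bijection (\ref{basic classes}). The map is well-defined since $\nu_b$ depends only on the $\sigma$-conjugacy class. Surjectivity onto $\Z$ is immediate: the central element $p^m \in \Gm(K) \subset G(K)$ has $\nu_{p^m} = m$. To realize half-integer slopes, I will exhibit an explicit basic $b$ whose spinor norm has odd $p$-adic valuation, for which the cocharacter $\mu$ of \S\ref{sss:mu basis} provides raw material: a direct calculation using $x_1 x_2 + x_2 x_1 = 1$ and $x_1^2 = x_2^2 = 0$ gives $\mu(p)^* \mu(p) = p^{-1}$, so any basic $b \in \mu(p) \cdot G(W)$ has $\ord_p(\eta_G(b))$ odd, and multiplication by central $p^m$ covers all odd integers. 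Injectivity I would deduce from Kottwitz's classification: basic $\sigma$-conjugacy classes are parametrized by $X_*(Z(G))_\Gamma$, and since the reduced identity component of the center is the central $\Gm$ on which $\eta_G$ is squaring, the slope (equivalently, $\ord_p \eta_G(b)$) detects the class.

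Third, for the quadratic space $V_K^\Phi$. Because $b \in G(K)$ acts on $V_K$ through $\SO(V_K)$, $\Phi = b \circ \sigma$ preserves the quadratic form $Q$. By Lemma \ref{lem:basic criterion} the isocrystal $(V_K, \Phi)$ is isoclinic of slope $0$, so $V_K^\Phi$ is a $\Q_p$-vector space of dimension $n$ with $V_K^\Phi \otimes_{\Q_p} K \iso V_K$, and the restriction of $Q$ lands in $K^{\sigma} = \Q_p$ and remains nondegenerate. Dimension and determinant coincide with those of $V_{\Q_p}$ since both are preserved under base change to $K$ and $\det(V_K) = \det(V_{\Q_p})$ in $K^\times/(K^\times)^2$ pulls back correctly in $\Q_p^\times/(\Q_p^\times)^2$ (the natural map is injective). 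For the Hasse invariant, I would argue in two steps. When $\nu_b \in \Z$, I can $\sigma$-conjugate $b$ to the central $p^{\nu_b}$, under which $\Phi v = \sigma(v)$, so $V_K^\Phi = V_{\Q_p}$ has Hasse invariant $1 = (-1)^{2\nu_b}$. For $\nu_b \in \tfrac{1}{2}\Z \setminus \Z$, reduce via central twisting to a single representative (e.g.\ the element constructed above with $\nu_b = -1/2$) and compute the Hasse invariant of the corresponding $\Q_p$-form directly from the two-dimensional hyperbolic summand spanned by $x_1, x_2$: the twist turns this summand into an anisotropic binary form of determinant $-p$, flipping the Hasse invariant from $+1$ to $-1$ while leaving the determinant class unchanged.

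The principal obstacle is the last computation of the Hasse invariant for a half-integer slope representative. The cleanest route is to make the representative fully explicit in the $2 \times 2$ hyperbolic block (the only place the twist is felt), reduce the problem to an honest calculation in a rank-$2$ quadratic space over $K$, and then invoke the classification of $\Q_p$-quadratic planes to read off $\epsilon = -1$; everything else (dimension, determinant, behaviour under integral slopes) is formal.
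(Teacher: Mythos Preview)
Your arguments for the slope formula~(\ref{slope formula}) and for the bijection~(\ref{basic classes}) are fine; in fact your functoriality argument for~(\ref{slope formula}) is cleaner than the paper's, which instead constructs explicit representatives $b^k$ and checks the formula on those.

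The gap is in the Hasse-invariant computation for half-integer slope. Two problems compound here. First, you never actually produce a basic representative: the element $\mu(p)$ acts on $V_K$ by scaling $x_1$ and $x_2$ by $p^{-1}$ and $p$ respectively, so the isocrystal $(V_K,\mu(p)\circ\sigma)$ has slopes $-1,0,\ldots,0,+1$ and is not isoclinic---hence $\mu(p)$ is not basic by Lemma~\ref{lem:basic criterion}, and nothing guarantees that some other element of $\mu(p)G(W)$ is. Second, and more seriously, your proposed computation on the $2$-dimensional summand $\langle x_1,x_2\rangle$ cannot work in principle: over~$\Q_p$, any binary form of determinant $-1$ (mod squares) automatically has Hasse invariant $+1$, since $(a,-a)_p=1$ forces $(a,b)_p=1$ whenever $ab\in -(\Q_p^\times)^2$. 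So you cannot flip~$\epsilon$ while fixing the determinant on a rank-$2$ piece; your own sketch reflects this tension when it simultaneously asserts ``determinant $-p$'' and ``leaving the determinant class unchanged.''

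The paper resolves this by choosing the representative $b=x_3(p^{-1}x_1+x_2)$, whose square $b^2=-p^{-1}Q(x_3)$ is central, so $b$ is manifestly basic with $\nu_b=-1/2$. Crucially, this $b$ acts nontrivially on $x_3$ as well as on $x_1,x_2$, and the Hasse-invariant flip is then computed on the \emph{three}-dimensional summand $M=\Q_p x_1+\Q_p x_2+\Q_p x_3$, where it is genuinely possible to change~$\epsilon$ while preserving dimension and determinant. The moral is that any basic element with half-integer slope must mix the hyperbolic plane with at least one more coordinate, and the Hasse computation has to take place on that larger piece.
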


\begin{proof}
As in \cite[\S 2]{asgari}, the derived group of $G$ is the kernel of the spinor similitude, which is just the usual spin double cover 
of $\SO(V)$.  
In particular the derived group is simply connected, and results of Kottwitz (combine \cite[Proposition 5.4]{KottIso}  and
\cite[(2.4.1)]{KottIso}) imply that 
\[
\frac{\ord_p\circ \eta_G}{2} : G(K) \to \frac{1}{2} \Z
\]
 induces a bijection 
\[
\{\mbox{basic } b\in G(K)  \} /  \mbox{$\sigma$-conjugacy} \iso  \frac{1}{2} \Z.
\]

Recalling the basis  $x_1,\ldots, x_n$ of $V$  of \S \ref{sss:mu basis},  we now set
\begin{equation}\label{nice b}
b = x_3 ( p^{-1} x_1+x_2) \in G(\Q_p).
\end{equation}
A simple calculation gives 
\begin{equation}\label{b square}
b^2 = -p^{-1} Q(x_3) \in p^{-1}\cdot \Z_p^\times.
\end{equation}
As $\Gm \subset G$ acts on $D_K$ via $t\mapsto t^{-1}$,  the relation (\ref{b square}) 
implies that  $b^k$ makes $D_K$ into an isoclinic isocrystal
of slope $k/2$.  Thus $b^k$ is basic with  $\nu_{b^k} = -k/2$ by Lemma \ref{lem:basic criterion}.

As $\eta_G(p)=p^2$, the relation (\ref{b square})   implies $\ord_p( \eta_G(b) )  =-1$, and so 
the powers of $b$ form a complete set of representatives  for the basic $\sigma$-conjugacy classes.  As  these satisfy
\[
\nu_{b^k} = -\frac{k}{2}=  \frac{ \ord_p ( \eta_G(b^k) ) }{ 2 },
\]
we have now proved both (\ref{slope formula}) and (\ref{basic classes}).

As $b^2$ is a scalar, it lies in the kernel of $G(K) \to \SO(V)(K)$.  Thus  the  isocrystal structure on $V_K$
defined by $\Phi=b^k\circ \sigma$ depends only on  $k\pmod{2}$. 
 If $k=0$ then   $V_K^\Phi=V_{\Q_p}$
as subspaces of $V_K$, and so they have the same dimension, determinant,  and Hasse invariant.

On the other hand,  if $k=1$ then direct calculation shows that the isocrystal  $V_K$ defined by  $\Phi=b\circ \sigma$ satisfies 
\begin{equation}\label{b on V}
\Phi x_1  = -p x_2, \quad
\Phi x_2  = -p^{-1} x_1, \quad \Phi x_3 = - x_3,
\end{equation}
and $\Phi x_i =x_i$ for $i>3$.   If we define subspaces 
$M=\Q_p x_1 +\Q_p x_2 + \Q_p x_3$ and $N=M^\perp$ in $V_{\Q_p}$,  then there are orthogonal  decompositions
\[
V_{\Q_p}= M\oplus N,\quad 
V_K^\Phi = M_K^\Phi  \oplus N,
\] 
and an elementary calculation  (as in the proof of \cite[Proposition 2.6]{HP}) 
shows that  $M$ and $M_K^\Phi$ have the same dimension and determinant, but different Hasse invariants.  
Hence the same is true of $V_{\Q_p}$ and $V_K^\Phi$.
 \end{proof}

\begin{proposition}\label{prop:local GSpin datum}
If we let 
\begin{itemize}
\item
$\mu: \Gm \to G$ be as in \S \ref{sss:mu basis},
\item
 $b\in G(K)$ be defined by (\ref{nice b}), so that $\nu_b=-1/2$, 
 \item 
  $G \to \GSp( C(V) , \psi_\delta)$ be the representation of \S \ref{sss:symplectic rep},
  \end{itemize}
then   $b\in G(W) \mu^\sigma(p) G(W),$ 
and the action of $\Gm$ on $C(V)$ determined by 
\[
\Gm \map{\mu} G \to \GSp(C(V) ,\psi_\delta)
\] 
has the form
\[
t\mapsto \left(\begin{matrix}
t^{-1} I_{2^{n-1}} \\ 
& I_{2^{n-1}} 
\end{matrix} \right)
\]
for some choice of basis of $C(V)$.  In particular,  $(G, b,\mu,C(V))$ is a local unramified Shimura-Hodge datum in the sense of  
Definition \ref{def:shimura-hodge}. 
\end{proposition}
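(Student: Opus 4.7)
The three assertions are largely independent calculations, and I would address them in the order stated. The bulk of the work is in parts (1) and (2); part (3) will follow by assembling the pieces against Definitions~\ref{Hodgetypedef} and \ref{def:shimura-hodge}.

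For the shape of $\iota\circ\mu$: the formulas recorded in \S\ref{sss:mu basis} already give that $\mu(t)$ acts on the decomposition $C(V)=x_1C(V)\oplus x_2C(V)$ as $t^{-1}$ on the first summand and as the identity on the second. What remains is to show each summand is $\Z_p$-free of rank $2^{n-1}$. I would use the orthogonal splitting $V=V_0\perp H$, with $V_0=\langle x_3,\ldots,x_n\rangle$ and $H=\langle x_1,x_2\rangle$ the hyperbolic plane, and the corresponding $\Z/2$-graded isomorphism $C(V)\cong C(V_0)\,\widehat{\otimes}\, C(H)$. The idempotents $e=x_1x_2$ and $f=x_2x_1$ satisfy $e+f=1$, $ef=fe=0$, and one checks $eC(V)=x_1C(V)$, $fC(V)=x_2C(V)$. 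Under the standard identification $C(H)\cong M_2(\Z_p)$ sending $e,f$ to the diagonal matrix units, each of $eC(H)$, $fC(H)$ is free of rank $2$, so $eC(V)$, $fC(V)$ are free of rank $2\cdot 2^{n-2}=2^{n-1}$. Concatenating $\Z_p$-bases then produces the asserted block form.

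For $b\in G(W)\mu^\sigma(p)G(W)$: since $\mu$ is defined over $\Z_p$ we have $\mu^\sigma=\mu$, and it suffices to exhibit $h\in G(W)$ with $b=\mu(p)h$. From $\mu(p)=p^{-1}e+f$ one gets $\mu(p)^{-1}=pe+f$, and I would compute
\[
h=\mu(p)^{-1}b=(pe+f)\bigl(x_3(p^{-1}x_1+x_2)\bigr)
\]
by expanding the four terms and repeatedly applying the Clifford relations $x_1^2=x_2^2=0$, $x_1x_2+x_2x_1=1$, and $x_3x_i=-x_ix_3$ for $i=1,2$. The terms $x_1x_2x_3x_2$ and $x_2x_1x_3x_1$ vanish (because of $x_2^2=0$ and $x_1^2=0$), while the nonvanishing terms $x_1x_2x_3x_1$ and $x_2x_1x_3x_2$ simplify using $x_1x_2x_1=x_1$ and $x_2x_1x_2=x_2$ to $-x_1x_3$ and $-x_2x_3$, with the explicit powers of $p$ cancelling exactly. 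The result is
\[
h=-(x_1+x_2)\,x_3.
\]
Since $Q(x_1+x_2)=[x_1,x_2]=1$ and $Q(x_3)\in\Z_p^\times$ (a consequence of the self-duality of $V$ together with $p$ odd), both $x_1+x_2$ and $x_3$ are unit-norm vectors in $V_W$; their product is an even Clifford-group element with $h^*h=Q(x_1+x_2)Q(x_3)=Q(x_3)\in W^\times$ and $hVh^{-1}=V$, hence $h\in G(W)$. This gives $b=\mu(p)h\in G(W)\mu(p)G(W)$.

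For part (3): given (1) and (2), I would verify the residual axioms of Definition~\ref{Hodgetypedef}. The closed immersion $\iota:G\hookrightarrow\GL(C(V))$ is the one of \S\ref{sss:symplectic rep}; the central $\Gm\subset\GL(C(V))$ lies in $G$ because scalars in $\Z_p^\times$ sit in $C^+(V)^\times$, centralize $V$, and satisfy $c^*c=c^2\in\Z_p^\times$; and $\iota\circ\mu$ has the stated block-diagonal shape by part (2), which in particular makes $\mu$ (equivalently $\{\mu\}$) minuscule. Combined with (1), this exhibits $(G,b,\mu,C(V))$ as a local Hodge embedding datum in the sense of Definition~\ref{def:shimura-hodge}.

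The only step that is not essentially bookkeeping is the Clifford-algebra identity $\mu(p)^{-1}b=-(x_1+x_2)x_3$; this is where all the powers of $p$ must cancel, and it is the main obstacle. Once that identity is in hand, the remaining verifications reduce to invoking standard properties of Clifford algebras and the definition of $G=\GSpin(V)$.
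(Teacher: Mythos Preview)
Your proof is correct. The approach differs from the paper's only in how you establish $b\in G(W)\mu^\sigma(p)G(W)$. You compute $\mu(p)^{-1}b$ directly in the Clifford algebra and obtain the explicit element $h=-(x_1+x_2)x_3$, then verify $h\in G(W)$ by checking $h^*h=Q(x_3)\in W^\times$ and that conjugation by $h$ preserves $V_W$. The paper instead argues on the level of the representation $G\to\SO(V)$: comparing the formulas of \S\ref{sss:mu basis} with (\ref{b on V}) shows $b\bullet V_W=\mu(p)\bullet V_W$, so $\mu(p)^{-1}b$ lies in the lattice stabilizer $p^{\Z}G(W)$, and then the equality $\ord_p(\eta_G(b))=-1=\ord_p(\eta_G(\mu(p)))$ forces $\mu(p)^{-1}b\in G(W)$. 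Your route gives an explicit witness and avoids the separate spinor-norm step; the paper's route reuses the $V$-action computations already in hand and is a bit less computational. For the rank claim you also supply more detail (via the graded tensor decomposition $C(V)\cong C(V_0)\,\widehat\otimes\,C(H)$) than the paper, which simply cites \S\ref{sss:mu basis}; a slightly quicker alternative is to note that left multiplication by $x_1$ and $x_2$ give mutually inverse isomorphisms between $x_2C(V)$ and $x_1C(V)$.
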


\begin{proof}
The calculations of \S \ref{sss:mu basis} show that the action of $\Gm$ on $C(V)$ has the stated form.
 Thus, as $\mu=\mu^\sigma$,  it suffices  to prove  $b\in \mu  (p) G(W)$.  
Comparing the calculations of \S \ref{sss:mu basis} with  (\ref{b on V}) shows that
\[
b\action V_W = \Phi (V_W ) = \mu(p) \action V_W
\]
as lattices in $V_K$.  Thus $\mu(p^{-1}) b$ lies in $p^\Z G(W)$, the stabilizer  in $G(K)$ of the lattice $V_W$. But 
\[
\ord_p( \eta_G(b) ) = -1 = \ord_p( \eta_G( \mu(p) )),
\]
and so  in fact $\mu(p^{-1}) b\in G(W)$.
\end{proof}

\begin{remark}
In general, given $G$ and $\{\mu\}$ as in \S\ref{general} with $\{\mu\}$ minuscule, there is a unique 
basic $\sigma$-conjugacy class $[b]$ such that $(G, [b], \{\mu\})$ is a local unramified Shimura datum.
This follows from \cite[Theorem 1.15 (i)]{RapoportRich}
 and the description of the set $B(G_{\Q_p}, \{\mu\})$ given there; see also   \cite[\S 5.2]{Wortmann}. 
\end{remark}

%%%%%%%%%%%%%%%%%%%%%%%%%%%%%%

\subsection{The GSpin Rapoport-Zink space}
%\label{ss:GSpin space}

%%%%%%%%%%%%%%%%%%%%%%%%%%%%%%

The  local Hodge-Shimura datum  $(G, b, \mu, C(V))$ of Proposition \ref{prop:local GSpin datum}   will remain fixed throughout
the remainder of \S \ref{s:GSpin RZ space},  and throughout \S \ref{s:lattices} and \S \ref{s:RZ structure}.

  \subsubsection{}\label{sss:special quasi-endomorphisms}

By Lemma \ref{lem:basic criterion}, the  isocrystals 
  \[
  (V_K , \Phi = b\circ \sigma) , \quad (D_K , F=b\circ \sigma)
  \]
have  slopes $0$ and $1/2$, respectively, and Lemma \ref{lemma121} implies that 
there is a $p$-divisible group 
\[
X_0=X_0( G,b,\mu,C(V))
\]
 over $k$  whose contravariant  Dieudonn\'e module is the lattice $D_W\subset D_K$.  
 The perfect symplectic form $\psi_\delta$ on $C(V)$
 determines a  principal  polarization 
 $
 \lambda_0: X_0 \to X_0^\vee,
 $ 
 and the inclusion $C(V)^{op}\subset \End(C(V))$ by right multiplication defines an action of $C(V)^{op}$ on $X_0$.
 Let $\Db(X_0)$ be the contravariant crystal of $X_0$, so that 
$\Db(X_0)(W)=D_W.$

Tensoring (\ref{simple special}) with $K$ yields a subspace
$
V_K\subset \End_K(D_K)
$ 
of special endomorphisms, on which the operators $\Phi$ and $F$  are related by 
$
\Phi x = F \circ x\circ F^{-1}.
$   
In particular, the $\Phi$-fixed vectors commute with $F$, and so determine  a distinguished  $\Q_p$-subspace
\[
V_K^\Phi \subset \End (X_0)_\Q
\]
of \emph{special quasi-endomorphisms} of $X_0$.   The restriction to $V_K^\Phi$ of the $K$-valued quadratic form on $V_K$
then satisfies
$
x\circ x = Q(x) \cdot \mathrm{id}.
$  
By Proposition \ref{prop:twisted space} the space $V_K^\Phi$
has the same dimension and determinant as $V_{\Q_p}$, but has Hasse invariant 
$
\epsilon ( V_K^\Phi ) =  - \epsilon(V_{\Q_p} ) = -1 .
$

\subsubsection{}\label{sss:RZ space}

Now we have all the ingredients needed to attach a Rapoport-Zink formal scheme to the quadratic space $(V,Q)$, 
using  the general  constructions of \S \ref{s:General RZ space} and \S \ref{ShimuraSect}.
By Theorem \ref{mainKim},  the  quadruple  $(G,b,\mu,C(V))$  determines a formal $W$-scheme
\begin{equation}\label{GSpin RZ}
\RZc=\RZc_{ G,b,\mu,C(V)  ,(s_\alpha) } 
\end{equation}
together with a closed immersion  $\RZc   \hookrightarrow  \RZc(X_0,\lambda_0).$   
Here $ \RZc(X_0,\lambda_0)$ is the symplectic Rapoport-Zink space as in \S \ref{sss:RZ}.
By Proposition \ref{prop:tensor independence}, the formal scheme (\ref{GSpin RZ}) does not depend on the collection of tensors  
$
(s_\alpha) \subset C(V)^\otimes = D^\otimes
$
  that cut out the subgroup $G\subset \GL(C(V)$. 
 
For this to make sense, we must explain why the datum $(G,b,\mu,C(V))$ has the form (\ref{localtoglobaleq}).   In other words, why this quadruple 
 (along with some choice of  tensors $s_\alpha$) agrees with the one coming from a $k$-point on an integral canonical model of a Shimura variety.  
The quadratic space $(V,Q)$ over $\Z_p$ can be realized as the $p$-adic completion of a quadratic space over $\Z_{(p)}$ of signature $(n-2,2)$,
and hence the reductive group $G$ and its representation   $C(V)$ also arise from analogous objects over $\Z_{(p)}$.  
The tensors $(s_\alpha)$ may then be chosen to come from this $\Z_{(p)}$-model of $C(V)$, and the existence of the desired point $x$ on a global Shimura variety then follows from Proposition \ref{switchpoint} below.

An explicit   list of  tensors $(s_\alpha)$ that cut out $G\subset \GL(C(V))$ can be found in \cite[Lemma 1.4(3)]{MadapusiSpin}.  
It will be convenient to fix, once and for all, such a list, and assume that it includes  the tensor induced by the symplectic form
$\psi_\delta$ on $C(V)$, as in the proof of Theorem \ref{mainKim}, and that it include a set of $\Z_p$-algebra generators for the subring
\[
C(V)^{op} \subset \End(C(V)) = C(V) \otimes C(V)^*
\] 
defined by  right multiplication.  (Equivalently, a set of generators for the subring $C(V) \subset \End(D)$ of \S\ref{sss:D-module}.)

 \subsubsection{}\label{sss:universal object}
The  restriction of the universal object  via $\RZc \hookrightarrow \RZc(X_0,\lambda_0)$  is a   pair $(X, \rho)$,
in which $X$ is a $p$-divisible group over $\RZc$, and 
\[
\rho:  X_0 \times_{\Spf(k) }  \overline{\RZc}   \dashrightarrow X \times_{\RZc} \overline{\RZc}
\] 
is a quasi-isogeny of $p$-divisible groups over 
\[
\overline{\RZc} = \RZc \times_{\Spf(W)} \Spf(k).
\]

As we have chosen our list of tensors to include  generators of the subalgebra
$C(V)^{op} \subset \End(C(V))$, the universal $X$ is endowed not only with a principal polarization $\lambda : X\to X^\vee$,
but also with an action of $C(V)^{op}$.  The universal quasi-isogeny is $C(V)^{op}$-linear.
The action of $C(V)^{op}$ on the universal object will play little role in what follows; it will be used only in the proof of
Proposition \ref{prop:projective} below.

The universal quasi-isogeny also  respects  the polarizations $\lambda$ and $\lambda_0$ up to scaling, and hence,
 Zariski locally on $\RZc$, we have
 $
 \rho^*\lambda = c(\rho)^{-1} \lambda_0
 $  
 for some $c(\rho) \in \Q_p^\times$.  For each $\ell\in \Z$ let $\RZc^{ (\ell)} \subset \RZc$   be the open and closed formal subscheme 
 on which $\ord_p( c(\rho)) =\ell$, so that
\[
\RZc = \bigsqcup_{\ell \in \Z }\RZc ^{ (\ell)}.
\]

\subsubsection{}\label{sss:J action}

The algebraic group $J_b = \GSpin( V_K^\Phi)$ has $\Q_p$-points
\[
J_b(\Q_p) = \{ g \in G(K) : g b = b \sigma(g) \},
\]
and acts   as automorphisms of the isocrystal $D_K$. This realizes 
$J_b(\Q_p) \subset \End(X_0)_{\Q}^\times$, and, as in  (\ref{Jaction}), there is an  induced  action of $J_b(\Q_p)$ on  $\RZc$.   
Each $g\in J_b(\Q_p)$ restricts to  an isomorphism
\[
g: \RZc^{(\ell)} \to \RZc^{ (\ell + \ord_p ( \eta_b (g) )) }.
\]
In particular, the subgroup  $p^\Z\subset J_b(\Q_p)$ acts on  $\RZc$, and, as $\eta_b(p)=p^2$,
\begin{equation}\label{fundamental domain}
p^\Z \backslash \RZc  \iso \RZc^{(0)} \sqcup \RZc^{(1)}.
\end{equation}
The surjectivity of  the spinor similitude $\eta_b : J_b(\Q_p)  \to \Q_p^\times$ implies that  the   $\RZc^{(\ell)}$ for various $\ell$ are  (non-canonically) isomorphic.

\subsubsection{}\label{sss:period domain} In this paper, we do not discuss the very interesting general theory of the $p$-adic symmetric domain and the period morphism for the $p$-analytic spaces associated to the formal schemes $\RZc_G$. See \cite[Chapter 5]{RapZinkBook}, and for the Hodge type case \cite{KimRZ}. 

We will just mention briefly, and without details, an elegant description of the $p$-adic symmetric domains that relates to the basic Rapoport-Zink spaces for $\GSpin(V)$ treated here.

In this case, the corresponding flag variety  is simply the quadric $\mathcal{Q}\subset \mathbb{P}(V)$ of isotropic lines $L\subset V$. 
If $F$ is a finite extension of $K$, the $F$-valued points  $\mathcal{Q}^{\rm wa}(F)$
of the (weakly) admissible locus $\mathcal{Q}^{\rm wa}\subset \mathcal{Q}^{\rm rig}_K$ in the rigid analytic quadric are those isotropic $F$-lines $L\subset V_F$
which are not contained in any  isotropic $F$-subspace of $V_F$ which is $\Q_p$-rational; here ``rational'' is  for the $\Q_p$-vector space structure on $V_F$ given by $V_F=V^\Phi_K\otimes_{\Q_p}F$.

 This description  can be obtained  by first
reducing consideration to the corresponding $p$-adic symmetric domain for the 
  group ${\rm SO}(V)=\GSpin(V)/\Gm$ 
(for example, see \cite[Cor. 9.2.22]{RapoportDatOrlik}),
and then by working through the definitions of \cite[Chapter 1]{RapZinkBook}
for ${\rm SO}(V)$. We leave the details to the reader.

%%%%%%%%%%%%%%%%%%%%%%%%%%%%%%%

\section{Vertex lattices and special lattices}
\label{s:lattices}

%%%%%%%%%%%%%%%%%%%%%%%%%%%%%%%%%

The section contains mostly linear algebra.  We study the family of \emph{vertex lattices}  $\Lambda \subset V_K^\Phi$,
and the family of \emph{special lattices} $L\subset V_K$.

%%%%%%%%%%%%%%%%%%%%%%%%%%%%%%

\subsection{Vertex lattices}

%%%%%%%%%%%%%%%%%%%%%%%%%%%%%%

In this subsection we introduce the  vertex lattices  and study their
 combinatorial properties (compare with  \cite{HP, VollaardSS, VollaardWedhorn}). Later, 
 in  \S \ref{s:RZ structure}, we will  express the reduced scheme underlying the spinor similitude Rapoport-Zink 
formal scheme (\ref{GSpin RZ}) as a union  of    closed subschemes  indexed by these vertex lattices.

\begin{definition}\label{def:vertex lattice}
A \emph{vertex lattice} is a $\Z_p$-lattice $\Lambda \subset V_K^\Phi$ satisfying
\[
p\Lambda \subset \Lambda^\vee \subset \Lambda,
\]
where $\Lambda^\vee$ is the dual lattice in the sense of Definition \ref{def:q-maximal}.
The \emph{type} of $\Lambda$ is \[t_\Lambda= \dim_{\F_p} ( \Lambda/\Lambda^\vee).\]
\end{definition}

%Define a positive even integer 
%\[
%t_\mathrm{max} =   \begin{cases}
%n-2 & \mbox{if $n$ is even and $\det(V_{\Q_p}) = (-1)^{\frac{n}{2}} $} \\
%n-1 & \mbox{if $n$ is odd} \\
%n & \mbox{if $n$ is even and $\det(V_{\Q_p}) \neq (-1)^{\frac{n}{2}} $}.
%\end{cases}
%\]

\begin{proposition}\label{prop:possible types}
Let $\Lambda$ be a vertex lattice, and recall the integer $t_\mathrm{max}$
 of (\ref{tmax}).   The type $t_\Lambda$  is even and satisfies 
$
2\le t_\Lambda \le t_\mathrm{max}.
$
Furthermore,  every vertex lattice is contained in a vertex lattice of type $t_\mathrm{max}$.
\end{proposition}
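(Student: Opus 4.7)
The plan is to verify the three assertions---evenness of $t_\Lambda$, the bounds $2\le t_\Lambda\le t_{\mathrm{max}}$, and the existence of an enlargement to type $t_{\mathrm{max}}$---in sequence, with the middle step being the key one. For parity, I will use the invariant-factor theorem for $\Lambda^\vee\subset\Lambda$ to produce a $\Z_p$-basis $e_1,\ldots,e_n$ of $\Lambda$ such that $e_1,\ldots,e_{n-t_\Lambda},pe_{n-t_\Lambda+1},\ldots,pe_n$ is a basis of $\Lambda^\vee$. Comparing Gram matrices in these two bases, and invoking the identity $\det\mathrm{Gram}(\Lambda^\vee)=\det\mathrm{Gram}(\Lambda)^{-1}$ valid for any lattice and its $[\,,\,]$-dual, gives $\ord_p\det\mathrm{Gram}(\Lambda)=-t_\Lambda$. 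Since $\det\mathrm{Gram}(\Lambda)$ represents $\det(V_K^\Phi)=\det(V_{\Q_p})$ in $\Q_p^\times/(\Q_p^\times)^2$, and $\ord_p\det(V_{\Q_p})$ is even by the self-duality of $V$ (see \S\ref{sss:quadratic invariants}), $t_\Lambda$ must be even.

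The core step is the lemma that $\Omega_0=\Lambda/\Lambda^\vee$, as a non-degenerate $\F_p$-quadratic space under $pQ\bmod p$, admits no Lagrangian subspace. Indeed, the preimage $L\subset\Lambda$ of such a Lagrangian $\bar L$ would satisfy $\Lambda^\vee\subset L\subset\Lambda$ with $[\Lambda:L]=[L:\Lambda^\vee]=p^{t_\Lambda/2}$, and isotropy of $\bar L$ translates into $[L,L]\subset\Z_p$, i.e.\ $L\subset L^\vee$; the duality identity $[\Lambda:L^\vee]=[L:\Lambda^\vee]$ then forces $L=L^\vee$, a self-dual lattice in $V_K^\Phi$. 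But Proposition~\ref{prop:twisted space} says $V_K^\Phi$ has Hasse invariant $-1$, which is incompatible with the existence of a self-dual lattice at odd $p$ (any such lattice diagonalizes with unit entries, so its Hasse invariant is trivial). In particular $\Omega_0\neq 0$, so $t_\Lambda\ge 2$. The only case in which $t_\Lambda\le t_{\mathrm{max}}$ is not automatic from parity is $n$ even with $\det(V_{\Q_p})\equiv(-1)^{n/2}$: there $t_\Lambda=n$ would give $\Lambda^\vee=p\Lambda$, hence $\Omega_0=\Lambda/p\Lambda$ with Gram matrix $p\cdot\mathrm{Gram}(\Lambda)\bmod p$ whose determinant works out to $\det(V_{\Q_p})\equiv(-1)^{n/2}$ modulo squares, making $\Omega_0$ the split $n$-dimensional form, which admits a Lagrangian---contradicting the lemma.

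For the enlargement, I will add two to the type at a time. A direct calculation (using $p>2$ to convert between $[\,,\,]$ and $Q$) shows that $\Lambda'=\Lambda+\Z_p\cdot p^{-1}w$ is a vertex lattice strictly containing $\Lambda$, necessarily of type $t_\Lambda+2$ by an index count, exactly when $w\in\Lambda^\vee\setminus p\Lambda$ satisfies $Q(w)\in p\Z_p$. Such $w$ correspond to nonzero isotropic vectors for the quadratic form $Q\bmod p$ on the non-degenerate $\F_p$-quadratic space $\Lambda^\vee/p\Lambda$ of dimension $n-t_\Lambda$. When $n-t_\Lambda\ge 3$, Chevalley--Warning supplies one. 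The boundary case $n-t_\Lambda=2$ occurs only for $n$ even with $t_\Lambda=n-2$; here a short determinant computation yields $\det(\Lambda^\vee/p\Lambda)\equiv\det(V_{\Q_p})\cdot\det(\Omega_0)\pmod{(\F_p^\times)^2}$, and combining with the no-Lagrangian property of $\Omega_0$ (which pins down the square class of $\det(\Omega_0)$) one finds that $\Lambda^\vee/p\Lambda$ is hyperbolic exactly when $t_{\mathrm{max}}=n$, where $t_\Lambda<t_{\mathrm{max}}$, and anisotropic when $t_{\mathrm{max}}=n-2$, where $t_\Lambda=t_{\mathrm{max}}$ already. Iterating produces a vertex lattice of type $t_{\mathrm{max}}$ containing $\Lambda$. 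The main obstacle throughout is the no-Lagrangian lemma; the remaining steps are lattice index bookkeeping or standard facts about $\F_p$-quadratic forms.
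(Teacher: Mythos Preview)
Your proof is correct, and it takes a genuinely different route from the paper's.

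Both arguments treat evenness and the lower bound $t_\Lambda\ge 2$ the same way (ultimately reducing to the nonexistence of a self-dual lattice in $V_K^\Phi$ because $\epsilon(V_K^\Phi)=-1$). The divergence is in the upper bound and the enlargement. The paper introduces the set $\mathbf{Lat}$ of all $\Z_p$-lattices $\Lambda\subset V_K^\Phi$ with $[\Lambda,\Lambda]\subset p^{-1}\Z_p$, picks a maximal element, and observes that such an element is a $pQ$-maximal lattice; it then invokes Eichler's theorem (Appendix, Theorems~\ref{thm:eichler} and~\ref{thm:elementary divisors}) to split off hyperbolic planes and reduce to an anisotropic kernel $Z_{\Q_p}$ of dimension $\le 4$, and finishes by a case analysis on $\dim Z_{\Q_p}\in\{2,3,4\}$ using the explicit classification of low-dimensional anisotropic $\Q_p$-quadratic spaces. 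This simultaneously shows that maximal elements of $\mathbf{Lat}$ are exactly the vertex lattices of type $t_{\mathrm{max}}$, giving the upper bound and the enlargement at once.

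Your approach stays entirely over $\F_p$: you deduce $t_\Lambda\le t_{\mathrm{max}}$ directly from the no-Lagrangian property of $\Omega_0$ via a determinant computation, and you enlarge $\Lambda$ two steps at a time by producing isotropic vectors in the complementary $\F_p$-quadratic space $\Lambda^\vee/p\Lambda$ (dimension $n-t_\Lambda$), using Chevalley--Warning when $n-t_\Lambda\ge 3$ and a discriminant comparison in the boundary case $n-t_\Lambda=2$. This is more elementary---it avoids the Appendix on maximal lattices and the classification of anisotropic $\Q_p$-forms---at the cost of somewhat more bookkeeping. One thing you lose is the clean characterization ``vertex lattice of type $t_{\mathrm{max}}$ $=$ $pQ$-maximal lattice,'' which the paper reuses in the proof of Proposition~\ref{prop:connected lattices} (connectedness of the graph of vertex lattices) via the elementary divisor theorem.
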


\begin{proof}
Recall from \S \ref{sss:quadratic invariants} and  Proposition \ref{prop:twisted space} that  
\begin{equation}\label{even dets}
\ord_p(\det(V_K^\Phi))  \equiv \ord_p( \det(V_{\Q_p}) ) \equiv 0 \pmod{2}.
\end{equation}
It follows that the type   of a  vertex lattice is even.  
The type cannot be $0$, for then $V_K^\Phi$ would contain a self-dual lattice, contradicting the 
Hasse invariant calculation $\epsilon(V_K^\Phi) = -1$.

Let $\mathbf{Lat}$ be the set of all $\Z_p$-lattices  $\Lambda \subset V_K^\Phi$ satisfying $[ \Lambda , \Lambda] \subset p^{-1}\Z_p$.  In other words,
$\mathbf{Lat}$ is the set of all lattices for which $p\Lambda \subset \Lambda^\vee$, and so $\mathbf{Lat}$ contains all vertex lattices.  
Let $\Lambda$ be any lattice which is maximal (with respect to inclusion) among all elements of $\mathbf{Lat}$.  We will prove that $\Lambda$ is a vertex
lattice of type $t_\mathrm{max}$, from which the proposition follows immediately.

The lattice $\Lambda$ is a maximal lattice (in the sense of Definition \ref{def:q-maximal}) with respect to the  rescaled quadratic  form $pQ$ on $V_K^\Phi$,
and so by Theorems \ref{thm:eichler} and  \ref{thm:elementary divisors} there is a decomposition
\[
\Lambda = \mathrm{Span}_{\Z_p}\{  e_1 ,  f_1, \ldots , e_r ,  f_r \}  \oplus Z
\]
in which $Z_{\Q_p}$ is anisotropic, 
\begin{equation}\label{Z space}
Z= \{ x\in Z_{\Q_p} : Q(x) \in p^{-1} \Z_p \},
\end{equation}
and the vectors $e_i$ and $f_j$ satisfy 
\[
[Z,e_i] = [Z,f_i] =0, \quad   [e_i,e_j]=[f_i,f_j ] =0 ,
\] 
and $[e_i,f_j] = p^{-1}\delta_{i,j}$.
  As every quadratic space over $\Q_p$ of dimension greater than $4$ contains an isotropic vector, we also  have 
 $\mathrm{dim} ( Z_{\Q_p} )  \le 4$.

The relation  $p \Lambda \subset \Lambda^\vee$ implies    $pZ\subset Z^\vee$.  We cannot have $Z^\vee =Z$, for then 
\[
 \mathrm{Span}_{\Z_p}\{  p e_1 ,  f_1, \ldots , p e_r ,  f_r \}  \oplus Z \subset V_K^\Phi
\]
would be a self-dual lattice,  contradicting $\epsilon(V_K^\Phi)=-1$.  In particular, $Z\neq 0$.

If  $\dim(Z_{\Q_p})=1$  then  $Z_{\Q_p}\iso \Q_p$ with the quadratic form
$Q(x)= c x^2$ for some $c\in \Q_p^\times/(\Q_p^\times)^2$.  We cannot have  $\ord_p(c)$ even, for  then (\ref{Z space}) implies
$Z=Z^\vee$,  contradicting what was said above.  But we also cannot have $\ord_p(c)$ odd, for  then 
\[
\ord_p(\det(V_{\Q_p}))=\ord_p( \det(V_K^\Phi) ) = 2r + \ord_p( \det(Z_{\Q_p} )) 
\]
is odd,  contradicting (\ref{even dets}).  Thus  $\dim(Z_{\Q_p}) \in \{ 2,3,4\}$. 

Suppose   $\dim(Z_{\Q_p})=2$.  Let $\Q_{p^2}$ be the unramified quadratic extension of $\Q_p$, and let $x\mapsto \bar{x}$
be its nontrivial Galois automorphism.  For some $c\in \Q_p^\times /\mathrm{Nm}(\Q_{p^2}^\times)$ there is an isomorphism
$Z_{\Q_p}\iso \Q_{p^2}$ identifying the quadratic form $Q$ with $Q(x) = c x\bar{x}$.  
If $\ord_p(c)$ is even then, as above,  $Z=Z^\vee$ yields a contradiction.  Thus $\ord_p(c)$ is odd, and simple calculation shows that
$Z^\vee\subset Z$, $\dim(Z/Z^\vee)=2$  and $\det(Z_{\Q_p}) = -u$ for a nonsquare $u\in \Q_p^\times$.
This implies that $\Lambda^\vee \subset \Lambda$ with  $\dim(\Lambda/\Lambda^\vee) = 2r+2 = n,$ and 
\[
\det(V_{\Q_p}) = \det(V_K^\Phi) = (-1)^r \det(Z_{\Q_p}) = (-1)^{\frac{n}{2}} u \neq (-1)^{\frac{n}{2}}.
\]
Thus  $\Lambda$ is a vertex lattice of type $n=t_\mathrm{max}$.

Suppose that   $\dim( Z_{\Q_p} ) =3$.  Let $B$ denote the quaternion division algebra over $\Q_p$, with its main involution $x\mapsto \bar{x}$.  
The subspace of traceless elements $B^0= \{ x\in B : x+\bar{x}=0\}$ has dimension $3$, and the 
reduced norm $\mathrm{Nrd}(x)=x\bar{x}$  restricts to an anisotropic quadratic form on $B^0$ with 
$\ord_p( \det(B^0))$ even.  In fact
\[
(B^0 , \mathrm{Nrd}) \iso (\Q_p^3 , -ux_1^2 -p x_2^2 + upx_3^2 )
\]
for any nonsquare $u\in \Z_p^\times$.
There are exactly four anisotropic quadratic spaces over $\Q_p$ of dimension $3$, and they are the spaces $(B_0, c \mathrm{Nrd})$
with $c\in \Q_p^\times / (\Q_p^\times)^2$.  If $\ord_p(c)$ is odd then $\ord_p(\det(V_K^\Phi))$ is also odd, contradicting 
(\ref{even dets}). Thus $\ord_p(c)$ is even, and one easily checks from (\ref{Z space}) that $Z^\vee \subset Z$ with 
$Z/Z^\vee$ of dimension $2$.  Thus $\Lambda^\vee \subset \Lambda$ and  
\[
\mathrm{dim} ( \Lambda/\Lambda^\vee )  = 2r+2 = n-1 . 
\]
In other words, $\Lambda$ is a vertex lattice of type $n-1=t_\mathrm{max}$.

Finally,  suppose $\dim(Z_{\Q_p})=4$.  By  \cite[Corollary IV.2.3]{SerreAcourse}, the only anisotropic quadratic space of dimension $4$ is $Z_{\Q_p} \iso B$
with its reduced norm form.  In particular  $\det(Z_{\Q_p})=1$, and $Z\iso \mathfrak{m}^{-1}$, where $\co_B\subset B$ is  the unique maximal order and  $\mathfrak{m}\subset \co_B$ is its unique maximal ideal.  The dual lattice is $Z^\vee =\co_B$, and 
it  follows that  $Z^\vee \subset Z$ with   $\dim(Z/Z^\vee) =2$.  This implies that $\Lambda^\vee \subset \Lambda$, 
$
\dim(\Lambda/\Lambda^\vee) = 2r+2 = n-2,
$
and 
\[
\det(V_{\Q_p}) = \det(V_K^\Phi) = (-1)^r \det(Z_{\Q_p}) = (-1)^{\frac{n}{2}} .
\]
  Thus  $\Lambda$ is a vertex lattice of type $n-2=t_\mathrm{max}$.
\end{proof}

There is a natural notion of adjacency between  vertex lattices, which makes them into the vertices
of a connected graph, as we now explain.

\begin{definition}
Two vertex lattices  $\Lambda_1 ,\Lambda_2 \subset V_K^\Phi$ are \emph{adjacent}
if either $\Lambda_1 \subsetneq \Lambda_2$ or $\Lambda_2 \subsetneq \Lambda_1$.
\end{definition}

We write $\Lambda_1 \sim \Lambda_2$ to indicate that $\Lambda_1$ and $\Lambda_2$ are adjacent.
Adjacent lattices have different types, 
and the inclusion between them  is always the lattice of smaller type inside the  lattice of larger type.

\begin{proposition}\label{prop:all types}
Let $\Lambda \subset V_K^\Phi$ be a vertex lattice of type $t_\Lambda$ and suppose $t\neq  t_\Lambda$ is any 
even integer with  $2 \le t \le t_\mathrm{max}$.  There is a vertex lattice of type $t$ adjacent  to $\Lambda$.
\end{proposition}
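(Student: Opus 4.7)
The plan is to translate the existence of adjacent vertex lattices into a question about subspaces of the $\F_p$-quadratic space $\Omega_M := M/M^\vee$ attached to a vertex lattice $M$. Sending $M'$ to its image $M'/M^\vee \subset \Omega_M$ identifies the set of vertex lattices $M'$ satisfying $M^\vee \subset M' \subset M$ with the set of $\F_p$-subspaces $U \subset \Omega_M$ for which $U^\perp \subset U$, and under this identification $t_{M'} = 2\dim_{\F_p} U - t_M$. Indeed, $U^\perp$ is the image of $M'^\vee$, while the condition $pM' \subset M'^\vee$ is automatic from $pM \subset M^\vee \subset M'^\vee$.

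Suppose first that $t < t_\Lambda$. I want to find an isotropic subspace $I \subset \Omega_\Lambda$ of dimension $(t_\Lambda - t)/2$; the preimage of $I^\perp$ in $\Lambda$ will then be the desired vertex lattice of type $t$ strictly contained in $\Lambda$. The nondegenerate $\F_p$-quadratic space $\Omega_\Lambda$ cannot admit a Lagrangian subspace, for such a subspace would pull back to a self-dual lattice in $V_K^\Phi$, contradicting the Hasse invariant computation $\epsilon(V_K^\Phi) = -1$ of Proposition \ref{prop:twisted space}. Consequently $\Omega_\Lambda$ has Witt index exactly $t_\Lambda/2 - 1$, and isotropic subspaces of every dimension in $[0, t_\Lambda/2 - 1]$ exist. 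Since $t \ge 2$, the required dimension $(t_\Lambda - t)/2$ lies in this range.

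Suppose instead $t > t_\Lambda$. Using Proposition \ref{prop:possible types}, I embed $\Lambda$ in a vertex lattice $\Lambda^\sharp$ of type $t_\mathrm{max}$. Then $\bar\Lambda := \Lambda/(\Lambda^\sharp)^\vee$ is a subspace of $\Omega_{\Lambda^\sharp}$ of dimension $(t_\Lambda + t_\mathrm{max})/2$ with $\bar\Lambda^\perp = \Lambda^\vee/(\Lambda^\sharp)^\vee \subset \bar\Lambda$. Any extension of $\bar\Lambda$ to a subspace $U \subset \Omega_{\Lambda^\sharp}$ of dimension $(t+t_\mathrm{max})/2 \le t_\mathrm{max}$ automatically satisfies $U^\perp \subset \bar\Lambda^\perp \subset \bar\Lambda \subset U$, so the associated vertex lattice $\Lambda''$ has type $t$ and strictly contains $\Lambda$.

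The substantive ingredient is the absence of Lagrangian subspaces in $\Omega_\Lambda$, which is built into the setup via the Hasse invariant of $V_K^\Phi$; once this is invoked, what remains is elementary bookkeeping with the structure theory of $\F_p$-quadratic spaces, and the two cases are essentially dual to each other via the correspondence in the opening paragraph.
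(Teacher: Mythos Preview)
Your proof is correct and follows essentially the same approach as the paper: both arguments work inside the $\F_p$-quadratic space $\Omega_M$ and exploit the absence of Lagrangians (equivalently, the Hasse invariant obstruction) to produce the required isotropic or coisotropic subspaces. The paper proceeds iteratively, changing the type by $2$ at each step via an isotropic line or a codimension-one subspace, whereas you set up the dictionary $M' \leftrightarrow U$ once and then jump directly to the target type; the content is the same.
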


\begin{proof}
First suppose that $t< t_\Lambda$.  The quadratic form $q(x) = pQ(x)$ makes   $\Lambda/\Lambda^\vee$ into
a nondegenerate quadratic space over $\F_p$ of rank  $t_\Lambda \ge 4$, and Corollary  1.2.2 of \cite{SerreAcourse} implies the
existence of  an isotropic line $\ell \subset \Lambda/\Lambda^\vee$.
The orthogonal $\ell^\perp \subset \Lambda/\Lambda^\vee$ determines a vertex lattice $\Lambda' = \Lambda^\vee + \ell^\perp \subset \Lambda$
of type $t_{\Lambda'} = t_\Lambda - 2$, and repeating this process yields a vertex lattice of type $t$  contained in $\Lambda$.

Now suppose that $t_\Lambda <t$.  By Proposition \ref{prop:possible types} 
there is a vertex lattice $\Lambda_\mathrm{max}$ of maximal type $t_\mathrm{max}$ satisfying
$
\Lambda_\mathrm{max}^\vee  \subsetneq \Lambda^\vee \subset \Lambda \subsetneq \Lambda_\mathrm{max}. 
$
The subspace $\Lambda^\vee / \Lambda^\vee_\mathrm{max} \subset \Lambda_\mathrm{max}/ \Lambda^\vee_\mathrm{max}$ is 
totally isotropic, and for any codimension one subspace $\ell \subset \Lambda^\vee / \Lambda^\vee_\mathrm{max}$ the orthogonal
\[
\Lambda / \Lambda^\vee_\mathrm{max} \subset \ell^\perp \subset \Lambda_\mathrm{max}/ \Lambda^\vee_\mathrm{max}
\]
determines a vertex lattice $\Lambda' = \Lambda^\vee_\mathrm{max} + \ell^\perp$ of type $t_\Lambda+2$ containing $\Lambda$.  Repeating  this 
process   yields a vertex lattice of type $t$  containing  $\Lambda$.
\end{proof}

The following proposition, which proves the connectedness of the graph of vertex lattices,  
will be used in the proof of Theorem \ref{thm:final}   to show that $\RZc^{(\ell)}$ is connected.

\begin{proposition}\label{prop:connected lattices}
Given any two vertex lattices $\Lambda', \Lambda'' \subset V_K^\Phi$, there is a sequence of adjacent
vertex lattices
\[
\Lambda'\sim  \Lambda_1 \sim \Lambda_2 \sim \cdots\sim \Lambda_s \sim \Lambda''.
\]
\end{proposition}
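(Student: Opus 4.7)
The plan is to reduce first to the case where both vertex lattices have maximal type $t_\mathrm{max}$, and then to induct on a combinatorial distance between them.

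The reduction is immediate from Proposition \ref{prop:all types}: every vertex lattice is contained in a vertex lattice of type $t_\mathrm{max}$, and the proof realizes this containment by a sequence of adjacencies going up through every allowed intermediate type. Applying this to both $\Lambda'$ and $\Lambda''$ reduces the problem to connecting two vertex lattices $\Lambda_1$ and $\Lambda_2$ of type $t_\mathrm{max}$.

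For this I would induct on
\[
d(\Lambda_1,\Lambda_2) := \dim_{\F_p} \bigl( \Lambda_1 / (\Lambda_1 \cap \Lambda_2) \bigr).
\]
The base case $d=0$ forces $\Lambda_1 \subset \Lambda_2$, whence $\Lambda_1 = \Lambda_2$ by a covolume comparison using $\lvert \Lambda_i/\Lambda_i^\vee \rvert = p^{t_\mathrm{max}}$ for $i=1,2$. For the inductive step, the goal is to produce a third maximal-type vertex lattice $\Lambda_3$ together with a type-$(t_\mathrm{max}-2)$ vertex sublattice $\Lambda_0$ contained in both $\Lambda_1$ and $\Lambda_3$ — giving the chain $\Lambda_1 \supsetneq \Lambda_0 \subsetneq \Lambda_3$ — such that $d(\Lambda_3,\Lambda_2) < d(\Lambda_1,\Lambda_2)$. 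Concretely, $\Lambda_0$ will correspond to an isotropic line $\ell \subset \Omega_1 := \Lambda_1/\Lambda_1^\vee$ via $\Lambda_0 = \Lambda_1^\vee + \ell^\perp$, and one chooses $\ell$ to lie inside the image of $\Lambda_1 \cap \Lambda_2$ in $\Omega_1$; the lattice $\Lambda_3$ is then obtained as another maximal-type vertex lattice enlarging $\Lambda_0$, constructed using a vector extracted from $\Lambda_2$ so as to capture one more dimension of $\Lambda_2$.

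The main obstacle is establishing the existence of such an isotropic line $\ell$ within the image of $\Lambda_1 \cap \Lambda_2$. This image has $\F_p$-codimension $d$ in the nondegenerate quadratic space $\Omega_1$, which itself has Witt index $t_\mathrm{max}/2 - 1 \ge 1$; an argument using Witt's theorem, combined with the bound on $d$, should yield the required $\ell$ outside a few easily handled degenerate cases (where $d$ is close to $t_\mathrm{max}$ and the image meets the isotropic cone of $\Omega_1$ only trivially, in which case an initial adjacency step moves $\Lambda_1$ into generic position before invoking the induction). Verifying that the resulting $\Lambda_3$ actually decreases the distance to $\Lambda_2$ is then a direct lattice calculation in the hyperbolic plane inside $\Omega_1/\ell^\perp$ paired with $\ell^\perp/\ell$.
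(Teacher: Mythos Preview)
Your strategy is different from the paper's, and the inductive step has a real gap. The paper does not induct on any distance; after enlarging $\Lambda'$ and $\Lambda''$ to vertex lattices $\underline{\Lambda}',\underline{\Lambda}''$ of type $t_{\mathrm{max}}$ (equivalently, to lattices maximal for the rescaled form $pQ$), it invokes the elementary divisor theorem for maximal lattices (Theorem~\ref{thm:elementary divisors}) to put them in a common normal form
\[
\underline{\Lambda}'' = \mathrm{Span}_{\Z_p}\{e_1,f_1,\ldots,e_r,f_r\}\oplus Z,\qquad
\underline{\Lambda}' = \mathrm{Span}_{\Z_p}\{p^{a_1}e_1,p^{-a_1}f_1,\ldots,p^{a_r}e_r,p^{-a_r}f_r\}\oplus Z,
\]
sharing the anisotropic piece $Z$. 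The chain of adjacencies is then written down explicitly by adjusting one exponent $a_i$ at a time, alternately dropping to a sublattice of type $t_{\mathrm{max}}-2$ and going back up to type $t_{\mathrm{max}}$.

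The gap in your argument is the justification for the isotropic line. You assert that the image $W$ of $\Lambda_1\cap\Lambda_2$ in $\Omega_1$ has codimension $d$ and appeal to a ``bound on $d$''. But $d=\dim_{\F_p}\bigl(\Lambda_1/(\Lambda_1\cap\Lambda_2)\bigr)$ is \emph{unbounded} --- in the normal form above it equals $\sum_i a_i$ --- so no such bound exists, and in any case the codimension of $W$ is only $\le d$, not equal to it. What actually rescues this step is a different observation: $W^\perp\subset\Omega_1$ is the image of $\Lambda_1\cap\Lambda_2^\vee$, and since $Q(\Lambda_2^\vee)\subset\Z_p$ this subspace is totally isotropic for $pQ$; hence $\mathrm{codim}\,W=\dim W^\perp\le t_{\mathrm{max}}/2-1$, forcing $\dim W\ge t_{\mathrm{max}}/2+1\ge 3$ (the case $t_{\mathrm{max}}=2$ being trivial since the vertex lattice is then unique), so $W$ contains an isotropic vector over $\F_p$. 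You also leave the construction of the overlattice $\Lambda_3$ with strictly smaller distance to $\Lambda_2$ entirely unspecified; choosing the correct one among the type-$t_{\mathrm{max}}$ overlattices of $\Lambda_0$, and verifying the distance actually drops, is exactly where one ends up reproving a fragment of the elementary divisor decomposition. The paper sidesteps all of this by invoking that theorem once at the outset.
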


\begin{proof}
As in the proof of Proposition \ref{prop:possible types}, let $\mathbf{Lat}$ be the set of all $\Z_p$-lattices
$\Lambda \subset V_K^\Phi$ satisfying $[\Lambda, \Lambda] \subset p^{-1} \Z_p$.    
Recall that $\mathbf{Lat}$ contains all vertex lattices.  Recall also that any maximal   (with respect to inclusion) element 
$\Lambda\in \mathbf{Lat}$   is necessarily a vertex lattice of type $t_\mathrm{max}$, and is a maximal lattice 
with respect to the rescaled quadratic form $pQ$.

Pick  maximal elements $\underline{\Lambda}' , \underline{\Lambda}'' \in \mathbf{Lat}$ with $\Lambda'\subset \underline{\Lambda}'$
and $\Lambda'' \subset \underline{\Lambda}''$.  In particular $\Lambda' \sim \underline{\Lambda}'$ and 
$\Lambda'' \sim \underline{\Lambda}''$.  Using the  maximality of $\underline{\Lambda}'$ and  $\underline{\Lambda}''$
with respect to $pQ$, Theorem \ref{thm:elementary divisors} implies that there are decompositions
\[
\underline{\Lambda}'' =  \mathrm{Span}_{\Z_p}\{   e_1,  f_1 , \ldots,  e_r , f_r \}  \oplus Z
\]
and
\[
\underline{\Lambda}' = \mathrm{Span}_{\Z_p}\{   p^{a_1} e_1,   p^{-a_1} f_1 , \ldots , p^{a_r} e_r ,  p^{-a_r}  f_r  \}  \oplus Z
\]
where all $e_i$ and $f_i$ are isotropic, $[e_i,f_j]=p^{-1} \delta_{ij}$, each $a_i \ge 0$, and 
 $Z$ is orthogonal to all  $e_i$ and $f_i$ and satisfies  $pZ\subset Z^\vee \subset Z$.

From these decompositions it is elementary to construct a chain of adjacent vertex lattices 
\[
\Lambda'\sim \underline{\Lambda}' \sim \Lambda_1 \sim \Lambda_2 \sim \cdots\sim \Lambda_s\sim \underline{\Lambda}''  \sim \Lambda''.
\]
For example,  set
\[
\Lambda_1 =   \mathrm{Span}_{\Z_p} \{   p^{a_1} e_1 ,  p^{-a_1+1} f_1 ,  p^{a_2} e_2,   p^{-a_2} f_2,
\ldots , p^{a_r} e_r, p^{-a_r}  f_r \}  \oplus Z
\]
so that $\Lambda_1 \subsetneq \underline{\Lambda}'$ is a vertex lattice of type $t_\mathrm{max}-2$, and then set
\[
\Lambda_2 =  \mathrm{Span}_{\Z_p} \{  p^{a_1-1} e_1,  p^{-a_1+1} f_1 , p^{a_2} e_2,   p^{-a_2} f_2,
\ldots ,  p^{a_r} e_r ,  p^{-a_r}  f_r \} \oplus Z
\]
so that $\Lambda_2\supsetneq \Lambda_1$ is a vertex lattice of type $t_\mathrm{max}$.  Repeat until all the 
exponents reach $0$.
\end{proof}

%%%%%%%%%%%%%%%%%%%%%%%%%%%%%%%

\subsection{Special lattices}
\label{ss:special lattices}

%%%%%%%%%%%%%%%%%%%%%%%%%%%%%%%%%

We now define a family of special lattices in $V_K$.  In \S \ref{s:RZ structure} we will show that these special lattices
are in bijection with the set $p^\Z \backslash \RZc(k)$.

 In fact, we will need a similar result for any finitely generated  extension $k'$ of $k$. 
Let  $W'$ be the Cohen ring of $k'$, let $K'=W'[1/p]$ be its fraction field, and let    $\sigma: K' \to K'$  be any  lift of Frobenius.
Define a $\sigma$-linear operator  $\Phi = b\circ \sigma$ on $V_{K'}$.
 If $L \subset V_{K'}$ is any  $W'$-submodule, let $\Phi_*(L) \subset V_{K'}$ be the $W'$-submodule generated by $\Phi(L)$.

\begin{definition}
A \emph{special lattice} $L\subset V_{K'}$ is a self-dual $W'$-lattice such that 
\[
  (L+\Phi_*(L))/L  \iso W' / pW'.
\]
\end{definition}

The following proposition implies  that for every special lattice $L\subset V_K$
there is a vertex lattice $\Lambda \subset V_K^\Phi$ with 
$\Lambda_W^\vee  \subset L \subset \Lambda_W.$
In fact, there is a unique  minimal  such $\Lambda$, denoted $\Lambda(L)$.
The proof is identical to that of  \cite[Proposition 4.1]{RTW} and  \cite[Lemma 2.1]{VollaardSS}, and so is omitted here.

\begin{proposition}\label{prop:optimal vertex}
Let $L\subset V_K$ be a special lattice.  If we define 
\[
L^{(r)}= L + \Phi(L) + \cdots + \Phi^r(L),
\]  
then there is a (necessarily unique) integer $1\le d \le  t_\mathrm{max}/2 $ such that 
\[
L=L^{(0)} \subsetneq L^{(1)} \subsetneq \cdots\subsetneq L^{(d)} =L^{(d+1)}.
\]
Moreover, the $W$-module $L^{(r+1)}/L^{(r)}$ has length $1$  for all $ r<d$, and 
\[
\Lambda(L) = \{ x\in L^{(d)}  : \Phi(x) = x\} \subset V_K^\Phi
\]
is a vertex lattice of type $2d$  satisfying  $\Lambda(L)^\vee =\{ x\in L : \Phi(x) = x \}.$  
\end{proposition}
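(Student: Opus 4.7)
The plan is to follow the template of \cite[Proposition 4.1]{RTW} and \cite[Lemma 2.1]{VollaardSS}. The crucial structural input is that $\Phi=b\circ\sigma$ acts as an isometry of $V_{K'}$ up to $\sigma$: the identity $[\Phi x,\Phi y]=\sigma[x,y]$ holds because $b\in G(K)$ acts on $V_K$ through $\SO(V_K)$, while $\sigma$ preserves the form on $V_{\Z_p}$. Consequently $\Phi_*(M)$ is self-dual for every self-dual $W'$-lattice $M\subset V_{K'}$, and the duals of the lattices $L^{(r)}$ are
\[
L^{(r),\vee}=L\cap\Phi_*(L)\cap\cdots\cap\Phi_*^r(L)\subset L^\vee=L.
\]

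First I would prove that the lengths $n_r\define\length_{W'}(L^{(r)}/L^{(r-1)})$ are non-increasing. The identity $L^{(r)}=L^{(r-1)}+\Phi_*(L^{(r-1)})$ together with $\Phi_*(L^{(r-2)})\subset L^{(r-1)}$ yields a natural surjection
\[
\Phi_*(L^{(r-1)})/\Phi_*(L^{(r-2)})\twoheadrightarrow L^{(r)}/L^{(r-1)},
\]
whose source has $W'$-length $n_{r-1}$ because $\Phi_*$ is $\sigma$-semi-linear and injective. Starting from $n_1=1$ (the special lattice hypothesis), each $n_r\in\{0,1\}$, and the sequence drops to $0$ the moment it first does. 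Let $d\ge 1$ be the smallest integer with $L^{(d)}=L^{(d+1)}$. This gives the strict inclusions and stabilization asserted in the proposition, and $\Phi_*(L^{(d)})\subset L^{(d+1)}=L^{(d)}$ shows that $L^{(d)}$ is $\Phi_*$-stable.

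The main technical step, needed in order to descend $L^{(d)}$ to a $\Z_p$-lattice, is to upgrade $\Phi_*(L^{(d)})\subset L^{(d)}$ to equality. Set $a=\length(L^{(d)}/\Phi_*(L^{(d)}))$. Since $\Phi_*$ commutes with taking duals, dualizing the inclusion gives $L^{(d),\vee}\subset\Phi_*(L^{(d),\vee})$ with quotient of length $a$. The chain
\[
L^{(d),\vee}\subset\Phi_*(L^{(d),\vee})\subset\Phi_*(L^{(d)})\subset L^{(d)}
\]
has successive quotient lengths $a$, $2d$, $a$: the middle $2d$ comes from $\length(\Phi_*(L^{(d)})/\Phi_*(L^{(d),\vee}))=\length(L^{(d)}/L^{(d),\vee})$, combined with the fact that $\length(L^{(d)}/L^{(d),\vee})=2d$ by self-duality of $L$ (length $d$ on each side of $L$). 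Comparing with the total length $2d$ forces $2a=0$, hence $a=0$. By Frobenius descent over $W'$ for the chosen lift of Frobenius $\sigma$, the fixed module $\Lambda(L)\define (L^{(d)})^\Phi$ is a $\Z_p$-lattice in $V_K^\Phi$ with $\Lambda(L)\otimes_{\Z_p}W'=L^{(d)}$; the same descent applied to the $\Phi_*$-stable sublattice $L^{(d),\vee}$ yields a $\Z_p$-sublattice with $W'$-span $L^{(d),\vee}$, and this must coincide with $\Lambda(L)^\vee$.

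Finally, I would verify the vertex lattice property and the last identity. The inclusion $\Lambda(L)^\vee\subset\Lambda(L)$ is immediate, and the length count $\length(L^{(d)}/L^{(d),\vee})=2d$ descends to $\length_{\Z_p}(\Lambda(L)/\Lambda(L)^\vee)=2d$. The vertex condition $p\Lambda(L)\subset\Lambda(L)^\vee$ -- the most delicate step and the main obstacle -- is proved by a direct analysis of the pairing on the symmetric chain $L^{(d),\vee}\subset\cdots\subset L\subset\cdots\subset L^{(d)}$, exactly as in \cite{RTW,VollaardSS}; granted this, $\Lambda(L)$ is a vertex lattice of type $2d$, and Proposition \ref{prop:possible types} gives $2d\le t_\mathrm{max}$. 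For the last identity, any $x\in L$ with $\Phi(x)=x$ satisfies $x=\Phi^r(x)\in \Phi_*^r(L)$ for every $r$, so $x$ lies in $L^{(d),\vee}=\bigcap_{r=0}^d\Phi_*^r(L)$; conversely, a $\Phi$-fixed element of $L^{(d),\vee}\subset L$ manifestly lies in $L$, yielding $\Lambda(L)^\vee=\{x\in L:\Phi(x)=x\}$.
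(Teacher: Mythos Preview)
Your proposal is correct and takes the same approach the paper intends: the paper omits the proof entirely and simply cites \cite[Proposition~4.1]{RTW} and \cite[Lemma~2.1]{VollaardSS}, which is precisely the template you follow and adapt to the orthogonal setting. In fact you supply considerably more detail than the paper does---your length argument for $\Phi_*(L^{(d)})=L^{(d)}$ via the chain $L^{(d),\vee}\subset\Phi_*(L^{(d),\vee})\subset\Phi_*(L^{(d)})\subset L^{(d)}$ is clean and correct, and your verification of $\Lambda(L)^\vee=\{x\in L:\Phi(x)=x\}$ is fine; the one step you defer (the vertex condition $p\Lambda(L)\subset\Lambda(L)^\vee$) is exactly the step the paper also leaves to the cited references.
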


\subsection{The variety $S_\Lambda$}

We next  attach to a vertex lattice 
$
\Lambda \subset V_K^\Phi
$ 
a $k$-variety $S_\Lambda$  parametrizing certain special lattices. 

\subsubsection{}\label{sss:Xlambda}

Define an $\F_p$-vector space  $\Omega_0=\Lambda/\Lambda^\vee$ 
of dimension $t_\Lambda$.  The quadratic form $pQ$ on $\Lambda$ is $\Z_p$-valued, and its reduction modulo $p$ makes $\Omega_0$ into a nondegenerate quadratic space over $\F_p$.  Set 
\[
\Omega \define \Omega_0\otimes_{\F_p} k \iso \Lambda_W / \Lambda_W^\vee
\] 
with its Frobenius operator  $\mathrm{id}\otimes \sigma = \Phi$. 
  Note that $\Omega_0$ cannot admit a Lagrangian ($=$ totally isotropic of dimension $t_\Lambda/2$) subspace.   
  Indeed, if such a subspace $\mathscr{L} \subset \Omega_0$ existed, then 
$\Lambda^\vee+ \mathscr{L}\subset V_K^\Phi$ would be a vertex lattice of type $0$, 
contradicting Proposition \ref{prop:possible types}.
In fact, $\Omega_0$ is characterized up to isomorphism as the unique nondegenerate 
quadratic space of dimension $t_\Lambda$ that does \emph{not} admit a Lagrangian subspace.

 The \emph{orthogonal Grassmannian} $\OGr(\Omega)$  is the moduli 
space of  Lagrangian subspaces $\mathscr{L}\subset \Omega$.    More precisely, 
an $R$-point of $\OGr(\Omega)$ is a totally isotropic local direct summand $\mathscr{L} \subset \Omega\otimes_k R$
of rank $t_\Lambda/2$.
Denote by  $S_\Lambda \subset \OGr(\Omega)$  the reduced closed subscheme 
with  $k$-points
\begin{align*}
S_\Lambda (k) & = \left\{ \mbox{Lagrangians }\mathscr{L} \subset \Omega : \dim_k( \mathscr{L}+\Phi(\mathscr{L}) ) = \frac{t_\Lambda}{2}+1 \right\} \\
& \iso  \{ \mbox{special lattices } L \subset V_K :   \Lambda^\vee_W \subset L \subset \Lambda_W \}.
\end{align*}

\begin{proposition}\label{prop:lagrangian}
The $k$-scheme $S_\Lambda$ has two connected components  
$
S_\Lambda = S_\Lambda^+ \sqcup S_\Lambda^-.
$
The two components are isomorphic, and each  is  projective and smooth of dimension $( t_\Lambda / 2 ) -1$.
\end{proposition}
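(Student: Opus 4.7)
The plan is to work inside the orthogonal Grassmannian $\OGr(\Omega)$ of Lagrangian subspaces of $\Omega$. Since $\Omega = \Omega_0 \otimes_{\F_p} k$ is a nondegenerate split quadratic space over $k$ of even dimension $2m = t_\Lambda$, the Grassmannian $\OGr(\Omega)$ decomposes as a disjoint union $\OGr^+ \sqcup \OGr^-$ of two connected components, each smooth projective and homogeneous under $\SO(\Omega)$ of dimension $m(m-1)/2$; two Lagrangians $\mathscr{L}_1, \mathscr{L}_2$ lie in the same component if and only if $m - \dim(\mathscr{L}_1 \cap \mathscr{L}_2)$ is even. I will first verify that $\Phi$ interchanges the two components $\OGr^\pm$: this is forced by the hypothesis that $\Omega_0$ admits no Lagrangian subspace over $\F_p$, since a $\Phi$-stable component would produce, via Lang--Steinberg applied to the connected homogeneous variety $\OGr^\pm$, a $\Phi$-fixed Lagrangian, contradicting that hypothesis. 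Setting $S_\Lambda^\pm \define S_\Lambda \cap \OGr^\pm$ then yields the desired open-and-closed decomposition $S_\Lambda = S_\Lambda^+ \sqcup S_\Lambda^-$; moreover, any reflection in a non-isotropic vector of $\Omega_0$ is defined over $\F_p$ (so commutes with $\Phi$), swaps $\OGr^\pm$, and preserves the condition defining $S_\Lambda$, and thus restricts to an isomorphism $S_\Lambda^+ \iso S_\Lambda^-$.

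For the dimension and smoothness of $S_\Lambda^+$, I will use an incidence construction. Consider the variety
\[
\mathrm{Z} \define \big\{(\mathscr{L}_1, \mathscr{L}_2) \in \OGr^+ \times \OGr^- : \dim(\mathscr{L}_1 \cap \mathscr{L}_2) \ge m - 1\big\}.
\]
For fixed $\mathscr{L}_1 \in \OGr^+$, each hyperplane $W \subset \mathscr{L}_1$ is contained in a unique Lagrangian of $\OGr^-$ (the second isotropic line of the hyperbolic plane $W^\perp/W$ added back to $W$), so both projections $\mathrm{Z} \to \OGr^\pm$ are $\mathbb{P}^{m-1}$-bundles, making $\mathrm{Z}$ smooth projective of dimension $(m-1)(m+2)/2$ and of codimension $(m-1)(m-2)/2$ in $\OGr^+ \times \OGr^-$. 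Since $\Phi(\mathscr{L})$ automatically lies in the component opposite to that of $\mathscr{L}$, parity forces $\dim(\mathscr{L} \cap \Phi(\mathscr{L})) \le m - 1$, so $S_\Lambda^+$ is exactly the preimage of $\mathrm{Z}$ under the graph morphism
\[
\Gamma_\Phi : \OGr^+ \longrightarrow \OGr^+ \times \OGr^-, \qquad \mathscr{L} \longmapsto (\mathscr{L}, \Phi(\mathscr{L})).
\]
The crucial transversality assertion is that $\Gamma_\Phi$ meets $\mathrm{Z}$ transversely: because $\Phi$ is of Frobenius type, its differential vanishes identically, so $d\Gamma_\Phi$ lands in $T_\mathscr{L} \OGr^+ \oplus 0$ at every point; transversality with $T_{(\mathscr{L}, \Phi(\mathscr{L}))} \mathrm{Z}$ then reduces to the surjectivity of the tangent map of the second projection $\mathrm{Z} \to \OGr^-$, which holds since that projection is a smooth surjection. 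This yields smoothness of $S_\Lambda^+$ and the expected dimension
\[
\dim S_\Lambda^+ = \tfrac{m(m-1)}{2} - \tfrac{(m-1)(m-2)}{2} = m - 1.
\]

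It remains to establish that $S_\Lambda^+$ is connected, hence, given smoothness, irreducible. I plan to identify $S_\Lambda^+$ with the closure of a Deligne--Lusztig variety for the reductive group $\SO(\Omega_0)$ of twisted type $^2D_m$, corresponding to a specific reduced Weyl element of length $m - 1$; classical Deligne--Lusztig theory then supplies irreducibility. Projectivity is automatic, as $S_\Lambda^+$ is closed in the projective variety $\OGr^+$. The main obstacle I anticipate is the careful handling of the transversality input together with the Deligne--Lusztig identification used for connectedness; once these are in place, all remaining assertions follow routinely.
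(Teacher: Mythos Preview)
Your argument is essentially correct and considerably more self-contained than the paper's, which simply cites \cite[Proposition~3.6]{HP} for everything except the isomorphism $S_\Lambda^+\iso S_\Lambda^-$ (where the paper uses the same reflection trick you do). Your Lang--Steinberg step showing that $\Phi$ swaps $\OGr^\pm$ is clean, and the transversality argument for smoothness and dimension is a nice touch: the key observation that $d\Phi=0$ reduces the transversality of $\Gamma_\Phi$ with $Z$ to the surjectivity of $d\mathrm{pr}_2|_Z$, which is immediate from the $\mathbb{P}^{m-1}$-bundle structure. This is arguably more transparent than routing through the Deligne--Lusztig machinery for smoothness.

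Two small points deserve mention. First, you do not explicitly verify that $S_\Lambda^\pm$ is nonempty; transverse pullbacks can be empty, so you should either exhibit a point directly or note that nonemptiness follows once the Deligne--Lusztig identification is in place (Coxeter varieties are nonempty). Second, your connectedness argument is stated only as a plan. The identification of $S_\Lambda^\pm$ with the closure of a Coxeter-type Deligne--Lusztig variety (for $\SO(\Omega_0)$ of type $^2D_m$) is correct and is exactly what \cite{HP} and \S\ref{DLsection} of the present paper carry out; irreducibility then follows from Lusztig's results on Coxeter orbits. But you should either execute this identification or point precisely to the relevant result, since irreducibility of general Deligne--Lusztig varieties can fail and the Coxeter case requires its own argument.
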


\begin{proof}
All of the claims are included in  \cite[Proposition 3.6]{HP}, except for the isomorphism 
$S_\Lambda^+\iso S_\Lambda^-$.
Pick any  $g\in \mathrm{O}(\Omega_0)(\F_p)$ with $\det(g)=-1$.  
The natural action of $g$ on $\mathrm{OGr}(\Omega)$ leaves $S_\Lambda$ invariant, and the
 discussion  of \cite[\S 3.2]{HP} shows that $g$ interchanges $S_\Lambda^+$ with  $S_\Lambda^-$.
\end{proof}

%%%%%%%%%%%%%%%%%%%%%%%%%%%%%%%

\section{Structure of the spinor similitude Rapoport-Zink space}
\label{s:RZ structure}

%%%%%%%%%%%%%%%%%%%%%%%%%%%%%%%%%

We will determine explicitly the  structure of the reduced $k$-scheme $\RZc^\red$ underlying the formal $W$-scheme
$\RZc$ of  \S \ref{sss:RZ space}.  More precisely, we will  express $\RZc^\red$ as a union of closed subschemes 
$\RZc_\Lambda^\red$ indexed by vertex lattices, and then relate each $\RZc_\Lambda^\red$ to the variety
$S_\Lambda$ of \S  \ref{sss:Xlambda}.

\subsection{Closed subschemes defined by vertex lattices}

Recall from \S \ref{sss:special quasi-endomorphisms} and \S \ref{sss:universal object} the $\Q_p$-quadratic space of special quasi-endomorphisms
$
V_K^\Phi \subset \End ( X_0 )_\Q ,
$
and the universal quasi-isogeny 
\[
\rho:  X_0 \times_{\Spf(k) }  \overline{\RZc}   \dashrightarrow X \times_{\RZc} \overline{\RZc}.
\]

\subsubsection{}

Fix a vertex lattice $\Lambda \subset V_K^\Phi$, and   denote by
$\RZc_\Lambda\subset \RZc $  the closed \cite[Proposition 2.9]{RapZinkBook} formal subscheme defined by the condition
\[
\rho  \circ \Lambda^\vee\circ  \rho^{-1}  \subset \End(X).
\] 
In other words, $\RZc_\Lambda$ is the locus where the quasi-endomorphisms $\rho  \circ \Lambda^\vee\circ  \rho^{-1}$  of $X$ are actually integral.  
As in  \S \ref{sss:J action}, the subgroup  $p^\Z\subset J_b(\Q_p)$ acts on  $\RZc_\Lambda$.  Set
\[
\RZc^{(\ell)}_\Lambda= \RZc_\Lambda \cap \RZc^{(\ell)},
\]
so that $p^\Z \backslash \RZc_\Lambda  \iso \RZc_\Lambda^{(0)} \sqcup \RZc_\Lambda^{(1)}$, exactly as in (\ref{fundamental domain}).

\begin{proposition}\label{prop:projective}
The  reduced $k$-scheme underlying   $\RZc^{(\ell)}_\Lambda$  is  projective.
\end{proposition}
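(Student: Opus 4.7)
The plan is to show that $\RZc^{(\ell),\red}_\Lambda$ is proper and of finite type over $k$, and then to deduce projectivity either by identifying it with a projective target (as the later sections will do via $S_\Lambda^\pm$), or more economically by exhibiting a closed immersion into a projective ambient scheme, such as a suitable orthogonal Grassmannian via the Hodge filtration / period map.

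By Theorem \ref{mainKim}, $\RZc$ is formally locally of finite type over $\Spf(W)$, so as a closed formal subscheme $\RZc^{(\ell)}_\Lambda$ is too, and $\RZc^{(\ell),\red}_\Lambda$ is locally of finite type over $k$. For quasi-compactness, I would translate the defining integrality condition into a lattice bound. A $k$-point corresponds, via the universal quasi-isogeny $\rho$, to a Dieudonn\'e lattice $M\subset D_K$ with Frobenius $F=b\circ\sigma$; the condition $\rho\circ\Lambda^\vee\circ\rho^{-1}\subset\End(X)$ amounts to $v\cdot M\subset M$ for every $v\in\Lambda^\vee$, using the action $V_K\subset\End_K(D_K)$ of \S\ref{sss:D-module}. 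The exponent $\ell$ fixes the volume of $M$. Because $p\Lambda\subset\Lambda^\vee$ and the elements $v\in\Lambda^\vee$ satisfy $v^2=Q(v)\cdot\mathrm{id}$ with $Q(v)\in p^{-1}\Z_p$, the Clifford subalgebra of $\End_K(D_K)$ generated by $\Lambda^\vee$ together with the volume constraint sandwiches $M$ between two fixed lattices: $p^N D_W\subset M\subset p^{-N} D_W$ for some $N$ depending only on $\Lambda$ and $\ell$. This confines $M$ to a finite set of lattices, and within each such lattice the Hodge filtrations of type $\mu$ form a projective subscheme, yielding quasi-compactness of $\RZc^{(\ell),\red}_\Lambda$.

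For properness, I would apply the valuative criterion. Let $R$ be a complete discrete valuation ring over $W$ with algebraically closed residue field and fraction field $F$, and suppose an $F$-point of $\RZc^{(\ell)}_\Lambda$ extends to an $\Spf(R)$-point of $\RZc$. By rigidity of quasi-isogenies \cite{DrinfeldCoverings}, each quasi-endomorphism $\rho\circ v\circ\rho^{-1}$ with $v\in\Lambda^\vee$, defined over $F$, has a unique lift to a quasi-endomorphism over $\Spf(R)$; its integrality may be verified on the special fiber, where it holds because $\RZc^{(\ell)}_\Lambda$ is closed in $\RZc^{(\ell)}$. Hence the $\Spf(R)$-point factors through $\RZc^{(\ell)}_\Lambda$, proving properness.

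The main obstacle is the boundedness step: extracting an explicit lattice sandwich from the vertex lattice condition and the polarization degree, tracking precisely how the action of $\Lambda^\vee$ on $D_K$ and the Clifford structure force a two-sided lattice bound on $M$. This is the spinor analogue of the lattice bounds used in the unitary Rapoport-Zink setting of \cite{HP, RTW, VollaardSS, VollaardWedhorn}, and should proceed by a direct linear algebra argument once the $V$-action on $D$ is made explicit.
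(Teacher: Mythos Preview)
Your outline has the right shape---bound the Dieudonn\'e lattice $M_y$ and then invoke projectivity of bounded Rapoport--Zink loci---but the boundedness step as you sketch it does not work. The order in $\End_K(D_K)$ generated by $\Lambda^\vee$ alone lies in the image of $C(V_K)^{op}$ under the isomorphism $C(V_K)^{op}\otimes_{Z_K}C(V_K)\cong\End_{Z_K}(D_K)$; but $D_K$ is \emph{not} of rank one over $C(V_K)^{op}$ (it has rank roughly $2^{n/2}$ over a maximal order there), so lattices stable under such an order are far from unique up to scaling, and fixing the volume via $\ell$ does not help. In the unitary analogues you cite, the analogous algebra already fills out $\End$ over the center; here it does not.

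The paper closes this gap by also using the $C(V)^{op}$-action on $X$ inherited from the tensors (right multiplication of $C(V)$ on itself); see \S\ref{sss:universal object}. Combining this with the $\Lambda^\vee$-action, $M_y$ is stable under the $W$-order $R$ generated by $\Lambda^\vee\otimes C(V)$ inside $\End_{Z_K}(D_K)$, and $R$ spans $\End_{Z_K}(D_K)$ over $K$. Passing to a maximal $Z_W$-order $\tilde R\supset R$, the $\tilde R$-stable lattice $\tilde R\cdot M_y$ is unique up to scaling by $Z_K^\times$; the polarization and the fixed $\ell$ then pin down that scalar up to units, giving a uniform sandwich $p^m a\tilde M\subset M_y\subset a\tilde M$. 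One then applies \cite[Corollary~2.29]{RapZinkBook} directly to conclude projectivity of the bounded locus, so your separate valuative-criterion properness argument is unnecessary (and, incidentally, should be run with $R$ a $k$-DVR rather than a $W$-DVR).
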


\begin{proof}
Abbreviate $Z=Z(V)$ for the center of $C(V)$.  Using the isomorphism 
\begin{equation}\label{azumaya action}
C(V)^{op} \otimes_{Z} C(V) \iso \End_{Z} ( D )
\end{equation}
of  \S\ref{sss:symplectic rep}, and  the inclusion
$
\Lambda^\vee \subset V_K \subset C(V_K)^{op},
$
we denote by 
\[
R  \subset \End_{Z_K}(D_K)
\] 
the $W$-subalgebra generated by  
$
\Lambda^\vee  \otimes_{\Z_p} C(V)  \subset \End_{Z_K} ( D_K ).
$   
The isomorphism (\ref{azumaya action}) implies that $R$ generates $\End_{Z_K} ( D_K )$ as a $K$-vector space.
Fix any maximal $Z_W$-order $\tilde{R}$ with 
\[
R\subset \tilde{R}  \subset \End_{Z_K}(D_K).
\]
As $R$ and $\tilde{R}$ are both $W$-lattices in $\End_K(D_K)$,  we  have $p^m   \tilde{R} \subset R$   for some positive integer $m$.
Fix a  $W$-lattice  $\tilde{M} \subset D_K$  stable under the action of $\tilde{R}$.  It  is  unique up to scaling by $Z_K^\times$.

Suppose  $y\in \RZc^{(\ell)}_\Lambda(k)$,  and use the quasi-isogeny $\rho_y: X_0 \dashrightarrow X_y$ to view 
\[
M_y=\Db(X_y)(W)
\] 
as a $W$-lattice in $D_K=\Db(X_0)(W) [1/p]$. On one  hand, $M_y$ is stable under the action of $C(V)$ defined by (\ref{azumaya action}).
Indeed, this action in precisely the action on $M_y$ induced by the action of $C(V)^{op}$ on $X_y$ and contravariant functoriality;
see \S \ref{sss:universal object}.   On the other hand,  $\Lambda^\vee \subset \End  (  X_y  )$ by the very definition of 
$\RZc^{(\ell)}_\Lambda$.   Combining these, we see that $M_y$ is stable under the action of $R$, and so we may define the 
$\tilde{R}$-stable lattice $\tilde{M}_y = \tilde{R} \cdot M_y$.    
By the uniqueness of $\tilde{M}$ up to scaling, there is an $a(y) \in Z_K^\times$ such that 
$\tilde{M}_y = a(y) \tilde{M}$, and so
\[
p^m a(y) \tilde{M} \subset  M_y \subset a(y) \tilde{M} .
\]

First suppose that $n$ is even, so that $Z_K=K^\times$.  The perfect symplectic form
$\psi_\delta$ on $C(V_W)$ induces a dual form on $D_W$, which satisfies
\[
p^\ell \psi_\delta( D_W, D_W) = \psi_\delta( \tilde{M}_y , \tilde{M}_y) = a(y)^2 \psi_\delta(  \tilde{M} ,  \tilde{M}).
\]
Thus the $p$-adic valuation of $a(y)$ is constant as $y$ varies, and we may choose $a=a(y)$ to be independent of $y$.

Now suppose that $n$ is odd, so that $Z_K = K \times K$.  Let $\epsilon_1, \epsilon_2\in Z_K$ be the orthogonal
idempotents.  In this case the dual form on $D_W$ satisfies
\[
p^\ell \psi_\delta( \epsilon_i D_W,  \epsilon_i D_W) =
 \psi_\delta( \epsilon_i \tilde{M}_y , \epsilon_i\tilde{M}_y) = a_i(y)^2 \psi_\delta(  \epsilon_i\tilde{M} , \epsilon_i \tilde{M})
\]
where $a_i(y) = \epsilon_i(y) a(y)$.  Again this shows that $a_i(y)$ has constant $p$-adic valuation as $y$ varies,
and we may take $a=a(y)$ to be independent of $y$.

In either case
$
a p^m   \tilde{M} \subset  M_y \subset a \tilde{M}
$  
for all $y$.   Combining this  bound on $M_y$ and the projectivity result of   \cite[Corollary 2.29]{RapZinkBook}, 
we see that  the closed immersion    
$
\RZc\hookrightarrow \RZc(X_0,\lambda_0)
$ 
 realizes the reduced scheme underlying  $\RZc^{(\ell)}_\Lambda$ as a closed subscheme of a projective $k$-scheme.
\end{proof}

\subsection{Special lattices and the points of $\RZc_\Lambda$}

Let $k'/k$ be a finitely generated field extension.  Let $W'$ be the Cohen ring of $k'$, set $K'=W'[1/p]$, and 
let $\sigma : W' \to W'$ be a lift of Frobenius chosen as in Proposition \ref{fgfield}.  This choice of $\sigma$ determines
an operator $F=b\circ \sigma$ on $D_{K'}$, and an operator $\Phi=b\circ \sigma$ on $V_{K'}$.

\subsubsection{}\label{sss:the specials}

As in the proof of Proposition \ref{prop:projective}, each $y\in \RZc(k')$ determines a $W'$-lattice
\[
M_y = \mathbb{D}(X_y)(W') \subset D_{K'},
\]
and a $W'$-submodule
$
M_{1,y} =F^{-1}(pM_y) 
$
as in \S \ref{Zinkpar}.
Using the inclusion  $V_{K'} \subset \End_{K'}(D_{K'})$ obtained from  (\ref{simple special}), 
define   $W'$-lattices
\begin{align*}
L_y  &= \{ x\in V_{K'} : xM_{1,y} \subset M_{1,y} \} \\
L^\sharp_y &= \{ x\in V_{K'} : xM_y\subset M_y \}. \nonumber \\
L^{\sharp\sharp}_y & = \{ x\in V_{K'} : xM_{1,y} \subset M_y \}. \nonumber
\end{align*}
The action of $p^\Z$ on $\RZc(k')$ rescales the lattices $M_y$ and $M_{1,y}$, and hence 
the three lattices defined above depend only on the image of $y$ in $p^\Z \backslash \RZc(k')$.

\begin{proposition}\label{prop:special lattice bijection}
For every $y\in  \RZc(k')$ the lattice $L_y$ is special (in the sense of \S \ref{ss:special lattices}),  and satisfies
$
\Phi_*(L_y) = L_y^\sharp
$  
and  $L_y + L^\sharp_y= L_y^{\sharp\sharp} . $  Moreover,  $y\mapsto L_y$ establishes  bijections
\begin{align*}
p^\Z \backslash \RZc(k') & \iso \{ \mbox{special lattices  } L \subset  V_{K'} \} \\
p^\Z \backslash \RZc_\Lambda(k')  & \iso  \{ \mbox{special lattices }L \subset V_{K'}  :  \Lambda^\vee_{W'} \subset L  \subset \Lambda_{W'} \}.
\end{align*}
\end{proposition}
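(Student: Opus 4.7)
The plan is to translate the three lattices $L_y,L_y^\sharp,L_y^{\sharp\sharp}$ into group-theoretic data on $V_{K'}$ via the refined affine Deligne--Lusztig parametrization of \S \ref{par:ADL}, and then apply Lemma \ref{lem:tech2}. First I would invoke Corollary \ref{cor:points} to represent each $y \in \RZc(k')$ by a class $gQ_\mu(W') \in X_{G,b,\mu^\sigma}(k')$ (noting that $\mu^\sigma=\mu$ since $\mu$ is defined over $\Z_p$), so that $M_y = g\cdot D_{W'}$ for the contragredient $G$-action and $M_{1,y} = gp\mu(p)^{-1}\cdot D_{W'}$, with $g$ satisfying $g^{-1}b\sigma(g)\mu(p)^{-1}\in G(W')$. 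Since $V\subset C(V)^{op}$ acts on $D$ via (\ref{simple special}) while $G$ acts contragrediently, the identity $g\circ v\circ g^{-1}=g\action v$ of \S \ref{sss:D-module}, together with the elementary fact that $\{u\in V_{K'}:uD_{W'}\subset D_{W'}\}=V_{W'}$, would give
\[
L_y^\sharp = g\action V_{W'},\qquad L_y = g\mu(p^{-1})\action V_{W'}.
\]
Lemma \ref{lem:tech2} then supplies the self-duality of $L_y$ and $L_y^\sharp$ together with $(L_y+L_y^\sharp)/L_y\iso W'/pW'$.

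Next I would verify $\Phi_*(L_y)=L_y^\sharp$ directly. Because $V$ is defined over $\Z_p$, the semi-linear operator $\Phi=b\circ\sigma$ on $V_{K'}$ satisfies $\Phi(h\action v_0)=b\sigma(h)\action\sigma(v_0)$ for $h\in G(K')$ and $v_0\in V$, so the $W'$-span of $\Phi(h\action V_{W'})$ equals $b\sigma(h)\action V_{W'}$. Applied to $h=g\mu(p^{-1})$, the refined ADL condition on $g$ becomes exactly the assertion that this equals $g\action V_{W'}=L_y^\sharp$, showing $L_y$ is special. The inclusion $L_y+L_y^\sharp\subset L_y^{\sharp\sharp}$ follows immediately from $M_{1,y}\subset M_y$. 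For the reverse inclusion, the triple $(L_y,L_y^\sharp,L_y^{\sharp\sharp})$ is equivariant for the $G(K')$-action, so the problem reduces to the base point $g=1$. There $M_{1,y}$ is the standard lift of the Hodge filtration under the weight decomposition $D=D^{(1)}\oplus D^{(0)}$ induced by $\mu$, and a direct coordinate computation in the basis $x_1,\ldots,x_n$ (using that $x_1$ sends $D^{(1)}$ into $D^{(0)}$ and kills $D^{(0)}$, $x_2$ does the reverse, and each $x_i$ with $i\ge 3$ preserves the grading) identifies $L_1^{\sharp\sharp}$ with $W'x_1\oplus p^{-1}W'x_2\oplus\bigoplus_{i\ge 3}W'x_i$, matching $L_1+L_1^\sharp$. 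This explicit coordinate computation is the main technical obstacle.

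For the bijections, the map $y\mapsto L_y$ factors through $p^\Z$-orbits because the scalar $p$ is central in $G$, forcing $p\action v=v$ for all $v$. Surjectivity onto special lattices would follow by taking an arbitrary special $L$, setting $L^\sharp=\Phi_*(L)$, using Lemma \ref{lem:tech2} to produce $g\in G(K')$ realizing the formulas above, and reversing the calculation of $\Phi_*(L_y)$ to verify that $g$ satisfies the refined ADL condition. Injectivity reduces to the observation that if $g,g'$ produce the same pair $(L,L^\sharp)$, then $g^{-1}g'$ stabilizes both $V_{W'}$ and $\mu(p^{-1})\action V_{W'}$, hence lies in $G(W')\cap \mu(p^{-1})G(W')\mu(p)=Q_\mu(W')$. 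For the bijection involving a vertex lattice $\Lambda$, Dieudonn\'e theory identifies $v\in V_K^\Phi$ as an integral endomorphism of $X_y$ precisely when $v$ preserves both $M_y$ and $M_{1,y}$, i.e.\ when $v\in L_y\cap L_y^\sharp$, so $y\in\RZc_\Lambda(k')$ iff $\Lambda_{W'}^\vee\subset L_y\cap L_y^\sharp$. Self-duality of $L_y$ reduces $\Lambda_{W'}^\vee\subset L_y$ to $L_y\subset\Lambda_{W'}$, and the analogous condition on $L_y^\sharp=\Phi_*(L_y)$ is automatic because $\Phi$ fixes $\Lambda$ and $\Lambda^\vee$ pointwise.
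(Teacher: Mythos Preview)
Your approach is essentially the paper's: both arguments route through the refined ADL parametrization of Corollary~\ref{cor:points}, identify $L_y^\sharp = g\action V_{W'}$ and $L_y = g\mu(p^{-1})\action V_{W'}$ via the ``elementary fact'' that $V_{W'}=\{x:xD_{W'}\subset D_{W'}\}$, invoke Lemma~\ref{lem:tech2}, read off $\Phi_*(L_y)=L_y^\sharp$ from the ADL condition, and reduce $L_y+L_y^\sharp=L_y^{\sharp\sharp}$ to a coordinate check at $g=1$ (the paper records this as the identity $\mu(p^{-1})\action V_{W'}+V_{W'}=\{x:x\mu(p^{-1})D_{W'}\subset D_{W'}\}$ and then conjugates by $g$).

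One genuine detail you glide over: in both injectivity and surjectivity, the stabilizer of $V_{W'}$ under $\action$ is $p^\Z G(W')$, not $G(W')$, because the central $\Gm$ acts trivially on $V$. For injectivity this is harmless since you are working modulo $p^\Z$. For surjectivity it is not: ``reversing the calculation'' only shows $g^{-1}b\sigma(g)\mu(p^{-1})$ stabilizes $V_{W'}$, hence lies in $p^\Z G(W')$. To land in $G(W')$ (i.e.\ to verify the refined ADL condition) you need the extra step the paper supplies: apply $\eta_G$ and use $b\in G(W)\mu^\sigma(p)G(W)$ from Proposition~\ref{prop:local GSpin datum} to see $\eta_G(g^{-1}b\sigma(g)\mu(p^{-1}))\in(W')^\times$, forcing the $p^\Z$-factor to be trivial.
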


\begin{proof}
As in \S\ref{par:ADL} we have the refined affine Deligne-Lusztig set 
\[
X_{G,b,\mu^\sigma, \sigma}(k')=\big\{ g\in G(K') : g^{-1} b\sigma(g) \mu^\sigma(p)^{-1} \in G(W') \big\} / Q(W'),
\]
 where
\[
Q(W') = G(W') \cap \mu (p^{-1}) G(W') \mu(p).
\]
Recalling the action  $G \to \SO(V)$  defined by  $g\bullet v = gvg^{-1}$, for each $g\in X_{G,b,\mu^\sigma, \sigma}(k')$
 define  self-dual $W'$-lattices 
 \[
 L_g^\sharp = g\action V_{W'}\quad\mbox{and}\quad L_g = g\mu(p^{-1} ) \bullet V_{W'}.
 \]
As the action of $p \bullet$ is trivial, these lattices  depend only on the image of $g$  modulo $p^\Z$.

First we show that  $g\mapsto L_g$ establishes a bijection
\[
p^\Z \backslash X_{G,b,\mu^\sigma, \sigma}(k') \iso \{ \mbox{special lattices in } V_{K'} \}.
\]
Given a $g\in  X_{G,b,\mu^\sigma, \sigma}(k')$,  Lemma \ref{lem:tech2}  (which holds with $W$ replaced by $W'$ throughout) implies   
\[
( L_g + L_g^\sharp)/L_g\iso W' / pW'.
\] 
  Moreover,  $g^{-1} b\sigma(g) \mu^\sigma(p^{-1}) \in G(W')$ implies
\begin{equation}\label{special shift}
\Phi_*(L_g) = L_g^\sharp,
\end{equation}
and so   $L_g$ is special.  
To prove  injectivity,  assume $L_g=L_h$.   Applying $\Phi_*$ to both sides and using  (\ref{special shift}) shows that 
$L_g^\sharp=  L_h^\sharp$.    It follows that $h^{-1} g$ lies in the   intersection in $G(K')$
of the stabilizers of $V_{W'}$ and $\mu(p^{-1})\action V_{W'}$,  which is $p^\Z Q(W')$.  Thus
  $g=h$ in $p^\Z \backslash X_{G,b,\mu^\sigma, \sigma}(k')$.
   For surjectivity, suppose  $L$ is a special lattice.   
 Lemma \ref{lem:tech2}  implies the existence of a $g\in G(K')$ such that 
\[
(  \Phi_*(L) , L )= (  g \action V_{W'} ,  g\mu( p^{-1}) \action V_{W'} ).
\]
This equality implies that $g^{-1} b \sigma(g)\mu^\sigma(p^{-1})$ stabilizes $V_{W'}$, and so 
lies in $p^\Z G(W')$.  The relation 
$
b\in G(W) \mu^\sigma(p) G(W)
$
of Proposition \ref{prop:local GSpin datum}  implies that  
\[
 \eta_G\big( g^{-1} b \sigma(g)\mu^\sigma(p^{-1}) \big) \in (W')^\times,
\]
 and so in fact $g^{-1} b \sigma(g)\mu^\sigma(p^{-1})\in G(W')$.
Thus we have found a  $g\in  X_{G,b,\mu^\sigma, \sigma}(k')$ with  $L=L_g$.

By Corollary \ref{cor:points},  there is bijection
$
\RZc(k') \iso X_{G,b,\mu^\sigma, \sigma}(k'),
$
defined by sending  the point $y\in \RZc(k')$ to the unique  $g\in X_{G,b,\mu^\sigma, \sigma}(k')$ satisfying both
\[
M_y=g\cdot D_{W'}\quad\mbox{and}\quad M_{1,y} = g\cdot p\mu(p^{-1}) D_{W'}.
\]
Assuming that $y$ and $g$ are related in this way, we claim that  
\begin{equation}\label{lattices match}
(L_y^\sharp , L_y) = (  L_g^\sharp , L_g).
\end{equation}
To prove this,  let $B =  \{ x\in V_{K'} : x D_{W'} \subset D_{W'}\}$.
The inclusion $V_{W'} \subset B$ is obvious.  For the other inclusion
note that any $x\in B$ must have $Q(x) = x\circ x \in W'$, and so 
$V_{W'} \subset B\subset B^\vee \subset ( V_{W'})^\vee.$  The self-duality of $V_{W'}$ implies that equality 
 holds throughout, and so
\[
V_{W'} = \{ x\in V_{K'} : x D_{W'} \subset D_{W'}\}.
\]
Applying $g \action$ to both sides of this equality proves  $L_y^\sharp=L_g^\sharp$, while applying  $g\mu(p^{-1}) \action$  to
both sides proves  $L_y=L_g$.

We have now  established  bijections
\[
p^\Z \backslash \RZc(k') \iso  p^\Z \backslash X_{G,b,\mu^\sigma, \sigma}(k') \iso  \{ \mbox{special lattices  } L \subset  V_{K'} \}.
\]
The relation  $\Phi_*( L_y^\sharp ) =  L_y$  follows from  (\ref{special shift}) and (\ref{lattices match}).  
We verify $L_y + L_y^\sharp   = L_y^{\sharp\sharp}$ as follows:
Using the calculations of \S \ref{sss:mu basis},  one can show 
\begin{equation}\label{special stabilizer}
 \mu(p^{-1}) \action V_{W'} + V_{W'} = \{ x\in V_{K'} :  x   \mu(p^{-1})  D_{W'} \subset   D_{W'} \}.
\end{equation}
If $y\in \RZc(k')$ corresponds to  $g\in X_{G,b,\mu^\sigma, \sigma}(k')$ under the bijection above, 
then applying $g\action$  to both sides of (\ref{special stabilizer})  yields
\begin{align*}
L_y^\sharp+ L_y  = L_g^\sharp+ L_g 
& = \{ x\in V_{K'} :   (g^{-1}  x   g) \mu(p^{-1}) \cdot D_{W'}  \subset    D_{W'} \} \\
& = \{ x\in V_{K'} :   x  M_y^1 \subset    M_y \} \\
& = L_y^{\sharp\sharp}.
\end{align*}

Finally, a point $y \in p^\Z \backslash \RZc(k')$ lies in  the subset  $p^\Z \backslash \RZc_\Lambda(k')$ if and only if the quasi-endomorphisms
$\Lambda^\vee \subset \End (D_{K'})$ stabilize both lattices $M_{1,y} \subset M_y$.  This is  equivalent to the condition
$\Lambda^\vee \subset L_y \cap L_y^\sharp$, and so 
\begin{align*}
p^\Z \backslash \RZc_\Lambda (k') & \iso  \{ \mbox{special lattices  } L \subset  V_{K'}  :  \Lambda^\vee \subset L \cap \Phi_*(L)\} \\
& =  \{ \mbox{special lattices  } L \subset  V_{K'}  :  \Lambda^\vee \subset L \} \\
& = \{ \mbox{special lattices  } L \subset  V_{K'}  :  \Lambda^\vee_{W'} \subset L \subset \Lambda_{W'} \} .
\end{align*}
Here we have used first the fact that all elements of $\Lambda^\vee$ are fixed by $\Phi$, and then the fact that special lattices are  self-dual.
This completes the proof of Proposition \ref{prop:special lattice bijection}.
\end{proof}

\begin{corollary}\label{cor:vertex cover}
We have
\[
\RZc (k) = \bigcup_{ \substack{  \Lambda \\ t_\Lambda = t_\mathrm{max} }} \RZc_\Lambda (k).
\]
\end{corollary}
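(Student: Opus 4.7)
The plan is to reduce the statement to one about special lattices via Proposition \ref{prop:special lattice bijection}, and then build the required maximal-type vertex lattice using the results of \S\ref{s:lattices}. One inclusion is tautological, since each $\RZc_\Lambda$ is a closed formal subscheme of $\RZc$ by construction, so only the inclusion ``$\subset$'' requires proof.

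First I would start with an arbitrary  $y \in \RZc(k)$. By Proposition \ref{prop:special lattice bijection} (with $k'=k$), the image of $y$ in $p^\Z\backslash\RZc(k)$ corresponds to a special lattice $L = L_y \subset V_K$. Applying Proposition \ref{prop:optimal vertex} to $L$ produces a canonical vertex lattice $\Lambda(L) \subset V_K^\Phi$ satisfying
\[
\Lambda(L)^\vee_W \subset L \subset \Lambda(L)_W.
\]
Next, by Proposition \ref{prop:possible types}, $\Lambda(L)$ is contained in some vertex lattice $\Lambda$ of maximal type $t_\Lambda = t_\mathrm{max}$. Since $\Lambda(L) \subset \Lambda$ implies $\Lambda^\vee \subset \Lambda(L)^\vee$, we obtain the sandwich
\[
\Lambda^\vee_W \subset \Lambda(L)^\vee_W \subset L \subset \Lambda(L)_W \subset \Lambda_W.
\]
Then applying the second bijection of Proposition \ref{prop:special lattice bijection} in the reverse direction shows that the image of $y$ in $p^\Z\backslash \RZc(k)$ lies in  $p^\Z\backslash \RZc_\Lambda(k)$.

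Finally, to upgrade this from a statement modulo $p^\Z$ to a statement about $y$ itself, I would observe that the action of the central subgroup $p^\Z \subset J_b(\Q_p)$ preserves $\RZc_\Lambda$: indeed, for a scalar $g = p^m$, conjugation $g^{-1} \Lambda^\vee g = \Lambda^\vee$ is trivial, so the condition $\rho \circ \Lambda^\vee \circ \rho^{-1} \subset \End(X)$ defining $\RZc_\Lambda$ is invariant under the $p^\Z$-action $(X,\rho) \mapsto (X,\rho\circ g^{-1})$ of \S\ref{sss:J action}. Thus if any $p^\Z$-translate of $y$ lies in $\RZc_\Lambda(k)$, so does $y$. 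This step is purely formal and the argument involves no real obstacle; the substantive content is entirely in Propositions \ref{prop:possible types}, \ref{prop:optimal vertex}, and \ref{prop:special lattice bijection}, which have already been established.
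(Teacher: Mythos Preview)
Your proof is correct and follows essentially the same route as the paper: both arguments pass from $y$ to the special lattice $L_y$ via Proposition~\ref{prop:special lattice bijection}, invoke Proposition~\ref{prop:optimal vertex} to produce the minimal vertex lattice $\Lambda(L_y)$, and then enlarge it to one of maximal type using Proposition~\ref{prop:possible types}. Your final paragraph explicitly justifying why the conclusion lifts from $p^\Z\backslash\RZc(k)$ to $\RZc(k)$ is a point the paper leaves implicit, but the reasoning is the same.
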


\begin{proof}
Suppose $y\in \RZc(k)$.  Let $L_y \subset V_K$ be the corresponding 
special lattice of  Proposition  \ref{prop:special lattice bijection}, and let  $\Lambda(L_y)$ be the vertex lattice of   
Proposition \ref{prop:optimal vertex}.  By  Proposition \ref{prop:possible types} there is a vertex lattice 
$\Lambda \supset \Lambda(L_y)$ with $t_\Lambda=t_\mathrm{max}$, and clearly
\[
\Lambda^\vee \subset \Lambda(L_y)^\vee = \{ x\in L_y : \Phi(x) =x \}  \subset L_y.
\]
The self-duality of $L_y$   implies $\Lambda_W^\vee \subset L_y \subset \Lambda_W$, and so
$y\in \RZc_\Lambda(k)$.
\end{proof}

\begin{corollary}\label{cor:stratum intersection}
For any vertex lattices $\Lambda_1$ and $\Lambda_2$ we have 
\[
\RZc_{\Lambda_1} (k) \cap \RZc_{\Lambda_2}(k)
= \begin{cases}
\RZc_{\Lambda_1 \cap \Lambda_2 }(k) & \mbox{if $\Lambda_1\cap\Lambda_2$ is a vertex lattice} \\
\emptyset & \mbox{otherwise.}
\end{cases}
\]
\end{corollary}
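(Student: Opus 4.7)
The plan is to translate the question into one about special lattices via Proposition \ref{prop:special lattice bijection} applied to $k' = k$, and then reduce everything to an elementary lattice-theoretic statement about $\Lambda_1 \cap \Lambda_2$ inside $V_K^\Phi$.

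First, I would recall that by Proposition \ref{prop:special lattice bijection}, a point $y \in \RZc(k)$ lies in $\RZc_{\Lambda_i}(k)$ precisely when its associated special lattice $L_y \subset V_K$ satisfies $\Lambda_i^\vee \otimes_{\Z_p} W \subset L_y \subset \Lambda_i \otimes_{\Z_p} W$, and the action of $p^\Z$ preserves each $\RZc_{\Lambda_i}$. Consequently, the intersection $\RZc_{\Lambda_1}(k) \cap \RZc_{\Lambda_2}(k)$ parametrizes, modulo $p^\Z$, those special lattices $L \subset V_K$ sandwiched as
\[
(\Lambda_1^\vee + \Lambda_2^\vee) \otimes_{\Z_p} W \subset L \subset (\Lambda_1 \cap \Lambda_2) \otimes_{\Z_p} W.
\]

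Next, set $\Lambda = \Lambda_1 \cap \Lambda_2$, a full $\Z_p$-lattice in the nondegenerate $\Q_p$-quadratic space $V_K^\Phi$. I would invoke the standard double-duality identity $\Lambda^\vee = \Lambda_1^\vee + \Lambda_2^\vee$, so that the above condition becomes simply $\Lambda^\vee \otimes_{\Z_p} W \subset L \subset \Lambda \otimes_{\Z_p} W$. If $\Lambda$ happens to be a vertex lattice, then a second application of Proposition \ref{prop:special lattice bijection} identifies the set of such $L$ with $p^\Z \backslash \RZc_\Lambda(k)$, settling the first case. Conversely, any point in $\RZc_\Lambda(k)$ also lies in $\RZc_{\Lambda_i}(k)$ because $\Lambda^\vee \supset \Lambda_i^\vee$ and $\Lambda \subset \Lambda_i$.

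For the second case, I have to show emptiness of the intersection whenever $\Lambda$ fails to be a vertex lattice. The inclusion $p\Lambda \subset \Lambda^\vee$ holds automatically, since $p\Lambda \subset p\Lambda_1 \subset \Lambda_1^\vee \subset \Lambda^\vee$, so the only way the vertex condition can fail is if $\Lambda^\vee \not\subset \Lambda$. But if a self-dual $W$-lattice $L$ existed with $\Lambda^\vee \otimes_{\Z_p} W \subset L \subset \Lambda \otimes_{\Z_p} W$, then dualizing forces $\Lambda^\vee \otimes_{\Z_p} W \subset L^\vee = L \subset \Lambda \otimes_{\Z_p} W$, and hence $\Lambda^\vee \subset \Lambda$ after descending to $\Phi$-fixed vectors — a contradiction. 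I do not anticipate any genuine obstacle: once Proposition \ref{prop:special lattice bijection} is available and the duality identity for $\Lambda_1 \cap \Lambda_2$ is noted, the argument is entirely formal.
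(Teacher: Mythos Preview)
Your argument is correct and is precisely the approach the paper has in mind: the paper's proof consists of a single citation to \cite[Proposition 4.3(ii)]{RTW}, whose argument translates the question to special lattices via the analogue of Proposition \ref{prop:special lattice bijection} and uses the duality identity $(\Lambda_1\cap\Lambda_2)^\vee=\Lambda_1^\vee+\Lambda_2^\vee$. One small simplification in your last paragraph: once $\Lambda^\vee\not\subset\Lambda$ in $V_K^\Phi$, faithful flatness of $W$ over $\Z_p$ already gives $\Lambda^\vee_W\not\subset\Lambda_W$, so no lattice $L$ can be sandwiched between them and the descent-to-$\Phi$-fixed-vectors step is unnecessary.
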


\begin{proof}
The proof is the same as \cite[Proposition 4.3(ii)]{RTW}.
\end{proof}

%%%%%%%%%%%%%%%%%%%%%%%%%%%%%%%%%%%%

\subsection{Comparison of $\RZc_\Lambda$ and $S_\Lambda$}

%%%%%%%%%%%%%%%%%%%%%%%%%%%%%%%%%%%%

Fix a vertex lattice $\Lambda\subset V_K^\Phi$.
Comparing Proposition \ref{prop:special lattice bijection} with the bijection of  \S \ref{sss:Xlambda} yields bijections
\[
p^\Z \backslash \RZc_\Lambda(k) \iso   \{ \mbox{special lattices } L\subset V_K :   \Lambda_W^\vee \subset L \subset \Lambda_W \} \iso S_\Lambda(k),
\]
and similarly for any finitely generated field  extension $k'/k$.

\begin{theorem}\label{thm:main isomorphism}
Let $\RZc_\Lambda^\red$ be the reduced $k$-scheme underlying $\RZc_\Lambda$.
There is a unique isomorphism of $k$-schemes  
\[
p^\Z \backslash \RZc_\Lambda^\red \iso S_\Lambda
\]
inducing  the above bijection on $k$-points.
\end{theorem}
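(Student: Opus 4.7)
My plan is to construct a morphism $\bar f: p^{\Z}\backslash \RZc_\Lambda^{\red}\to S_\Lambda$ realizing the prescribed bijection on $k$-points, and then upgrade bijectivity to an isomorphism via a deformation-theoretic comparison of completed local rings at every closed point. Over $\RZc_\Lambda$, the universal $p$-divisible group $X$ carries an integral action of $\Lambda^\vee$ by the very definition of $\RZc_\Lambda$; this action preserves both the crystal $M=\Db(X)$ and the Hodge filtration $M_1\subset M$, since for $\lambda\in \Lambda^\vee\subset V_K^\Phi$ we have $\lambda F=F\lambda$ and hence $\lambda M_1=\lambda F^{-1}(pM)\subset F^{-1}(pM)=M_1$. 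For any $\lambda\in \Lambda$, the element $A_\lambda=p\lambda$ lies in $\Lambda^\vee$ and so gives an integral endomorphism of $M_1$; its reduction modulo $p$, call it $\bar A_\lambda\colon M_1/pM_1\to M_1/pM_1$, vanishes when $\lambda\in \Lambda^\vee$ (since then $A_\lambda=p\cdot\lambda$ with $\lambda$ preserving $M_1$), so the assignment $\omega\mapsto \bar A_\lambda$ is independent of the choice of lift $\lambda\in\Lambda$ of $\omega\in \Omega=\Lambda_W/\Lambda_W^\vee$. This produces a $\co$-linear map on the reduced special fiber
\[
\bar\beta\colon \Omega\otimes_k \bar M_1\longrightarrow \bar M_1,\qquad \bar M_1:=M_1/pM_1.
\]
At any geometric point $y$, the kernel of the induced linear map $\Omega\otimes k'\to \End_{k'}(\bar M_{1,y})$ consists of those $\omega=\lambda+\Lambda_{W'}^\vee$ for which $p\lambda\cdot M_{1,y}\subset pM_{1,y}$, i.e.~$\lambda M_{1,y}\subset M_{1,y}$, which by Proposition \ref{prop:special lattice bijection} is exactly $L_y/\Lambda_{W'}^\vee$, a Lagrangian of dimension $t_\Lambda/2$ in $\Omega\otimes k'$. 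Since the kernel dimension equals $t_\Lambda/2$ at every closed point of $\RZc_\Lambda^{\red}$ and closed points are dense, upper semicontinuity forces the kernel to have constant rank $t_\Lambda/2$, and therefore to be a Lagrangian subbundle $\mathscr{L}_\univ\subset \Omega\otimes_k\co_{\RZc_\Lambda^{\red}}$; the closed condition $\dim(\mathscr{L}_\univ+\Phi\mathscr{L}_\univ)=t_\Lambda/2+1$ holds pointwise, again by Proposition \ref{prop:special lattice bijection}, so $\mathscr{L}_\univ$ defines a morphism $f\colon \RZc_\Lambda^{\red}\to S_\Lambda$.

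The action of $p\in p^\Z\subset J_b(\Q_p)$ rescales $M$ and $M_1$ by the same factor and so leaves $\bar\beta$ (hence $\mathscr{L}_\univ$) unchanged; therefore $f$ descends to a morphism $\bar f\colon p^\Z\backslash\RZc_\Lambda^{\red}\to S_\Lambda$, whose effect on $k'$-points for every finitely generated $k'/k$ is, by construction, the map $y\mapsto L_y/\Lambda_{W'}^\vee$, i.e.~the composition of the bijection of Proposition \ref{prop:special lattice bijection} with the identification of \S\ref{sss:Xlambda}. Proposition \ref{prop:projective} shows the source is projective and Proposition \ref{prop:lagrangian} shows the target is projective, so $\bar f$ is proper and bijective on closed points; uniqueness of any morphism realizing this bijection is automatic, since source and target are separated reduced $k$-schemes of finite type and $k$ is algebraically closed.

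To promote bijectivity to an isomorphism I would compare completed local rings at every closed point $y\in\RZc_\Lambda^{\red}(k)$. Combining Theorem \ref{mainKim}(i) with Faltings' description of $\widehat\co_{\sS,x}$ recalled in \S\ref{Faltings} shows that $\widehat\co_{\RZc_\Lambda,y}$ pro-represents the functor of lifts of $(X_y,\rho_y,(t_{\alpha,y}))$ for which the elements of $\Lambda^\vee$ remain integral endomorphisms; by Grothendieck-Messing theory, such lifts are classified by lifts of the Hodge filtration that remain $\Lambda^\vee$-stable, and the linearization of $\bar\beta$ identifies these functorially with lifts of the Lagrangian $L_y/\Lambda_W^\vee\subset \Omega$, i.e.~with the deformation functor of $S_\Lambda$ at $\bar f(y)$. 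Since $S_\Lambda$ is smooth of dimension $t_\Lambda/2-1$ by Proposition \ref{prop:lagrangian}, the differential of $\bar f$ at $y$ is an isomorphism, and $\bar f$ induces an isomorphism on completed local rings at every closed point; combined with propriety and bijectivity this proves $\bar f$ is an isomorphism. The main obstacle I anticipate is the verification that $\ker(\bar\beta)$ has constant rank on all of $\RZc_\Lambda^{\red}$ and not merely generically: this hinges on Proposition \ref{prop:special lattice bijection} guaranteeing that every geometric point contributes a Lagrangian of dimension exactly $t_\Lambda/2$. A secondary technical point is the identification of the linearization of $\bar\beta$ with the expected deformation-theoretic map on tangent spaces, which requires unwinding the Grothendieck-Messing description of first-order deformations along the integral $\Lambda^\vee$-action.
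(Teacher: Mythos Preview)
Your construction of the morphism has a genuine gap: the object $\bar M_1 = M_1/pM_1$ is only defined pointwise over field-valued points (where $M_1 \subset M$ are $W'$-modules coming from the Dieudonn\'e/window description), not as a coherent sheaf on $\RZc_\Lambda^{\red}$. Over a reduced $k$-algebra $R$ of finite type, what is naturally available is the crystal value $\mathscr{D}=\Db(X)(R)$ and its Hodge filtration $\Fil^1\subset\mathscr{D}$; but on these the endomorphism $p\lambda$ acts as zero since $p=0$ in $R$, so your map $\bar\beta$ collapses. Without a sheaf-theoretic incarnation of $\bar M_1$, your appeal to upper semicontinuity of the kernel rank has no coherent sheaf map to apply to, and the construction of $\mathscr{L}_{\univ}$ as a subbundle does not go through.

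This is precisely the point the paper's proof works around. Rather than trying to access $L_y$ directly, the paper uses the two maps $\phi^\sharp:(\Lambda^\vee/p\Lambda^\vee)\otimes R\to\End_R(\mathscr{D})$ and $\phi^{\sharp\sharp}:(\Lambda^\vee/p\Lambda^\vee)\otimes R\to\End_R(\Fil^1)$, which are honestly defined over $R$; their kernels yield $\mathscr{L}_y^\sharp\subset\mathscr{L}_y^{\sharp\sharp}\subset\Omega\otimes R$, which one checks fiberwise are local direct summands of ranks $t_\Lambda/2$ and $t_\Lambda/2+1$. The missing Lagrangian $\mathscr{L}_y$ is then recovered as the \emph{other} isotropic line in the rank-two hyperbolic quotient $\mathscr{L}_y^{\sharp\sharp}/(\mathscr{L}_y^{\sharp\sharp})^\perp$. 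This indirect extraction of $\mathscr{L}_y$ is the key trick. Once the morphism exists, the paper finishes not by deformation theory but by Zariski's main theorem: bijectivity on $k'$-points for all finitely generated $k'/k$ (already established in Proposition~\ref{prop:special lattice bijection}), properness from Proposition~\ref{prop:projective}, and smoothness of $S_\Lambda$ combine to give an isomorphism immediately, bypassing the Grothendieck--Messing comparison you sketch.
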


\begin{proof}
First we construct a morphism $\RZc^\red_\Lambda \to S_\Lambda$.  Suppose we are given an 
$R$-point $y\in \RZc^\red_\Lambda(R)$ for some  reduced $k$-algebra $R$
of finite type.  Pulling back the universal object of \S \ref{sss:universal object} yields  a triple $(X_y   , \rho_y , \lambda_y )$ over $R$
in which $X_y$ is a $p$-divisible group, $\lambda_y$ is a principal polarization,  and  $\rho_y : X_{0/R}  \dashrightarrow X_y$ is a quasi-isogeny. 
Moreover, $x\mapsto \rho \circ x \circ \rho^{-1}$ defines a $\Z_p$-module map
\[
\Lambda^\vee \to \rho \circ \Lambda^\vee \circ \rho^{-1} \subset  \End (X_y).
\]

Let $\mathscr{D}_y=\Db(X_y)(R)$ be the  contravariant crystal of 
$X_y$ evaluated at the trivial divided power thickening  $R\to R$,  and let $\Fil^1(\mathscr{D}_y) \subset \mathscr{D}_y$ be the Hodge filtration.  
The locally free $R$-modules  $ \Fil^1(\mathscr{D}_y) \subset \mathscr{D}_y$  depend functorially on  $X_y$,
and so $\Lambda^\vee \to \End(X_y)$  induces  $R$-module maps
\[
\phi^\sharp : (\Lambda^\vee / p \Lambda^\vee) \otimes_{\F_p} R \to \End_R(\mathscr{D}_y)
\]
and
\[
\phi^{\sharp\sharp} : (\Lambda^\vee / p \Lambda^\vee) \otimes_{\F_p} R \to \End_R(\mathscr{D}_y^1)
\]
with $\ker(\phi^\sharp) \subset \ker(\phi^{\sharp\sharp})$.

The bilinear form on $\Lambda^\vee$ induces an $R$-valued bilinear form on 
$(\Lambda^\vee / p \Lambda^\vee)  \otimes_{\F_p} R$,
and any $x_1,x_2\in (\Lambda^\vee / p \Lambda^\vee)  \otimes_{\F_p} R$ satisfy 
\[
x_1\circ x_2+x_2\circ x_1 = [x_1,x_2]\in R 
\]
 as endomorphisms of $\mathscr{D}_y$.
In particular,  if   $x_1\in \ker(\phi^{\sharp\sharp})$ then $[x_1 , x_2]=0$, as the value of the scalar $[x_1,x_2]$
can be computed from its action  on $\Fil^1 ( \mathscr{D}_y)$, which is obviously trivial.
This shows that $\ker(\phi^{\sharp\sharp})$ is contained in the radical of the quadratic space $(\Lambda^\vee / p \Lambda^\vee)  \otimes_{\F_p} R$,
which is $(p\Lambda / p \Lambda^\vee)  \otimes_{\F_p} R$.  Recalling the $k$-quadratic space $\Omega=(\Lambda/\Lambda^\vee)\otimes_{\F_p} k$
from \S \ref{sss:Xlambda}, let 
\[
\mathscr{L}_y^\sharp \subset \mathscr{L}^{\sharp\sharp}_y\subset  \Omega\otimes_k R
\]
 be the images of $\ker(\phi^\sharp) \subset \ker(\phi^{\sharp\sharp})$ under the isomorphism
\[
(p\Lambda / p \Lambda^\vee)  \otimes_{\F_p} R \map{ p^{-1} \otimes\mathrm{id} }   \Omega\otimes_k R.
\]

Suppose for the moment that $R=k$.  Recalling from \S \ref{sss:the specials} (with $k'=k$) the $W$-modules 
$M_{1,y}\subset M_y$, there is an isomorphism $\mathscr{D}_y \iso M_y/pM_y$ identifying
$\Fil^1(\mathscr{D}_y) \iso M_{1,y} /pM_y$.  The subspaces
\[
\mathscr{L}_y^\sharp \subset \mathscr{L}^{\sharp\sharp}_y \subset  \Omega \iso (\Lambda_W / \Lambda^\vee_W)
\]
correspond to lattices $\Lambda_W^\vee \subset L_y^\sharp \subset L^{\sharp\sharp}_y \subset \Lambda_W$,
and tracing through the  definitions shows that these are none other than the lattices
\[
L_y^\sharp = \{ x\in V_K : x M_y = M_y \} \quad\mbox{and}\quad
L^{\sharp\sharp}_y  = \{ x\in V_K : x M_{1,y} \subset M_y \}
\]
appearing in \S \ref{sss:the specials}.
Comparison with Proposition \ref{prop:special lattice bijection} shows that 
$
L^{\sharp\sharp}_y = L_y + L_y^\sharp,
$
where $L_y$ is the special lattice
\[
L_y = \{x \in V_K : x M_y^1 \subset M_y^1 \}
\]
satisfying  $\Phi(L_y) =L_y^\sharp  $.    Noting that $\Lambda_W^\vee  \subset L_y \subset \Lambda_W,$
 we denote by  $\mathscr{L}_y \subset \Omega$ the $k$-subspace corresponding to $L_y$.

 The self-duality of  the $W$-lattices $L_y$ and $L_y^\sharp$ implies that the  corresponding $k$-subspaces 
$\mathscr{L}_y$ and $\mathscr{L}_y^\sharp$ of   $\Omega$ are maximal isotropic,  and so have dimension 
$t_\Lambda/2$.  The specialness of $L_y$ also implies that  $\mathscr{L}^{\sharp\sharp}_y=\mathscr{L}_y+ \mathscr{L}_y^\sharp$ 
has dimension $(t_\Lambda/2)+1$.   
 It follows  that $( \mathscr{L}^{\sharp\sharp}_y )^\perp \subset \mathscr{L}^{\sharp\sharp}_y$ with codimension $2$, 
 and that the quotient  $\mathscr{L}^{\sharp\sharp}_y / (\mathscr{L}^{\sharp\sharp}_y)^\perp$ is a hyperbolic plane over $k$.
The subspaces $\mathscr{L}_y/ (\mathscr{L}^{\sharp\sharp}_y)^\perp$ and 
$\mathscr{L}_y^\sharp/ (  \mathscr{L}^{\sharp\sharp}_y)^\perp$ are its unique isotropic lines.

Now  return to a general reduced $R$ of finite type.   The submodule
$
\mathscr{L}^\sharp_y\subset \Omega\otimes_k R
$ 
is a totally isotropic local direct summand of rank $t_\Lambda/2$, and 
$
\mathscr{L}^{\sharp\sharp}_y \subset \Omega\otimes_k R
$ 
is a local direct summand of rank $(t_\Lambda/2)+1$.  Indeed, by
 \cite[Exercise X.16]{LangAlgebra} it suffices to check these properties fiber-by-fiber at the closed points of $\Spec(R)$,
which is precisely what we did in the $R=k$ case above.
 
By similar reasoning the quotient $\mathscr{L}^{\sharp\sharp}_y/  ( \mathscr{L}^{\sharp\sharp}_y)^\perp$ is a hyperbolic plane over $R$, and so
contains exactly two isotropic local direct summands of rank one.  One of them is $\mathscr{L}^\sharp_y/ (\mathscr{L}^{\sharp\sharp}_y)^\perp$,
and the other has the form  $\mathscr{L}_y/ (\mathscr{L}_y^{\sharp\sharp})^\perp$ for a uniquely determined Lagrangian 
$\mathscr{L}_y \subset \Omega\otimes_k R$.
 By again reducing to the  $R=k$ case  treated above, we see that  $\Phi(\mathscr{L}_y) = \mathscr{L}_y^\sharp$,  and so 
 \[
 \mathscr{L}_y + \Phi(\mathscr{L}_y)= \mathscr{L}_y +  \mathscr{L}_y^\sharp  = \mathscr{L}_y^{\sharp\sharp} 
 \] 
 is a local direct summand of rank $(t_\Lambda/2)+1$.  In other words,  $\mathscr{L}_y\in S_\Lambda(R)$.  

The $k$-scheme  $\RZc^\red_\Lambda$ is itself reduced and locally of finite type, and so the rule $y\mapsto \mathscr{L}_y$ 
defines (at last) the promised  morphism 
$\RZc^\red_\Lambda \to S_\Lambda$.  It is clear from the construction that the morphism descends to 
\begin{equation}\label{stratum iso}
p^\Z \backslash \RZc^\red_\Lambda \to S_\Lambda
\end{equation}
and  induces the desired bijection  on $k$-points.
In fact, the generality of Proposition \ref{prop:special lattice bijection}  shows that this morphism induces a bijection 
\[
p^\Z \backslash \RZc^\red_\Lambda(k') \iso S_\Lambda(k')
\]
for any extension field $k'/k$.  In particular (\ref{stratum iso}) is birational and quasi-finite. 
It is  a proper morphism, as  Proposition \ref{prop:projective} implies that 
$p^\Z \backslash \RZc^\red_\Lambda$ is projective.
The variety  $S_\Lambda$ is smooth by Proposition \ref{prop:lagrangian}, and so
Zariski's main theorem implies that (\ref{stratum iso}) is an isomorphism.
\end{proof}

Recall  from Proposition \ref{prop:lagrangian} that $S_\Lambda$ has two connected components.  
The two components are isomorphic, and are labelled (arbitrarily) as $S_\Lambda^+$ and $S_\Lambda^-$.

\begin{corollary}\label{cor:connected stratum}
The reduced scheme $\RZc_\Lambda^{(\ell),\red}$ underlying $\RZc_\Lambda^{(\ell)}$ is connected and nonempty, and is isomorphic to $S_\Lambda^\pm$.
\end{corollary}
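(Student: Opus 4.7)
The plan is to extract the corollary from Theorem \ref{thm:main isomorphism} by matching two natural open-closed decompositions of the quotient $p^\Z \backslash \RZc_\Lambda^\red$. On the one hand, Theorem \ref{thm:main isomorphism} furnishes an isomorphism
\[
p^\Z \backslash \RZc_\Lambda^\red \iso S_\Lambda = S_\Lambda^+ \sqcup S_\Lambda^-,
\]
and Proposition \ref{prop:lagrangian} tells us that $S_\Lambda^+$ and $S_\Lambda^-$ are each connected, nonempty, and (non-canonically) isomorphic. Hence the left hand side has exactly two connected components. On the other hand, since $\eta_b(p) = p^2$, the action of $p \in p^\Z \subset J_b(\Q_p)$ shifts $\ell$ by $2$, and restricting \eqref{fundamental domain} to $\RZc_\Lambda$ gives
\[
p^\Z \backslash \RZc_\Lambda^\red = \RZc_\Lambda^{(0),\red} \sqcup \RZc_\Lambda^{(1),\red}.
\]

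Once we show that both $\RZc_\Lambda^{(0),\red}$ and $\RZc_\Lambda^{(1),\red}$ are nonempty, each is open and closed in $p^\Z \backslash \RZc_\Lambda^\red$, hence a nonempty union of connected components of $S_\Lambda$. Since $S_\Lambda$ has only two components and both pieces are nonempty, each $\RZc_\Lambda^{(i),\red}$ must coincide with exactly one of $S_\Lambda^+$ or $S_\Lambda^-$; in particular, each is connected. For general $\ell \in \Z$, multiplication by $p^{(\ell - i)/2}$ (where $i \in \{0,1\}$ is the parity of $\ell$) gives an isomorphism $\RZc_\Lambda^{(i),\red} \iso \RZc_\Lambda^{(\ell),\red}$, finishing the proof.

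The hard part is the non-emptiness of both parities. Since $p^\Z$ preserves each parity, this reduces to producing an element $g \in J_b(\Q_p) = \GSpin(V_K^\Phi)(\Q_p)$ that stabilizes $\Lambda$ as a $\Z_p$-lattice in $V_K^\Phi$ and satisfies $\ord_p(\eta_b(g))$ odd; such a $g$ then carries $\RZc_\Lambda^{(\ell)}$ isomorphically onto $\RZc_\Lambda^{(\ell + \ord_p(\eta_b(g)))}$, connecting the two parities. I will construct $g$ as a product $g = v_1 v_2$ of two anisotropic vectors in $V_K^\Phi$, so that $\eta_b(g) = Q(v_1)Q(v_2)$ and the image of $g$ in $\SO(V_K^\Phi)$ is the product of reflections $s_{v_1}s_{v_2}$. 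Using the normal form for vertex lattices from the proof of Proposition \ref{prop:possible types}, $\Lambda$ decomposes as a sum of hyperbolic planes (with isotropic basis $e_i,f_i$ satisfying $[e_i,f_j] = p^{-1}\delta_{ij}$) together with an anisotropic orthogonal summand. Taking $v_1 = e_1 + f_1 \in \Lambda$ gives $Q(v_1) = p^{-1}$ and one checks directly that $s_{v_1}$ preserves $\Lambda$.

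The main obstacle is the choice of $v_2$: one needs $\ord_p(Q(v_2))$ even with $s_{v_2}$ preserving $\Lambda$, which is straightforward when $\Lambda$ has a nontrivial self-dual orthogonal summand $\Lambda_{\mathrm{sd}}$ (take any $v_2 \in \Lambda_{\mathrm{sd}}$ whose image in $\Lambda_{\mathrm{sd}}/p\Lambda_{\mathrm{sd}}$ is anisotropic, so that $Q(v_2) \in \Z_p^\times$), but requires extra care in the extremal case $\Lambda^\vee = p\Lambda$ (equivalently $t_\Lambda = n$, $n$ even, $\det(V_{\Q_p}) \neq (-1)^{n/2}$). In that case the anisotropic summand is a two-dimensional space on which $Q$ only takes values of odd valuation, and the required $v_2$ must be found as a combination of vectors from a hyperbolic plane and from the anisotropic summand, exploiting the fact that their $Q$-values can cancel to yield a unit; the product $v_1 v_2$ in $\GSpin$ will then have $\ord_p(\eta_b(v_1v_2)) = \ord_p(Q(v_1)) + \ord_p(Q(v_2))$ odd, as needed.
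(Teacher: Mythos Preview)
Your overall strategy is right and is exactly what the paper does: reduce to showing both $\RZc_\Lambda^{(0),\red}$ and $\RZc_\Lambda^{(1),\red}$ are nonempty, then produce $g \in J_b(\Q_p)$ preserving $\Lambda$ with $\ord_p(\eta_b(g))$ odd.  The execution, however, has a gap --- and it is not at the ``extremal'' end $t_\Lambda = n$ you flag, but at the bottom $t_\Lambda = 2$.

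The decomposition you invoke from the proof of Proposition~\ref{prop:possible types} is established there only for \emph{maximal} vertex lattices (those of type $t_\mathrm{max}$).  For a general vertex lattice one has instead a Jordan splitting $\Lambda = M_0 \perp M_1$ with $M_0$ self-dual and $M_1^\vee = pM_1$, $\mathrm{rank}(M_1)=t_\Lambda$; the $p^{-1}$-scaled hyperbolic planes you want must come from $M_1$.  But when $t_\Lambda = 2$ the reduction $M_1/pM_1 = \Omega_0$ is the anisotropic plane over $\F_p$, so $M_1$ contains no isotropic vector at all and your $v_1 = e_1 + f_1$ simply does not exist.  (You are not in your ``extremal'' case either, since $n\ge 3$ forces $M_0 \neq 0$.)

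The paper's approach sidesteps all the case analysis.  It first treats $t_\Lambda = 2$ directly using a \emph{diagonal} (not hyperbolic) basis $e_1,\dots,e_n$ of $\Lambda$ with Gram matrix $\mathrm{diag}(u_1 p^{-1}, u_2 p^{-1}, u_3,\dots,u_n)$, $u_i \in \Z_p^\times$; the lift $g$ of the product of reflections $r_1 r_3$ visibly preserves $\Lambda$ and has spinor norm $Q(e_1)Q(e_3)$ of odd valuation.  For general $\Lambda$, rather than building $g$ adapted to $\Lambda$, it uses Proposition~\ref{prop:all types} to choose a type-$2$ vertex lattice $\Lambda_2 \subset \Lambda$ and then invokes Corollary~\ref{cor:stratum intersection} to get $\emptyset \neq \RZc_{\Lambda_2}^{(\ell),\red} \subset \RZc_\Lambda^{(\ell),\red}$.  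This reduction also eliminates the need to finish your sketch in the case $t_\Lambda = n$.
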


\begin{proof}
The action of $p^\Z$ on $\RZc_\Lambda$ identifies $\RZc_\Lambda^{(\ell)} \iso \RZc_\Lambda^{(\ell +2 )}$, and so it suffices to 
assume $\ell \in \{0,1\}$.  Moreover, we know from Proposition \ref{prop:special lattice bijection} that
\[ 
\RZc_\Lambda^{(0),\red } \sqcup \RZc_\Lambda^{(1),\red }   \iso  p^\Z \backslash \RZc_\Lambda ^\red
\iso S_\Lambda^+ \sqcup S_\Lambda^-.
\]
These leaves two possibilities:  either each of  $\RZc_\Lambda^{(0),\red }$ and  
$\RZc_\Lambda^{(1),\red }$  is connected and isomorphic to $S_\Lambda^\pm$, or one of them is empty 
and the other has two connected components.   To complete the proof of the corollary, it therefore suffices to show that
 $\RZc_\Lambda^{(0),\red }$ and  $\RZc_\Lambda^{(1),\red }$  are nonempty.

First suppose that $\Lambda$ has type $t_\Lambda=2$.  In this case one can easily check that $S_\Lambda$
consists of two points, and so the same is true of $p^\Z \backslash  \RZc_\Lambda^\red$.  There is a $W$-basis $e_1,\ldots, e_n$ 
of $\Lambda$ such that the matrix of the
bilinear form is
\[
\left( \begin{smallmatrix}
u_1 p^{-1} \\
& u_2 p^{-1} \\
& & u_3\, \\
& & & \ddots\,  \\
& & & & u_n\,
\end{smallmatrix}\right)
\]
for some $u_1,\ldots, u_n \in \Z_p^\times$.

Let  $r_i\in \mathrm{O}(V_K^\Phi)(\Q_p)$ be the reflection with $e_i \mapsto -e_i$  and $e_j \mapsto e_j$ for all $j\neq i$.  
 The \emph{spinor norm} of $r_1 r_3$, in the sense of  \cite{Kitaoka}, is 
 \[
 \frac{ u_1 }{2p} \cdot  \frac{ u_3 }{2} =Q(e_1)Q(e_3) \in \Q_p^\times/(\Q_p^\times)^2.
 \]
The spinor norm of \cite{Kitaoka} is compatible with the spinor similitude of \S \ref{sss:G}, in the sense that 
any lift of  $r_1r_3\in \SO(V_K^\Phi)(\Q_p)$ to 
\[
g\in \GSpin(V_K^\Phi)(\Q_p) \iso J_b(\Q_p)  
\] 
satisfies  $\eta_b(g) = Q(e_1)Q(e_3)$ up to scaling by $(\Q_p^\times)^2$.  Thus
\[
\ord_p( \eta_b (g)) \equiv 1\pmod{2}.
\]  
By this calculation and the discussion of   \S \ref{sss:J action}, $g$ acts on 
\[
p^\Z \backslash \RZc_\Lambda^\red   \iso \RZc_\Lambda^{(0) , \red }  \sqcup \RZc_\Lambda^{(1) ,\red}  ,
\]
and  interchanges the two subsets on the right.  Thus each is nonempty, and in fact each is a single reduced point.

For general $\Lambda$, Proposition  \ref{prop:all types} allows us to pick a type $2$ vertex lattice  
$\Lambda_2 \subset \Lambda$.   Combining Corollary \ref{cor:stratum intersection} with the paragraph above shows that 
$
\emptyset\neq\RZc^{(\ell) , \red } _{\Lambda_2} \subset \RZc^{(\ell) , \red }_\Lambda.
$
\end{proof}

%%%%%%%%%%%%%%%%%%%%%%%%%%%%%%%%%%%%

\subsection{The main result}
\label{ss:mainRZGSPin}

%%%%%%%%%%%%%%%%%%%%%%%%%%%%%%%%%%%%

%\subsubsection{}
We can now prove our main result on the structure of the reduced  scheme 
\[
\RZc^\red =  \bigsqcup_{\ell\in \Z} \RZc^{(\ell), \red}.
\]
For each  vertex lattice $\Lambda$,  recall that  $\RZc_\Lambda^{(\ell) , \red}$ is  the  reduced $k$-scheme underlying the formal $W$-scheme
$
\RZc_\Lambda^{(\ell)} = \RZc_\Lambda \cap \RZc^{(\ell)}.
$

\begin{theorem}\label{thm:final}
For each $\ell$ the $k$-scheme $\RZc^{(\ell),\red}$  is connected.  Each closed subscheme
$\RZc^{(\ell),\red}_\Lambda$ is projective and smooth of  dimension 
$(t_\Lambda/2) - 1$, and is isomorphic to $S_\Lambda^\pm$. The  irreducible  components of $\RZc^{(\ell),\red}$
are precisely the closed subschemes $\RZc_\Lambda^{(\ell),\red}$  as $\Lambda$ runs over the 
vertex lattices of  maximal type $t_\Lambda=t_\mathrm{max}$, and in particular $\RZc^\red$ is equidimensional with
\[
\dim(\RZc^\red) = \frac{1}{2}
\begin{cases}
n-4 & \mbox{if $n$ is even and $\det(V_{\Q_p})= (-1)^{\frac{n}{2}} $} \\
n-3 & \mbox{if $n$ is odd } \\
n-2& \mbox{if $n$ is even and $\det( V_{\Q_p} ) \neq (-1)^{\frac{n}{2}} $}.
\end{cases}
\]
\end{theorem}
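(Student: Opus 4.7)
The plan is to deduce the theorem from structural results already established, primarily Corollary \ref{cor:connected stratum}, Corollary \ref{cor:vertex cover}, and Corollary \ref{cor:stratum intersection}, together with the combinatorial properties of vertex lattices from Propositions \ref{prop:possible types}, \ref{prop:all types}, and \ref{prop:connected lattices}. Statement (b) is essentially immediate: by Corollary \ref{cor:connected stratum}, $\RZc_\Lambda^{(\ell),\red} \iso S_\Lambda^\pm$, and Proposition \ref{prop:lagrangian} records that each $S_\Lambda^\pm$ is projective and smooth of dimension $(t_\Lambda/2)-1$. The numerical formula for $\dim(\RZc^\red)$ in (d) then follows by substituting the values of $t_\mathrm{max}$ from (\ref{tmax}) into $(t_\mathrm{max}/2)-1$, once equidimensionality has been established from (c).

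The core step is (c). I would let $Y$ be any irreducible component of $\RZc^{(\ell),\red}$, with generic point $\eta$. Since $\RZc^{(\ell),\red}$ is locally of finite type over $k$, the residue field $k(\eta)$ is a finitely generated extension of $k$, and so Proposition \ref{prop:special lattice bijection} applies with $k' = k(\eta)$, identifying $\eta$ with a special lattice $L_\eta \subset V_{K(\eta)}$. Proposition \ref{prop:optimal vertex} then attaches to $L_\eta$ an optimal vertex lattice $\Lambda(L_\eta) \subset V_K^\Phi$, and by Proposition \ref{prop:possible types} we may choose a vertex lattice $\Lambda$ of maximal type with $\Lambda \supseteq \Lambda(L_\eta)$. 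The inclusion $\Lambda^\vee \subseteq \Lambda(L_\eta)^\vee$, combined with the characterization $\Lambda(L_\eta)^\vee = \{x \in L_\eta : \Phi(x)=x\}$ and the self-duality of $L_\eta$, forces every special quasi-endomorphism in $\Lambda^\vee$ to act integrally on $X_\eta$, so $\eta \in \RZc_\Lambda^{(\ell),\red}$. Since $\RZc_\Lambda^{(\ell),\red}$ is closed, $Y = \overline{\{\eta\}} \subseteq \RZc_\Lambda^{(\ell),\red}$; as $\RZc_\Lambda^{(\ell),\red}$ is itself irreducible closed, the fact that $Y$ is a maximal irreducible closed subset forces $Y = \RZc_\Lambda^{(\ell),\red}$. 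Conversely, each $\RZc_\Lambda^{(\ell),\red}$ with $t_\Lambda = t_\mathrm{max}$ is irreducible of dimension $(t_\mathrm{max}/2)-1$ by (b); if it were properly contained in some irreducible component, the argument above would identify that component with a $\RZc_{\Lambda'}^{(\ell),\red}$ of the same dimension, contradicting the impossibility of nesting distinct irreducible schemes of equal dimension.

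For statement (a), connectedness: given $k$-points $y_1, y_2 \in \RZc^{(\ell),\red}$, Corollary \ref{cor:vertex cover} places each $y_i$ in some $\RZc_{\Lambda_i}^{(\ell),\red}$ with $\Lambda_i$ of maximal type, and Proposition \ref{prop:connected lattices} supplies a chain of adjacent vertex lattices $\Lambda_1 = M_0 \sim M_1 \sim \cdots \sim M_s = \Lambda_2$. For each adjacent pair, say $M \subsetneq M'$, the integrality condition tightens as the vertex lattice shrinks, so $\RZc_M^{(\ell),\red} \subseteq \RZc_{M'}^{(\ell),\red}$ (equivalently, by Corollary \ref{cor:stratum intersection}, the intersection $\RZc_M^{(\ell),\red} \cap \RZc_{M'}^{(\ell),\red}$ equals the nonempty $\RZc_M^{(\ell),\red}$). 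Since each $\RZc_{M_i}^{(\ell),\red}$ is itself connected and nonempty by Corollary \ref{cor:connected stratum}, the union of the strata along the chain is a connected subset of $\RZc^{(\ell),\red}$ joining $y_1$ to $y_2$.

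The principal technical subtlety is in handling the generic point of an irreducible component, whose residue field is typically finitely generated but non-perfect. The availability of Proposition \ref{prop:special lattice bijection} over such fields---a consequence of the refined affine Deligne-Lusztig formalism developed in Section \ref{par:ADL}---is precisely what makes the generic point argument in (c) work; a formalism restricted to perfect fields would not suffice.
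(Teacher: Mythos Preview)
Your treatment of statements (a), (b), and (d) is correct and essentially identical to the paper's. The real divergence is in (c), and there your argument has a gap.

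You apply Proposition~\ref{prop:optimal vertex} to the special lattice $L_\eta \subset V_{K'}$, where $K'=W'[1/p]$ and $W'$ is the Cohen ring of the residue field $k'=k(\eta)$, which is typically finitely generated but \emph{not perfect}. Proposition~\ref{prop:optimal vertex} is stated and proved only for special lattices $L\subset V_K$, i.e.\ for $k'=k$. Its proof (following \cite[Prop.~4.1]{RTW} and \cite[Lemma~2.1]{VollaardSS}) hinges on the fact that once the chain $L^{(0)}\subsetneq L^{(1)}\subsetneq\cdots$ stabilizes at $L^{(d)}$, this $\Phi$-stable $W$-lattice descends to a $\Z_p$-lattice $\Lambda(L)=(L^{(d)})^\Phi\subset V_K^\Phi$ with $\Lambda(L)\otimes_{\Z_p}W=L^{(d)}$. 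That descent step uses that $\sigma$ is \emph{bijective} on $W$, which fails for $W'$ when $k'$ is imperfect. You rightly emphasize that Proposition~\ref{prop:special lattice bijection} is available over such $k'$; but your generic-point argument also needs the analogue of Proposition~\ref{prop:optimal vertex} over $k'$, and nothing in the paper supplies it. Without it, you cannot conclude that $\eta\in\RZc_\Lambda^{(\ell),\red}$ for some vertex lattice $\Lambda$ of maximal type.

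The paper's route for (c) is both different and simpler: it never leaves $k$-points. Corollary~\ref{cor:vertex cover} already gives
\[
\RZc^{(\ell),\red}(k)=\bigcup_{t_\Lambda=t_{\mathrm{max}}}\RZc^{(\ell),\red}_\Lambda(k),
\]
using Proposition~\ref{prop:optimal vertex} only in the case $k'=k$ where it is proved. Since each $\RZc^{(\ell),\red}_\Lambda$ with $t_\Lambda=t_{\mathrm{max}}$ is, by Corollary~\ref{cor:connected stratum} and Proposition~\ref{prop:lagrangian}, an irreducible projective closed subscheme of the common dimension $(t_{\mathrm{max}}/2)-1$, the paper reads off from this covering that these are exactly the irreducible components of $\RZc^{(\ell),\red}$. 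Thus the refined affine Deligne--Lusztig description over non-perfect fields, while crucial elsewhere (e.g.\ in the proof of Theorem~\ref{thm:main isomorphism}), is not what drives step~(c) here.
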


\begin{proof}
For any vertex lattice $\Lambda \subset V_K^\Phi$,  Corollary \ref{cor:connected stratum} and Proposition \ref{prop:lagrangian}
tell us that \[ \RZc_\Lambda^{( \ell ),\red}\iso S_\Lambda^\pm\] is  
irreducible, projective, and smooth of dimension $(t_\Lambda/2) - 1$.

Corollary  \ref{cor:vertex cover} implies that 
\begin{equation}\label{final cover}
\RZc^{(\ell),\red} = \bigcup_{ \substack{ \Lambda \\ t_\Lambda = t_\mathrm{max} } } \RZc^{(\ell),\red}_\Lambda,
\end{equation}
and so the irreducible components of $\RZc^{(\ell),\red}$ are precisely the 
$\RZc^{(\ell),\red}_\Lambda$ with $\Lambda$ of maximal type $t_\mathrm{max}$.  This proves all parts of the claim,
except for the connectedness of $\RZc^{(\ell),\red}$.

Suppose that $\Lambda_1\sim \Lambda_2$ are adjacent vertex lattices.  If 
$\Lambda_1\subset \Lambda_2$ then Corollary \ref{cor:stratum intersection} implies that 
$ \RZc^{(\ell),\red}_{\Lambda_1}$ and  $\RZc^{(\ell),\red}_{\Lambda_2}$ 
lie on the same connected component of 
$\RZc^{(\ell)}$.  Of course similar remarks hold if $\Lambda_2\subset \Lambda_1$.
Proposition \ref{prop:connected lattices} shows that any two vertex lattices are connected by a chain of adjacent 
vertex lattices, and so all of the closed subschemes $\RZc_\Lambda^{(\ell),\red}$ 
lie on  the  same connected component of $\RZc^{(\ell),\red}$.  
The equality (\ref{final cover}) now shows that  $\RZc^{(\ell),\red}$
is connected.
\end{proof}

\begin{remark}
 When $n$ is odd the center of $C^+(V)$ is $\Z_p$. 
 When $n$ is even the center of  $C^+(V)$  is the maximal  $\Z_p$-order in $F=\Q_p[x]/( x^2 - \Delta)$, 
where $\Delta = (-1)^{\frac{n}{2}} \det(V_{\Q_p})$.  Thus for $n$ even
\[
\dim(\RZc^\red) = 
\begin{cases}
(n/2)-2  & \mbox{if }F\iso \Q_p\times\Q_p \\
(n/2)-1  & \mbox{if }F\iso \Q_{p^2}.
\end{cases}
\]
\end{remark}

\begin{remark}%\label{conjrapoport}
The dimension formula of Theorem \ref{thm:final} verifies a case of a conjecture of Chai and of Rapoport \cite{GHKR,RapoportGuide}. 
According to this conjecture, we should have 
\[
\dim(\RZc^\red) = \langle\rho, \mu-\nu_b\rangle-\frac{1}{2}{\rm def}_G(b).
\]
Here, $\mu$ is assumed to be a dominant representative of the conjugacy class $\{\mu\}$, $\rho$ is the half sum of all absolute positive roots of $G$ and by definition, 
\[{\rm def}_G(b)={\rm rank}_{\Q_p}(G)-{\rm rank}_{\Q_p}(J_b).\] 
In our case,
$ \langle \rho, \mu-\nu_b\rangle= \langle \rho, \mu\rangle=(n-2)/2$, while
we have
${\rm def}_G(b)=2$, $1$, or  $0$, in the three cases
listed in the Theorem (in that order).
Indeed, ${\rm def}_G(b)$ is the difference between the Witt indices of $V_{\Q_p}$ and $V^\Phi_K$
and this can be determined as in the proof of Proposition \ref{prop:possible types}. The above dimension formula
has recently been shown, for the all (unramified) Rapoport-Zink spaces of Hodge type defined in this paper, 
 by Hamacher \cite{HamacherProduct1} and by Zhang \cite{ZhangCao}.  
\end{remark}

%%%%%%%%%%%%%%%%%%%%%%%%%%%%%%

\subsection{The Bruhat-Tits stratification}
\label{ss:BT}

%%%%%%%%%%%%%%%%%%%%%%%%%%%%%%

Using the collection of closed subschemes $\RZc_\Lambda^\red$ of $\RZc^\red$, we explain how to define a stratification of $\RZc^\red$,
in which each  stratum is the Deligne-Lusztig variety determined by a Coxeter element in a special orthogonal group over $\F_p$.

\subsubsection{}
Recall from Corollary \ref{cor:stratum intersection} that $\Lambda' \subset \Lambda$ implies 
$
\RZc_{\Lambda'}^\red  \subset \RZc_\Lambda^\red.
$
For each vertex lattice $\Lambda$ define   the \emph{Bruhat-Tits stratum} 
\[
\mathrm{BT}_\Lambda = \RZc_\Lambda^\red \smallsetminus \bigcup_{ \Lambda'\subsetneq \Lambda } \RZc_{\Lambda'}^\red .
\]
It is an open and dense subscheme  of  $\RZc^\red_\Lambda$, and 
\[
 \RZc_\Lambda^\red = \bigcup_{\Lambda'\subset \Lambda} \mathrm{BT}_\Lambda
\] 
defines a  stratification of $ \RZc_\Lambda^\red $ as a disjoint union of  locally closed subschemes.

\subsubsection{}
\label{sss:BTstratEO}
Similarly  
\[
 \RZc^\red  = \bigcup_{\mathrm{all\ }\Lambda} \mathrm{BT}_\Lambda
\]
defines a stratification of  $\RZc^\red$ as a disjoint union of  locally closed subschemes. 
This is the $\GSpin$ analogue of the Bruhat-Tits stratification for unitary Rapoport-Zink spaces found in
\cite{VollaardWedhorn} and \cite{RTW}.  However,   this terminology should be taken  with a grain of salt: unlike in [\emph{loc.~cit.}] the strata here are not in bijection with the vertices in the Bruhat-Tits building of the group $J^\mathrm{der}_b$.  See \cite[\S 2.7]{HP} for more details in the special case $n=6$.

\subsubsection{}\label{sss:BT points}
For a special lattice $L \subset V_K$, recall from Proposition \ref{prop:optimal vertex} the vertex lattice  $\Lambda(L)$ characterized by
\[
\Lambda(L)^\vee =\{ x\in   L : \Phi(x)=x \}.
\]
If we rewrite the bijections  of  Theorem \ref{thm:main isomorphism} and   \S \ref{sss:Xlambda} as
\begin{align*}
p^\Z \backslash \RZc _\Lambda^\red(k) & \iso S_\Lambda (k)  \\
& \iso  \{ \mbox{special lattices }L \subset V_K  : \Lambda^\vee  \subset L \}  \\
& =  \{ \mbox{special lattices }L  \subset V_K : \Lambda(L) \subset \Lambda \},
\end{align*}
the inclusion $ \mathrm{BT}_\Lambda \subset \RZc_\Lambda^\red$ identifies 
\begin{align*}
p^\Z \backslash \mathrm{BT}_\Lambda (k) 
& \iso \{ \mbox{special lattices }L \subset V_K    : \Lambda(L)=\Lambda \} \\
& = \{ \mbox{special lattices }L  \subset V_K   : L + \Phi(L) + \cdots + \Phi^d(L) =\Lambda_W \} .
\end{align*}

\subsubsection{}\label{DLsection}
Fix a vertex lattice $\Lambda$ of type $t_\Lambda=2d$, and recall from  \S \ref{sss:Xlambda}
the  $2d$-dimensional  $\F_p$-quadratic space $\Omega_0= \Lambda/\Lambda^\vee$.
Set 
\[
\Omega=\Omega_0\otimes_{\F_p} k \iso \Lambda_W / \Lambda_W^\vee,
\] 
and let $\Phi=\mathrm{id}\otimes \sigma$ be the Frobenius on $\Omega$. 

We recall the set-up of  \cite[\S 3.2]{HP}.
Fix a basis $\{e_1,\ldots,e_d , f_1,\ldots, f_d\}$ of $\Omega$ in such a way that $\mathrm{Span}_k\{ e_1,\ldots, e_d\}$
and $\mathrm{Span}_k\{ f_1,\ldots, f_d\}$ are totally isotropic, $[ e_i ,f_j] =\delta_{i,j}$, and the Frobenius $\Phi$
fixes $e_1,\ldots, e_{d-1}, f_1,\ldots, f_{d-1}$ but interchanges $e_d \leftrightarrow f_d$.
This choice of basis  determines a maximal $\Phi$-stable torus $T \subset \SO(\Omega)$. 

The isotropic flags  $\FF^+_\bullet$ and $\FF^-_\bullet$ in $\Omega$ defined by
\begin{align*}
\FF_i^\pm & = \mathrm{Span}_k\{ e_1, \ldots , e_i \} \mbox{ for } 1\leq i\leq d-1 \\
\FF_d^+  & = \mathrm{Span}_k\{  e_1, \ldots, e_{d-1}, e_d \} \nonumber \\
\FF_d^-  & =  \mathrm{Span}_k\{   e_1, \ldots, e_{d-1}, f_d \}  \nonumber .
\end{align*}
satisfy $\FF^\pm_\bullet=\Phi(\FF^\mp_\bullet) $, and have   the same stabilizer  $B\subset \SO(\Omega)$.  
It is a $\Phi$-stable Borel subgroup containing $T$.
The corresponding set of simple reflections in the Weyl group $W=N(T)/T$ is 
$
\{s_1, \ldots, s_{d-2}, t^+, t^-\}
$
where
\begin{itemize}
\item $s_i$  interchanges $e_i \leftrightarrow e_{i+1}$ and  $f_i \leftrightarrow f_{i+1}$, and fixes the other basis elements.

\item $t^+$ interchanges $e_{d-1} \leftrightarrow e_d$ and $f_{d-1} \leftrightarrow f_d$, and fixes the other basis elements.

\item $t^-$ interchanges $e_{d-1} \leftrightarrow f_d$ and  $f_{d-1} \leftrightarrow e_d$, and fixes the other basis elements.
\end{itemize}
Notice that  $\Phi(s_i)=s_i$, and $\Phi(t^\pm)=t^{\mp}$, and so the products
\[
w^\pm=t^{\mp}s_{d-2}\cdots s_2s_1 \in W
\] 
 are  \emph{Coxeter elements}: products of exactly one representative from each $\Phi$-orbit 
in the set of simple reflections above.

\subsubsection{}
The \emph{Deligne-Lusztig variety}
\[
X_B(w^\pm)  = \{   g\in \SO(\Omega) /B : \mathrm{inv}( g, \Phi(g) ) = w^\pm   \} 
\]
is a smooth quasi-projective $k$-variety of dimension $d-1$.  Here  $\mathrm{inv}$ is the relative position invariant 
\[
\SO(\Omega) /B \times \SO(\Omega) /B  \map{(g_1,g_2) \mapsto g_1^{-1} g_2}   B\backslash \SO(\Omega) /B \iso W.
\]

\begin{theorem}
%\label{thm:coxeterDL}
There are isomorphisms $X_B(w^+) \iso X_B(w^-)$, and 
\[
p^\Z \backslash \mathrm{BT}_\Lambda \iso X_B(w^+) \sqcup X_B(w^-).
\]
\end{theorem}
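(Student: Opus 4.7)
The plan is to use the isomorphism $p^\Z \backslash \RZc_\Lambda^\red \iso S_\Lambda = S_\Lambda^+ \sqcup S_\Lambda^-$ coming from Theorem \ref{thm:main isomorphism}, together with the explicit $k$-point description in \S\ref{sss:BT points}. Under this isomorphism, $p^\Z \backslash \mathrm{BT}_\Lambda$ corresponds to the open subscheme $S_\Lambda^\circ \subset S_\Lambda$ whose $k$-points are Lagrangians $\mathscr{L}\subset \Omega$ with
\[
\mathscr{L} + \Phi(\mathscr{L}) + \cdots + \Phi^d(\mathscr{L}) = \Omega,
\]
or equivalently, $\dim(\mathscr{L}+\Phi(\mathscr{L})+\cdots+\Phi^i(\mathscr{L}))=d+i$ for $0\le i\le d$. (That the complement is precisely $\bigcup_{\Lambda' \subsetneq \Lambda}\RZc_{\Lambda'}^\red$ follows from Proposition \ref{prop:optimal vertex}, which identifies $\Lambda(L)$ as the minimal vertex lattice such that $\Lambda(L)^\vee \subset L \subset \Lambda(L)$, so $\Lambda(L) = \Lambda$ exactly when $L+\Phi(L)+\cdots$ fills $\Lambda_W$.) Write $S_\Lambda^{\circ,\pm}= S_\Lambda^\circ \cap S_\Lambda^\pm$.

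The  heart of the argument is a comparison between $S_\Lambda^{\circ,\pm}$ and $X_B(w^\pm)$. Set $G=\SO(\Omega)$ and let $P^\pm\supset B$ be the maximal parabolic stabilizing the Lagrangian $\FF_d^\pm$, so that the two components of the orthogonal Grassmannian are $\OGr(\Omega)^\pm = G/P^\pm$, and $S_\Lambda^\pm$ is a closed subvariety of $G/P^\pm$. I would consider the forgetful map $\pi: G/B\to G/P^+\sqcup G/P^-$ sending a full isotropic flag to its top component, and show that $\pi$ restricts to an isomorphism
\[
\pi : X_B(w^+)\sqcup X_B(w^-) \iso S_\Lambda^{\circ,+}\sqcup S_\Lambda^{\circ,-}.
\]
The isomorphism $X_B(w^+) \iso X_B(w^-)$ is then induced by conjugation by any lift to $G(k)$ of an element $g\in \mathrm{O}(\Omega_0)(\F_p)$ with $\det(g)=-1$: such $g$ swaps $\FF_d^+\leftrightarrow \FF_d^-$ and hence $t^+\leftrightarrow t^-$ and $w^+\leftrightarrow w^-$, exchanging the two Deligne-Lusztig varieties and also exchanging $S_\Lambda^+$ with $S_\Lambda^-$ as in the proof of Proposition \ref{prop:lagrangian}.

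To verify the claimed isomorphism $\pi$, I would argue in two steps. First, for a full isotropic flag $\FF_\bullet$ with $\mathrm{inv}(\FF_\bullet,\Phi(\FF_\bullet))=w^\pm$, the fact that $w^\pm$ is a Coxeter element implies, step by step, that $\FF_i+\Phi(\FF_i)$ has dimension $i+1$ for each $1\le i\le d$ (with the correct component choice at the top when $w^\pm = t^\mp s_{d-2}\cdots s_1$); in particular $\FF_d \in S_\Lambda^{\circ,\pm}$. Second, for any Lagrangian $\mathscr{L}\in S_\Lambda^{\circ,\pm}$, the nested subspaces
\[
\FF_i \define \mathscr{L}\cap \Phi^{-1}(\mathscr{L})\cap \cdots \cap \Phi^{-(d-i)}(\mathscr{L})
\]
define a unique full isotropic flag refining $\mathscr{L}$ whose relative position against its Frobenius image is $w^\pm$; this gives an inverse to $\pi$. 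Both steps amount to direct Weyl-group computations in type $D_d$ using the explicit form of $w^\pm$ and the basis $\{e_i, f_j\}$ of \S\ref{DLsection}, and in particular give that the projection $\pi$, when restricted to $X_B(w^\pm)$, is a bijection on $\bar k$-points to $S_\Lambda^{\circ,\pm}(\bar k)$. Since both source and target are smooth of dimension $d-1$ and $\pi$ is a morphism of $k$-schemes that is bijective on geometric points with the inverse constructed above being a morphism as well, $\pi$ is an isomorphism on each component.

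The main obstacle will be the combinatorial identification of the relative position condition $\mathrm{inv}(\FF_\bullet,\Phi(\FF_\bullet))=w^\pm$ with the iterated dimension condition defining $S_\Lambda^{\circ,\pm}$, particularly tracking the ``$+$'' versus ``$-$'' choice at the top of the flag (which comes from the exchange $t^+\leftrightarrow t^-$ under $\Phi$). Once this combinatorial matching is established, the identification of $S_\Lambda^{\circ,\pm}$ as the image of the Coxeter-type Deligne-Lusztig variety follows formally, and constructing the explicit inverse above bypasses the need for a more abstract appeal to properties of Coxeter Deligne-Lusztig varieties.
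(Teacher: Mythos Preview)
Your proposal is correct and follows essentially the same route as the paper: reduce via Theorem~\ref{thm:main isomorphism} to the open locus in $S_\Lambda$ of Lagrangians spanning $\Omega$ under Frobenius iterates, identify this locus with $X_B(w^+)\sqcup X_B(w^-)$ via the projection to the top piece of the flag, and use an element of $\mathrm{O}(\Omega_0)(\F_p)$ of determinant $-1$ to swap the two components. The only difference is that the paper imports the identification $X_B(w^\pm)\hookrightarrow S_\Lambda^\pm$ (with image the spanning locus) from \cite[Proposition~3.8]{HP}, whereas you propose to prove it directly by the flag construction $\FF_i=\mathscr{L}\cap\Phi^{-1}(\mathscr{L})\cap\cdots\cap\Phi^{-(d-i)}(\mathscr{L})$; this is exactly the content of that cited result.
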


\begin{proof}
Recall that the $k$-variety
\[
S_\Lambda (k)= \{ \mbox{Lagrangians }\mathscr{L} \subset \Omega : \dim_k( \mathscr{L}+\Phi(\mathscr{L}) ) = d +1 \}.
\]
has two connected components $X^+_\Lambda$ and $X^-_\Lambda $, interchanged
by the action of any $g\in \mathrm{O}(\Omega_0)(\F_p)$ with $\det(g)=-1$.  

  After possibly relabeling $\FF^+_\bullet$ and $\FF^-_\bullet$,  \cite[Proposition 3.8]{HP} gives an open immersion
$X_B(w^\pm) \to S_\Lambda^\pm$ defined by $g\mapsto g\FF_d^\pm$.    Thus 
\[
X_B(w^+) \sqcup X_B(w^-) \subset S_\Lambda
\]
as an open subset with $k$-points
\begin{align*}
X_B(w^\pm)(k)  
& = 
\{ \mathscr{L} \in S_\Lambda^\pm (k) : \mathscr{L} \cap  \Phi(\mathscr{L}) \cap \Phi^2(\mathscr{L}) \cap  \cdots \cap \Phi^d(\mathscr{L}) = 0 \} \\
& = 
\{ \mathscr{L} \in S_\Lambda^\pm (k) : \mathscr{L} + \Phi(\mathscr{L}) +\Phi^2(\mathscr{L}) + \cdots + \Phi^d(\mathscr{L}) = \Omega \} .
\end{align*}
The action of any $g$ as above interchanges $X_B(w^+)$ with $X_B(w^-)$.

By  \S \ref{sss:BT points}, we have bijections
\begin{align*}
p^\Z \backslash \mathrm{BT}_\Lambda(k)  & \iso 
\{ \mathscr{L} \in S_\Lambda (k) : \mathscr{L} + \Phi(\mathscr{L}) +\Phi^2(\mathscr{L}) + \cdots + \Phi^d(\mathscr{L}) = \Omega \} \\
& \iso X_B(w^+)(k)  \sqcup X_B(w^-) (k).
\end{align*}
This is nothing more than the restriction of the isomorphism 
$p^\Z \backslash \RZc^\red_\Lambda \iso S_\Lambda$ of Theorem \ref{thm:main isomorphism},
and hence arises from an isomorphism of varieties
\[
p^\Z \backslash \mathrm{BT}_\Lambda \iso X_B(w^+)   \sqcup X_B(w^-) .
\]
\end{proof}

\begin{remark}
The quotient $p^\Z \backslash \RZc_\Lambda^\red$ is itself isomorphic to a  disjoint union of two Deligne-Lusztig varieties.
Indeed, if  $P^\pm \subset \SO(\Omega)$ denotes the maximal parabolic subgroup stabilizing $\FF_d^\pm$, then 
\cite[Proposition 3.6]{HP} shows that  
\[
p^\Z \backslash \RZc_\Lambda \iso S_\Lambda \iso X_{P^+}(1) \sqcup X_{P^-}(1).
\]
\end{remark}

%%%%%%%%%%%%%%%%%%%%%%%%%%%%%%%%%%%%

\section{Shimura varieties for spinor similitude groups}
\label{s:global GSpin}

%%%%%%%%%%%%%%%%%%%%%%%%%%%%%%%%%%%%

Finally, we apply our results to study the supersingular loci of  Shimura varieties of type $\GSpin$.
Throughout \S \ref{s:global GSpin} we fix  a quadratic space $(V,Q)$ of signature $(n-2,2)$ over $\Z_{(p)}$.
We always assume that $n\ge 3$, and that the corresponding  bilinear form $[x,y]$  induces an isomorphism from $V$ to its $\Z_{(p)}$-linear dual.

%%%%%%%%%%%%%%%%%%%%%%%%%%%%%%

\subsection{The GSpin Shimura variety}

%%%%%%%%%%%%%%%%%%%%%%%%%%%%%%

First, we attach to the quadratic space $V$ a Shimura variety of Hodge type.

\subsubsection{}

As in the local set-up of \S \ref{sss:azumaya}, the Clifford algebra $C(V)$ is endowed with a $\Z/2\Z$-grading 
$
C(V)=C^+(V)\oplus C^-(V)
$
and a canonical involution $c\mapsto c^*$.
The group of spinor similitudes $G=\GSpin(V)$  is the reductive group over $\Z_{(p)}$ defined by
\[
G(R) = \{  g\in C^+(V_R)^\times : g V_R g^{-1} = V_R,\, g^* g \in R^\times \}
\]
for any $\Z_{(p)}$-algebra $R$.  As before, the spinor similitude $\eta_G: G \to \Gm$ is  defined by 
$\eta_G(g) =g^* g$, and there is a representation $G \to \SO(V)$ defined by  $g\action v = gvg^{-1}$.
By slight abuse of notation, we denote again by $G$ the generic fiber of the $\Z_{(p)}$-group scheme $G$ just defined.

\subsubsection{}

As in (\ref{hermitian domain}), define a hermitian symmetric domain 
\[
 \mathcal{H} = \{ z\in V_\C : [z,z]=0,\,  [z,\bar{z}]<0 \} /\C^\times
\]
 of dimension $n-2$. The group $G(\R)$ acts on $\mathcal{H}$ through the representation $G \to \SO(V)$,
and the action of any $g\in G(\R)$ with $\eta_G(g) < 0$ interchanges the two connected components of $\mathcal{H}$.

Writing   $z\in \mathcal{H}$  as  $z=u+iv$ with $u,v\in V_\R$, the subspace $\mathrm{Span}_\R\{u,v\}$
is a negative definite plane in $V_\R$, oriented by the ordered orthogonal basis $u,v$.  There are natural $\R$-algebra maps 
\[
\C \iso  C^+(\mathrm{Span}_\R\{u,v\}) \to C^+(V_\R).
\]
The first is  determined by 
\[
i\mapsto  \frac{uv} { \sqrt{ Q(u) Q(v)} },
\]
and the second is induced by  the inclusion $\mathrm{Span}_\R\{u,v\} \subset V_\R$.  The above composition restricts to an injection
$\bm{h}_z: \C^\times \to G(\R)$, which arises from a morphism $\bm{h}_z : \mathbb{S} \to G_\R$ of real algebraic groups.
Here  $\mathbb{S} = \mathrm{Res}_{\C/\R} \Gm$ is  Deligne's torus.  
The construction $z\mapsto \bm{h}_z$  realizes 
$
\mathcal{H} \subset \Hom(\mathbb{S} , G_\R)
$
as a $G(\R)$-conjugacy class.

Using the conventions of \cite{DeligneCorvallis}, the Hodge structure on $V$ determined by $\bm{h}_z$ is
\begin{equation}\label{gspin hodge}
V_\C^{(1,-1)} = \C z ,\quad
V_\C^{(0,0)} = ( \C z + \C \bar{z} )^\perp,\quad
V_\C^{(-1,1)} = \C \bar{z}.
\end{equation}

\subsubsection{}
\label{sss:ortho hodge}

The $\Z_{(p)}$-quadratic space $V$ admits an orthogonal basis, and so  one can choose  orthogonal vectors 
$e,f\in V$  of negative length with $Q(e), Q(f)  \in \Z_{(p)}^\times$.  If we set
$\delta = ef \in C(V)^\times$ then, exactly as in \S \ref{sss:symplectic rep}, $\delta$ determines  a perfect $G$-equivariant symplectic form 
\[
\psi_\delta : C(V) \otimes_{\Z_{(p)}} C(V) \to \Z_{(p)} (\eta_G),
\]
where $G$ acts  on $C(V)$ via left multiplication.  For any $z\in \mathcal{H}$ the bilinear form
$\psi_\delta( \bm{h}_z(i) c_1 , c_2)$ on $C(V_\R)$ is either positive definite or negative definite, depending on the 
connected component of $\mathcal{H}$ containing  $z$.

The Hodge structure on $C(V_\Q)$ determined by $\bm{h}_z \in \Hom(\mathbb{S} ,G_\R)$ is 
\[
C(V_\C)^{( 0,-1) } =  z C(V_\C) ,\quad
C(V_\C)^{ (-1,0) } = \bar{z}  C(V_\C) .
\]
From this it follows that the faithful representation
\[
G\to \GSp(C(V),\psi_\delta)
\] 
defines a morphism of Shimura data from $(G ,\mathcal{H})$ to the Siegel Shimura datum determined by 
the symplectic space $(C(V),\psi_\delta)$.

\subsubsection{}
Define a  hyperspecial subgroup  $U_p=G(\Z_p)$  of $G(\Q_p)$, and choose any sufficiently small compact open subgroup 
$U^p \subset G( \A_f^p)$.
Setting $U=U_p U^p$, there  is an associated Shimura variety $\Sh_{U} (   G , \mathcal{H}   )$ over $\Q$ with complex points 
\[
\Sh_{ U } (   G  , \mathcal{H}   ) (\C)  = G(\Q) \backslash \mathcal{H} \times G(\A_f) / U .
\]

Let $2g=\dim C(V_\Q) = 2^n$ so that, as in \S \ref{ss:integral models}, the morphism of Shimura data 
$(G,\mathcal{H}) \to (\GSp_{2g} ,\mathcal{H}_{2g})$ constructed in \S \ref{sss:ortho hodge} determines a morphism from 
$\Sh_{U} (   G , \mathcal{H}   )$ to a moduli space of polarized abelian varieties up to prime to $p$-isogeny.
Pulling back the universal object over this moduli space yields an abelian scheme up to prime to $p$-isogeny
\[
A \to \Sh_{U} (   G , \mathcal{H}   ),
\]
often called the \emph{Kuga-Satake abelian scheme}; see \cite{MadapusiSpin} for more information.

The fiber of the Kuga-Satake abelian scheme at a point  $(z,g) \in \mathcal{H} \times G(\A_f)$ can be made very explicit: 
it is the abelian variety up to  prime-to-$p$-isogeny $A_{( z ,g )}$   whose   Betti homology is the $\Z_{(p)}$-module
\[
{\rm H}_1( A_{( z,g)}  (\C) , \Z_{(p)} )  =  g  \cdot C(V) \subset C(V_\Q)
\]
 with  the  Hodge structure  $\bm{h}_z$ defined above.  Note that $A_{(z,g)}$  carries a prime-to-$p$ polarization $\lambda$ inherited from the 
 symplectic form $\psi_\delta$, and an action of $C(V)^{op}$ induced by the 
 right multiplication action of $C(V)$ on itself.    
 \bigskip
 \bigskip

%%%%%%%%%%%%%%%%%%%%%%%%%%%%%%

\subsection{Uniformization of the supersingular locus}

%%%%%%%%%%%%%%%%%%%%%%%%%%%%%%

As in \S \ref{ss:integral models}, 
let $\mathscr{S} =\mathscr{S}_U(G , \mathcal{H})$ be Kisin's \cite{KisinJAMS} smooth  integral model of 
$\Sh_{U } (G , \mathcal{H} )$ over  $\Z_{(p)}$, and let
\[
\mathscr{S}_{U_p} =  \varprojlim_{U^p} {\sS}_{U^pU_p}(G, \mathcal{H}).
\]   
By the very construction of the integral model,
 the Kuga-Satake abelian scheme extends to an abelian scheme up to prime to $p$-isogeny
$A \to \mathscr{S}$.

\subsubsection{}

We denote by 
\[
\mathscr{S}_{ss} \subset \mathscr{S} \otimes_{\Z_{(p)} } k
\]
 the supersingular locus:  the largest reduced closed subscheme  over which the Kuga-Satake abelian scheme is supersingular.  
The fiber of $A$ at any point of $\mathscr{S} \otimes_{\Z_{(p)} }k$  is supersingular if and only if 
its $p$-divisible group is isoclinic.  Thus  Lemma \ref{lem:basic criterion} implies that the supersingular locus is precisely the basic locus.
Moreover, along the supersingular locus the slope of the universal $p$-divisible group must by $1/2$, and so the classification of basic elements in Proposition \ref{prop:twisted space} tells us that $\mathscr{S}_{ss}$ must be the Newton stratum $\sS_b$
for the basic $b$ appearing in  Proposition  \ref{prop:local GSpin datum}.

Denote by $ (  \widehat{\mathscr{S}}_W )_{ /  \mathscr{S}_{ss}  }$ the formal completion of $\mathscr{S}_W$ along $\mathscr{S}_{ss}$.

\begin{lemma}\label{nonempty}
The supersingular locus $\mathscr{S}_{ss}$ is nonempty.
\end{lemma}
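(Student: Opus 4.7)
The plan is to exhibit a CM (special) point $z \in \mathscr{S}(\overline{\Q})$ whose Kuga--Satake abelian variety $A_z$ has supersingular reduction modulo $p$; by smoothness of $\mathscr{S}$ at the hyperspecial level $U_p$, the reduction of $z$ then lies in $\mathscr{S}_{ss}(\overline{\F}_p)$.

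The first step is to find a negative definite $2$-plane $V_\Q^-\subset V_\Q$ whose discriminant in $\Q^\times/(\Q^\times)^2$ matches that of an imaginary quadratic field $K$ in which $p$ is inert. Since $V_\Q$ has signature $(n-2,2)$ with $n \geq 3$, the orthogonal complement of any fixed negative definite $2$-plane is nonzero, and varying the $2$-plane via vectors from this complement shows by elementary computation that the discriminant takes infinitely many classes in $\Q^\times/(\Q^\times)^2$. Since there are also infinitely many imaginary quadratic fields in which $p$ is inert, a matching choice exists. Under this choice, the even Clifford algebra $C^+(V_\Q^-)$ is canonically identified with $K$, and the embedding $K \hookrightarrow C^+(V_\Q)$ realizes a CM torus $T = \mathrm{Res}_{K/\Q}\Gm \hookrightarrow \GSpin(V_\Q^-) \hookrightarrow G$. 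The Hodge structure on $V_\Q$ with $V_\C^{1,-1}$ equal to an isotropic line in $V_\Q^-\otimes_\Q \C \iso K\otimes_\Q \C$ defines a point $h \in \mathcal{H}$ fixed by $T$, so $(T,\{h\})$ is a CM sub-Shimura datum of $(G,\mathcal{H})$, and (after shrinking $U^p$ if necessary) its $\overline{\Q}$-points give special points $z \in \mathscr{S}(\overline{\Q})$.

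To verify supersingular reduction of $A_z$, note that $A_z$ admits an action of $K$ via $K \cong C^+(V_\Q^-) \hookrightarrow C(V_\Q)$ and the right multiplication action of $C(V_\Q)$ on itself; this action specializes to an action of $K$ on $A_{z,\overline{\F}_p}$. Each simple $K$-isotypic component of $A_{z,\overline{\F}_p}$ is isogenous to a power of a CM elliptic curve with complex multiplication by an order in $K$, and Deuring's criterion (applied to $K$ in which $p$ is inert) implies that such an elliptic curve has supersingular reduction at $p$. Hence $A_{z,\overline{\F}_p}$ is supersingular, as required.

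The main obstacle is the discriminant-matching argument in the first step; modulo this, the rest of the proof is a routine application of the theory of CM points on Shimura varieties. Alternatively, one could invoke the general nonemptiness results for Newton strata on Hodge type Shimura varieties referenced in Remark \ref{rem:nonemptiness}.
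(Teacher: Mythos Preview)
Your approach is essentially the paper's: find a negative definite rational $2$-plane whose even Clifford algebra is an imaginary quadratic field $K$ in which $p$ is inert, produce a CM point, and invoke Deuring. However, several steps are not justified as written.

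First, your discriminant-matching argument is a non-sequitur: that the set of discriminants realized by negative definite $2$-planes in $V_\Q$ and the set of discriminants of imaginary quadratic fields with $p$ inert are both infinite does not imply they meet. What you actually need is control of $-\det(V_\Q^-)$ in $\Q_p^\times/(\Q_p^\times)^2$; the paper handles this by reducing to $n=3$ and using the classification of ternary quadratic forms over $\Q$.

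Second, ``by smoothness of $\mathscr{S}$'' does not explain why a $\overline{\Q}$-point of $\mathscr{S}$ specializes to a $k$-point: $\mathscr{S}$ is not proper. The paper uses that CM implies potentially good reduction (via N\'eron--Ogg--Shafarevich), and then invokes the extension property of Kisin's canonical integral model to extend the point over the ring of integers.

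Third, your argument for supersingularity of the reduction is not correct as stated: an abelian variety over $\overline{\F}_p$ carrying an action of an imaginary quadratic field $K$ is not automatically isogenous to a power of a CM elliptic curve. The clean route, which the paper takes, is to observe that already over $\C$ the Kuga--Satake abelian variety at this CM point is isogenous to a power of an elliptic curve with CM by $K$ (a Hodge-theoretic exercise), and then apply Deuring to that elliptic curve.
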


\begin{proof} This can  be understood as a special case of recent results on the non-emptiness of the basic locus in Hodge type Shimura varieties; see Remark \ref{rem:nonemptiness}.  Here we  give a direct argument.

Let $V_{0\Q} \subset V_\Q$ be a rational $2$-plane on which $Q$ is negative definite.  The $\Z/2\Z$-grading on the  Clifford algebra of $V_{0\Q}$
has the simple form $C(V_{0\Q}) = F \oplus V_{0\Q}$, where the even part is the quadratic imaginary field
\[
F= \Q\left(\sqrt{ - \mathrm{det}(V_{0\Q}) }  \right ).
\]  
We leave it as an exercise to the reader to check that one may  choose $V_{0\Q}$ so that $p$ is inert in $F$
(reduce to the case $n=3$, and use the classification of quadratic forms from \cite{SerreAcourse}).

 The action of $F$ by left multiplication makes $V_{0\Q}$ into an $F$-vector space
of dimension $1$.  The  $\C$-quadratic space $V_{0\C}$ is a hyperbolic plane, and its two isotropic lines are distinguished 
by the two embeddings $F\to \C$: on one line $F$ acts through one embedding, and on the other $F$ acts through the conjugate embedding.
These two lines determine two points of  $\mathcal{H}$, and we pick one of them $z_0 \in \mathcal{H}$.  For any $g\in G(\A_f)$,
an exercise in linear algebra shows that  $A_{ ( z_0,g) }$ is isogenous to a product of elliptic 
curves with complex multiplication by $F$.

Let $x_\C  \in \mathscr{S}(\C) $ be the point defined by $(z_0,g)$.  This is a special point in the sense of Deligne,
and so the underlying point  $x\in \mathscr{S}$   has residue field   a finite extension of $\Q$. 
By completing the residue field at a prime above $p$ and passing to its maximal unramified extension,  we obtain a finite extension 
$\Phi /  \Q^\mathrm{unr}_p$  and a point $x_\Phi \in \mathscr{S}(\Phi)$ above $x$.    
As  the Kuga-Satake abelian scheme $A_{x_\Phi}$ has complex multiplication,
the criterion  of N\'eron-Ogg-Shafarevich guarantees that we may replace $\Phi$ by a finite extension 
so that the $\ell$-adic Tate module of $A_{x_\Phi}$ is unramified for all $\ell\not=p$.  
The extension property of Kisin's integral models now gives an extension of $x_\Phi$ to a point 
of $\mathscr{S}(\co_\Phi)$, whose reduction to $\mathscr{S}(k)$ is necessarily supersingular
(as $p$ is inert in the CM field $F$).
\end{proof}

\begin{proposition}\label{switchpoint}
There exists a point $x\in \sS_{U_p}(k)$
such that the local Hodge-Shimura datum $(G_{\Z_p} , b_x ,\mu_x , C(V_{\Z_p}) )$
obtained from $x$ (by the procedure of  \S\ref{sss:global to local datum}) agrees with the local Hodge-Shimura datum of 
Proposition \ref{prop:local GSpin datum}.
\end{proposition}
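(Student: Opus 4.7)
The plan is to apply Remark \ref{rem:point switch} to transport the local Shimura-Hodge datum attached to an arbitrary supersingular point into the datum $(G_{\Z_p},b,\mu,C(V_{\Z_p}))$ exhibited in Proposition \ref{prop:local GSpin datum}. By Lemma \ref{nonempty} the supersingular locus is non-empty, so we may pick $x_1\in \sS_{ss}(k)$; lifting through the finite \'etale transition maps in the tower $\{\sS_{U^pU_p}\}$, which are surjective on $k$-points since $k=\bar{\F}_p$, we may regard $x_1$ as an element of $\sS_{U_p}(k)$. Let $(G_{\Z_p}, b_{x_1}, \mu_{x_1}, C(V_{\Z_p}))$ be the local Shimura-Hodge datum of $x_1$ produced by \S\ref{sss:global to local datum}. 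By Remark \ref{rem:point switch}, to conclude the proof it suffices to exhibit $g\in G(K)$ and $h\in G(W)$ satisfying
\[
b \,=\, g^{-1} b_{x_1} \sigma(g), \qquad \mu \,=\, h\, \mu_{x_1} \, h^{-1},
\]
since the required compatibility $b\in G(W)\mu^\sigma(p)G(W)$ is already part of Proposition \ref{prop:local GSpin datum}.

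To produce $g$, observe that $x_1$ being supersingular means that the $p$-divisible group of the Kuga-Satake abelian variety at $x_1$ is isoclinic of slope $1/2$, so by Lemma \ref{lem:basic criterion} the element $b_{x_1}$ is basic with $\nu_{b_{x_1}}=-1/2$. The element $b$ of Proposition \ref{prop:local GSpin datum} is also basic with $\nu_b=-1/2$, so by the bijection of Proposition \ref{prop:twisted space} between basic $\sigma$-conjugacy classes and $\tfrac{1}{2}\Z$, the elements $b$ and $b_{x_1}$ are $\sigma$-conjugate in $G(K)$; this supplies $g$.

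To produce $h$, recall from \S\ref{sss:global to local datum} that $\mu_{x_1}$ lies in the $G(\bar K)$-conjugacy class of $\mu_h^{-1}$, where $\mu_h$ is the Hodge cocharacter of the GSpin Shimura datum. From \S\ref{sss:mu basis} the explicit cocharacter $\mu$ of Proposition \ref{prop:local GSpin datum} acts on $V$ with weights $(-1,+1,0,\ldots,0)$, which matches the action of $\mu_h^{-1}$ read off from the Hodge structure \eqref{gspin hodge}. A direct Clifford algebra computation, using that $\eta_G\circ \bm{h}_z$ equals the norm character $\mathrm{Nm}\colon \mathbb S\to \Gm$ (itself a consequence of the identity $(a+bi)^*(a+bi)=a^2+b^2$ in the real Clifford subalgebra generated by a negative definite plane), further shows that $\eta_G\circ \mu$ and $\eta_G\circ \mu_h^{-1}$ both send $t\mapsto t^{-1}$. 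Since $\mu$ and $\mu_h^{-1}$ therefore agree both modulo $\Gm$ and on the central $\Gm$, they are $G(\bar K)$-conjugate, and hence so are $\mu$ and $\mu_{x_1}$. Since $G$ is reductive over $\Z_p$ and split over $W$, any two integral minuscule cocharacters of $G$ lying in the same $G(\bar K)$-conjugacy class are $G(W)$-conjugate; this supplies $h$.

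The main obstacle will be the verification that $\mu$ and $\mu_h^{-1}$ lie in the same $G(\bar K)$-conjugacy class: equality of the induced cocharacters into $\SO(V)=G/\Gm$ is visible from the explicit formulas, but to conclude conjugacy in $G$ itself one must track the composition with the spinor similitude $\eta_G\colon G\to\Gm$ and verify that the two central parts coincide. Once this bookkeeping is in place, applying Remark \ref{rem:point switch} with the $g$ and $h$ produced above yields the desired point $x\in\sS_{U_p}(k)$.
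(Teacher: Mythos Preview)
Your proposal is correct and follows essentially the same route as the paper: pick a supersingular point via Lemma \ref{nonempty}, show that the resulting $b_{x_1}$ and the explicit $b$ are $\sigma$-conjugate because both are basic with $\nu=-1/2$, show that $\mu_{x_1}$ and $\mu$ are $G(W)$-conjugate, and invoke Remark \ref{rem:point switch}. The only cosmetic difference is in the verification of the cocharacter conjugacy: the paper characterizes the relevant $G(W)$-conjugacy class directly by the two conditions that $\eta_G\circ\mu$ is $z\mapsto z^{-1}$ and that the induced grading on $V_W$ is of the form $F_1\oplus F_0\oplus F_{-1}$ with $F_{\pm 1}$ isotropic lines, which is exactly the content of your ``agree modulo $\Gm$'' plus ``agree under $\eta_G$'' argument, just packaged more succinctly.
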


\begin{proof}
Let $b$ and $\mu$ be as in Proposition \ref{prop:local GSpin datum}.  Using Lemma \ref{nonempty}, we can find a point  
\[
x_0 \in \mathscr{S}_{ss}(k)=\sS_b(k) ,
\]
which  determines  a local Shimura-Hodge datum $(G_{\Z_p} , b_{x_0} ,\mu_{x_0} , C(V_{\Z_p}) )$ as in  \S\ref{sss:global to local datum}.

The cocharacters $\mu_{x_0}$ and  $\mu$ are $G(W)$-conjugate.  Indeed, using (\ref{gspin hodge}), one can see that the conjugacy class of both $\mu_{x_0}$ and  $\mu$ is characterized as the set of all characters $\mathbb{G}_{m W} \to G_W$ such that the composition 
\[
\mathbb{G}_{m W} \to G_W \map{\nu_G} \mathbb{G}_{m W}
\]
is $z\mapsto z^{-1}$, and such that the induced grading on $V_W$ has the form
$
V_W = F_1\oplus F_0 \oplus F_{-1},
$
in which $F_1$ and $F_{-1}$ are isotropic lines orthogonal to $F_0$.

The results of \S \ref{ss:gspin local datum} now show that there is a unique $\sigma$-conjugacy class of basic elements in $G(K)$ making $D_K=\Hom(C(V_K) , K)$ into an isocrystal of slope $1/2$, and hence the basic elements $b_{x_0}$ and $b$ are $\sigma$-conjugate.  Thus the claim follows from Remark \ref{rem:point switch}.
 \end{proof}

Let $V'_\Q$ be the unique positive definite quadratic space over $\Q$ with the same dimension and determinant as $V_\Q$, but with 
Hasse invariant 
\[
\epsilon ( V'_{\Q_\ell}) = \begin{cases}
\ \ \epsilon ( V_{\Q_\ell}) & \mbox{if }\ell \neq p \\
- \epsilon ( V_{\Q_\ell}) & \mbox{if }\ell=p
\end{cases}
\] 
for all finite primes $\ell$.  Let  $I' = \GSpin( V'_\Q)$  be the corresponding spinor similitude group over $\Q$, 
and let $\eta_{I'} : I'  \to \Gm$ be the spinor similitude.

The uniformization Theorem \ref{uniformThm}  now gives the following result.

\begin{theorem}\label{thm:GSpin uniform}
There is an isomorphism of formal $W$-schemes
\[
I'(\Q) \backslash \RZc \times G(\A^p) / U^p  \iso (  \widehat{\mathscr{S}}_W )_{ /  \mathscr{S}_{ss}  }
\]
for suitable isomorphisms $I'(\Q_p) \iso J_b(\Q_p)$,  and $I'(\Q_\ell) \iso G(\Q_\ell)$ for $\ell \not=p$.
\end{theorem}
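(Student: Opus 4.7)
The plan is to deduce this from the general uniformization result Theorem \ref{uniformThm} applied to the global Hodge type Shimura datum $(G,\mathcal{H})$ for $G = \GSpin(V_\Q)$, and then to identify the abstract inner form $I$ that appears there with the concrete group $I' = \GSpin(V'_\Q)$.

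First, I would verify the hypothesis needed to invoke Theorem \ref{uniformThm}: the local Shimura-Hodge datum $(G_{\Z_p}, b, \mu, C(V_{\Z_p}))$ of Proposition \ref{prop:local GSpin datum}, used to define the Rapoport-Zink formal scheme $\RZc$ in \S\ref{sss:RZ space}, must arise from a $k$-point of $\sS_{U_p}$ by the recipe of \S\ref{sss:global to local datum}. This is exactly the content of Proposition \ref{switchpoint}, whose proof combines Lemma \ref{nonempty} (nonemptiness of the supersingular locus via a CM construction) with Remark \ref{rem:point switch} to move from the local datum attached to an arbitrary supersingular point to the specific basic representative $b = x_3(p^{-1}x_1 + x_2)$. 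Since the supersingular locus coincides with the basic Newton stratum $\sS_b$ by Lemma \ref{lem:basic criterion} and Proposition \ref{prop:twisted space}, Theorem \ref{uniformThm} then yields an isomorphism
\[
\Theta^b: I(\Q)\backslash \RZc \times G(\A^p_f)/U^p \iso (\widehat\sS_W)_{/\sS_{ss}}
\]
with $I/\Q$ an inner form of $G$ satisfying $I(\Q_\ell) = G(\Q_\ell)$ for $\ell \neq p$, $I(\Q_p) = J_b(\Q_p)$, and $I(\R)$ compact modulo center.

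The main remaining step is identifying $I \iso I' = \GSpin(V'_\Q)$. Since $G \to \SO(V)$ is a central extension by $\Gm$, inner forms of $G$ correspond bijectively to inner forms of $\SO(V_\Q)$, which in turn are classified by $\Q$-quadratic spaces of the same dimension and determinant as $V_\Q$. Thus $I = \GSpin(W)$ for a unique such space $W/\Q$, and we must check that $W \iso V'_\Q$; by the Hasse-Minkowski classification, this is a purely local calculation of Hasse invariants and signatures. At each finite $\ell \neq p$, the isomorphism $I(\Q_\ell) \iso G(\Q_\ell)$ forces $W_{\Q_\ell} \iso V_{\Q_\ell}$, so $\epsilon(W_{\Q_\ell}) = \epsilon(V_{\Q_\ell})$. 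At $p$, Proposition \ref{prop:twisted space} identifies $J_b(\Q_p) = \GSpin(V_K^\Phi)(\Q_p)$, and the same proposition computes $\epsilon(V_K^\Phi) = -\epsilon(V_{\Q_p})$, so $\epsilon(W_{\Q_p}) = -\epsilon(V_{\Q_p})$. At infinity, compactness of $I(\R)$ modulo center forces $W_\R$ to be definite, and positivity (rather than negativity) is fixed by the choice of determinant together with consistency at the finite places. These local invariants match those defining $V'_\Q$ exactly, and Hilbert reciprocity $\prod_v \epsilon(V_{\Q_v}) = 1$ (together with the corresponding product formula for $W$) ensures that the simultaneous sign flip at $p$ and $\infty$ is internally consistent.

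The hardest part will be the bookkeeping in the third step: one must be careful that the abstract isomorphism $I(\Q_p) \iso J_b(\Q_p)$ provided by Theorem \ref{uniformThm} is compatible with the spinor similitude structures, so that the identification $J_b = \GSpin(V_K^\Phi)$ coming from \S\ref{sss:special quasi-endomorphisms} actually pins down $W_{\Q_p}$ as a quadratic space rather than merely up to an outer twist. This compatibility is built into the construction of $I$ via the Kottwitz triple attached to a supersingular point (see the proof of Theorem \ref{uniformThm} and the reference to \cite[\S 2.3]{KisinLR}), so once it is invoked correctly, the remainder of the identification $I \iso I'$ is a direct comparison of local invariants.
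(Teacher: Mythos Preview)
Your strategy is the same as the paper's: invoke Theorem \ref{uniformThm} (via Proposition \ref{switchpoint}) and then identify the abstract inner form $I$ with $I'=\GSpin(V'_\Q)$ by a local--global argument. The paper carries out the identification slightly differently, and its phrasing avoids the bookkeeping you flag as the ``hardest part.'' Rather than first writing $I=\GSpin(W)$ for some auxiliary quadratic space $W$ and then recovering $W$ place by place, the paper works directly with cohomology classes in $H^1(\Q,G^{\mathrm{ad}})$: it constructs the class $c'$ attached to $I'$ by pushing the cocycle $\sigma\mapsto\psi\sigma(\psi)^{-1}$ (for an isomorphism $\psi:V'_{\bar\Q}\iso V_{\bar\Q}$) along $\SO(V)\to G^{\mathrm{ad}}$, notes that $I$ is determined as an inner twist by its localizations $c_v$ (this is where \cite[(4.3)]{KisinLR} and the Hasse principle for adjoint groups enter), and then simply checks $c_v=c'_v$ at every place. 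This sidesteps two imprecisions in your sketch: the claim that inner forms of $G$ are classified by quadratic spaces of the same dimension and determinant is not literally true for all $n$ (and is unnecessary here), and the step ``$I(\Q_\ell)\iso G(\Q_\ell)$ forces $W_{\Q_\ell}\iso V_{\Q_\ell}$'' conflates isomorphism of point groups with isomorphism of algebraic groups and then with isomorphism of underlying quadratic spaces. Working in $H^1(\Q,G^{\mathrm{ad}})$ makes the local comparison tautological and the global conclusion a direct application of the Hasse principle for adjoint groups, so your compatibility worry evaporates.
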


\begin{proof}  Since $\sS_{ss}=\sS_b$, this will follow from Theorem \ref{uniformThm} after we 
show that the group $I$ in the statement of Theorem \ref{uniformThm} can be identified with the group $I'= \GSpin( V'_\Q)$ above.

The group $I'= \GSpin( V'_\Q)$ is   an inner form of $G$: Since $V_\Q$ and $V'_\Q$ have 
the same dimension and determinant, we can find an isomorphism of quadratic spaces 
\[
\psi: V'_\Q\otimes_\Q\bar\Q\xrightarrow{\sim} V_\Q\otimes_{\Q}\bar\Q
\] which produces a Galois cocycle $\sigma\mapsto \psi \sigma(\psi)^{-1}$ with values in ${\rm SO}(V_\Q)(\bar\Q)$.
Composing this with ${\rm SO}(V_{\Q})\to G^{\rm ad}$ gives a  class $c'\in {\rm H}^1(\Q, G^{\rm ad} )$ which defines $I'= \GSpin( V'_\Q)$. Notice that $V'_{\Q_\ell}$ is isomorphic to $V_{\Q_\ell}$,
for all $\ell\neq p$, and $V'_{\Q_p}$ is isomorphic to $V^\Phi_K$. 

The group $I$ is also
an inner form of $G$. In fact, by the remarks that follow part (iv) of the definition of a Kottwitz triple in  \cite[(4.3)]{KisinLR},  $I$ is uniquely determined {\sl as an inner form} 
 (or more correctly {\sl an inner twist}) of $G$ by the local inner twisting
isomorphisms at finite places, and the fact that $I_{\R}$ is anisotropic 
modulo center. 
(This uses the Hasse principle for adjoint
groups, see \cite[\S 6.5, Theorem 6.22]{PlatonovRapinchuk},
and the fact that there is a unique element of ${\rm H}^1({\mathbb R}, G^{\rm ad})$
which corresponds to the compact modulo center form of $G_{\mathbb R}$, see \cite[p.~423]{KottJAMS}). Therefore, $I$ is given by a well-defined cohomology class $c\in {\rm H}^1(\Q, G^{\rm ad} )$
with prescribed localizations $c_v$ in ${\rm H}^1(\Q_v, G^{\rm ad} )$, for all places $v$ of $\Q$.

By the definition of the classes $c_v$, as provided by the inner twists coming from 
the Kottwitz triple given by $x_0$, $c_\ell$ is trivial for $\ell\neq p$,
while $c_p$ corresponds to the inner twist $\GSpin(V^\Phi_K)$; as
$
V^\Phi_K\iso V'_{\Q_p},
$
 we have $c_p=c'_p$. Hence, $c_v=c'_v$ for all finite places $v$ of $\Q$. Also since $V'_\R$ is positive definite, we have as above $c_{\infty}=c'_{\infty}$. 
The result then follows as above, by the Hasse principle for adjoint
groups.
 \end{proof}

From here  on we identify
\[
I=I'=\GSpin(V'_\Q).
\]

\begin{remark}
As in the proof of Theorem \ref{uniformThm}, the group $I = \GSpin (V'_\Q)$ acts as quasi-endomorphisms 
of the fiber $A_{x_0}$ of the Kuga-Satake abelian scheme at the base point $x_0 \in \mathscr{S}_{U_p}(k)$.
The action $I\subset \End(A_{x_0})_\Q^\times$ can be explained as follows: the fiber $A_{x_0}$, like every fiber of the Kuga-Satake abelian scheme,
comes endowed with a collection of \emph{special quasi-endomorphisms} $V(A_{x_0}) \subset \End(A_{x_0})_\Q$ as in \cite[\S 5]{MadapusiSpin}.  
This is a quadratic space over $\Q$, with quadratic form determined by $v\circ v = Q(v) \cdot \mathrm{id}$.
For any fiber the space of special endomorphisms has dimension $\le \dim(V_\Q)$, and equality holds precisely at supersingular points.
In fact, using \cite[Theorem 6.4]{MadapusiK3}, the supersingularity of $A_{x_0}$ implies that $V(A_{x_0}) \iso V'_\Q$.  After fixing such an isomorphism, we obtain an injection 
$V'_\Q \to \End(A_{x_0})_\Q$, which, by the universal property of Clifford algebras, extends to ring homomorphism
$C(V_\Q') \to \End(A_{x_0})_\Q$.  This homomorphism then restricts to a homomorphism of groups
\[
\GSpin(V_\Q') \to \End(A_{x_0})_\Q^\times.
\]
\end{remark}

\subsubsection{}
Recalling the decomposition
$
\RZc = \bigsqcup_{\ell\in \Z} \RZc^{(\ell)}
$
of $\RZc$ into its connected components,
  we may rewrite the uniformization of Theorem \ref{thm:GSpin uniform} as an isomorphism
\[
 (  \widehat{\mathscr{S}}_W )_{ /  \mathscr{S}_{ss}  }  \iso I(\Q)_0  \backslash \RZc^{(0)}  \times G(\A^p) / U^p ,
\]
where 
\[
I(\Q)_0 = \mathrm{ker} \big( I(\Q)  \map{\eta_I } \Q^\times \map{ \ord_p}    \Z  \big)
\]
is the common stabilizer in $I(\Q)$ of the connected components of $\RZc$.  This may be further rewritten as
\begin{equation}\label{second GSpin uniform}
(  \widehat{\mathscr{S}}_W )_{ /  \mathscr{S}_{ss}  }   \iso \bigsqcup_{ g\in I(\Q)_0 \backslash G(\A_f^p) / U^p } \Gamma_g \backslash \RZc^{(0)},
\end{equation}
where  $\Gamma_g = I(\Q)_0 \cap g U^p g^{-1}.$

\subsection{Structure of the supersingular locus}

As in the proof of Theorem \ref{thm:GSpin uniform}, we may identify \[V_{\Q_p}' \iso V_K^\Phi\] as $\Q_p$-quadratic spaces.
In particular,  we obtain from Definition \ref{def:vertex lattice} the notion of a \emph{vertex lattice} $\Lambda \subset V_{\Q_p}'$,
whose \emph{type} $t_\Lambda = \dim_{\F_p}(\Lambda/ \Lambda^\vee)$ is a positive  even integer less than or equal to the integer
$t_\mathrm{max}$ of (\ref{tmax}).

\subsubsection{}

Fix a vertex lattice $\Lambda  \subset V'_{\Q_p}$. Exactly as in \S \ref{sss:Xlambda},  endow  $\Omega_0=\Lambda/ \Lambda^\vee$ with the rescaled $\F_p$-valued
quadratic form $pQ$.  Set $\Omega= \Omega_0 \otimes_{\F_p} k$, and let $S_\Lambda$ be the reduced $k$-scheme with $k$-points
\[
S_\Lambda(k) = \left\{ \mbox{Lagrangians }\mathscr{L} \subset \Omega : \dim_k( \mathscr{L}+\Phi(\mathscr{L}) ) = \frac{t_\Lambda}{2} + 1  \right\} 
\]
where  $\Phi=\mathrm{id} \otimes \sigma$ is the  absolute Frobenius on $\Omega$.
Recall from Proposition \ref{prop:lagrangian} that $S_\Lambda =S^+_\Lambda \sqcup S^-_\Lambda$ has two connected components.
The components are isomorphic, and each is projective and smooth of dimension $(t_\Lambda/2) -1$.
Up to isomorphism, $S_\Lambda$ depends only on the type $t_\Lambda$.

\subsubsection{}
Taking the reduced scheme underlying both sides of (\ref{second GSpin uniform}) yields an isomorphism
\[
\mathscr{S}_{ss}  \iso \bigsqcup_{ g\in I(\Q)_0 \backslash G(\A_f^p) / U^p } \Gamma_g \backslash \RZc^{(0),\red}.
\]
From this, the description of $ \RZc^{(0),\red}$ of Theorem \ref{thm:final}, and an argument
as in the proof of \cite[Theorem 6.1]{VollaardSS} we deduce the following result.

\begin{theorem}
For all $U^p \subset G(\A_f^p)$ sufficiently small, the following hold:
\begin{enumerate}
\item
Each of the $k$-schemes $\Gamma_g\backslash   \RZc^{(0),\red}$ is connected.
\item
The irreducible components of $  \Gamma_g\backslash   \RZc^{(0),\red}$ are in bijection with the set of orbits 
\[
\Gamma_g \backslash \{ \mbox{vertex lattices of type } t_\mathrm{max} \},
\]
and the irreducible component indexed by a vertex lattice $\Lambda$ is   isomorphic to  $S_\Lambda^\pm$.
In particular, all irreducible components are isomorphic to one another, and are projective and smooth of dimension
\[
\dim(\mathscr{S}_{ss}) = \frac{ t_\mathrm{max}}{2} -1.
\]
\end{enumerate}\qed
\end{theorem}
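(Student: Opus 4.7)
The plan is to derive this final theorem as a direct corollary of the uniformization isomorphism (\ref{second GSpin uniform}) and the explicit description of $\RZc^{(0),\red}$ already obtained in Theorem \ref{thm:final}. Taking reduced underlying schemes in (\ref{second GSpin uniform}) yields
\[
\mathscr{S}_{ss} \iso \bigsqcup_{g \in I(\Q)_0 \backslash G(\A_f^p)/U^p} \Gamma_g \backslash \RZc^{(0),\red},
\]
so both assertions reduce to statements about the individual quotients $\Gamma_g \backslash \RZc^{(0),\red}$.

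For (i), connectedness is automatic: Theorem \ref{thm:final} asserts that $\RZc^{(0),\red}$ is connected, and the continuous surjection onto its quotient preserves this. For (ii), Theorem \ref{thm:final} further identifies the irreducible components of $\RZc^{(0),\red}$ with the closed subschemes $\RZc_\Lambda^{(0),\red}$ as $\Lambda$ runs over vertex lattices of type $t_\mathrm{max}$ in $V_K^\Phi \iso V'_{\Q_p}$, each isomorphic to $S_\Lambda^\pm$ and of dimension $(t_\mathrm{max}/2)-1$. The action of $\Gamma_g \subset I(\Q)_0$ is through $I(\Q_p) \iso J_b(\Q_p) \iso \GSpin(V_K^\Phi)(\Q_p)$, which permutes the set of vertex lattices (via $g\mapsto g\action\Lambda$) and hence permutes the irreducible components accordingly. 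Thus the irreducible components of the quotient are in bijection with $\Gamma_g$-orbits on the set of type-$t_\mathrm{max}$ vertex lattices, and each component is a quotient of some $\RZc_\Lambda^{(0),\red}\iso S_\Lambda^\pm$ by its stabilizer in $\Gamma_g$.

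The main obstacle, and the place where the hypothesis of sufficiently small $U^p$ enters, is showing that for $U^p$ small enough the stabilizer in $\Gamma_g$ of each irreducible component $\RZc_\Lambda^{(0),\red}$ acts trivially on it (so that the quotient is again $S_\Lambda^\pm$ itself), and more generally that $\Gamma_g$ acts with trivial point stabilizers on $\RZc^{(0),\red}$. The standard argument, as in Vollaard \cite{VollaardSS} Theorem 6.1 or Rapoport--Zink \cite[Ch.~6]{RapZinkBook}, exploits the fact that since $I_\R$ is anisotropic modulo center (Theorem \ref{uniformThm}), the embedding $I(\Q) \hookrightarrow J_b(\Q_p)\times G(\A_f^p)$ is discrete and $I(\Q)_0$ modulo center intersects every compact open subgroup of $J_b(\Q_p)\times G(\A_f^p)$ in a finite set. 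The stabilizer in $J_b(\Q_p)$ of any single vertex lattice $\Lambda$ is compact-open, and there are only finitely many $I(\Q)_0$-conjugacy classes of type-$t_\mathrm{max}$ vertex lattices contributing nontrivially, so after shrinking $U^p$ we may arrange that $\Gamma_g = I(\Q)_0 \cap gU^pg^{-1}$ has trivial intersection with each such stabilizer (modulo central elements that act trivially on $\RZc^{(0),\red}$ because they lie in the kernel of the action on $\RZc$ once the component $\RZc^{(0)}$ is fixed).

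Granting this freeness, the irreducible components of $\Gamma_g\backslash\RZc^{(0),\red}$ are indexed precisely by the $\Gamma_g$-orbits on type-$t_\mathrm{max}$ vertex lattices, and each is isomorphic to $S_\Lambda^\pm$. The dimension formula $\dim(\mathscr{S}_{ss}) = (t_\mathrm{max}/2)-1$ then follows from Proposition \ref{prop:lagrangian}, and projectivity and smoothness are inherited from $S_\Lambda^\pm$.
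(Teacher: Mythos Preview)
Your proposal is correct and follows essentially the same approach as the paper, which gives no detailed proof at all: the paper simply states that the result follows from Theorem \ref{thm:final} together with ``an argument as in the proof of \cite[Theorem 6.1]{VollaardSS},'' and marks the statement with \qed. Your sketch fills in exactly this deferred argument, citing the same reference and identifying the same key point (freeness of the $\Gamma_g$-action for small $U^p$, using that $I_\R$ is anisotropic modulo center so $\Gamma_g$ is discrete and torsion-free modulo center).
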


%%%%%%%%%%%%%%%%%%%%%%%%%%%%%%%%%%%%%%%%%%%%%%%%%%%%

\begin{appendix}

\section{Maximal lattices}

For the reader's convenience we recall two results on maximal lattices in quadratic spaces,  used in several places in the body of the article.

\subsection{Eichler's theorem and the Elementary Divisor Theorem}
Let $F$ be a field, complete with respect to a  discrete valuation.  Denote by $\co$ the valuation ring of $F$.
Suppose $V$ is a finite dimensional $F$-vector space endowed with a nondegenerate quadratic form $q:V \to F$,
and let 
\[
[x,y]=q(x+y) -q(x) -q(y)
\] 
be the bilinear form determined by $q$.

\begin{definition}\label{def:q-maximal}
By a \emph{lattice} in $V$ we mean a free $\co$-submodule $M\subset V$ with $\mathrm{rank}_{\co}(M) = \mathrm{dim}_F(V)$.
 A lattice $M$ is \emph{maximal} (with respect to $q$) if $q(M)\subset \co$, and if $M$ is not properly contained in any other lattice with this property.
The \emph{dual} $M^\vee$ of the lattice $M$ is   
\[
M^\vee=\{x\in V :  [x, m]\in \co, \ \forall m\in M\}.
\]
 The lattice $M$ is called \emph{self-dual} if $M=M^\vee$.
\end{definition}

For a proof of the following, see \cite[Theorem 8.8]{Ger}.

\begin{theorem}[Eichler]\label{thm:eichler}
All  maximal lattices in $V$ are isomorphic as $\co$-quadratic spaces.  If $V$ is anisotropic, then it has a unique maximal lattice 
\[
M = \{ x\in V : q(x) \in \co \}.
\]
\end{theorem}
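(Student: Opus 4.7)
The plan is to handle the anisotropic case directly via an extended valuation on $V$, then reduce the general case to it by inductively splitting off hyperbolic planes from any maximal lattice.

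Step one is the anisotropic case. I would define $w \colon V \to \tfrac{1}{2}\Z \cup \{\infty\}$ by $w(x) = \tfrac{1}{2}v(q(x))$, so that the candidate lattice $M = \{x \in V : q(x) \in \co\}$ equals $\{x : w(x) \geq 0\}$. The crux is the ultrametric inequality $w(x+y) \geq \min(w(x), w(y))$, which depends on both anisotropy and completeness: expanding $q(x+y) = q(x) + q(y) + [x,y]$ and rescaling to the case $w(x) = w(y) = 0$, a failure would force a sequence of the form $x + \lambda y$ with $\lambda \in \co^\times$ whose $q$-values tend to $0$, producing a nonzero isotropic vector by passing to a limit in the complete metric space $V$. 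Once $w$ is known to be an ultrametric, $M$ is visibly an $\co$-submodule, and it is a lattice because $w$ is equivalent to the coordinate-wise valuation on any fixed $F$-basis. Maximality and uniqueness are then tautologies: any lattice $N$ with $q(N) \subset \co$ satisfies $N \subset M$, so $M$ is both maximal and the only maximal lattice.

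For the general case, I would induct on the Witt index of $V$ by showing that any maximal lattice $M \subset V$ with $V$ isotropic admits an orthogonal decomposition $M = M_H \oplus M'$ in which $H \subset V$ is a hyperbolic plane, $M_H = \co e \oplus \co f$ is self-dual in $H$ (with $e, f$ isotropic and $[e,f] = 1$), and $M' = M \cap H^\perp$ is maximal in $H^\perp$. Combined with the anisotropic case and the fact that any two self-dual lattices in a hyperbolic plane are isomorphic, this immediately yields that all maximal lattices in $V$ are mutually isomorphic as $\co$-quadratic spaces.

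The main obstacle is producing the splitting. First I would extract a primitive isotropic vector $x \in M$: maximality forces enough self-duality that the reduction $\bar M = M/\pi M$ inherits a nondegenerate quadratic form over the residue field, and since $V$ is isotropic over $F$ one shows, via Hensel's lemma together with a dimension count, that $\bar M$ must also be isotropic, so a primitive isotropic $x \in M$ exists. Nondegeneracy of the bilinear form on $M$ then produces $y \in M$ with $[x,y] \in \co^\times$; after scaling, assume $[x,y] = 1$, and replace $y$ by $y - q(y) x$ to make $y$ isotropic while preserving both $[x,y] = 1$ and $y \in M$. Setting $H = \mathrm{Span}_F(x,y)$ and $M_H = \co x + \co y$, every $m \in M$ admits the decomposition $m = \bigl([m,y]\,x + [m,x]\,y\bigr) + m'$ with $m' \in M \cap H^\perp$, proving $M = M_H \oplus (M \cap H^\perp)$. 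The induction closes by applying the hypothesis to $H^\perp$, which has strictly smaller Witt index.
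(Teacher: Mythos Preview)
The paper does not actually prove this theorem; it simply cites \cite[Theorem 8.8]{Ger}. So there is no proof in the paper to compare against, and your proposal stands on its own merits.

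Your overall architecture --- settle the anisotropic case directly, then inductively split off hyperbolic planes from a maximal lattice --- is the classical one and is correct in outline. Two points deserve tightening. First, in the anisotropic case, the ``sequence $x+\lambda y$ whose $q$-values tend to $0$'' sketch is not how the ultrametric inequality for $w$ is usually obtained, and as written it is unclear how a single failure $w(x+y)<\min(w(x),w(y))$ produces such a sequence. The standard route is to reduce to the two-dimensional span of $x$ and $y$ and argue via the Newton polygon of the quadratic $t\mapsto q(x+ty)$ (or simply invoke the classification of anisotropic forms over complete fields, which have dimension at most four); completeness enters through Hensel's lemma rather than through a limiting process in $V$.

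Second, in the isotropic step, the assertion that $M/\pi M$ carries a \emph{nondegenerate} quadratic form is false in general: maximal lattices need not be self-dual (already in dimension one, $q(x)=\pi x^2$ on $F$ gives $M=\co$ with $M^\vee=\pi^{-1}\co$). Fortunately you do not need it. A primitive isotropic vector $x\in M$ is obtained more cheaply by taking any isotropic $v\in V$ and scaling so that $v\in M\smallsetminus\pi M$. The genuinely important point is that such an $x$ satisfies $[x,M]=\co$: if $[x,M]\subset\pi\co$ then $\pi^{-1}x\in M^\vee$ with $q(\pi^{-1}x)=0$ and $q(m+\pi^{-1}x)=q(m)+[\pi^{-1}x,m]\in\co$ for all $m\in M$, so maximality forces $\pi^{-1}x\in M$, contradicting primitivity. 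With this in hand, your construction of $y$ and the splitting $M=M_H\oplus(M\cap H^\perp)$ go through; you should also check (it is easy) that $M\cap H^\perp$ is maximal in $H^\perp$.
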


\begin{theorem}[Elementary divisor theorem]\label{thm:elementary divisors}
Suppose $A$ and $B$ are maximal lattices in $V$.  There is a decomposition
\[
V = F e_1 \oplus F f_1 \oplus \cdots \oplus  F e_r \oplus F f_r \oplus V_0,
\]
in which   $V_0$ is anisotropic and orthogonal to all $e_i$ and all $f_i$,
\[
[e_i,e_j] = 0,\quad [f_i,f_j]=0, \quad [e_i,f_j] =\delta_{i,j},
\]
 and 
\begin{align*}
A &= \co e_1 \oplus \co f_1 \oplus \cdots \oplus \co e_r \oplus \co f_r \oplus M_0 \\
B  &= ( \beta_1)  e_1 \oplus (\beta_1^{-1})  f_1 \oplus \cdots \oplus  (\beta_r) e_r \oplus (\beta_r^{-1})  f_r \oplus M_0 ,
\end{align*}
for some  $\beta_1,\ldots, \beta_r \in F^\times$.  Here $(x) =\co  x$, and $M_0 \subset V_0$ is the unique maximal lattice in $V_0$.
\end{theorem}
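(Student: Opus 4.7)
The plan is to prove the elementary divisor theorem by induction on $\dim V$, using Eichler's theorem (A.1) in the base case and extracting a compatible hyperbolic plane in the inductive step.

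For the base case, suppose $V$ is anisotropic. Then by Theorem A.1 both $A$ and $B$ coincide with the unique maximal lattice $M_0 = \{x \in V : q(x) \in \co\}$, and the desired decomposition holds trivially with $r = 0$ and $V_0 = V$.

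For the inductive step, assume $V$ is isotropic. I first choose a primitive isotropic vector $e_1 \in A$, which exists because $V$ contains an isotropic vector and we may scale by a power of a uniformizer $\pi$. Let $\beta_1 \in F^\times$ be the (essentially unique) scalar such that $\beta_1 e_1 \in B$ is primitive. A basic consequence of maximality is that the pairing $[e_1,-] : A \to \co$ is surjective: if its image lay in $(\pi)$, then $A + \co \pi^{-1} e_1$ would be a strictly larger lattice on which $q$ is $\co$-valued (since $q(\pi^{-1} e_1) = 0$ and the cross terms are integral), contradicting the maximality of $A$. Hence there is $f_1^{(A)} \in A$ with $[e_1, f_1^{(A)}] = 1$, and after subtracting $q(f_1^{(A)})\, e_1$ we may assume $f_1^{(A)}$ is isotropic; the usual projection argument (using $A \subset A^\vee$) shows that $\co e_1 \oplus \co f_1^{(A)}$ is an orthogonal direct summand of $A$. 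The same argument applied to the maximal lattice $B$ and its primitive isotropic vector $\beta_1 e_1$ yields $g \in B$ splitting off the hyperbolic plane $\co(\beta_1 e_1) \oplus \co g$ of $B$; set $f_1^{(B)} = \beta_1 g$, so that $[e_1, f_1^{(B)}] = 1$, $q(f_1^{(B)}) = 0$, and $\beta_1^{-1} f_1^{(B)}$ is primitive in $B$.

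The main obstacle is that typically $f_1^{(A)} \neq f_1^{(B)}$, so neither candidate simultaneously gives the desired splitting of both $A$ and $B$. The task is to produce a single $f_1 \in A \cap \beta_1 B$, isotropic with $[e_1, f_1] = 1$, that realizes both decompositions. The set of isotropic $f \in V$ with $[e_1, f] = 1$ is an affine torsor over the subgroup of $e_1^{\perp}/Fe_1$ determined by the isotropy condition (choose any $h \in e_1^\perp$, then $\lambda = -[f_1^{(A)}, h] - q(h)$ makes $f_1^{(A)} + h + \lambda e_1$ isotropic), and the problem is to find a $h$ with $f_1^{(A)} + h + \lambda e_1 \in \beta_1 B$. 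Writing $f_1^{(A)} - f_1^{(B)} = a + b$ with $a \in A$, $b \in \beta_1 B$, this reduces to showing $[e_1, a] \in [e_1, A \cap \beta_1 B]$; this last ideal equals $\co$ by a direct computation using that $[e_1, A] = [e_1, \beta_1 B] = \co$ together with the primitivity of $e_1$ in both lattices (after possibly adjusting $a, b$ by an element of $A \cap \beta_1 B$).

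Once such $f_1$ is produced, $A$ and $B$ split orthogonally as $A = \co e_1 \oplus \co f_1 \oplus A'$ and $B = (\beta_1)e_1 \oplus (\beta_1^{-1})f_1 \oplus B'$, where $A' = A \cap V'$ and $B' = B \cap V'$ for $V' = (Fe_1 + Ff_1)^\perp$. One checks (using the existence of the splitting) that $A'$ and $B'$ are maximal in $V'$ with respect to the restricted quadratic form. Applying the inductive hypothesis to $(V', A', B')$ produces the remaining hyperbolic pairs $(e_2, f_2), \ldots, (e_r, f_r)$ and the anisotropic summand $V_0$ with its unique maximal lattice $M_0$, completing the decomposition.
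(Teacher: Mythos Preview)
Your inductive strategy is reasonable and is in spirit what lies behind the simultaneous splitting result the paper invokes, but the heart of the argument --- producing a single isotropic $f_1$ with $[e_1,f_1]=1$ that splits both $A$ and $\beta_1 B$ --- is not actually established. You assert that the problem ``reduces to showing $[e_1,a]\in [e_1, A\cap\beta_1 B]$'' and that this ideal is all of $\co$, but neither claim is justified. The equality $[e_1, A\cap\beta_1 B]=\co$ does \emph{not} follow formally from $[e_1,A]=[e_1,\beta_1 B]=\co$: for two lattices $L_1,L_2$ and a linear functional $\phi$ with $\phi(L_1)=\phi(L_2)=\co$, one can easily have $\phi(L_1\cap L_2)\subset(\pi)$ (take $L_1=\co^2$, $L_2=\co(1,\pi^{-1})\oplus\co(0,1)$, $\phi(x,y)=x$). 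So you must use maximality and the quadratic structure in an essential way here, and you have not indicated how. In fact, in standard treatments one cannot take $e_1$ to be an \emph{arbitrary} primitive isotropic vector of $A$; one has to choose it adapted to the pair $(A,B)$ (e.g.\ so that $\mathrm{ord}_p(\beta_1)$ is extremal), or else argue more carefully. Your parenthetical ``after possibly adjusting $a,b$ by an element of $A\cap\beta_1 B$'' does not close this gap.

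By contrast, the paper does not attempt to build the common hyperbolic summand by hand. It invokes \cite[Lemma~6.36]{Ger} to obtain, in one stroke, a simultaneous orthogonal decomposition $V=H_1\oplus\cdots\oplus H_r\oplus V_0$ with each $H_i$ a hyperbolic plane and with both $A$ and $B$ splitting along it; then \cite[Lemma~6.35]{Ger} (structure of maximal lattices in a single hyperbolic plane) gives $A\cap H_i=(\alpha_i)e_i\oplus(\alpha_i^{-1})f_i$ and $B\cap H_i=(\beta_i)e_i\oplus(\beta_i^{-1})f_i$, after which a rescaling finishes. If you want a self-contained proof along your lines, the real work is to prove the analogue of Lemma~6.36, and that is exactly where your sketch is incomplete.
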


\begin{proof}
By applying \cite[Lemma 6.36]{Ger} inductively, there is an orthogonal decomposition
\[
V = H_1 \oplus \cdots \oplus H_r \oplus V_0
\]
in which  each $H_i$ is a hyperbolic plane, $V_0$ is anisotropic, and
\begin{align*}
A  &= (A\cap H_1) \oplus \cdots \oplus (A\cap H_r) \oplus (A\cap V_0) \\
B  &= (B\cap H_1) \oplus \cdots \oplus (B\cap H_r) \oplus (B\cap V_0).
\end{align*}
The  maximality of $A$ implies that each $A\cap H_i$ is a maximal lattice of $H_i$, and that $A\cap V_0$ is a maximal
lattice of  $V_0$.  Of course similar remarks apply to $B$, and in particular   
\[
A\cap V_0= \{ x\in V_0 : q(x) \in \co \} = B\cap V_0
\]
 by Theorem \ref{thm:eichler}.

Choose a basis $e_i,f_i\in H_i$ such that $q(e_i)=q(f_i)=0$ and 
$[e_i,f_i]=1$.  Using the fact that $Fe_i$ and $Ff_i$ are the unique isotropic lines in $H_i$, it follows from \cite[Lemma 6.35]{Ger} 
that 
\begin{align*}
A\cap H_i &=   (\alpha_i) e_i \oplus   ( \alpha_i^{-1}) f_i \\
B\cap H_i &=   (\beta_i) e_i \oplus  (\beta_i^{-1})  f_i
\end{align*}
for some $\alpha_i , \beta_i\in F^\times$.  The desired decomposition of $V$ is now obtained by rescaling $e_i$ and $f_i$ so that $\alpha_i=1$.
\end{proof}
 
 \end{appendix}

 \bibliographystyle{amsalpha}
 %\bibliography{ShimuraBiblioSS}

\begin{thebibliography}{GHKR06}

\bibitem[Asg02]{asgari}
M.~Asgari, \emph{Local {$L$}-functions for split spinor groups}, Canad. J.
  Math. \textbf{54} (2002), no.~4, 673--693. \MR{1913914 (2003i:11062)}

\bibitem[Bas74]{BassClifford}
H.~Bass, \emph{Clifford algebras and spinor norms over a commutative ring},
  Amer. J. Math. \textbf{96} (1974), 156--206. \MR{0360645 (50 \#13092)}

\bibitem[BBM82]{BerthelotBreenMessingII}
P.~Berthelot, L.~Breen, and W.~Messing, \emph{Th\'eorie de {D}ieudonn\'e
  cristalline. {II}}, Lecture Notes in Mathematics, vol. 930, Springer-Verlag,
  Berlin, 1982. \MR{667344 (85k:14023)}

\bibitem[Ber96]{Berthelot}
P.~Berthelot, \emph{{C}ohomologie rigide et cohomologie rigide \`a supports
  propres. {P}remi\`ere partie}, Preprint. IRMAR 96-03, 89 pp (available at
  http://perso.univ-rennesl.fr/pierre.berthelot), 1996.

\bibitem[BM90]{BerthelotMessingIII}
P.~Berthelot and W.~Messing, \emph{Th\'eorie de {D}ieudonn\'e cristalline.
  {III}. {T}h\'eor\`emes d'\'equivalence et de pleine fid\'elit\'e}, The
  {G}rothendieck {F}estschrift, {V}ol.\ {I}, Progr. Math., vol.~86,
  Birkh\"auser Boston, Boston, MA, 1990, pp.~173--247. \MR{1086886 (92h:14012)}

\bibitem[BS15]{BhattScholzeWitt}
B.~Bhatt and P.~Scholze, \emph{Projectivity of the {W}itt vector affine
  {G}rassmannian}, Preprint, arXiv:1507.06490, 2015.

\bibitem[CKV15]{CKV}
M.~Chen, M.~Kisin, and E.~Viehmann, \emph{Connected components of affine
  {D}eligne-{L}usztig varieties in mixed characteristic}, Compos. Math.
  \textbf{151} (2015), no.~9, 1697--1762. \MR{3406443}

\bibitem[CV15]{ViehmannChen}
M.~Chen and E.~Viehmann, \emph{Affine {D}eligne-{L}usztig varieties and the
  action of {J}}, Preprint. arXiv:1507.02806, 2015.

\bibitem[Del79]{DeligneCorvallis}
P.~Deligne, \emph{Vari\'et\'es de {S}himura: interpr\'etation modulaire, et
  techniques de construction de mod\`eles canoniques}, Automorphic forms,
  representations and {$L$}-functions ({P}roc. {S}ympos. {P}ure {M}ath.,
  {O}regon {S}tate {U}niv., {C}orvallis, {O}re., 1977), {P}art 2, Proc. Sympos.
  Pure Math., XXXIII, Amer. Math. Soc., Providence, R.I., 1979, pp.~247--289.
  \MR{546620 (81i:10032)}

\bibitem[dJ95]{deJongBTIHES}
A.~J. de~Jong, \emph{Crystalline {D}ieudonn\'e module theory via formal and
  rigid geometry}, Inst. Hautes \'Etudes Sci. Publ. Math. (1995), no.~82, 5--96
  (1996). \MR{1383213 (97f:14047)}

\bibitem[DOR10]{RapoportDatOrlik}
J.-F. Dat, S.~Orlik, and M.~Rapoport, \emph{Period domains over finite and
  {$p$}-adic fields}, Cambridge Tracts in Mathematics, vol. 183, Cambridge
  University Press, Cambridge, 2010. \MR{2676072 (2012a:22026)}

\bibitem[Dri76]{DrinfeldCoverings}
V.~G. Drinfel'd, \emph{Coverings of {$p$}-adic symmetric domains}, Funkcional.
  Anal. i Prilo\v zen. \textbf{10} (1976), no.~2, 29--40. \MR{0422290 (54
  \#10281)}

\bibitem[Fal99]{FaltingsJAMS}
G.~Faltings, \emph{Integral crystalline cohomology over very ramified valuation
  rings}, J. Amer. Math. Soc. \textbf{12} (1999), no.~1, 117--144. \MR{1618483
  (99e:14022)}

\bibitem[Ger08]{Ger}
L.~Gerstein, \emph{Basic quadratic forms}, Graduate Studies in Mathematics,
  vol.~90, American Mathematical Society, Providence, RI, 2008. \MR{2396246
  (2009e:11064)}

\bibitem[GH15]{GortzHe}
U.~G{\"o}rtz and X.~He, \emph{Basic loci of {C}oxeter type in {S}himura
  varieties}, Camb. J. Math. \textbf{3} (2015), no.~3, 323--353. \MR{3393024}

\bibitem[GHKR06]{GHKR}
U.~G{\"o}rtz, T.~Haines, R.~Kottwitz, and D.~Reuman, \emph{Dimensions of some
  affine {D}eligne-{L}usztig varieties}, Ann. Sci. \'Ecole Norm. Sup. (4)
  \textbf{39} (2006), no.~3, 467--511. \MR{2265676 (2008e:14068)}

\bibitem[Gro64]{EGAIV}
A.~Grothendieck, \emph{\'{E}l\'ements de g\'eom\'etrie alg\'ebrique. {IV}.
  \'{E}tude locale des sch\'emas et des morphismes de sch\'emas. {I}}, Inst.
  Hautes \'Etudes Sci. Publ. Math. (1964), no.~20, 259. \MR{0173675 (30
  \#3885)}

\bibitem[Ham16]{HamacherProduct1}
P.~Hamacher, \emph{{T}he almost product structure of {N}ewton strata in the
  {D}eformation space of a {B}arsotti-{T}ate group with crystalline {T}ate
  tensors}, Preprint, arXiv:1601.03131, 2016.

\bibitem[HP14]{HP}
B.~Howard and G.~Pappas, \emph{On the supersingular locus of the {GU}(2,2)
  {S}himura variety}, Algebra Number Theory \textbf{8} (2014), no.~7,
  1659--1699. \MR{3272278}

\bibitem[Kim13]{KimRZ}
W.~Kim, \emph{{R}apoport-{Z}ink spaces of {H}odge type}, Preprint,
  arXiv:1308.5537, 2013.

\bibitem[Kim14]{KimUnif}
\bysame, \emph{{R}apoport-{Z}ink uniformization of {H}odge-type {S}himura
  varieties}, Preprint, 2014.

\bibitem[Kis10]{KisinJAMS}
M.~Kisin, \emph{Integral models for {S}himura varieties of abelian type}, J.
  Amer. Math. Soc. \textbf{23} (2010), no.~4, 967--1012. \MR{2669706}

\bibitem[Kis13]{KisinLR}
\bysame, \emph{Mod p points on {S}himura varieties of abelian type}, Preprint,
  to appear in JAMS, 2013.

\bibitem[Kit93]{Kitaoka}
Y.~Kitaoka, \emph{Arithmetic of quadratic forms}, Cambridge Tracts in
  Mathematics, vol. 106, Cambridge University Press, Cambridge, 1993.
  \MR{1245266 (95c:11044)}

\bibitem[Kot84]{KottTwisted}
R.~Kottwitz, \emph{Shimura varieties and twisted orbital integrals}, Math. Ann.
  \textbf{269} (1984), no.~3, 287--300. \MR{761308 (87b:11047)}

\bibitem[Kot85]{KottIso}
\bysame, \emph{Isocrystals with additional structure}, Compositio Math.
  \textbf{56} (1985), no.~2, 201--220. \MR{809866 (87i:14040)}

\bibitem[Kot92]{KottJAMS}
\bysame, \emph{Points on some {S}himura varieties over finite fields}, J. Amer.
  Math. Soc. \textbf{5} (1992), no.~2, 373--444. \MR{1124982 (93a:11053)}

\bibitem[KP15]{K-P}
M.~Kisin and G.~Pappas, \emph{Integral models for {S}himura varieties with
  parahoric level structure}, Preprint, arXiv:1512.01149, 2015.

\bibitem[KR99]{KR1}
S.S. Kudla and M.~Rapoport, \emph{Arithmetic {H}irzebruch-{Z}agier cycles}, J.
  Reine Angew. Math. \textbf{515} (1999), 155--244. \MR{1717613 (2002e:11076a)}

\bibitem[KR00]{KR2}
\bysame, \emph{Cycles on {S}iegel threefolds and derivatives of {E}isenstein
  series}, Ann. Sci. \'Ecole Norm. Sup. (4) \textbf{33} (2000), no.~5,
  695--756. \MR{1834500 (2002e:11076b)}

\bibitem[Kud04]{KudlaSurvey}
S.S. Kudla, \emph{Special cycles and derivatives of {E}isenstein series},
  Heegner points and {R}ankin {$L$}-series, Math. Sci. Res. Inst. Publ.,
  vol.~49, Cambridge Univ. Press, Cambridge, 2004, pp.~243--270. \MR{2083214
  (2005g:11108)}

\bibitem[Lan02]{LangAlgebra}
S.~Lang, \emph{Algebra}, third ed., Graduate Texts in Mathematics, vol. 211,
  Springer-Verlag, New York, 2002. \MR{1878556 (2003e:00003)}

\bibitem[Mat80]{Matsumura}
H.~Matsumura, \emph{Commutative algebra}, second ed., Mathematics Lecture Note
  Series, vol.~56, Benjamin/Cummings Publishing Co., Inc., Reading, Mass.,
  1980. \MR{575344 (82i:13003)}

\bibitem[Mes72]{MessingBT}
W.~Messing, \emph{The crystals associated to {B}arsotti-{T}ate groups: with
  applications to abelian schemes}, Lecture Notes in Mathematics, Vol. 264,
  Springer-Verlag, Berlin-New York, 1972. \MR{0347836 (50 \#337)}

\bibitem[Moo98]{Moonen}
B.~Moonen, \emph{Models of {S}himura varieties in mixed characteristics},
  Galois representations in arithmetic algebraic geometry ({D}urham, 1996),
  London Math. Soc. Lecture Note Ser., vol. 254, Cambridge Univ. Press,
  Cambridge, 1998, pp.~267--350. \MR{1696489 (2000e:11077)}

\bibitem[MP15]{MadapusiK3}
K.~Madapusi~Pera, \emph{The {T}ate conjecture for {K}3 surfaces in odd
  characteristic}, Invent. Math. \textbf{201} (2015), no.~2, 625--668.
  \MR{3370622}

\bibitem[MP16]{MadapusiSpin}
\bysame, \emph{Integral canonical models for spin {S}himura varieties}, Compos.
  Math. \textbf{152} (2016), no.~4, 769--824. \MR{3484114}

\bibitem[Nis82]{NisnevichThesis}
Y.~A. Nisnevich, \emph{\'{E}tale cohomology and arithmetic of semisimple
  groups}, ProQuest LLC, Ann Arbor, MI, 1982, Thesis (Ph.D.)--Harvard
  University. \MR{2632405}

\bibitem[Ogu84]{OgusIso}
A.~Ogus, \emph{{$F$}-isocrystals and de {R}ham cohomology. {II}. {C}onvergent
  isocrystals}, Duke Math. J. \textbf{51} (1984), no.~4, 765--850. \MR{771383
  (86j:14012)}

\bibitem[PR94]{PlatonovRapinchuk}
V.~Platonov and A.~Rapinchuk, \emph{Algebraic groups and number theory}, Pure
  and Applied Mathematics, vol. 139, Academic Press, Inc., Boston, MA, 1994,
  Translated from the 1991 Russian original by Rachel Rowen. \MR{1278263
  (95b:11039)}

\bibitem[Rap05]{RapoportGuide}
M.~Rapoport, \emph{A guide to the reduction modulo {$p$} of {S}himura
  varieties}, Ast\'erisque (2005), no.~298, 271--318, Automorphic forms. I.
  \MR{2141705 (2006c:11071)}

\bibitem[RR96]{RapoportRich}
M.~Rapoport and M.~Richartz, \emph{On the classification and specialization of
  {$F$}-isocrystals with additional structure}, Compositio Math. \textbf{103}
  (1996), no.~2, 153--181. \MR{1411570 (98c:14015)}

\bibitem[RTW14]{RTW}
M.~Rapoport, U.~Terstiege, and S.~Wilson, \emph{The supersingular locus of the
  {S}himura variety for {${\rm GU}(1,n-1)$} over a ramified prime}, Math. Z.
  \textbf{276} (2014), no.~3-4, 1165--1188. \MR{3175176}

\bibitem[RV14]{RapoportVi}
M.~Rapoport and E.~Viehmann, \emph{Towards a theory of local {S}himura
  varieties}, M\"unster J. Math. \textbf{7} (2014), no.~1, 273--326.
  \MR{3271247}

\bibitem[RZ96]{RapZinkBook}
M.~Rapoport and Th. Zink, \emph{Period spaces for {$p$}-divisible groups},
  Annals of Mathematics Studies, vol. 141, Princeton University Press,
  Princeton, NJ, 1996. \MR{1393439 (97f:14023)}

\bibitem[Ser73]{SerreAcourse}
J.-P. Serre, \emph{A course in arithmetic}, Springer-Verlag, New
  York-Heidelberg, 1973, Translated from the French, Graduate Texts in
  Mathematics, No. 7. \MR{0344216 (49 \#8956)}

\bibitem[Shi10]{ShimuraQuadratic}
G.~Shimura, \emph{Arithmetic of quadratic forms}, Springer Monographs in
  Mathematics, Springer, New York, 2010. \MR{2665139 (2011m:11003)}

\bibitem[SW13]{ScholzeWeinsteinModuli}
P.~Scholze and J.~Weinstein, \emph{Moduli of {$p$}-divisible groups}, Camb. J.
  Math. \textbf{1} (2013), no.~2, 145--237. \MR{3272049}

\bibitem[Vie14]{ViehmannAnnals}
E.~Viehmann, \emph{Truncations of level 1 of elements in the loop group of a
  reductive group}, Ann. of Math. (2) \textbf{179} (2014), no.~3, 1009--1040.
  \MR{3171757}

\bibitem[Vol10]{VollaardSS}
I.~Vollaard, \emph{The supersingular locus of the {S}himura variety for {${\rm
  GU}(1,s)$}}, Canad. J. Math. \textbf{62} (2010), no.~3, 668--720.
  \MR{2666394}

\bibitem[VW11]{VollaardWedhorn}
I.~Vollaard and T.~Wedhorn, \emph{The supersingular locus of the {S}himura
  variety of {${\rm GU}(1,n-1)$} {II}}, Invent. Math. \textbf{184} (2011),
  no.~3, 591--627. \MR{2800696 (2012j:14035)}

\bibitem[Wor13]{Wortmann}
D.~Wortmann, \emph{The $\mu$-ordinary locus for {S}himura varieties of {H}odge
  type}, Preprint, arXiv:1310.6444, 2013.

\bibitem[Zha13]{ZhangCaoEO}
C.~Zhang, \emph{{E}kedahl-{O}ort strata for good reductions of {S}himura
  varieties of {H}odge type}, Preprint, arXiv:1312.4869, 2013.

\bibitem[Zha15]{ZhangCao}
\bysame, \emph{Stratifications and foliations for good reductions of {S}himura
  varieties of {H}odge type}, Preprint, arXiv:1512.08102, 2015.

\bibitem[Zhu14]{ZhuWitt}
X.~Zhu, \emph{Affine {G}rassmannians and the geometric {S}atake in mixed
  characteristic}, Preprint, arXiv:1407.8519, 2014.

\bibitem[Zin01]{ZinkWindows}
Th. Zink, \emph{Windows for displays of {$p$}-divisible groups}, Moduli of
  abelian varieties ({T}exel {I}sland, 1999), Progr. Math., vol. 195,
  Birkh\"auser, Basel, 2001, pp.~491--518. \MR{1827031 (2002c:14073)}

\bibitem[Zin02]{Zinkdisplay}
\bysame, \emph{The display of a formal {$p$}-divisible group}, Ast\'erisque
  (2002), no.~278, 127--248, Cohomologies $p$-adiques et applications
  arithm{\'e}tiques, I. \MR{1922825 (2004b:14083)}

\end{thebibliography}

\providecommand{\bysame}{\leavevmode\hbox to3em{\hrulefill}\thinspace}
\providecommand{\MR}{\relax\ifhmode\unskip\space\fi MR }
% \MRhref is called by the amsart/book/proc definition of \MR.
\providecommand{\MRhref}[2]{%
  \href{http://www.ams.org/mathscinet-getitem?mr=#1}{#2}
}
\providecommand{\href}[2]{#2}

\end{document}